\documentclass[11pt]{article}
\usepackage[OT1,T1]{fontenc}
\usepackage{amsmath,amssymb}
\usepackage{mathrsfs}
\usepackage[all]{xy}
\SelectTips{cm}{10}
\usepackage[colorlinks,citecolor=black,linkcolor=black,urlcolor=black,
  pdfpagemode=None,pdfstartview=FitH]{hyperref}
\usepackage{tikz}
\usetikzlibrary{calc}

\addtolength\hoffset{-1.5cm}
\addtolength\textwidth{3cm}
\setlength\parindent{0pt}
\setlength\parskip{1.6ex plus .5ex minus 0ex}

\renewenvironment{description}{\list{}
  {\setlength\leftmargin{2.5em}\setlength\labelwidth{0pt}
  }}
  {\endlist}

\newtheorem{theorem}{Theorem}[section]
\newtheorem{lemma}[theorem]{Lemma}
\newtheorem{proposition}[theorem]{Proposition}
\newtheorem{corollary}[theorem]{Corollary}
\newenvironment{proof}{\trivlist
  \item[\hskip\labelsep{\itshape Proof.}]\upshape}{\nobreak\noindent
  $\square$\endtrivlist}
\newenvironment{other}[1]{\refstepcounter{theorem}\trivlist
  \item[\hskip\labelsep{\itshape #1~\thesection.\arabic{theorem}.}]
  \upshape}{\endtrivlist\bigbreak}
\numberwithin{equation}{section}

\renewcommand\appendix[1]{\def\thesection{A}
  \setcounter{theorem}0\refstepcounter{section}
  \subsection*{Appendix: #1}}

\DeclareMathOperator\Hom{Hom}
\DeclareMathOperator\End{End}
\DeclareMathOperator\Ext{Ext}
\DeclareMathOperator\ext{ext}
\DeclareMathOperator\Tor{Tor}
\DeclareMathOperator\re{re}
\DeclareMathOperator\im{im}
\DeclareMathOperator\coker{coker}
\DeclareMathOperator\id{id}
\DeclareMathOperator\hd{hd}
\DeclareMathOperator\soc{soc}
\DeclareMathOperator\wt{wt}
\DeclareMathOperator\height{ht}
\DeclareMathOperator\rk{rk}
\DeclareMathOperator\Pol{Pol}
\DeclareMathOperator\Irr{Irr}
\DeclareMathOperator\Rep{Rep}
\DeclareMathOperator\add{add}
\DeclareMathOperator\Sub{Sub}
\DeclareMathOperator\Fac{Fac}
\DeclareMathOperator\Tr{Tr}
\DeclareMathOperator\GL{GL}
\DeclareMathOperator\Card{Card}
\DeclareMathOperator\linspan{span}
\newcommand\dimvec{{\,\underline\dim\,}}
\newcommand\bsm{\begin{smallmatrix}}
\newcommand\esm{\end{smallmatrix}}
\newcommand\out{\mathrm{out}}
\newcommand\iin{\mathrm{in}}
\newcommand\mmod{\text{\upshape-}\mathrm{mod}}

\begin{document}
\title{Affine Mirkovi\'c-Vilonen polytopes}
\author{Pierre Baumann, Joel Kamnitzer and Peter Tingley}
\date{}
\maketitle

\begin{abstract}
\noindent
Each integrable lowest weight representation of a symmetrizable Kac-Moody
Lie algebra $\mathfrak g$ has a crystal in the sense of Kashiwara, which
describes its combinatorial properties. For a given $\mathfrak g$, there
is a limit crystal, usually denoted by $B(-\infty)$, which contains all
the other crystals. When $\mathfrak g$ is finite dimensional, a convex
polytope, called the Mirkovi\'c-Vilonen polytope, can be
associated to each element in $B(-\infty)$. This polytope sits in
the dual space of a Cartan subalgebra of $\mathfrak g$,
and its edges are parallel to the roots of $\mathfrak g$.
In this paper, we generalize this construction to the case where
$\mathfrak g$ is a symmetric affine Kac-Moody algebra. The datum
of the polytope must however be complemented by partitions attached
to the edges parallel to the imaginary root $\delta$. We prove that
these decorated polytopes are characterized by conditions on their
normal fans and on their $2$-faces. In addition, we discuss how our
polytopes provide an analog of the notion of Lusztig datum for affine
Kac-Moody algebras. Our main tool is an algebro-geometric model for
$B(-\infty)$ constructed by Lusztig and by Kashiwara and Saito, based
on representations of the completed preprojective algebra $\Lambda$
of the same type as~$\mathfrak g$. The underlying polytopes in our
construction are described with the help of Buan, Iyama, Reiten and
Scott's tilting theory for the category $\Lambda\mmod$. The partitions
we need come from studying the category of semistable $\Lambda$-modules
of dimension-vector a multiple of~$\delta$.
\end{abstract}

\section{Introduction}
\label{se:Intro}
Let $A$ be a symmetrizable generalized Cartan matrix, with rows and
columns indexed by a set $I$. We denote by $\mathfrak g$ the Kac-Moody
algebra defined by $A$. It comes with a triangular decomposition
$\mathfrak g=\mathfrak n_-\oplus\mathfrak h\oplus\mathfrak n_+$,
with a root system $\Phi$, and with a Weyl group $W$. The simple
roots $\alpha_i$ are indexed by $I$ and the group $W$ is a Coxeter
system, generated by the simple reflections $s_i$. We denote the
length function of $W$ by $\ell:W\to\mathbb N$ and the
set of positive (respectively, negative) roots by $\Phi_+$
(respectively, $\Phi_-$). The root lattice is denoted by
$\mathbb ZI=\bigoplus_{i\in I}\mathbb Z\alpha_i$ and we set
$\mathbb RI=\mathbb ZI\otimes_{\mathbb Z}\mathbb R$. The canonical
pairing between $\mathbb RI$ and its dual $(\mathbb RI)^*$ will
be denoted by angle brackets. Lastly, we denote by
$\mathbb R_{\geq0}I$ the set of linear combinations of the simple
roots with nonnegative coefficients and we set
$\mathbb NI=\mathbb ZI\cap\mathbb R_{\geq0}I$.

\subsection{Crystals}
\label{ss:Crystals}
The combinatorics of the representation theory of $\mathfrak g$ is
captured by Kashiwara's theory of crystals. Let us summarize quickly
this theory; we refer the reader to the nice survey~\cite{Kashiwara95}
for detailed explanations.

A $\mathfrak g$-crystal is a set $B$ endowed with maps $\wt$,
$\varepsilon_i$, $\varphi_i$, $\tilde e_i$ and $\tilde f_i$,
for each $i\in I$, that satisfy certain axioms. This definition is
of combinatorial nature and the axioms stipulate the local behavior
of the structure maps around an element~$b\in B$. This definition
is however quite permissive, so one wants to restrict to crystals
that actually come from representations.

In this respect, an important object is the crystal $B(-\infty)$,
which contains the crystals of all the irreducible lowest weight
integrable representations of $\mathfrak g$ (see Theorem~8.1
in~\cite{Kashiwara95}). This crystal contains a lowest weight element
$u_{-\infty}\in B(-\infty)$ annihilated by all the lowering operators
$\tilde f_i$, and any element of $B(-\infty)$ can be obtained by
applying a sequence of raising operators $\tilde e_i$ to~$u_{-\infty}$.

The crystal $B(-\infty)$ itself is defined as a basis of the
quantum group $U_q(\mathfrak n_+)$ in the limit $q\to0$. Working
with this algebraic construction is cumbersome, and there exist other,
more handy, algebro-geometric or combinatorial models for $B(-\infty)$.

One of these combinatorial models is Mirkovi\'c-Vilonen (MV) polytopes.
In this model, proposed by Anderson~\cite{Anderson03}, one associates
a convex polytope $\Pol(b)\subseteq\mathbb RI$ to each element
$b\in B(-\infty)$. The construction of $\Pol(b)$ is based on the
geometric Satake correspondence. More precisely, the affine
Grassmannian of the Langlands dual of $\mathfrak g$ contains
remarkable subvarieties, called MV cycles after Mirkovi\'c and
Vilonen~\cite{MirkovicVilonen04}. There is a natural bijection
$b\mapsto Z_b$ from $B(-\infty)$ onto the set of all MV cycles
\cite{BravermanFinkelbergGaitsgory06,BravermanGaitsgory01,
GaussentLittelmann05}, and $\Pol(b)$ is simply the image of $Z_b$
by the moment map.

Using Berenstein and Zelevinsky's work~\cite{BerensteinZelevinsky01},
the second author showed in~\cite{Kamnitzer10} that these MV
polytopes can be described in a completely combinatorial fashion:
these are the convex lattice polytopes whose normal fan is a coarsening
of the Weyl fan in the dual of $\mathbb RI$, and whose $2$-faces have
a shape constrained by the tropical Pl\"ucker relations. In addition,
the length of the edges of $\Pol(b)$ is given by the Lusztig data of
$b$, which indicate how $b$, viewed as a basis element of
$U_q(\mathfrak g)$ at the limit $q\to0$, compares with the PBW bases.

\subsection{Generalization to the affine case}
This paper aims at generalizing this model of MV
polytopes to the case where $\mathfrak g$ is an affine Kac-Moody
algebra.

Obstacles pop up when one tries to generalize the above constructions
of $\Pol(b)$ to the affine case. Despite difficulties in defining the
double-affine Grassmannian, the algebro-geometric model of $B(-\infty)$
using MV cycles still exists in the affine case, thanks to Braverman,
Finkelberg and Gaitsgory's work~\cite{BravermanFinkelbergGaitsgory06};
however, there is no obvious way to go from MV cycles to MV polytopes.

On the algebraic side, several PBW bases for $U_q(\mathfrak n_+)$
have been defined in the affine case by Beck~\cite{Beck94}, Beck
and Nakajima~\cite{BeckNakajima04}, and Ito~\cite{Ito10}, but
the relationship between the different Lusztig data they provide
has not been studied\footnote{Recently, Muthiah and Tingley
\cite{MuthiahTingley13} have considered this problem in the case
$\mathfrak g=\widehat{\mathfrak{sl}_2}$. They have shown that the
resulting combinatorics matches that produced in the present paper,
in the sense that the MV polytopes coming from the Lusztig data
provided by the PBW bases match those defined here. It should be
easy to extend this result to the case of an arbitrary symmetric
affine Kac-Moody algebra.}.

As recalled above, in finite type, the normal fan of an MV polytope
is a coarsening of the Weyl fan, so the facets of an MV polytope are
orthogonal to the rays in the Weyl fan. Therefore an MV polytope is
determined just by the position of theses facets, which form a set of
numerical values dubbed ``Berenstein-Zelevinsky (BZ) data''. In the
case $\mathfrak g=\widehat{\mathfrak{sl}_n}$, a combinatorial model
for an analog of these BZ data was introduced by Naito, Sagaki, and
Saito~\cite{NaitoSagakiSaito12,NaitoSagakiSaito13}. Later, Muthiah
\cite{Muthiah11} related this combinatorial model to the geometry of
the MV cycles. However, the complete relationship between this
combinatorial model and our affine MV polytopes is not yet clear.

\subsection{The preprojective model}
\label{ss:PrepMod}
Due to the difficulties in the MV cycle and PBW bases models,
we are led to use a third construction of $\Pol(b)$,
recently obtained by the first two authors for the case of a
finite dimensional $\mathfrak g$ \cite{BaumannKamnitzer12}.
This construction uses a geometric model for $B(-\infty)$
based on quiver varieties, which we now recall.

This model exists for any Kac-Moody algebra $\mathfrak g$ (not
necessarily of finite or affine type) but only when the
generalized Cartan matrix $A$ is symmetric. Then $2\id-A$ is the
incidence matrix of the Dynkin graph $(I,E)$; here our index set
$I$ serves as the set of vertices and $E$ is the set of edges.
Choosing an orientation of this graph yields a quiver $Q$, and one
can then define the completed preprojective algebra $\Lambda$ of $Q$.

A $\Lambda$-module is an $I$-graded vector space equipped with
linear maps. If the dimension-vector is given, we can work with a
fixed vector space; the datum of a $\Lambda$-module then amounts
to the family of linear maps, which can be regarded as a point of
an algebraic variety. This variety is called Lusztig's nilpotent
variety; we denote it by $\Lambda(\nu)$, where $\nu\in\mathbb NI$
is the dimension-vector. Abusing slightly the language, we often
view a point $T\in\Lambda(\nu)$ as a $\Lambda$-module.

For $\nu\in\mathbb NI$, let $\mathfrak B(\nu)$ be the set
of irreducible components of $\Lambda(\nu)$. We set
$\mathfrak B=\bigsqcup_{\nu\in\mathbb NI}\mathfrak B(\nu)$.
In~\cite{Lusztig90b}, Lusztig endows $\mathfrak B$ with a
crystal structure, and in~\cite{KashiwaraSaito97},
Kashiwara and Saito show the existence of an isomorphism of
crystals $b\mapsto\Lambda_b$ from $B(-\infty)$ onto $\mathfrak B$.
This isomorphism is unique since $B(-\infty)$ has no non-trivial
automorphisms.

Given a finite-dimensional $\Lambda$-module $T$, we can consider
the dimension-vectors of the $\Lambda$-submodules of $T$; they
are finitely many, since they belong to a bounded subset of the
lattice $\mathbb ZI$. The convex hull in $\mathbb RI$ of these
dimension-vectors will be called the Harder-Narasimhan (HN)
polytope of $T$ and will be denoted by $\Pol(T)$.

The main result of \cite{BaumannKamnitzer12} is equivalent to
the following statement: if $\mathfrak g$ is finite dimensional,
then for each $b\in B(-\infty)$, the set
$\{T\in\Lambda_b\mid\Pol(T)=\Pol(b)\}$ contains a dense open
subset of $\Lambda_b$. In other words, $\Pol(b)$ is the general
value of the map $T\mapsto\Pol(T)$ on $\Lambda_b$.

This result obviously suggests a general definition for MV
polytopes. We will however see that for $\mathfrak g$ of affine
type, another piece of information is needed to have an complete
model for $B(-\infty)$; namely, we need to equip each polytope
with a family of partitions. Our task now is to explain what our
polytopes look like, and where these partitions come from.

\subsection{Faces of HN polytopes}
\label{ss:FacesHNPol}
Choose a linear form $\theta:\mathbb RI\to\mathbb R$ and let
$\psi_{\Pol(T)}(\theta)$ denote the maximum value of $\theta$
on $\Pol(T)$. Then $P_\theta=\{x\in\Pol(T)\mid
\langle\theta,x\rangle=\psi_{\Pol(T)}(\theta)\}$ is a face of
$\Pol(T)$. Moreover, the set of submodules $X\subseteq T$ whose
dimension-vectors belong to $P_\theta$ has a smallest element
$T_\theta^{\min}$ and a largest element $T_\theta^{\max}$.

The existence of $T_\theta^{\min}$ and $T_\theta^{\max}$ follows
from general considerations: if we define the slope of a finite
dimensional $\Lambda$-module $X$ as $\langle\theta,\dimvec
X\rangle/\dim X$, then $T_\theta^{\max}/T_\theta^{\min}$ is the
semistable subquotient of slope zero in the Harder-Narasimhan
filtration of~$T$. Introducing the abelian subcategory
$\mathscr R_\theta$ of semistable $\Lambda$-modules of slope
zero, it follows that, for each submodule $X\subseteq T$,
$$\dimvec(X)\in P_\theta\ \Longleftrightarrow\
\Bigl(T_\theta^{\min}\subseteq X\subseteq T_\theta^{\max}\quad
\text{and}\quad X/T_\theta^{\min}\in\mathscr R_\theta\Bigr).$$
In other words, the face $P_\theta$ coincides with the HN
polytope of $T_\theta^{\max}\strut/T_\theta^{\min}$, computed
relative to the category $\mathscr R_\theta$, and shifted by
$\dimvec T_\theta^{\min}$.

Our aim now is to describe the normal fan to $\Pol(T)$, that
is, to understand how $T_\theta^{\min}$, $T_\theta^{\max}$
and $\mathscr R_\theta$ depend on $\theta$. For that, we need
tools that are specific to preprojective algebras.

\subsection{Tits cone and tilting theory}
\label{ss:TitsTilt}
One of these tools is Buan, Iyama, Reiten and Scott's tilting ideals
for $\Lambda$~\cite{BuanIyamaReitenScott09}. Let $S_i$ be the simple
$\Lambda$-module of dimension-vector $\alpha_i$ and let $I_i$ be its
annihilator, a one-codimensional two-sided ideal of $\Lambda$. The
products of these ideals $I_i$ are known to satisfy the braid
relations, so to each $w$ in the Weyl group of $\mathfrak g$, we can
attach a two-sided ideal $I_w$ of $\Lambda$ by the rule
$I_w=I_{i_1}\cdots I_{i_\ell}$, where $s_{i_1}\cdots s_{i_\ell}$
is any reduced decomposition of $w$. Given a finite-dimensional
$\Lambda$-module $T$, we denote the image of the evaluation map
$I_w\otimes_\Lambda\Hom_\Lambda(I_w,T)\to T$ by $T^w$.

Recall that the dominant Weyl chamber $C_0$ and the Tits cone
$C_T$ are the convex cones in the dual of $\mathbb RI$ defined as
$$C_0=\{\theta\in(\mathbb RI)^*\mid\forall
i\in I,\;\langle\theta,\alpha_i\rangle>0\}\quad\text{and}\quad
C_T=\bigcup_{w\in W}w\,\overline{C_0}.$$

We will show the equality $T_\theta^{\min}=T_\theta^{\max}=T^w$
for any finite dimensional $\Lambda$-module $T$, any $w\in W$ and
any linear form $\theta\in wC_0$. This implies that $\dimvec T^w$
is a vertex of $\Pol(T)$ and that the normal cone to $\Pol(T)$ at
this vertex contains $wC_0$. This also implies that $\Pol(T)$ is
contained in
$$\{x\in\mathbb RI\mid\forall\theta\in wC_0,\;
\langle\theta,x\rangle\leq\langle\theta,\dimvec T^w\rangle\}=
\dimvec T^w-w\bigl(\mathbb R_{\geq0}I\bigr).$$

When $\theta$ runs over the Tits cone, it generically belongs to a
chamber, and we have just seen that in this case, the face $P_\theta$
is a vertex. When $\theta$ lies on a facet, $P_\theta$ is an edge
(possibly degenerate). More precisely, if $\theta$ lies on the facet
that separates the chambers $wC_0$ and $ws_iC_0$, with say
$\ell(ws_i)>\ell(w)$, then
$(T_\theta^{\min},T_\theta^{\max})=(T^{ws_i},T^w)$. Results in
\cite{AmiotIyamaReitenTodorov10} and \cite{GeissLeclercSchroer11}
moreover assert that $T^w/T^{ws_i}$ is the direct sum of a finite
number of copies of the $\Lambda$-module $I_w\otimes_\Lambda S_i$.

There is a similar description when $\theta$ is in $-C_T$; here
the submodules $T_w$ of $T$ that come into play are the kernels
of the coevaluation maps $T\to\Hom_\Lambda(I_w,I_w\otimes_\Lambda T)$,
where again $w\in W$.

\subsection{Imaginary edges and partitions (in affine type)}
\label{ss:ImagEdgesPart}
From now on in this introduction, we focus on the case where
$\mathfrak g$ is of symmetric affine type, which in particular
implies $\mathfrak g$ is of untwisted affine type.

The root system for $\mathfrak g$ decomposes into real and imaginary
roots $\Phi=\Phi^{\re}\sqcup(\mathbb Z_{\neq0}\,\delta)$; the real
roots are the conjugate of the simple roots under the Weyl
group action, whereas the imaginary roots are fixed under
this action. The Tits cone is $C_T=\{\theta:\mathbb RI\to\mathbb
R\mid\langle\theta,\delta\rangle>0\}\cup\{0\}$.

We set $\mathfrak t^*=\mathbb RI/\mathbb R\delta$. The projection
$\pi:\mathbb RI\to\mathfrak t^*$ maps $\Phi^{\re}$ onto the
``spherical'' root system $\Phi^s$, whose Dynkin diagram is
obtained from that of $\mathfrak g$ by removing an extending
vertex. The rank of $\Phi^s$ is $r=\dim\mathfrak t^*$, which is
also the multiplicity of the imaginary roots.

The vector space $\mathfrak t$ identifies with the hyperplane
$\{\theta:\mathbb RI\to\mathbb R\mid\langle\theta,\delta\rangle=0\}$
of the dual of $\mathbb RI$. The root system
$\Phi^s\subseteq\mathfrak t^*$ defines an hyperplane arrangement in
$\mathfrak t$, called the spherical Weyl fan. The open cones in this
fan will be called the spherical Weyl chambers. Together, this fan
and the hyperplane arrangement that the real roots define in
$C_T\cup(-C_T)$ make up a (non locally finite) fan in the dual of
$\mathbb RI$, which we call the affine Weyl fan and which we denote
by~$\mathscr W$.

Each set of simple roots in $\Phi^s$ is a basis of $\mathfrak t^*$;
we can then look at the dual basis in $\mathfrak t$, whose elements
are the corresponding fundamental coweights. We denote by $\Gamma$
the set of all fundamental coweights, for all possible choices
of simple roots. Elements in $\Gamma$ are called spherical chamber
coweights; the rays they span are the rays of the spherical Weyl fan.

Now take a $\Lambda$-module $T$. As we saw in the previous section,
the normal cone to $\Pol(T)$ at the vertex $\dimvec T^w$ (respectively,
$\dimvec T_w$) contains $wC_0$ (respectively, $-w^{-1}C_0$).
Altogether, these cones form a dense subset of the dual of $\mathbb RI$:
this leaves no room for other vertices. This analysis also shows that
the normal fan to $\Pol(T)$ is a coarsening of $\mathscr W$.

Thus, the edges of $\Pol(T)$ point in directions orthogonal to
one-codimensional faces of $\mathscr W$, that is, parallel to roots.
In the previous section, we have described the edges that point
in real root directions. We now need to understand the edges that
are parallel to $\delta$. We call these the imaginary edges.

More generally, we are interested in describing the faces
parallel to $\delta$. Let us pick $\theta\in\mathfrak t$
and let us look at the face $P_\theta=\{x\in\Pol(T)\mid
\langle\theta,x\rangle=\psi_{\Pol(T)}(\theta)\}$.
As we saw in section~\ref{ss:FacesHNPol}, this face is the HN
polytope of $T_\theta^{\max}/T_\theta^{\min}$, computed relative
to the category~$\mathscr R_\theta$. It turns out that
$T_\theta^{\max}/T_\theta^{\min}$ and $\mathscr R_\theta$ only
depend on the face $F$ of the spherical Weyl fan to which $\theta$
belongs. We record this fact in the notation by writing
$\mathscr R_F$ for $\mathscr R_\theta$.

We need one more definition: for $\gamma\in\Gamma$, we say that
a $\Lambda$-module is a $\gamma$-core if it belongs to
$\mathscr R_\theta$ for all $\theta\in\mathfrak t$ sufficiently
close to $\gamma$. In other words, the category of $\gamma$-cores
is the intersection of the categories $\mathscr R_C$, taken over
all spherical Weyl chambers $C$ such that $\gamma\in\overline C$.

For each $\nu\in\mathbb NI$, the set of indecomposable modules is
a constructible subset of $\Lambda(\nu)$. It thus makes sense to
ask if the general point of an irreducible subset of $\Lambda(\nu)$
is indecomposable. Similarly, the set of modules that belong to
$\mathscr R_C$ is an open subset of $\Lambda(n\delta)$, so we
may ask if the general point of an irreducible subset of
$\Lambda(n\delta)$ is in $\mathscr R_C$. In section~\ref{ss:Cores},
we will show the following theorems.

\begin{theorem}
\label{th:IntroCore1}
For each integer $n\geq1$ and each $\gamma\in\Gamma$, there is
a unique irreducible component of $\Lambda(n\delta)$ whose general
point is an indecomposable $\gamma$-core.
\end{theorem}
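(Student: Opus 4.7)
The plan is to reduce the theorem to a classical classification of finite-length modules over a discrete valuation ring, via a tube-like description of the category of $\gamma$-cores.

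First, I would establish that the category $\mathscr{C}_\gamma$ of $\gamma$-cores, restricted to objects of dimension-vector in $\mathbb{N}\delta$, is equivalent to the category of finite-length nilpotent $k[[t]]$-modules, with a unique simple object $L_\gamma$ of dimension-vector $\delta$ and $\Ext^1_\Lambda(L_\gamma, L_\gamma)$ one-dimensional. This is the affine-type analogue of a rank-one homogeneous tube, and I would obtain it by intersecting the structure of $\mathscr{R}_C$ over all spherical Weyl chambers $C$ with $\gamma \in \overline{C}$: using the face-only dependence of $\mathscr{R}_\theta$ recalled in section~\ref{ss:FacesHNPol} and the tilting-ideal machinery of section~\ref{ss:TitsTilt}, I would pin down a distinguished simple object in the intersection and verify it has the required self-extension group. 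Under the resulting equivalence $\mathscr{C}_\gamma \simeq \mathrm{Nil}_{k[[t]]}$, indecomposable $\gamma$-cores of dimension $n\delta$ correspond to Jordan blocks of length $n$; call this unique indecomposable module $M_\gamma^{(n)}$.

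Next, let $U_\gamma^{(n)} \subseteq \Lambda(n\delta)$ be the subset of points isomorphic to $M_\gamma^{(n)}$. Since it is a single $\GL(n\delta)$-orbit, it is irreducible and constructible, and its closure $\overline{U_\gamma^{(n)}}$ is an irreducible closed subset of $\Lambda(n\delta)$. To see this closure is a full irreducible component, I would compute $\dim U_\gamma^{(n)} = \dim \GL(n\delta) - \dim \End_\Lambda(M_\gamma^{(n)})$; the endomorphism ring is $n$-dimensional (coming from $k[[t]]/(t^n)$), and comparing against the standard dimension formula for Lusztig's nilpotent variety shows that $\overline{U_\gamma^{(n)}}$ is top-dimensional, hence a component. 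Uniqueness then follows formally: any irreducible component with general point an indecomposable $\gamma$-core must have that general point isomorphic to $M_\gamma^{(n)}$ (by the classification in $\mathscr{C}_\gamma$), so it contains an open dense subset of $\overline{U_\gamma^{(n)}}$ and, being maximal irreducible, coincides with it.

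The main obstacle is the first stage: rigorously establishing that the intersection $\bigcap_{C \ni \gamma} \mathscr{R}_C$ has the expected rank-one tube structure with a unique simple of dimension $\delta$. This is where the affine geometry of the spherical Weyl fan interacts nontrivially with the tilting-ideal combinatorics, and identifying the correct $L_\gamma$ — together with the fact that every $\gamma$-core is built from it by self-extensions — is the crux of the argument. Once $\mathscr{C}_\gamma$ is so described, the geometric statement about irreducible components is essentially routine orbit and dimension-count bookkeeping.
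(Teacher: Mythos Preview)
Your proposal rests on a structural claim about the category of $\gamma$-cores that is false, and the error propagates through the entire argument.

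The category of $\gamma$-cores does \emph{not} have a unique simple object of dimension-vector $\delta$. By Crawley-Boevey's formula \eqref{eq:CrawleyBoeveyForm}, any brick $L$ with $\dimvec L=\delta$ satisfies $2\dim\End_\Lambda(L)-\dim\Ext^1_\Lambda(L,L)=(\delta,\delta)=0$, so $\dim\Ext^1_\Lambda(L,L)=2$, not $1$. There is in fact a one-parameter family of non-isomorphic simple $\gamma$-cores of dimension-vector $\delta$ (see Remark~\ref{rk:McKayCorr}, where the isomorphism classes are parameterized by a projective line). Your proposed equivalence $\mathscr C_\gamma\simeq\mathrm{Nil}_{k[[t]]}$ therefore cannot hold; the situation is closer to the regular blocks of a tame hereditary algebra, with a $\mathbb P^1$-worth of homogeneous tubes, than to a single tube.

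This already breaks your dimension count. Since $(\delta,\delta)=0$, equation~\eqref{eq:DimNilpVar} gives $\dim\Lambda(n\delta)=\dim G(n\delta)$. An indecomposable $\gamma$-core $M$ of dimension-vector $n\delta$ has $\dim\End_\Lambda(M)=n$ (Proposition~\ref{pr:PptiesI}~\ref{it:PIe}), so its orbit has dimension $\dim G(n\delta)-n$, which is strictly smaller than $\dim\Lambda(n\delta)$. A single orbit closure is never an irreducible component here. The component $I(\gamma,n)$ is genuinely an $n$-parameter family of indecomposable $\gamma$-cores (cf.\ Figure~\ref{fi:Cores}), and your argument has no mechanism to produce or control such a family.

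The paper's route is quite different: it constructs a fully faithful exact \emph{Hall functor} $\mathscr H_{C,F}:\Pi\mmod\to\Lambda\mmod$ from the preprojective algebra $\Pi$ of type $\widetilde A_1$, built out of a pair of orthogonal rigid bricks $(S_{C,F},R_{C,F})$ (section~\ref{se:HallFunc}). This functor transports the irreducible components $I(n)\subseteq\Pi(n\delta)$---which are explicitly described via nilpotent commuting pairs (Lemma~\ref{le:RegTildeA1})---to components $I(\gamma,n)\subseteq\Lambda(n\delta)$. Existence comes from this transport together with reflection functors (Proposition~\ref{pr:CorReflFunc}); uniqueness is obtained by a counting argument matching the bijection of Theorem~\ref{th:DescRCham} against the character formula of Corollary~\ref{co:CntIrrCompRF}, and is recorded as Corollary~\ref{co:IndecCore}.
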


We denote by $I(\gamma,n)$ this component.

\begin{theorem}
\label{th:IntroCore2}
Let $n$ be a positive integer and let $C$ be a spherical Weyl
chamber. There are exactly $r$ irreducible components of
$\Lambda(n\delta)$ whose general point is an indecomposable
module in~$\mathscr R_C$. These components are the $I(\gamma,n)$,
for~$\gamma\in\Gamma\cap\overline C$.
\end{theorem}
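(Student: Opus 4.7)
The statement splits into two assertions: each $I(\gamma,n)$ with $\gamma\in\Gamma\cap\overline C$ is an irreducible component of the required type, and no other such component exists. For the first half, the spherical Weyl chamber $C$ is a simplicial cone of dimension $r=\dim\mathfrak t^*$, so its closure $\overline C$ has exactly $r$ extreme rays; by the very definition of $\Gamma$, each such ray is spanned by a unique element of $\Gamma$, so $|\Gamma\cap\overline C|=r$. For $\gamma\in\Gamma\cap\overline C$, any $\gamma$-core lies in $\mathscr R_\theta$ for all $\theta$ sufficiently close to $\gamma$, and in particular for some $\theta\in C$, hence in $\mathscr R_C$. Thus each $I(\gamma,n)$ qualifies. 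The $r$ components are pairwise distinct: the generic point of $I(\gamma,n)$ is an indecomposable $\gamma$-core, and a short direct check (or an appeal to Theorem~\ref{th:IntroCore1} combined with the observation that a nonzero indecomposable $\gamma$-core is generically not a $\gamma'$-core for $\gamma'\neq\gamma$) shows that such a module cannot simultaneously be the generic point of $I(\gamma',n)$.

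For the converse, let $Z\subseteq\Lambda(n\delta)$ be an irreducible component whose general point $T$ is indecomposable and in $\mathscr R_C$. I would show that $T$ must be a $\gamma$-core for some $\gamma\in\Gamma\cap\overline C$, which by Theorem~\ref{th:IntroCore1} identifies $Z$ with $I(\gamma,n)$. The tool is an iterated Harder--Narasimhan argument inside $\mathscr R_C$: fix a maximal flag of faces $C=F_0\supsetneq F_1\supsetneq\cdots\supsetneq F_{r-1}$ of $\overline C$ with $\dim F_k=r-k$, pick $\theta_k$ in the relative interior of $F_k$, take the HN filtration of $T$ with respect to $\theta_1$ (which respects $\mathscr R_C$ since $\theta_1\in\overline C$), then refine each slope-zero subquotient using $\theta_2$, and so on. The terminal subquotients land in $\bigcap_k\mathscr R_{\theta_k}$, which equals the category of $\gamma$-cores for the unique $\gamma\in\Gamma$ spanning the ray $F_{r-1}$; by varying the flag, all of $\Gamma\cap\overline C$ appears.

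The hard step, and the main obstacle, is to leverage indecomposability of the generic $T$ in order to collapse this refined filtration to a single piece. One needs a generic splitting statement: on a dense open subset of the subvariety of $\Lambda(n\delta)$ consisting of modules whose refined HN subquotients have dimension vectors $n_1\delta,\ldots,n_r\delta$ (with $\sum_k n_k=n$), the module must actually split as a direct sum of $\gamma_k$-cores. Indecomposability of $T$ then forces all but one $n_k$ to vanish. The natural route is a dimension count comparing the ``mixed-type'' strata in $\Lambda(n\delta)$ with sums of dimensions of the $I(\gamma_k,n_k)$, the expectation being that mixed strata are strictly smaller-dimensional unless a single $\gamma_k$ is present. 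Carrying this out requires detailed geometry of $\Lambda(n\delta)$ and the explicit description of indecomposable cores produced in the proof of Theorem~\ref{th:IntroCore1}; this is where the symmetric affine structure, and ultimately the partitions attached to imaginary edges foreshadowed in the abstract, must play a decisive role.
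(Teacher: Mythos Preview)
Your converse argument has a genuine gap that breaks the approach. The iterated Harder--Narasimhan filtration you propose is trivial on any $T\in\mathscr R_C$. Indeed, each $\theta_k$ lies in a face $F_k\subseteq\overline C$, and by \eqref{eq:CompCatSphFace} we have $\mathscr R_C\subseteq\mathscr R_{F_k}=\mathscr R_{\theta_k}$; thus $T$ is already $\theta_k$-semistable, so $T_{\theta_k}^{\min}=0$ and $T_{\theta_k}^{\max}=T$ for every $k$, and refining gives nothing. Relatedly, your identification of $\bigcap_k\mathscr R_{\theta_k}$ with the category of $\gamma$-cores is incorrect: the chain $F_{r-1}\subseteq\overline{F_{r-2}}\subseteq\cdots\subseteq\overline C$ gives $\mathscr R_{F_{r-1}}\supseteq\cdots\supseteq\mathscr R_C$, so the intersection is just $\mathscr R_C$. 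The $\gamma$-core category is defined as an intersection over \emph{all} chambers containing $\gamma$ in their closure, not over a flag inside a single chamber.

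The paper's route is entirely different and avoids any structural analysis of an individual indecomposable $T$. It first establishes Theorem~\ref{th:DescRCham}: the map $(\lambda_\gamma)\mapsto\overline{\bigoplus_{\gamma}I(\gamma,\lambda_\gamma)}$ is a bijection $\mathcal P^{\Gamma\cap\overline C}\to\mathfrak R_C$. Injectivity comes from the canonical decomposition and the $\ext^1$-vanishing of Lemma~\ref{le:CoreDom}~\ref{it:CDc}; surjectivity is a pure counting argument, comparing the generating function $P_{\mathfrak R_C}$ from Corollary~\ref{co:CntIrrCompRF} with $\prod_{n\geq1}(1-t^{n\delta})^{-r}$. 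Once that bijection is in place, Theorem~\ref{th:IntroCore2} is immediate: a component in $\mathfrak R_C(n\delta)$ has indecomposable general point if and only if its canonical decomposition has a single summand, which happens exactly when the corresponding tuple of partitions is concentrated at one $\gamma$ with the single part $(n)$, i.e.\ the component is $I(\gamma,n)$. So the ``hard step'' you flag is circumvented globally by the character identity rather than locally by a splitting lemma.
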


In Theorem~\ref{th:IntroCore2}, the multiplicity $r$ of the root
$n\delta$ materializes as a number of irreducible components.

Now let $b\in B(-\infty)$ and pick $\theta$ in a spherical Weyl
chamber $C$. Let $T$ be a general point of $\Lambda_b$ and let
$X=T_\theta^{\max}/T_\theta^{\min}$, an object in $\mathscr R_C$.
Write the Krull-Schmidt decomposition of $X$ as
$X_1\oplus\cdots\oplus X_\ell$, with $X_1$, \dots, $X_\ell$
indecomposable; then each $X_k$ is in $\mathscr R_C$, so
$\dimvec X_k=n_k\delta$ for a certain integer $n_k\geq1$.
Moreover, it follows from Crawley-Boevey and Schr\"oer's theory
of canonical decomposition~\cite{Crawley-BoeveySchroer02} that
each $X_k$ is the general point of an irreducible component
$Z_k\subseteq\Lambda(n_k\delta)$. Using Theorem~\ref{th:IntroCore2},
we then see that each $Z_k$ is a component $I(\gamma_k,n_k)$ for a
certain $\gamma_k\in\Gamma\cap\overline C$. Gathering the integers
$n_k$ according to the coweights $\gamma_k$, we get a tuple of
partitions $(\lambda_\gamma)_{\gamma\in\Gamma\cap\overline C}$.
In this context, we will show that the partition $\lambda_\gamma$
depends only on $b$ and $\gamma$, and not on the Weyl chamber $C$.

We are now ready to give the definition of the MV polytope of $b$:
it is the datum $\widetilde\Pol(b)$ of the HN polytope $\Pol(T)$,
for $T$ general in $\Lambda_b$, together with the family of
partitions $(\lambda_\gamma)_{\gamma\in\Gamma}$ defined above.

\subsection{$2$-faces of MV polytopes}
\label{ss:TwoFaces}
Let us now consider the $2$-faces of our polytopes $\Pol(T)$. Such a
face is certainly of the form
$$P_\theta=\{x\in\Pol(T)\mid\langle\theta,x\rangle=
\psi_{\Pol(T)}(\theta)\},$$
where $\theta$ belongs to a $2$-codimensional face of $\mathscr W$.
There are three possibilities, whether $\theta$ belongs to $C_T$,
$-C_T$ or $\mathfrak t$.

Suppose first that $\theta\in C_T$. Then the root system
$\Phi_\theta=\Phi\cap(\ker\theta)$ is finite of rank $2$, of
type $A_1\times A_1$ or type $A_2$. More precisely, let
$w\in W$ be of minimal length such that $\theta\in w\,\overline{C_0}$
and let $J=\{i\in I\mid\langle w^{-1}\theta,\alpha_i\rangle=0\}$;
then the element $w^{-1}$ maps $\Phi_\theta$ onto the root
system $\Phi_J=\Phi\cap\mathbb RJ$.
The full subgraph of $(I,E)$ defined by $J$ gives rise to a
preprojective algebra~$\Lambda_J$. The obvious surjective
morphism $\Lambda\to\Lambda_J$ induces an inclusion
$\Lambda_J\mmod\hookrightarrow\Lambda\mmod$, whose image is
the category $\mathscr R_{w^{-1}\theta}$. Further, the tilting
ideals $I_w$ provide an equivalence of categories
$$\xymatrix@C=6em{\mathscr R_{w^{-1}\theta}
\ar@<.6ex>[r]^(.56){I_w\otimes_\Lambda?}&
\ar@<.6ex>[l]^(.44){\Hom_\Lambda(I_w,?)}\mathscr R_\theta},$$
whose action on the dimension-vectors is given by $w$. Putting
all this together, we see that $P_\theta$ is the image under $w$
of the HN polytope of the $\Lambda_J$-module
$X=\Hom_\Lambda(I_w,T_\theta^{\max}/T_\theta^{\min})$.
In addition, genericity is preserved in this construction:
if $T$ is a general point in an irreducible component of a
nilpotent variety for $\Lambda$, then $X$ is a general point
in an irreducible component of a nilpotent variety for $\Lambda_J$.
When $\Phi_J$ is of type $A_2$, this implies that the $2$-face
$P_\theta$ obeys the tropical Pl\"ucker relation
from~\cite{Kamnitzer10}.

A similar analysis can be done in the case where $\theta$ is in $-C_T$.
It then remains to handle the case where $\theta\in\mathfrak t$, that
is, where $\theta$ belongs to a face $F$ of codimension one in the
spherical Weyl fan. Here $\Phi_\theta=\Phi\cap(\ker\theta)$ is an
affine root system of type $\widetilde A_1$. The face $F$ separates
two spherical Weyl chambers of $\mathfrak t$, say $C'$ and $C''$, and
there are spherical chamber coweights $\gamma'$ and $\gamma''$ such that
$\Gamma\cap\overline{C'}=\bigl(\Gamma\cap\overline
F\bigr)\sqcup\{\gamma'\}$ and
$\Gamma\cap\overline{C''}=\bigl(\Gamma\cap\overline
F\bigr)\sqcup\{\gamma''\}$.

Choose $\theta'\in C'$ and~$\theta''\in C''$.
Assume that $T$ is the general point of an irreducible component.
As we saw in section~\ref{ss:ImagEdgesPart}, the modules
$T_{\theta'}^{\max}/T_{\theta'}^{\min}$ and
$T_{\theta''}^{\max}/T_{\theta''}^{\min}$ are then described by
tuples of partitions $(\lambda_\gamma)_{\gamma\in\Gamma\cap
\overline{C'}}$ and $(\lambda_\gamma)_{\gamma\in
\Gamma\cap\overline{C''}}$, respectively. Both these modules
are subquotients of $T_\theta^{\max}/T_\theta^{\min}$,
so this latter contains the information about the partitions
$\lambda_\gamma$ for all $\gamma\in(\Gamma\cap\overline F)
\sqcup\{\gamma',\gamma''\}$.

\begin{theorem}
\label{th:IntroImag2Face}
Let $\overline{P_\theta}$ be the polytope obtained by shortening
each imaginary edge of the $2$-face $P_\theta$ by
$\Bigl(\sum_{\gamma\in\Gamma\cap\overline F}|\lambda_\gamma|\Bigr)\delta$.
Then $\overline{P_\theta}$, equipped with the two partitions
$\lambda_{\gamma'}$ and $\lambda_{\gamma''}$, is an MV polytope
of type~$\widetilde A_1$.
\end{theorem}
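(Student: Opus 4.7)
The plan is to reduce the statement to the type $\widetilde A_1$ theory by isolating the contribution to $P_\theta$ of the $\gamma$-cores for $\gamma\in\Gamma\cap\overline F$ and identifying what remains with an HN polytope for the preprojective algebra $\Lambda'$ of type $\widetilde A_1$. As explained in section~\ref{ss:FacesHNPol}, the $2$-face $P_\theta$ is, up to the translation by $\dimvec T_\theta^{\min}$, the HN polytope of $N=T_\theta^{\max}/T_\theta^{\min}$ computed relative to $\mathscr R_F$, so everything that follows will be phrased in terms of $N$ as an object of $\mathscr R_F$.

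The first key step is a canonical-decomposition argument. Since $T$ is the general point of $\Lambda_b$, Crawley-Boevey and Schr\"oer's theory forces the Krull-Schmidt decomposition of $N$ to be the canonical decomposition of its dimension-vector inside $\mathscr R_F$. I would split it as $N=N_0\oplus N_1$, where $N_0$ collects the indecomposable summands lying in $\mathscr R_{C'}\cap\mathscr R_{C''}$ and $N_1$ gathers the remaining summands. Applying Theorem~\ref{th:IntroCore2} to both $C'$ and $C''$, the summands of $N_0$ must be indecomposable $\gamma$-cores with $\gamma\in\Gamma\cap\overline F$, and the tuple of partitions they realize must be $(\lambda_\gamma)_{\gamma\in\Gamma\cap\overline F}$ in order to be consistent with both the $C'$ and the $C''$ descriptions recalled in section~\ref{ss:ImagEdgesPart}. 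In particular $\dimvec N_0=\bigl(\sum_{\gamma\in\Gamma\cap\overline F}|\lambda_\gamma|\bigr)\delta$, and because $N_0$ is concentrated in $\delta$-multiples of dimension, it contributes only to the two imaginary edges of $P_\theta$; subtracting $\dimvec N_0$ from each of those edges produces exactly~$\overline{P_\theta}$.

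The second step is to identify $N_1$ with a $\Lambda'$-module. Mirroring the reduction carried out in section~\ref{ss:TwoFaces} for $\theta\in\pm C_T$ via a tilting ideal and a surjection onto a rank-$2$ preprojective algebra, one needs a faithful exact functor from a suitable subcategory of $\mathscr R_F$ to $\Lambda'\mmod$ that kills the cores for $\gamma\in\Gamma\cap\overline F$ and sends the remaining cores, indexed by $\gamma'$ and $\gamma''$, to the two fundamental imaginary indecomposables for $\Lambda'$. Under such a functor $N_1$ becomes a $\Lambda'$-module $X$ whose HN polytope (again in the sense of section~\ref{ss:FacesHNPol} but computed for $\Lambda'$) is $\overline{P_\theta}$ up to translation, and whose imaginary partitions, produced as in section~\ref{ss:ImagEdgesPart}, are exactly $\lambda_{\gamma'}$ and $\lambda_{\gamma''}$. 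One must also check that genericity passes through this functor: if $T$ is general in $\Lambda_b$, then $X$ must be general in an irreducible component of the corresponding nilpotent variety for $\Lambda'$, which is what guarantees that $\overline{P_\theta}$ together with its two partitions is an MV polytope of type $\widetilde A_1$ in the sense of the paper, rather than an arbitrary decorated polytope.

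The main obstacle I expect is the construction of this functor $\mathscr R_F\to\Lambda'\mmod$ and the simultaneous preservation of dimension-vectors, HN filtrations, and genericity of irreducible components. In the finite rank-$2$ reductions of section~\ref{ss:TwoFaces}, the tilting ideal $I_w$ produces the equivalence $\mathscr R_{w^{-1}\theta}\to\mathscr R_\theta$ and the surjection $\Lambda\to\Lambda_J$ produces the other half of the identification. Here $\Phi_\theta$ is of affine type $\widetilde A_1$ and $\Lambda'$ is infinite dimensional with continuous families of indecomposables, so neither ingredient is directly available: the matching of the two remaining cores to the fundamental imaginary $\Lambda'$-modules, and the comparison of generic HN polytopes on the two sides, will require a structural analysis of $\mathscr R_F$ and of semistable modules of dimension-vector a multiple of $\delta$ that is specific to the affine setting and is not a formal consequence of the finite-type arguments.
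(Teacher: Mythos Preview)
Your first step---splitting $N=T_\theta^{\max}/T_\theta^{\min}$ as $N_0\oplus N_1$ with $N_0$ a direct sum of $\gamma$-cores for $\gamma\in\Gamma\cap\overline F$---is exactly what the paper does (see equation~\eqref{eq:GenPt2Face} and Theorem~\ref{th:DescRFacet}), and your reading of the Minkowski-sum consequence for the imaginary edges is correct.

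The second step, however, has the direction of the functor reversed, and this is not merely cosmetic. You ask for a faithful exact functor from a subcategory of $\mathscr R_F$ \emph{to} $\Lambda'\mmod$, by analogy with the surjection $\Lambda\to\Lambda_J$ in the finite-type $2$-face reduction. The paper instead constructs an \emph{embedding} $\mathscr H_{C',F}:\Pi\mmod\hookrightarrow\mathscr R_F$ (section~\ref{se:HallFunc}), and then proves that $N_1$ lies in its essential image. The input for this embedding is a pair of orthogonal rigid bricks $S_{C',F},R_{C',F}\in\mathscr R_F$ with $\dim\Ext^1_\Lambda(S_{C',F},R_{C',F})=2$; these are the unique simple objects of $\mathscr R_F$ with real dimension-vector (Lemma~\ref{le:SRBricks} and the discussion before it). The $2$-Calabi-Yau structure of $\Lambda\mmod$ is what makes the construction go through (Lemma~\ref{le:ApproxCoh}), and there is no analogue of the algebra surjection $\Lambda\to\Lambda_J$ here.

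This reversal dissolves the obstacle you anticipated. Preservation of genericity becomes Proposition~\ref{pr:HFIrrComp}, which follows from a straightforward dimension count comparing $\Pi(\mu)$ and $\Lambda(\nu)$. The identification of the partitions $\lambda_{\gamma'},\lambda_{\gamma''}$ on the two sides is Corollary~\ref{co:CoresAgree} and Proposition~\ref{pr:RedImagTwoFace}: one shows that $\mathfrak H_{C',F}(I(\lambda))=I(\gamma_{C'/F},\lambda)$ and $\mathfrak H_{C',F}(I(\lambda)^*)=I(\gamma_{C''/F},\lambda)$, so the $\Pi$-side partitions attached to $X\in\mathfrak A$ with $\mathscr H_{C',F}(X)\cong N_1$ are precisely $\lambda_{\gamma'}$ and $\lambda_{\gamma''}$. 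A projection functor in your direction, by contrast, would have to be defined on objects of $\mathscr R_F$ that are \emph{not} iterated extensions of $S_{C',F}$ and $R_{C',F}$ (the cores for $\gamma\in\Gamma\cap\overline F$), and it is unclear what it should do to them or why it would preserve HN filtrations and genericity.
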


A partition can be thought of as an MV polytope of type $\widetilde A_0$,
since the generating function for the number of partitions equals the
graded dimension of the upper half of the Heisenberg algebra.
Thus, the family of partitions
$(\lambda_\gamma)_{\gamma\in\Gamma\cap\overline F}$ can be thought
of as an MV polytope of type $(\widetilde A_0)^{r-1}$. We can
therefore regard the datum of the face $P_\theta$ and of the
partitions $(\lambda_\gamma)_{\gamma\in(\Gamma\cap\overline F)
\sqcup\{\gamma',\gamma''\}}$ as an MV polytope of type
$\widetilde A_1\times\widetilde A_0^{r-1}$.

Theorem~\ref{th:IntroImag2Face} will be proved in
section~\ref{ss:MVPolComp}. Our method is to construct an embedding
of $\Pi\mmod$ into $\mathscr R_\theta$, where $\Pi$ is the completed
preprojective algebra of type $\widetilde A_1$; this embedding
depends on $F$ and its essential image is large enough to capture
a dense open subset in the relevant irreducible component of
Lusztig's nilpotent variety. In this construction, we were inspired
by the work of I.~Frenkel et al.\ \cite{FrenkelMalkinVybornov01} who
produced analogous embeddings in the quiver setting.

So the final picture is the following. Let $\mathcal{MV}$ be the
set of all lattice convex polytopes $P$ in $\mathbb RI$, equipped
with a family of partitions $(\lambda_\gamma)_{\gamma\in\Gamma}$,
such that:
\begin{itemize}
\item
The normal fan to $P$ is a coarsening of the Weyl fan $\mathscr W$.
\item
To each spherical Weyl chamber $C$ corresponds a imaginary edge of $P$;
the difference between the two endpoints of this edge is equal to
$\Bigl(\sum_{\gamma\in\Gamma\cap\overline C}|\lambda_\gamma|\Bigr)\delta$.
\item
A $2$-face of $P$ is an MV polytope of type $A_1\times A_1$, $A_2$
or $\widetilde A_1\times\widetilde A_0^{r-1}$; in the type $A_2$
case, this means that its shape obeys the tropical Pl\"ucker relation.
\end{itemize}

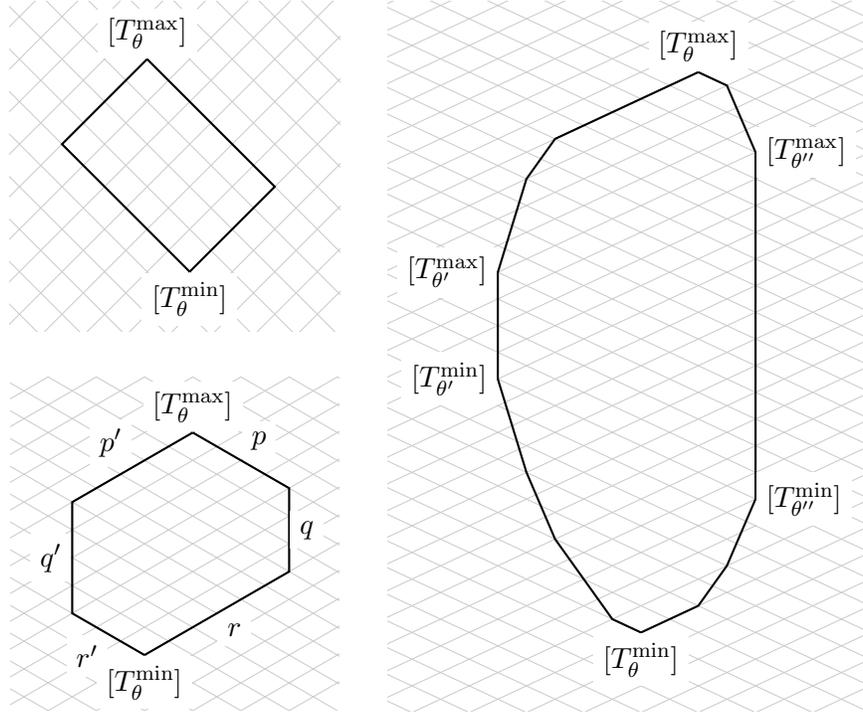
\begin{figure}[!ht]
\begin{center}
\begin{tikzpicture}
\begin{scope}[xshift=.6cm,yshift=5.1cm,scale=.4]
\clip (-6,-2) rectangle (5,9);
\foreach \x in {-8,...,10}
  \draw[color=gray!40] (-45:5) ++(45:\x) -- ++(135:16);
\foreach \y in {-5,...,11}
  \draw[color=gray!40] (-135:8) ++(135:\y) -- ++(45:18);
\draw[thick] (0,0) node[fill=white,below]{$[T_\theta^{\min}]$}
  -- ++(45:4)
  -- ++(135:6) node[fill=white,above]{$[T_\theta^{\max}]$};
\draw[thick] (0,0)
  -- ++ (135:6)
  -- ++(45:4);
\end{scope}
\begin{scope}[scale=.37]
\clip (-4.8,-2) rectangle (7,10);
\foreach \x in {-8,...,15}
  \draw[color=gray!40] (-30:6) ++(30:\x) -- ++(150:19);
\foreach \y in {-5,...,13}
  \draw[color=gray!40] (-150:8) ++(150:\y) -- ++(30:23);
\draw[thick] (0,0) node[fill=white,below]{$[T_\theta^{\min}]$}
  -- ++(30:6) node[fill=white,midway,below right]{$r$}
  -- ++(90:3) node[fill=white,midway,right]{$q$}
  -- ++(150:4) node[fill=white,midway,above right]{$p$}
     node[fill=white,above]{$[T_\theta^{\max}]$};
\draw[thick] (0,0)
  -- ++ (150:3) node[fill=white,midway,below left]{$r'$}
  -- ++(90:4) node[fill=white,midway,left]{$q'$}
  -- ++(30:5) node[fill=white,midway,above left]{$p'$};
\end{scope}
\begin{scope}[xshift=6.6cm,yshift=.3cm,scale=.42]
\clip (-8,-2.5) rectangle (7.2,20);
\foreach \x in {-8,...,28}
  \draw[color=gray!40] (-25:7) ++(25:\x) -- ++(155:35);
\foreach \y in {-7,...,28}
  \draw[color=gray!40] (-155:8) ++(155:\y) -- ++(25:36);
\draw[thick] (0,0) node[fill=white,below]{$[T_\theta^{\min}]$}
  -- ++(25:2)
  -- ++($(25:2)+(155:1)$)
  -- ++($(25:3)+(155:2)$)
     node[fill=white,right]{$[T_{\theta''}^{\min}]$}
  -- ++($(25:13)+(155:13)$)
     node[fill=white,right]{$[T_{\theta''}^{\max}]$}
  -- ++($(25:2)+(155:3)$)
  -- ++(155:1) node[fill=white,above]{$[T_\theta^{\max}]$};
\draw[thick] (0,0)
  -- ++(155:1)
  -- ++($(25:2)+(155:4)$)
  -- ++($(25:2)+(155:3)$)
  -- ++($(25:3)+(155:4)$) node[fill=white,left]{$[T_{\theta'}^{\min}]$}
  -- ++($(25:4)+(155:4)$) node[fill=white,left]{$[T_{\theta'}^{\max}]$}
  -- ++($(25:4)+(155:3)$)
  -- ++($(25:2)+(155:1)$)
  -- ++(25:5);
\end{scope}
\end{tikzpicture}
\end{center}
\caption{Examples of $2$-faces of affine MV polytopes. These faces
are of type $A_1\times A_1$ (top left), $A_2$ (bottom left), and
$\widetilde A_1$ (right). In type $A_2$, the tropical Pl\"ucker
relation is $q'=\min(p,r)$. Note that the edges are parallel to
root directions. In type $\widetilde A_1$, the edges parallel to
the imaginary root $\delta$ (displayed here vertically) must be
decorated with partitions $\lambda_{\gamma'}$ and
$\lambda_{\gamma''}$ which encode the structure of the modules
$T_{\theta'}^{\max}/T_{\theta'}^{\min}$ and
$T_{\theta''}^{\max}/T_{\theta''}^{\min}$ (notation as in the
text above). The lengths of all the edges and the partitions
are subject to relations which are the analogs of the tropical
Pl\"ucker relations; the displayed polygon endowed with the
partitions $\lambda_{\gamma'}=(3,1)$ and
$\lambda_{\gamma''}=(4,2,1,1,1,1,1,1,1)$ satisfies these relations.}
\label{fi:Twofaces}
\end{figure}

At the end of section~\ref{ss:ImagEdgesPart}, we
associated an element $\widetilde\Pol(b)$ of $\mathcal{MV}$
to each $b\in B(-\infty)$.

\begin{theorem}
\label{th:IntroMV}
The map $\widetilde\Pol:B(-\infty)\to\mathcal{MV}$ is bijective.
\end{theorem}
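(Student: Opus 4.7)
The plan is to establish the bijection in two stages---injectivity via reconstruction of the crystal element from the decorated polytope, and surjectivity by comparing generating functions---exploiting the identification of $B(-\infty)$ with the set $\mathfrak B$ of irreducible components of Lusztig's nilpotent varieties $\Lambda(\nu)$.

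For injectivity, I would show that $\widetilde\Pol(b)$ encodes enough invariants to pinpoint the irreducible component $\Lambda_b\subseteq\Lambda(\nu)$ containing a generic $T$ with $\Pol(T)=\Pol(b)$. Fix $C_0$ and a sequence of adjacent chambers obtained by successive simple reflections. The corresponding vertices $\dimvec T^{w}$ of $\Pol(T)$, traversed along the $1$-skeleton, produce classical real-root Lusztig data of $b$ for any chosen reduced word, while the partitions $(\lambda_\gamma)_{\gamma\in\Gamma}$ encode the imaginary Lusztig data through the Krull--Schmidt decomposition of each $T_\theta^{\max}/T_\theta^{\min}$ and Theorem~\ref{th:IntroCore2}. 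Crawley-Boevey and Schr\"oer's canonical decomposition theorem then guarantees that the generic module in $\Lambda_b$ is determined up to isomorphism by these combined data; hence $b$ is recovered from $\widetilde\Pol(b)$.

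For surjectivity, I would compare generating functions. By Kac's denominator formula,
$$\sum_{\nu\in\mathbb NI}\bigl|B(-\infty)_\nu\bigr|\,e^{-\nu}=\prod_{\alpha\in\Phi^{\re}_+}\frac{1}{1-e^{-\alpha}}\prod_{n\geq1}\frac{1}{(1-e^{-n\delta})^r}.$$
On the other hand, fix a spherical Weyl chamber $C$; an element of $\mathcal{MV}$ is locally parametrized near $C$ by a tuple of real-root edge lengths (contributing $\prod_{\alpha\in\Phi^{\re}_+}(1-e^{-\alpha})^{-1}$) and by the family $(\lambda_\gamma)_{\gamma\in\Gamma\cap\overline C}$ (contributing the $r$-fold product of the classical partition generating function, i.e.\ $\prod_{n\geq1}(1-e^{-n\delta})^{-r}$). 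The $2$-face axioms---tropical Pl\"ucker in type $A_2$, and Theorem~\ref{th:IntroImag2Face} in the imaginary case---would guarantee that these local data extend to a unique consistent decorated polytope over all chambers. Matching these generating functions, combined with injectivity, yields the bijection.

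The hard part, I expect, is the consistency statement underlying the surjectivity count: given local decorated data at a single spherical chamber $C$ satisfying the $2$-face conditions, one must produce a unique decorated polytope in $\mathcal{MV}$ with that local datum. Transitions across a codimension-one face $F$ separating chambers $C'$ and $C''$ are governed by the $\widetilde A_1\times\widetilde A_0^{r-1}$ condition; translating this into a well-defined recursion that determines $\lambda_{\gamma''}$ from $\lambda_{\gamma'}$ and the data along $F$ requires a precise analog of the tropical Pl\"ucker relation for type $\widetilde A_1$ MV polytopes. This is exactly what the analysis of the embedding $\Pi\mmod\hookrightarrow\mathscr R_\theta$ used in the proof of Theorem~\ref{th:IntroImag2Face} should provide. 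Once this recursion is established, induction on the length of $w\in W$ propagates the local datum globally and closes the argument.
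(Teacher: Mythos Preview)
Your injectivity argument is essentially the paper's: the decorated polytope $\widetilde\Pol(b)$ records a full Lusztig datum $\Omega_\preccurlyeq(b)$ along any chosen convex order, and $\Omega_\preccurlyeq$ is injective (indeed bijective, Theorem~\ref{th:IntroLusDat}). The paper packages this as a commutative triangle $\Omega_\preccurlyeq=\widehat\Omega_\preccurlyeq\circ\widetilde\Pol$, where $\widehat\Omega_\preccurlyeq:\mathcal{MV}\to\mathbb N^{(\Phi_+^{\re})}\times\mathcal P^{\Gamma\cap\overline C}$ reads the Lusztig datum directly off the decorated polytope.

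Your surjectivity argument, however, takes a harder route than the paper and has a genuine gap. To count $|\mathcal{MV}_\nu|$ via generating functions you must know that $\widehat\Omega_\preccurlyeq$ is a \emph{bijection}: every tuple $((n_\alpha),(\lambda_\gamma))$ extends to a unique element of $\mathcal{MV}$. The wall-crossing recursion you describe (tropical Pl\"ucker for finite-type $2$-faces, the $\widetilde A_1$ rule for affine ones) only establishes \emph{uniqueness} of such an extension---it tells you how the datum must transform as you cross each wall. What it does not give you for free is \emph{existence}: you would still need to check that, having propagated the datum along some sequence of walls, the resulting collection of vertices and partitions actually satisfies the $2$-face constraints for \emph{every} $2$-face, including those not on your chosen path. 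This is a global consistency (monodromy) statement that is not obvious and is not proved in the paper.

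The paper sidesteps this entirely. It proves only the \emph{injectivity} of $\widehat\Omega_{\eta_0}$ for one fixed generic $\eta_0$, by the same wall-crossing argument you sketch: given $\widetilde P\in\mathcal{MV}$, the datum $\widehat\Omega_{\eta_1}(\widetilde P)$ for any other generic $\eta_1$ is determined from $\widehat\Omega_{\eta_0}(\widetilde P)$ by crossing finitely many walls, and the polytope is determined by the totality of these data. Then the triangle $\Omega_\preccurlyeq=\widehat\Omega_{\eta_0}\circ\widetilde\Pol$, together with the \emph{already established} bijectivity of $\Omega_\preccurlyeq$ (Theorem~\ref{th:IntroLusDat}), forces $\widetilde\Pol$ to be surjective: for $\widetilde P\in\mathcal{MV}$, set $b=\Omega_\preccurlyeq^{-1}(\widehat\Omega_{\eta_0}(\widetilde P))$; then $\widehat\Omega_{\eta_0}(\widetilde\Pol(b))=\widehat\Omega_{\eta_0}(\widetilde P)$, whence $\widetilde\Pol(b)=\widetilde P$ by injectivity of $\widehat\Omega_{\eta_0}$. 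The existence of MV polytopes with prescribed Lusztig datum thus comes for free from the image of $\widetilde\Pol$, rather than from a direct combinatorial construction.
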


Theorem~\ref{th:IntroMV} will be proved in section~\ref{ss:PfThIntroMV}.
In a companion paper~\cite{BaumannDunlapKamnitzerTingley12}, we
will provide a combinatorial description of MV polytopes of type
$\widetilde A_1$. With that result in hand, the above conditions
provide an explicit characterization of the polytopes
$\widetilde\Pol(b)$.

\subsection{Lusztig data}
\label{ss:LuszData}
As explained at the end of section~\ref{ss:Crystals}, for a finite
dimensional $\mathfrak g$, the MV polytope $\Pol(b)$ of an element
$b\in B(-\infty)$ geometrically encodes all the Lusztig data of $b$.

In more detail, let $N$ be the number of positive roots. Each reduced
decomposition of the longest element $w_0$ of $W$ provides a PBW
basis of the quantum group $U_q(\mathfrak n_+)$, which goes to the
basis $B(-\infty)$ at the limit $q\to0$. To an element $b\in
B(-\infty)$, one can therefore associate many PBW monomials, one
for each PBW basis. In other words, one can associate to $b$ many
elements of $\mathbb N^N$, one for each reduced decomposition of
$w_0$. These elements in $\mathbb N^N$ are called the Lusztig data
of $b$. A reduced decomposition of $w_0$ specifies a path in the
1-skeleton of $\Pol(b)$ that connects the top vertex to the bottom
one, and the corresponding Lusztig datum materializes as the lengths
of the edges of this path.

With this in mind, we now explain that when $\mathfrak g$ is
of affine type, our MV polytopes $\widetilde\Pol(b)$ provide a
fair notion of Lusztig data.

To this aim, we first note that a reasonable analog of the
reduced decompositions of $w_0$ is certainly the notion of
``total reflection order'' (Dyer) or ``convex order'' (Ito), see
\cite{CelliniPapi98,Ito01}. By definition, this is a total order
$\preccurlyeq$ on $\Phi_+$ such that
$$\bigl(\alpha+\beta\in\Phi_+\;\text{ and }\;\alpha\preccurlyeq
\beta\bigr)\ \Longrightarrow\
\alpha\preccurlyeq\alpha+\beta\preccurlyeq\beta.$$
(Unfortunately, the convexity relation implies that
$m\delta\preccurlyeq n\delta$ for any positive integers $m$ and $n$.
We therefore have to accept that $\preccurlyeq$ is only a preorder;
this blemish is however limited to the imaginary roots.)

A convex order $\preccurlyeq$ splits the positive real roots in two
parts: those that are greater than $\delta$ and those that are smaller.
One easily shows that the projection $\pi:\mathbb RI\to\mathfrak t^*$
maps $\{\beta\in\Phi_+\mid\beta\succ\delta\}$ onto a positive
system of~$\Phi^s$. Thus, there exists $\theta\in\mathfrak t$
such that
\begin{equation}
\label{eq:ConvOrderDelta}
\forall\beta\in\Phi_+^{\re},\quad
\beta\succ\delta\ \Longleftrightarrow\
\langle\theta,\beta\rangle>0.
\end{equation}

Given such a convex order $\preccurlyeq$, we will construct a
functorial filtration $(T_{\succcurlyeq\alpha})_{\alpha\in\Phi_+}$
on each finite dimensional $\Lambda$-module $T$, such that each
$\dimvec T_{\succcurlyeq\alpha}$ is a vertex of $\Pol(T)$. The family of
dimension-vectors $(\dimvec T_{\succcurlyeq\alpha})_{\alpha\in\Phi_+}$
are the vertices along a path in the $1$-skeleton of $\Pol(T)$ connecting
the top vertex and bottom vertices. The lengths of the edges in this
path form a family of natural numbers $(n_\alpha)_{\alpha\in\Phi_+}$,
defined by the relation
$\dimvec T_{\succcurlyeq\alpha}/T_{\succ\alpha}=n_\alpha\alpha$.
Further, if we choose $\theta\in\mathfrak t$ satisfying
\eqref{eq:ConvOrderDelta}, then $T_{\succcurlyeq\delta}=T_\theta^{\max}$
and $T_{\succ\delta}=T_\theta^{\min}$.

Fix $b\in B(-\infty)$ and take a general point $T$ in $\Lambda_b$.
Besides the family $(n_\alpha)_{\alpha\in\Phi_+}$ of natural numbers
mentioned just above, we can construct a tuple of partitions
$(\lambda_\gamma)_{\gamma\in\Gamma\cap\overline C}$ by applying the
analysis carried after Theorem~\ref{th:IntroCore2} to the module
$T_\theta^{\max}/T_\theta^{\min}$, where $C$ is the spherical Weyl
chamber containing $\theta$. To $b$, we can thus associate the pair
$\Omega_\preccurlyeq(b)$ consisting in the two families
$(n_\alpha)_{\alpha\in\Phi_+^{\re}}$ and
$(\lambda_\gamma)_{\gamma\in\Gamma\cap\overline C}$. All this
information can be read from $\widetilde\Pol(b)$. We call
$\Omega_\preccurlyeq(b)$ the Lusztig datum of $b$ in direction
$\preccurlyeq$.

Let us denote by $\mathcal P$ the set of all partitions and by
$\mathbb N^{(\Phi_+^{\re})}$ the set of finitely supported families
$(n_\alpha)_{\alpha\in\Phi_+^{\re}}$ of non-negative integers.

\begin{theorem}
\label{th:IntroLusDat}
The map $\Omega_\preccurlyeq:B(-\infty)\to\mathbb N^{(\Phi_+^{\re})}
\times\mathcal P^{\Gamma\cap\overline C}$ is bijective.
\end{theorem}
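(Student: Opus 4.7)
My plan is to combine a character/generating-function comparison with a reconstruction argument based on Theorem~\ref{th:IntroMV}. The map $\Omega_\preccurlyeq$ is weight-preserving if one assigns to $\bigl((n_\alpha),(\lambda_\gamma)\bigr)$ the weight $\sum_{\alpha} n_\alpha\alpha + \bigl(\sum_\gamma|\lambda_\gamma|\bigr)\delta$. The character of $B(-\infty)$ coincides with that of $U_q(\mathfrak n_+)$, namely
\[
\prod_{\alpha\in\Phi_+^{\re}}\frac{1}{1-e^{-\alpha}}\cdot\prod_{n\geq1}\frac{1}{(1-e^{-n\delta})^r},
\]
while the target factors as the generating series for $\mathbb N^{(\Phi_+^{\re})}$ (the first factor) times that for $\mathcal P^{\Gamma\cap\overline C}$ (the second factor, using the fact that $|\Gamma\cap\overline C|=r$). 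Thus the weight-graded cardinalities agree; since each weight space is finite, it suffices to prove injectivity.

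To establish injectivity, I would show that $\Omega_\preccurlyeq(b)$ determines $\widetilde\Pol(b)$, whence Theorem~\ref{th:IntroMV} forces $b$. The datum $\Omega_\preccurlyeq(b)$ records the lengths of the real edges along the monotone path in the $1$-skeleton of $\Pol(b)$ cut out by the filtration $(T_{\succcurlyeq\alpha})$, together with the partitions labelling the imaginary edges incident to the spherical Weyl chamber $C$. To recover the other edges and the partitions for the other chambers, I would propagate the information across $2$-faces of $\Pol(b)$. Any two convex orders are linked by a sequence of local exchanges at rank-two sub-root-systems, each such exchange corresponding to crossing a $2$-face. By section~\ref{ss:TwoFaces}, the $2$-face is of type $A_1\times A_1$ (nothing changes), $A_2$ (the new edge lengths are determined by the tropical Pl\"ucker relation of \cite{Kamnitzer10}), or $\widetilde A_1\times\widetilde A_0^{r-1}$ (the new partitions are determined by Theorem~\ref{th:IntroImag2Face} and the combinatorial description of type $\widetilde A_1$ MV polytopes from the companion paper \cite{BaumannDunlapKamnitzerTingley12}). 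Iterating these moves transports the Lusztig datum to every convex order and every spherical Weyl chamber, thereby reconstructing the full $\widetilde\Pol(b)$.

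The step I expect to be most delicate is the imaginary exchange. For the real-root edges the argument is a direct affine counterpart of \cite{Kamnitzer10}, already prepared by the tilting-theoretic framework of section~\ref{ss:TitsTilt}. For the imaginary edges, however, one must verify that the $\widetilde A_1\times\widetilde A_0^{r-1}$ face data provides a well-defined, single-valued combinatorial rule that exchanges $\lambda_{\gamma'}$ with $\lambda_{\gamma''}$ while leaving $(\lambda_\gamma)_{\gamma\in\Gamma\cap\overline F}$ untouched; this consistency ultimately rests on the explicit characterization of $\widetilde A_1$ MV polytopes in \cite{BaumannDunlapKamnitzerTingley12}. Once this rule is established, the reconstruction closes up and injectivity, hence bijectivity by the character comparison, follows.
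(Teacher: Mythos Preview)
Your approach differs substantially from the paper's and runs into a circularity. The paper's proof (section~\ref{ss:LusDatComp}) is direct: the convex order induces a finite nested family of torsion pairs $(\mathscr T(A_k),\mathscr F(A_k))$ on $\Lambda\mmod$, and the generalization of Proposition~\ref{pr:NesTorIrrComp} to such families already gives a bijection from $\mathfrak B(\nu)$ onto a product $\prod_k\bigl(\mathfrak F(A_k)\cap\mathfrak T(B_k)\bigr)$ (equation~\eqref{eq:BijLuszData}). Each factor is then identified explicitly: with $\mathbb N$ at a real root $\beta_k$ via Proposition~\ref{pr:TPAdjBiconv} (which gives $\mathscr F(A_k)\cap\mathscr T(B_k)=\add L(A_k,B_k)$), and with $\mathcal P^{\Gamma\cap\overline C}$ at $\delta$ via Proposition~\ref{pr:AThetaTor} and Theorem~\ref{th:DescRCham}. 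Bijectivity is thus built in from the torsion-pair machinery; no character comparison and no polytope reconstruction are needed.

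The gap in your argument is that you invoke Theorem~\ref{th:IntroMV} to conclude injectivity of $\Omega_\preccurlyeq$, but in the paper Theorem~\ref{th:IntroMV} is proved \emph{after} Theorem~\ref{th:IntroLusDat} and explicitly depends on it: in section~\ref{ss:PfThIntroMV}, injectivity of $\widetilde\Pol$ is deduced from injectivity of $\Omega_\preccurlyeq$ via the commutative diagram~\eqref{eq:CompLusDat}, and surjectivity of $\widetilde\Pol$ from surjectivity of $\Omega_\preccurlyeq$. So as written your argument is circular. Your appeal to the companion paper \cite{BaumannDunlapKamnitzerTingley12} for the $\widetilde A_1$ exchange rule is also misplaced: that paper is announced as future work and is not used in any proof here; the imaginary $2$-face exchange in section~\ref{ss:PfThIntroMV} is handled instead by Theorem~\ref{th:DescRFacet} and Proposition~\ref{pr:RedImagTwoFace}, which are internal to the preprojective-algebra framework. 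The propagation you describe is essentially the paper's argument for the injectivity of $\widehat\Omega_\preccurlyeq$ on $\mathcal{MV}$, repurposed---but that argument is deployed downstream, after Theorem~\ref{th:IntroLusDat} is already in hand.
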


Theorem~\ref{th:IntroLusDat} will be proved in
section~\ref{ss:LusDatComp}. Let us conclude by a few remarks.
\begin{enumerate}
\item
The MV polytope $\widetilde\Pol(b)$ contains the information of
all Lusztig data of $b$, for all convex orders. This is in complete
analogy with the situation in the case where $\mathfrak g$ is
finite dimensional. The conditions on the $2$-faces given in the
definition of $\mathcal{MV}$ say how the Lusztig datum of $b$ varies
when the convex order changes; they can be regarded as the analog
in the affine type case of Lusztig's piecewise linear bijections.
\item
The knowledge of a single Lusztig datum of $b$, for just one
convex order, allows one to reconstruct the irreducible component
$\Lambda_b$. This fact is indeed an ingredient of the proof of
injectivity in Theorem~\ref{th:IntroMV}.
\item
Through the bijective map $\widetilde\Pol$, the set $\mathcal{MV}$
acquires the structure of a crystal, isomorphic to $B(-\infty)$.
This structure can be read from the Lusztig data. Specifically,
if $\alpha_i$ is the smallest element of the order $\preccurlyeq$,
then $\varphi_i(b)$ is the $\alpha_i$-coordinate of
$\Omega_\preccurlyeq(b)$, and the operators $\tilde e_i$ and
$\tilde f_i$ act by incrementing or decrementing this coordinate.
\item
As mentioned at the beginning of section~\ref{ss:PrepMod}, Beck
in~\cite{Beck94}, Beck and Nakajima in~\cite{BeckNakajima04},
and Ito in~\cite{Ito10} construct PBW bases of $U_q(\mathfrak n_+)$
for $\mathfrak g$ of affine type. An element in one of these
bases is a monomial in root vectors, the product being computed
according to a convex order $\preccurlyeq$. To describe a monomial,
one needs an integer for each real root $\alpha$ and a $r$-tuple
of integers for each imaginary root $n\delta$, so in total,
monomials in a PBW basis are indexed by
$\mathbb N^{(\Phi_+^{\re})}\times\mathcal P^r$. Moreover, such
a PBW basis goes to $B(-\infty)$ at the limit $q\to0$. (This
fact has been established in \cite{BeckChariPressley99} for
Beck's bases, and the result can probably be extended to Ito's
more general bases by using \cite{Lusztig96} or \cite{Saito94}.)
In the end, we get a bijection between $B(-\infty)$ and
$\mathbb N^{(\Phi_+^{\re})}\times\mathcal P^r$. We expect that
this bijection is our map $\Omega_\preccurlyeq$.
\end{enumerate}

\begin{figure}[ht]
\begin{center}
\begin{tikzpicture}
\begin{scope}[scale=.6]
\coordinate(O) at (0,0);
\coordinate(I) at (.2,.3);		
\coordinate(J) at (-1.2,.3);	
\coordinate(K) at (1,.3);		
\coordinate(v1) at ($(O)+0*(I)+0*(J)+0*(K)$);
\coordinate(v2) at ($(O)+1*(I)+0*(J)+0*(K)$);
\coordinate(v3) at ($(O)+3*(I)+2*(J)+0*(K)$);
\coordinate(v4) at ($(O)+3*(I)+4*(J)+0*(K)$);
\coordinate(v5) at ($(O)+1*(I)+4*(J)+0*(K)$);
\coordinate(v6) at ($(O)+0*(I)+3*(J)+0*(K)$);
\coordinate(v7) at ($(O)+1*(I)+4*(J)+2*(K)$);
\coordinate(v8) at ($(O)+1*(I)+2*(J)+2*(K)$);
\coordinate(v9) at ($(O)+6*(I)+5*(J)+3*(K)$);
\coordinate(v10) at ($(O)+9*(I)+9*(J)+7*(K)$);
\coordinate(v11) at ($(O)+7*(I)+7*(J)+4*(K)$);
\coordinate(v12) at ($(O)+6*(I)+7*(J)+3*(K)$);
\coordinate(v13) at ($(O)+7*(I)+9*(J)+5*(K)$);
\coordinate(v14) at ($(O)+6*(I)+9*(J)+5*(K)$);
\coordinate(v15) at ($(O)+6*(I)+9*(J)+7*(K)$);
\coordinate(v16) at ($(O)+7*(I)+9*(J)+8*(K)$);
\coordinate(v17) at ($(O)+9*(I)+9*(J)+8*(K)$);
\coordinate(v18) at ($(O)+6*(I)+7*(J)+7*(K)$);
\coordinate(v19) at ($(O)+7*(I)+7*(J)+7*(K)$);
\coordinate(v20) at ($(O)+6*(I)+5*(J)+5*(K)$);
\filldraw[fill=gray!30,opacity=.8,line join=round]
  (v17) -- (v19) -- (v20) -- cycle;
\filldraw[fill=gray!20,opacity=.8,line join=round]
  (v17) -- (v20) -- (v9) -- (v10) -- cycle;
\filldraw[fill=gray!40,opacity=.8,line join=round]
  (v2) -- (v20) -- (v9) -- (v3) -- cycle;
\filldraw[fill=gray!40,opacity=.8,line join=round]
  (v1) -- (v2) -- (v3) -- (v4) -- (v5) -- (v6) -- cycle;
\filldraw[fill=gray!20,opacity=.8,line join=round]
  (v4) -- (v12) -- (v13) -- (v14) -- (v5) -- cycle;
\filldraw[fill=gray!30,opacity=.8,line join=round]
  (v3) -- (v9) -- (v11) -- (v12) -- (v4) -- cycle;
\filldraw[fill=gray!27,opacity=.8,line join=round]
  (v10) -- (v11) -- (v12) -- (v13) -- cycle;
\filldraw[fill=gray!23,opacity=.8,line join=round]
  (v9) -- (v10) -- (v11) -- cycle;
\filldraw[fill=gray!18,opacity=.8,line join=round]
  (v10) -- (v13) -- (v14) -- (v15) -- (v16) -- (v17) -- cycle;
\filldraw[fill=gray!45,opacity=.8,line join=round]
  (v1) -- (v6) -- (v7) -- (v8) -- cycle;
\filldraw[fill=gray!38,opacity=.8,line join=round]
  (v8) -- (v7) -- (v15) -- (v16) -- (v18) -- cycle;
\filldraw[fill=gray!45,opacity=.8,line join=round]
  (v8) -- (v18) -- (v19) -- (v20) -- (v2) -- (v1) -- cycle;
\filldraw[fill=gray!28,opacity=.8,line join=round]
  (v17) -- (v16) -- (v18) -- (v19) -- cycle;
\filldraw[fill=gray!25,opacity=.8,line join=round]
  (v5) -- (v14) -- (v15) -- (v7) -- (v6) -- cycle;
\draw[ultra thick,line join=round,line cap=butt]
  (v1) -- (v8) -- (v7) -- (v15) -- (v16) -- (v17);
\end{scope}
\begin{scope}[xshift=4.6cm,scale=.6]
\coordinate(O) at (0,0);
\coordinate(I) at (-1.2,.3);	
\coordinate(J) at (.8,.3);		
\coordinate(K) at (.4,.3);		
\coordinate(v1) at ($(O)+0*(I)+0*(J)+0*(K)$);
\coordinate(v2) at ($(O)+1*(I)+0*(J)+0*(K)$);
\coordinate(v3) at ($(O)+3*(I)+2*(J)+0*(K)$);
\coordinate(v4) at ($(O)+3*(I)+4*(J)+0*(K)$);
\coordinate(v5) at ($(O)+1*(I)+4*(J)+0*(K)$);
\coordinate(v6) at ($(O)+0*(I)+3*(J)+0*(K)$);
\coordinate(v7) at ($(O)+1*(I)+4*(J)+2*(K)$);
\coordinate(v8) at ($(O)+1*(I)+2*(J)+2*(K)$);
\coordinate(v9) at ($(O)+6*(I)+5*(J)+3*(K)$);
\coordinate(v10) at ($(O)+9*(I)+9*(J)+7*(K)$);
\coordinate(v11) at ($(O)+7*(I)+7*(J)+4*(K)$);
\coordinate(v12) at ($(O)+6*(I)+7*(J)+3*(K)$);
\coordinate(v13) at ($(O)+7*(I)+9*(J)+5*(K)$);
\coordinate(v14) at ($(O)+6*(I)+9*(J)+5*(K)$);
\coordinate(v15) at ($(O)+6*(I)+9*(J)+7*(K)$);
\coordinate(v16) at ($(O)+7*(I)+9*(J)+8*(K)$);
\coordinate(v17) at ($(O)+9*(I)+9*(J)+8*(K)$);
\coordinate(v18) at ($(O)+6*(I)+7*(J)+7*(K)$);
\coordinate(v19) at ($(O)+7*(I)+7*(J)+7*(K)$);
\coordinate(v20) at ($(O)+6*(I)+5*(J)+5*(K)$);
\filldraw[fill=gray!20,opacity=.8,line join=round]
  (v17) -- (v19) -- (v20) -- cycle;
\filldraw[fill=gray!20,opacity=.8,line join=round]
  (v17) -- (v20) -- (v9) -- (v10) -- cycle;
\filldraw[fill=gray!30,opacity=.8,line join=round]
  (v2) -- (v20) -- (v9) -- (v3) -- cycle;
\filldraw[fill=gray!50,opacity=.8,line join=round]
  (v1) -- (v6) -- (v7) -- (v8) -- cycle;
\filldraw[fill=gray!30,opacity=.8,line join=round]
  (v8) -- (v18) -- (v19) -- (v20) -- (v2) -- (v1) -- cycle;
\filldraw[fill=gray!20,opacity=.8,line join=round]
  (v17) -- (v16) -- (v18) -- (v19) -- cycle;
\filldraw[fill=gray!40,opacity=.8,line join=round]
  (v8) -- (v7) -- (v15) -- (v16) -- (v18) -- cycle;
\draw[ultra thick,line join=round,line cap=butt]
  (v1) -- (v8) -- (v7);
\filldraw[fill=gray!45,opacity=.8,line join=round]
  (v1) -- (v2) -- (v3) -- (v4) -- (v5) -- (v6) -- cycle;
\filldraw[fill=gray!38,opacity=.8,line join=round]
  (v4) -- (v12) -- (v13) -- (v14) -- (v5) -- cycle;
\filldraw[fill=gray!45,opacity=.8,line join=round]
  (v5) -- (v14) -- (v15) -- (v7) -- (v6) -- cycle;
\filldraw[fill=gray!32,opacity=.8,line join=round]
  (v3) -- (v9) -- (v11) -- (v12) -- (v4) -- cycle;
\filldraw[fill=gray!24,opacity=.8,line join=round]
  (v10) -- (v13) -- (v14) -- (v15) -- (v16) -- (v17) -- cycle;
\filldraw[fill=gray!22,opacity=.8,line join=round]
  (v10) -- (v11) -- (v12) -- (v13) -- cycle;
\filldraw[fill=gray!20,opacity=.8,line join=round]
  (v9) -- (v10) -- (v11) -- cycle;
\draw[ultra thick,line join=round,line cap=butt]
  (v7) -- (v15) -- (v16) -- (v17);
\end{scope}
\end{tikzpicture}
\end{center}
\caption{Two views of the same affine MV polytope $\widetilde P(b)$
of type $\widetilde A_2$. The thick line goes successively through
the points $\nu_0=0$, $\nu_1=\nu_0+(\alpha_0+2\alpha_1+2\alpha_2)$,
$\nu_2=\nu_1+2\alpha_1$, $\nu_3=\nu_2+5\delta$,
$\nu_4=\nu_3+(\alpha_0+\alpha_2)$, $\nu_5=\nu_4+2\alpha_0$.
The length of the edges of this line, together with the two partitions
$\lambda_{\varpi_1}=(1,1)$ and $\lambda_{\varpi_2}=(2,1)$, form the
Lusztig datum $\Omega_{\preccurlyeq}(b)$ relative to any convex order
$\preccurlyeq$ such that
$\alpha_0\prec\alpha_0+\alpha_2\prec\delta
\prec\alpha_1\prec\alpha_0+2\alpha_1+2\alpha_2$.
The other vertices were calculated using the conditions on the
$2$-faces. The MV polytope $\widetilde P(b)$ includes the data of
$\lambda_\gamma$ for all spherical chamber coweights $\varpi$,
and in this example, the rest of this decoration is given by
$\lambda_{s_1\varpi_1}=(1,1)$, $\lambda_{s_2s_1\varpi_1}=(0)$,
$\lambda_{s_2\varpi_2}= (2,1)$ and $\lambda_{s_1s_2\varpi_2}=(2,1)$.}
\label{fi:AffMVPolA2}
\end{figure}
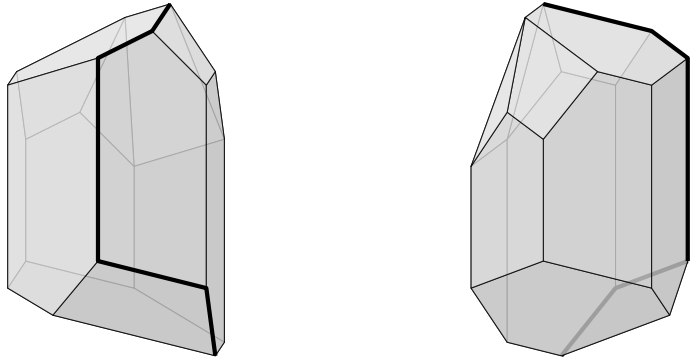

\subsection{Plan of the paper}
\label{ss:PlanPaper}
Section~\ref{se:CombBiconv} recalls combinatorial notions and
facts related to root systems. We emphasize the notion of
biconvex subsets, which is crucial to the study of convex orders
and to the definition of the functorial filtration
$(T_{\succcurlyeq\alpha})_{\alpha\in\Phi_+}$ mentioned in
section~\ref{ss:LuszData}.

Section~\ref{se:TorHNPoly} is devoted to generalities about
HN polytopes in abelian categories.

In section~\ref{se:RecPrepAlg}, we recall known facts about
preprojective algebras and Lusztig's nilpotent varieties. We also
prove that cutting a $\Lambda$-module according to a torsion
pair is an operation that preserves genericity.

In section~\ref{se:TiltTheoPrepAlg}, we exploit the tilting theory
on $\Lambda\mmod$ to define and study the submodules $T^w$ and $T_w$
mentioned in section~\ref{ss:TitsTilt}. An important difference
with the works of Iyama, Reiten et al.\ and of Gei\ss, Leclerc
and Schr\"oer is the fact that we are interested not only in the
small slices that form the categories $\Sub(\Lambda/I_w)$
(notation of Iyama, Reiten et al.) or $\mathcal C_w$ (notation
of Gei\ss, Leclerc and Schr\"oer), but also at controlling the
remainder. Moreover, we track the tilting theory at the level
of the irreducible components of Lusztig's nilpotent varieties and
interpret the result in term of crystal operations.

In section~\ref{se:HallFunc}, we construct embeddings of $\Pi\mmod$
into $\Lambda\mmod$, where $\Pi$ is the completed preprojective
algebra of type $\widetilde A_1$. The data needed to define such
an embedding is a pair $(S,R)$ of rigid orthogonal bricks in
$\Lambda\mmod$ satisfying
$\dim\Ext^1_\Lambda(S,R)=\dim\Ext^1_\Lambda(R,S)=2$. The key
ingredient in the construction is the $2$-Calabi-Yau property
of $\Lambda\mmod$.

The final section~\ref{se:PrepAlgAff} deals with the specifics
of the affine type case. All the results concerning the imaginary
edges, the cores, or the partitions are stated and proved there.

\subsection{Thanks}
\label{ss:Thanks}
We thank Claire Amiot for suggesting to us that the reflection
functors in \cite{BaumannKamnitzer12} are related to those in
\cite{AmiotIyamaReitenTodorov10}, which allowed us to take the
current literature
\cite{BuanIyamaReitenScott09,GeissLeclercSchroer11,SekiyaYamaura13}
into account in section~\ref{se:TiltTheoPrepAlg}.
We also thank Thomas Dunlap for sharing his ideas about
affine MV polytopes and for providing us with his PhD thesis
\cite{Dunlap10}. We thank Alexander Braverman, Bernhard Keller,
Bernard Leclerc, and Dinakar Muthiah for very helpful discussions.
Finally, we thank two anonymous referees for thorough and insightful
reports which led to significant improvements in the presentation.

P.\ B.\ acknowledges support from the ANR,
project~ANR-09-JCJC-0102-01.
J.\ K.\ acknowledges support from NSERC.
P.\ T.\ acknowledges support from the NSF,
grants~DMS-0902649, DMS-1162385 and~DMS-1265555.

\subsection{Summary of the main notations}
\label{ss:SummMainNot}
$\mathbb N=\{0,1,2,\ldots\}$.\\
$\mathcal P$ the set of partitions.\\
$\mathbf K(\mathscr A)$ the Grothendieck group of $\mathscr A$, an
essentially small abelian category.\\
$\Irr\mathscr A$ the set of isomorphism classes of simple objects in
$\mathscr A$.\\
$\Pol(T)\subseteq\mathbf K(\mathscr A)_{\mathbb R}$ the HN polytope
of an object $T\in\mathscr A$.\\
$P_\theta=\{x\in P\mid\langle\theta,x\rangle=m\}$ the face of a HN
polytope $P$, where $\theta\in(\mathbf K(\mathscr A)_{\mathbb R})^*$
and $m$ is the maximal value of $\theta$ on $P$.\\
$K$ the base field for representation of quivers and preprojective
algebras.\\
$(I,E)$ a finite graph, wihout loops (encoding a symmetric generalized
Cartan matrix).\\
$\mathfrak g$ the corresponding symmetric Kac-Moody algebra.\\
$\mathfrak n_+$ the upper nilpotent subalgebra of $\mathfrak g$.\\
$\Omega$ an orientation of $(I,E)$ (thus $Q=(I,\Omega)$ is a quiver).\\
$H=\Omega\sqcup\overline\Omega$ the set of edges of the double quiver
$\overline Q$.\\
$s,t:H\to I$ the source and target maps.\\
$\Lambda$ the completed preprojective algebra of $Q$.\\
$\Lambda\mmod$ the category of finite dimensional left $\Lambda$-modules.\\
$\Phi$ the root system of $\mathfrak g$.\\
$\{\alpha_i\mid i\in I\}$ the standard basis of $\Phi$.\\
$\Phi=\Phi_+\sqcup\Phi_-$ the positive and negative roots with respect
to this basis.\\
$W=\langle s_i\mid i\in I\rangle$ the Weyl group.\\
$\ell:W\to\mathbb N$ the length function.\\
$\mathbb ZI=\bigoplus_{i\in I}\mathbb Z\alpha_i$ the root lattice.\\
$\mathbb RI=\mathbb ZI\otimes_{\mathbb Z}\mathbb R$, the
$\mathbb R$-vector space with basis $(\alpha_i)_{i\in I}$.\\
$(\mathbb RI)^*$ the dual vector space.\\
$\omega_i\in(\mathbb RI)^*$ the $i$-th coordinate on $\mathbb RI$;
thus $(\omega_i)_{i\in I}$ is the dual basis of $(\alpha_i)_{i\in I}$.\\
$(\,,\,):\mathbb ZI\times\mathbb ZI\to\mathbb Z$ the $W$-invariant
symmetric bilinear form (real roots have square length $2$).\\
$C_0=\{\theta\in(\mathbb RI)^*\mid\langle\theta,\alpha_i\rangle>0\}$
the dominant Weyl chamber.\\
$C_T=\bigcup_{w\in W}\overline{C_0}$ the Tits cone.\\
$F_J=\{\theta\in(\mathbb RI)^*\mid\forall j\in
J,\;\langle\theta,\alpha_j\rangle=0\text{ and }\forall i\in
I\setminus J,\;\langle\theta,\alpha_i\rangle>0\}$, for $J\subseteq I$.\\
$\Phi_J$ and $W_J$, the root subsystem and the parabolic subgroup
defined by $J\subseteq I$.\\
$w_J$ the longest element in $W_J$, when the latter is finite.\\
$\height:\mathbb ZI\to\mathbb Z$ the linear form such that
$\height(\alpha_i)=1$ for each $i\in I$.\\
$N_w=\Phi_+\cap w\Phi_-$, for $w\in W$; thus
$N_w=\{s_{i_1}\cdots s_{i_{k-1}}\alpha_{i_k}\mid1\leq k\leq\ell\}$
for any reduced decomposition $w=s_{i_1}\cdots s_{i_\ell}$.\\
$\Pi$ the preprojective algebra of type $\widetilde A_1$.

In the case of an affine root system:\\
$\delta$ the positive primitive imaginary root.\\
$\mathfrak t^*=\mathbb RI/\mathbb R\delta$.\\
$\pi:\mathbb RI\to\mathfrak t^*$ the projection modulo
$\mathbb R\delta$.\\
$\Phi^s=\pi(\Phi^{\re})$ the spherical (finite) root system.\\
$\iota:\Phi^s\to\Phi_+^{\re}$ the ``minimal'' lift, a right inverse
of $\pi$.\\
$r=\dim\mathfrak t^*$ the rank of $\Phi^s$.\\
$\mathfrak t=\{\theta\in(\mathbb RI)^*\mid\langle\theta,\delta\rangle=0\}$
the dual of $\mathfrak t^*$.\\
$\Gamma\subseteq\mathfrak t$ the set of all spherical chamber coweights.\\
$\mathscr W$ the Weyl fan on $(\mathbb RI)^*$, completed on
$\mathfrak t$ by the spherical Weyl fan.\\
$Q^\vee\subseteq\mathfrak t$ the coroot lattice, spanned over
$\mathbb Z$ by the elements $(\alpha_i,?)$.\\
$t_\lambda\in W$ the translation, for $\lambda\in Q^\vee$;
thus $t_\lambda(\nu)=\nu-\langle\lambda,\nu\rangle\delta$ for
each $\nu\in\mathbb RI$.\\
$W_0=W/Q^\vee$ the image of $W$ in $\GL(\mathfrak t^*)$.\\
$\mathscr V=\{A\subseteq\Phi_+\mid A\text{ biconvex}\}$.

And after having chosen an extending vertex $0$ in the extended
Dynkin diagram:\\
$I_0=I\setminus\{0\}$ the vertices of the (finite type) Dynkin diagram.\\
$\{\pi(\alpha_i)\mid i\in I_0\}$ a preferred system of simple
roots for $\Phi^s$.\\
$(\varpi_i)_{i\in I_0}$ the spherical fundamental coweights,
a basis of $\mathfrak t$.\\
$C_0^s=\sum_{i\in I_0}\mathbb R_{>0}\varpi_i$ the dominant spherical
Weyl chamber.

Geometry:\\
The set of irreducible components of a topological space $X$ is
denoted by $\Irr X$. If $Z$ is an irreducible topological space,
then we say that a propriety $P(x)$ depending on a point $x\in Z$
holds for $x$ general in $Z$ if the set of points of $Z$ at which
$P$ holds true contains a dense open subset of $Z$. We sometimes
extend this vocabulary by simply saying ``let $x$ be a general point
in $Z$''; in this case, it is understood that we plan to impose
finitely many such conditions $P$.

\section{Combinatorics of root systems and of MV polytopes}
\label{se:CombBiconv}
In this section, we introduce our notations and recall general
results about root systems and biconvex subsets. Starting from
section~\ref{ss:SetupAffTyp} onwards, we focus on the case of
an affine root system.

\subsection{General setup}
\label{ss:GenSetup}
Let $(I,E)$ be a finite graph, without loops: here $I$ is the set of
vertices and $E$ is the set of edges. We denote by $\mathbb ZI$ the
free abelian group on $I$ and we denote its canonical basis by
$\{\alpha_i\mid i\in I\}$. We endow it with the symmetric bilinear
form $(\,,\,):\mathbb ZI\times\mathbb ZI\to\mathbb Z$, given by
$(\alpha_i,\alpha_i)=2$ for any $i$, and for $i\neq j$,
$(\alpha_i,\alpha_j)$ is the negative of the number of edges between
the vertices $i$ and $j$ in the graph $(I,E)$. The Weyl group $W$ is
the subgroup of $\GL(\mathbb ZI)$ generated by the simple reflections
$s_i:\alpha_j\mapsto\alpha_j-(\alpha_j,\alpha_i)\alpha_i$; this is in
fact a Coxeter system, whose length function is denoted by $\ell$.
Lastly, we denote by $\mathbb NI$ the set of all linear combinations
of the $\alpha_i$ with coefficients in $\mathbb N$ and we denote by
$\height:\mathbb ZI\to\mathbb Z$ the linear form that maps each
$\alpha_i$ to~$1$.

The matrix with entries $(\alpha_i,\alpha_j)$ is a symmetric
generalized Cartan matrix, hence it gives rise to a Kac-Moody algebra
$\mathfrak g$ and a root system $\Phi$. The latter is a $W$-stable
subset of $\mathbb ZI$, which can be split into positive and negative
roots $\Phi=\Phi_+\sqcup\Phi_-$ and into real and imaginary roots
$\Phi=\Phi^{\re}\sqcup\Phi^{\im}$.

Given a subset $J\subseteq I$, we can look at the root system
$\Phi_J=\Phi\cap\linspan_{\mathbb Z}\{\alpha_j\mid j\in J\}$. Its
Weyl group is the parabolic subgroup $W_J=\langle s_j\mid j\in
J\rangle$ of $W$. If $W_J$ is finite, then it has a longest element,
which we denote by $w_J$. An element $u\in W$ is called
$J$-reduced on the right if $\ell(us_j)>\ell(u)$ for each $j\in J$.
If $u$ is $J$-reduced on the right, then $\ell(uv)=\ell(u)+\ell(v)$
for all $v\in W_J$. Each right coset of $W_J$ in $W$ contains a
unique element that is $J$-reduced on the right.

The Weyl group acts on $\mathbb RI$ and on its dual $(\mathbb RI)^*$.
The dominant chamber $C_0$ and the Tits cone $C_T$ are the convex
cones in $(\mathbb RI)^*$ defined as
$$C_0=\{\theta\in(\mathbb RI)^*\mid\forall
i\in I,\;\langle\theta,\alpha_i\rangle>0\}\quad\text{and}\quad
C_T=\bigcup_{w\in W}w\,\overline{C_0}.$$
The closure $\overline C_0$ is the disjoint union of faces
$$F_J=\{\theta\in(\mathbb RI)^*\mid\forall j\in
J,\;\langle\theta,\alpha_j\rangle=0\text{ and }\forall i\in
I\setminus J,\;\langle\theta,\alpha_i\rangle>0\},$$
for $J\subseteq I$. The stabilizer of any point in $F_J$ is precisely
the parabolic subgroup $W_J$. Thus
$$\overline C_0=\bigsqcup_{J\subseteq I}F_J\quad\text{and}\quad
C_T=\bigsqcup_{J\subseteq I}\ \bigsqcup_{w\in W/W_J}wF_J.$$
The disjoint union on the right endows $C_T$ with the structure
of a (non locally finite) fan, which we call the Tits fan.

To an element $w\in W$, we associate the subset
$N_w=\Phi_+\cap w\Phi_-$. If $w=s_{i_1}\cdots s_{i_\ell}$ is a
reduced decomposition, then
$$N_w=\{s_{i_1}\cdots s_{i_{k-1}}\alpha_{i_k}\mid 1\leq k\leq\ell\}.$$
The following result is well-known (see for instance
Remark~$\clubsuit$ in~\cite{CelliniPapi98}).
\begin{lemma}
\label{le:CombiCox}
For $(u,v)\in W^2$, the following three properties are equivalent:
$$\ell(u)+\ell(v)=\ell(uv),\qquad N_u\subseteq N_{uv},\qquad
N_{u^{-1}}\cap N_v=\varnothing.$$
\end{lemma}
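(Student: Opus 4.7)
The plan is to reduce all three conditions to a single quantitative identity
$$\ell(uv)=\ell(u)+\ell(v)-2\,\bigl|N_{u^{-1}}\cap N_v\bigr|,$$
using only $\ell(w)=|N_w|$ and the fact that $w\in W$ acts as a bijection on the set of roots. Once this identity is in hand, the three stated properties become manifestly equivalent. I would establish it by partitioning $\Phi_+$ as $N_u\sqcup(\Phi_+\setminus N_u)$ and computing $|N_{uv}|$ piece by piece.

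For the first piece, I would parametrize $\alpha\in\Phi_+\setminus N_u$ as $\alpha=u\beta$ with $\beta\in\Phi_+$. Since $u\beta\in\Phi_+$, the element $\beta$ runs exactly over $\Phi_+\setminus N_{u^{-1}}$. The definition of $N_{uv}$ then says $\alpha\in N_{uv}$ iff $v^{-1}\beta\in\Phi_-$, i.e.\ iff $\beta\in N_v$. This gives the clean identification
$$N_{uv}\setminus N_u=u\bigl(N_v\setminus N_{u^{-1}}\bigr),\qquad
\bigl|N_{uv}\setminus N_u\bigr|=|N_v|-\bigl|N_{u^{-1}}\cap N_v\bigr|.$$

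For the second piece, I would parametrize $\alpha\in N_u$ as $\alpha=-u\gamma$ with $\gamma\in\Phi_+$ such that $u\gamma\in\Phi_-$, that is, with $\gamma\in N_{u^{-1}}$. A direct check (unwinding $v^{-1}u^{-1}\alpha=-v^{-1}\gamma$) shows $\alpha\in N_{uv}$ iff $\gamma\notin N_v$, so
$$N_u\setminus N_{uv}=-u\bigl(N_{u^{-1}}\cap N_v\bigr),\qquad
\bigl|N_u\setminus N_{uv}\bigr|=\bigl|N_{u^{-1}}\cap N_v\bigr|.$$
Adding the two counts, $|N_{uv}|=|N_u|-|N_u\setminus N_{uv}|+|N_{uv}\setminus N_u|$, yields the identity announced above.

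From this identity the three equivalences are immediate: $\ell(u)+\ell(v)=\ell(uv)$ iff $N_{u^{-1}}\cap N_v=\varnothing$, which is condition (iii); and $N_u\subseteq N_{uv}$ iff $N_u\setminus N_{uv}=\varnothing$, which by the second displayed formula is again $N_{u^{-1}}\cap N_v=\varnothing$. There is no genuine obstacle — the only point requiring a little care is keeping track of signs when writing elements of $N_u$ as $-u\gamma$, so I would be explicit about that case distinction.
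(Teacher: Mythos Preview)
Your proof is correct: the identity $\ell(uv)=\ell(u)+\ell(v)-2\,|N_{u^{-1}}\cap N_v|$ is established cleanly by your partition of $\Phi_+$ into $N_u$ and its complement, and the three equivalences follow at once. Note that the paper does not actually prove this lemma; it simply records it as well-known and points to Remark~$\clubsuit$ in Cellini--Papi as a reference. So there is no proof in the paper to compare yours against, but your argument is a standard and complete one.
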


\begin{corollary}
\label{co:NwJred}
Let $J\subseteq I$ and let $w\in W$. If $w$ is $J$-reduced on the
right, then $N_{w^{-1}}\cap\Phi_J=\varnothing$.
\end{corollary}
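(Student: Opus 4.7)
The plan is to argue by contradiction using Lemma~\ref{le:CombiCox} and the hypothesis that multiplication on the right by elements of $W_J$ is length-additive. I would start by reducing to the real roots: any $\beta\in N_{w^{-1}}=\Phi_+\cap w^{-1}\Phi_-$ must be real, since imaginary roots are $W$-fixed and hence a positive imaginary $\beta$ satisfies $w\beta=\beta\in\Phi_+$, which forces $\beta\notin w^{-1}\Phi_-$. Consequently it is enough to rule out elements of $N_{w^{-1}}\cap\Phi_J^{\mathrm{re}}$.

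Next I would exploit the standard Coxeter-theoretic fact that every positive real root of a Kac--Moody root system can be written in the form $s_{j_1}\cdots s_{j_{k-1}}\alpha_{j_k}$ for some reduced expression $s_{j_1}\cdots s_{j_k}$; applied inside the subsystem $\Phi_J$ this gives, for every $\beta\in(\Phi_J)_+^{\mathrm{re}}$, an element $v=s_{j_1}\cdots s_{j_k}\in W_J$ with $\beta\in N_v$ by the explicit formula for $N_v$ recalled just above Lemma~\ref{le:CombiCox}.

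Now assume for contradiction that some such $\beta$ lies in $N_{w^{-1}}$. Since $w$ is $J$-reduced on the right, the paragraph preceding Lemma~\ref{le:CombiCox} gives $\ell(wv)=\ell(w)+\ell(v)$. Applying Lemma~\ref{le:CombiCox} (with the pair $(w,v)$) yields $N_{w^{-1}}\cap N_v=\varnothing$, contradicting $\beta\in N_{w^{-1}}\cap N_v$. Hence no such $\beta$ exists, proving the corollary.

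The only step that requires any care is the realization of a positive real root of $\Phi_J$ as an element of some $N_v$ with $v\in W_J$; I expect this to be immediate either by citing Kac's book or by a quick direct argument: among all $v\in W_J$ with $v^{-1}\beta\in\Phi_-$ (such $v$ exist since $\beta$ is real and $W_J$-conjugate to a negative simple root of $\Phi_J$) pick one of minimal length, write a reduced expression $v=s_{j_1}\cdots s_{j_k}$, and observe that the last simple reflection must send $v^{-1}\beta$ from $\Phi_+$ to $\Phi_-$, forcing $v^{-1}\beta=-\alpha_{j_k}$ and hence $\beta=s_{j_1}\cdots s_{j_{k-1}}\alpha_{j_k}\in N_v$. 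Everything else is a direct application of the two results already available.
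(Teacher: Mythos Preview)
Your proof is correct, but the paper's argument is shorter and more direct. Instead of producing a general $v\in W_J$ with $\beta\in N_v$, the paper observes that if $\beta\in N_{w^{-1}}\cap\Phi_J$, then $\beta\in\Phi_+$ is a nonnegative linear combination of the $\alpha_j$ with $j\in J$, while $w\beta\in\Phi_-$; hence some $w\alpha_j$ must be negative, i.e.\ $\alpha_j\in N_{w^{-1}}$. This is $N_{s_j}\subseteq N_{w^{-1}}$, and Lemma~\ref{le:CombiCox} then gives $\ell(ws_j)<\ell(w)$, contradicting $J$-reducedness. In effect the paper immediately lands on the special case $v=s_j$ of your argument via a one-line linearity trick, bypassing the need to realize an arbitrary positive real root of $\Phi_J$ as an element of some $N_v$.

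One minor remark: your reduction to real roots asserts that ``imaginary roots are $W$-fixed''. That is true in affine type but not in general Kac--Moody type; what holds in general is that $\Phi_+^{\im}$ is $W$-stable, which is all you need (alternatively, the explicit formula for $N_w$ already shows it consists of real roots). This does not affect the validity of your argument.
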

\begin{proof}
Let $J\subseteq I$ and let $w\in W$ be such that
$N_{w^{-1}}\cap\Phi_J\neq\varnothing$. Then there exists
$\beta\in\Phi_J\cap\Phi_+$ such that $w\beta\in\Phi_-$.
Since $\beta$ is a nonnegative linear combination of the roots
$\alpha_j$ for $j\in J$, it follows that there exists $j\in J$
such that $w\alpha_j\in\Phi_-$. For this $j$, we have
$N_{s_j}\subseteq N_{w^{-1}}$, whence
$\ell(s_j)+\ell(s_jw^{-1})=\ell(w^{-1})$ by Lemma~\ref{le:CombiCox}.
Therefore $w$ is not $J$-reduced on the right.
\end{proof}

\subsection{Biconvex sets (general type)}
\label{ss:BiconvSetsGenType}
A subset $A\subseteq\Phi$ is said to be clos if the conditions
$\alpha\in A$, $\beta\in A$, $\alpha+\beta\in\Phi$ imply
$\alpha+\beta\in A$ (see \cite{Bourbaki68}, chapitre 6, \S1,
n\textordmasculine\;7, D\'efinition~4). A subset $A\subseteq\Phi_+$
is said to be biconvex if both $A$ and $\Phi_+\setminus A$ are clos.
We denote by $\mathscr V$ the set of all biconvex subsets of
$\Phi_+$ and endow it with the inclusion order.

\begin{other}{Examples}
\label{ex:BiconvexSets}
\begin{enumerate}
\item
\label{it:BSa}
An increasing union or a decreasing intersection of biconvex subsets
is itself biconvex.
\item
\label{it:BSb}
Each finite biconvex subset of $\Phi_+$ consists of real roots
and is a $N_w$, with $w\in W$ (see \cite{CelliniPapi98},
Proposition~3.2). For convenience, we will say that a biconvex
set $A$ is cofinite if its complement $\Phi_+\setminus A$ is finite.
Given $w\in W$, we set $A_w=N_{w^{-1}}$ and $A^w=\Phi_+\setminus N_w$.
Thus the map $w\mapsto A_w$ (respectively, $w\mapsto A^w$) is a
bijection from $W$ onto the set of finite (respectively, cofinite)
biconvex subsets of $\Phi_+$.
\item
\label{it:BSc}
Each $\theta\in(\mathbb RI)^*$ gives birth to two biconvex subsets
$$A_\theta^{\min}=\{\alpha\in\Phi_+\mid\langle\theta,
\alpha\rangle>0\}\quad\text{and}\quad A_\theta^{\max}=
\{\alpha\in\Phi_+\mid\langle\theta,\alpha\rangle\geq0\}.$$
\end{enumerate}
\end{other}

\begin{other}{Remark}
\label{rk:TrueDefBiconv}
Define a positive root system as a subset $X\subseteq\Phi$ such
that $\Phi=X\sqcup(-X)$ and that the convex cone spanned in
$\mathbb RI$ by $X$ is acute. Denote by $\widetilde{\mathscr V}$
the set of all positive root systems in $\Phi$. If $X$ is a
positive root system, then $A=X\cap\Phi_+$ is biconvex and $X$
can be recovered from $A$ by the formula
$X=A\sqcup(-(\Phi_+\setminus A))$. Thus the map
$X\mapsto X\cap\Phi_+$ from $\widetilde{\mathscr V}$ to $\mathscr V$
is well-defined and injective. The image of this map is the set
$\mathscr V'$ of all subsets $A\subseteq\Phi_+$ such that the
convex cones spanned by $A$ and by $\Phi_+\setminus A$ intersect
only at the origin. Lemma~\ref{le:BiconvPosRS} below shows that
$\mathscr V'=\mathscr V$ whenever $\Phi$ is of finite or affine type.
\end{other}

\begin{proposition}
\label{pr:ATheta}
Let $J\subseteq I$, let $\theta\in F_J$, and let $w\in W$.
Assume $w$ is $J$-reduced on the right. Then
$A^w=A_{w\theta}^{\max}$ and $A_w=A_{-w^{-1}\theta}^{\min}$. In
addition, if $W_J$ is finite, then $A^{ww_J}=A_{w\theta}^{\min}$
and $A_{w_Jw}=A_{-w^{-1}\theta}^{\max}$.
\end{proposition}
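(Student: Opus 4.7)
My plan is to verify each of the four equalities by a direct sign analysis on $\langle\theta,\beta\rangle$, where $\beta$ is obtained from $\alpha\in\Phi_+$ by applying an appropriate Weyl-group element. The key observation is that, for $\theta\in F_J$, the value $\langle\theta,\beta\rangle$ depends only on the location of $\beta$ in $\Phi$: it is strictly positive on $\Phi_+\setminus\Phi_J$, zero on $\Phi_J$, and strictly negative on $\Phi_-\setminus\Phi_J$. Since the four equalities differ only in the strict-versus-nonstrict nature of the bounding inequality, the entire argument reduces in each case to controlling the ``borderline'' case $\beta\in\Phi_J$.

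The auxiliary fact I would record first is the contrapositive of Corollary~\ref{co:NwJred}: since $w$ is $J$-reduced on the right, $w\alpha_j\in\Phi_+$ for every $j\in J$, and therefore $w(\Phi_J\cap\Phi_+)\subseteq\Phi_+$ and $w(\Phi_J\cap\Phi_-)\subseteq\Phi_-$. For the first equality $A^w=A_{w\theta}^{\max}$, substituting $\beta=w^{-1}\alpha$ reduces the two sides to ``$\beta\in\Phi_+$'' and ``$\langle\theta,\beta\rangle\ge 0$''; they agree except possibly when $\beta\in\Phi_J\cap\Phi_-$, and this is ruled out because then $\alpha=w\beta\in\Phi_-$ by the auxiliary fact, contradicting $\alpha\in\Phi_+$. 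The second equality $A_w=A_{-w^{-1}\theta}^{\min}$ is obtained by the parallel argument with $\beta=w\alpha$, after checking that the borderline possibility $\beta\in\Phi_J\cap\Phi_-$ is again impossible; this verification uses the auxiliary fact in its symmetric form (Corollary~\ref{co:NwJred} applied with $w$ and $w^{-1}$ interchanged), which one can either invoke directly or derive by complementation in $\Phi_+$ using the identity $A^{w^{-1}}=\Phi_+\setminus A_w$ together with $\{\alpha\in\Phi_+\mid\theta(w\alpha)<0\}=A_{-w^{-1}\theta}^{\min}$.

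For the ``in addition'' equalities, the new input is that the longest element $w_J$ swaps $\Phi_J\cap\Phi_+$ with $\Phi_J\cap\Phi_-$ while preserving the non-$J$ components of every root; in particular, it fixes $\theta$ and stabilises both $\Phi_+\setminus\Phi_J$ and $\Phi_-\setminus\Phi_J$ setwise. Together with the length-additivity $\ell(ww_J)=\ell(w)+\ell(w_J)$, which follows from $J$-reducedness on the right, this gives the decomposition $N_{ww_J}=N_w\sqcup wN_{w_J}=N_w\sqcup w(\Phi_J\cap\Phi_+)$; removing $w(\Phi_J\cap\Phi_+)$ from the first equality amounts precisely to removing the borderline roots, turning ``$\ge 0$'' into ``$>0$'' and yielding $A^{ww_J}=A_{w\theta}^{\min}$. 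The fourth equality $A_{w_Jw}=A_{-w^{-1}\theta}^{\max}$ follows by the symmetric argument using the analogous behaviour of $w_J$ on the other side. The main technical obstacle is the boundary bookkeeping in the second and fourth equalities, where the exclusion of $\beta\in\Phi_J$ depends on a careful application of Corollary~\ref{co:NwJred} in its ``$w^{-1}$'' form and on the invariance $w_J\theta=\theta$.
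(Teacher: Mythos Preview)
Your argument for $A^w=A_{w\theta}^{\max}$ and $A^{ww_J}=A_{w\theta}^{\min}$ is correct and essentially matches the paper's; the paper handles the third equality by analysing $w_Jw^{-1}\alpha$ directly rather than via the decomposition $N_{ww_J}=N_w\sqcup w(\Phi_J\cap\Phi_+)$, but the two arguments are equivalent.

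The gap is in the second and fourth equalities. When you write ``Corollary~\ref{co:NwJred} applied with $w$ and $w^{-1}$ interchanged,'' that corollary requires its argument to be $J$-reduced on the right, and $w$ being $J$-reduced on the right does \emph{not} force $w^{-1}$ to be. Your complementation alternative amounts to applying the first equality to $w^{-1}$ in place of $w$, so it needs the same missing hypothesis. In fact the last two equalities, as stated, are false for general~$J$: in type $A_2$ with $J=\{1\}$, $\theta\in F_J$ and $w=s_1s_2$ (which is $J$-reduced on the right since $\ell(s_1s_2s_1)=3>2$), one has $A_w=N_{s_2s_1}=\{\alpha_2,\alpha_1+\alpha_2\}$, while $w(\alpha_1+\alpha_2)=-\alpha_1$ gives $\langle -w^{-1}\theta,\alpha_1+\alpha_2\rangle=\langle\theta,\alpha_1\rangle=0$, so $\alpha_1+\alpha_2\notin A_{-w^{-1}\theta}^{\min}$. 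The paper's ``proved in a similar fashion'' overlooks the same asymmetry. What complementation actually yields from the first and third equalities, under the stated hypothesis, is
\[
A_{w^{-1}}=A_{-w\theta}^{\min}\qquad\text{and}\qquad A_{w_Jw^{-1}}=A_{-w\theta}^{\max},
\]
which is precisely the form in which the paper later uses these facts (Remark~\ref{rk:ComTPIR}~\ref{it:CTPIRb}); equivalently, the last two equalities hold as written if one instead assumes that $w$ is $J$-reduced on the \emph{left}. The remaining applications in the paper only use $J=\varnothing$, where the distinction disappears.
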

\begin{proof}
Let $J$, $\theta$, $w$ as in the statement of the proposition.

We have $A^w=\{\alpha\in\Phi_+\mid w^{-1}\alpha\in\Phi_+\}$ and
$A_{w\theta}^{\max}=\{\alpha\in\Phi_+\mid\langle\theta,w^{-1}\alpha
\rangle\geq0\}$. The inclusion $A^w\subseteq A_{w\theta}^{\max}$ is
straightforward. To show the reverse inclusion, we take
$\alpha\in\Phi_+\setminus A^w$, that is, $\alpha\in N_w$. Then
$\beta=-w^{-1}\alpha$ is in $N_{w^{-1}}$, in particular
$\beta\in\Phi_+$, but $\beta\notin\Phi_J$ by Corollary~\ref{co:NwJred},
and so $\langle\theta,\beta\rangle>0$, which means that
$\alpha\notin A_{w\theta}^{\max}$. We conclude that
$A^w=A_{w\theta}^{\max}$.

Suppose now that $W_J$ is finite. Then
$A^{ww_J}=\{\alpha\in\Phi_+\mid w_Jw^{-1}\alpha\in\Phi_+\}$ and
$A_{w\theta}^{\min}=\{\alpha\in\Phi_+\mid\langle\theta,
w_Jw^{-1}\alpha\rangle>0\}$. The inclusion $A_{w\theta}^{\min}
\subseteq A^{ww_J}$ is straightforward. To show the reverse inclusion,
we take $\alpha\in A^{ww_J}\setminus A_{w\theta}^{\min}$, if possible.
Then $w_Jw^{-1}\alpha$ necessarily belongs to $\Phi_+\cap\Phi_J$,
and so does $\beta=-w^{-1}\alpha$. Then $\beta\in
N_{w^{-1}}\cap\Phi_J$, which contradicts Corollary~\ref{co:NwJred}.
We conclude that $A^{ww_J}=A_{w\theta}^{\min}$.

The last two equalities $A_w=A_{-w^{-1}\theta}^{\min}$ and
$A_{w_Jw}=A_{-w^{-1}\theta}^{\max}$ are proved in a similar fashion.
\end{proof}

\subsection{Setup in the affine type}
\label{ss:SetupAffTyp}
In the rest of section~\ref{se:CombBiconv}, we will focus on the
case where the root system $\Phi$ is of affine type. Then there
exists $\delta\in\mathbb ZI$ such that $\Phi_+^{\im}=\mathbb
Z_{>0}\,\delta$. We set $\mathfrak t^*=\mathbb RI/\mathbb R\delta$
and we denote the natural projection by $\pi:\mathbb RI\to\mathfrak
t^*$. Then $\Phi^s=\pi(\Phi^{\re})$ is a finite root system
in~$\mathfrak t^*$, called the spherical root system.

The dual vector space $\mathfrak t$ of $\mathfrak t^*$ is
identified with $\{\theta\in(\mathbb RI)^*\mid\langle\theta,
\delta\rangle=0\}$. The linear forms $x\mapsto(\alpha_i,x)$
on $\mathbb RI$ span a lattice in $\mathfrak t$, called the
coroot lattice; we denote it by $Q^\vee$. The Weyl group leaves
$\delta$ invariant, hence acts on $\mathfrak t^*$. The kernel
of this action consists of translations $t_\lambda$, for
$\lambda\in Q^\vee$. The translation $t_\lambda$ acts on $\mathbb RI$
by $t_\lambda(\nu)=\nu-\langle\lambda,\nu\rangle\,\delta$ and acts
on $(\mathbb RI)^*$ by $t_\lambda(\theta)=\theta+\langle\theta,
\delta\rangle\lambda$. We denote by $W_0$ the quotient of $W$
by this subgroup of translations; it can be viewed as a subgroup
of $\GL(\mathfrak t^*)$, indeed as the Weyl group of $\Phi^s$.

A basis of the root system $\Phi^s$ is in particular a basis of
$\mathfrak t^*$, and the elements in the dual basis are called the
spherical fundamental coweights. We define a spherical chamber
coweight as an element of $\mathfrak t$ that is conjugate under $W_0$
to a spherical fundamental coweight, and we denote by $\Gamma$ the
set of all spherical chamber coweights. The root system $\Phi^s$ defines
a hyperplane arrangement in $\mathfrak t$, called the spherical Weyl
fan. The open cones in this fan will be called the spherical Weyl
chambers. The rays of this fan are spanned by the spherical chamber
coweights.

The Tits cone is $C_T=\{\theta\in(\mathbb RI)^*\mid
\langle\theta,\delta\rangle>0\}\cup\{0\}$. Thus $(\mathbb RI)^*$ is
covered by $C_T$, $-C_T$ and~$\mathfrak t$. Gathering the faces of
the Tits fan, their opposite, and the faces of the spherical Weyl fan,
we get a (non-locally finite) fan on $(\mathbb RI)^*$. We call it the
affine Weyl fan and denote it by~$\mathscr W$. The cones of this fan
are the equivalence classes of the relation $\sim$ on $(\mathbb RI)^*$
defined in the following way: one says that $x\sim y$ if for each
root $\alpha\in\Phi$, the two real numbers $\langle x,\alpha\rangle$
and $\langle y,\alpha\rangle$ have the same sign.

\begin{figure}[ht]
\begin{center}
\begin{tikzpicture}
\begin{scope}[scale=.7]
\coordinate (u) at (1.7,.2);
\coordinate (v) at (-1,.8);
\coordinate (w) at ($(0,0)-(u)-(v)$);
\coordinate (O) at (0,0);
\coordinate (A) at (-.1,2.4);
\coordinate (B) at ($(A)-(u)$);
\coordinate (C) at ($(A)+(v)$);
\coordinate (D) at ($(A)-(w)$);
\coordinate (E) at ($(A)+(v)-(w)$);
\coordinate (F) at ($(A)-(u)+(v)$);
\coordinate (G) at ($(A)+2*(v)$);
\coordinate (AT) at ($(O)!1.71!(A)$);
\coordinate (BT) at ($(O)!1.65!(B)$);
\coordinate (CT) at ($(O)!1.68!(C)$);
\coordinate (DT) at ($(O)!1.6!(D)$);
\coordinate (ET) at ($(O)!1.48!(E)$);
\coordinate (FT) at ($(O)!1.5!(F)$);
\coordinate (GT) at ($(O)!1.45!(G)$);
\draw[thick] (O) ++(-3.8,-1.5) -- ++(0:8.5) -- ++(70:4)
  node[right]{$\delta=0$};
\draw[thick] (A) ++(-3.8,-1.5) -- ++(0:8.5) -- ++(70:4)
  node[right]{$\delta=1$};
\draw (O) node{$\bullet$};
\draw (ET) ++(0,.5) node{};
\draw ($(O)-1.6*(u)$) -- ($(O)+2.2*(u)$);
\draw ($(O)-1.4*(v)$) -- ($(O)+.7*(v)$);
\draw ($(O)-.6*(w)$) -- ($(O)+1.2*(w)$);
\draw[thin] (O) -- (AT);
\draw[thin] (O) -- (BT);
\draw[thin] (O) -- (DT);
\draw[thin] (O) -- (FT);
\draw[dashed,thin] (O) -- (CT);
\draw[dashed,thin] (O) -- (ET);
\draw[dashed,thin] (O) -- (GT);
\draw[thin] (AT) to[out=-175,in=15] (BT)
  to[out=145,in=-30] (FT) to[out=51,in=-137] (GT)
  to[out=15,in=-175] (ET) to[out=-18,in=152] (DT)
  to[out=-138,in=53] (AT);
\draw[thin] (AT) to[out=130,in=-25] (CT)
  to[out=155,in=-12] (GT);
\draw[thin] (BT) to[out=63,in=-137] (CT)
  to[out=43,in=-156] (ET);
\draw[thin] (FT) to[out=30,in=-168] (CT)
  to[out=12,in=175] (DT);
\draw ($(A)-1.2*(u)$) -- ($(A)+2.5*(u)$);
\draw ($(C)-.2*(u)$) -- ($(C)+3.2*(u)$);
\draw ($(A)-2*(w)-.2*(u)$) -- ($(A)-2*(w)+1.4*(u)$);
\draw ($(A)-(v)-.2*(u)$) -- ($(A)-(v)+1.6*(u)$);
\draw ($(B)-1.6*(w)$) -- ($(B)+.2*(w)$);
\draw ($(A)-2.2*(w)$) -- ($(A)+.6*(w)$);
\draw ($(A)+(u)-2.2*(w)$) -- ($(A)+(u)+1.2*(w)$);
\draw ($(A)+2*(u)-1.2*(w)$) -- ($(A)+2*(u)+1.2*(w)$);
\draw ($(B)-.3*(v)$) -- ($(B)+.3*(v)$);
\draw ($(A)-1.3*(v)$) -- ($(A)+1.3*(v)$);
\draw ($(A)+(u)-1.3*(v)$) -- ($(A)+(u)+1.6*(v)$);
\draw ($(A)+2*(u)-.6*(v)$) -- ($(A)+2*(u)+2.3*(v)$);
\draw ($(A)+3*(u)+.7*(v)$) -- ($(A)+3*(u)+2.3*(v)$);
\end{scope}
\end{tikzpicture}
\end{center}
\caption{The upper half of the affine Weyl fan of type
$\widetilde A_2$. The intersection with the affine hyperplane
$\{\theta\in(\mathbb RI)^*\mid\langle\theta,\delta\rangle=1\}$
draws the familiar pattern of alcoves. On the hyperplane
$\{\theta\in(\mathbb RI)^*\mid\langle\theta,\delta\rangle=0\}$,
one can see the spherical Weyl fan.}
\label{fi:AffWeylFanA2}
\end{figure}
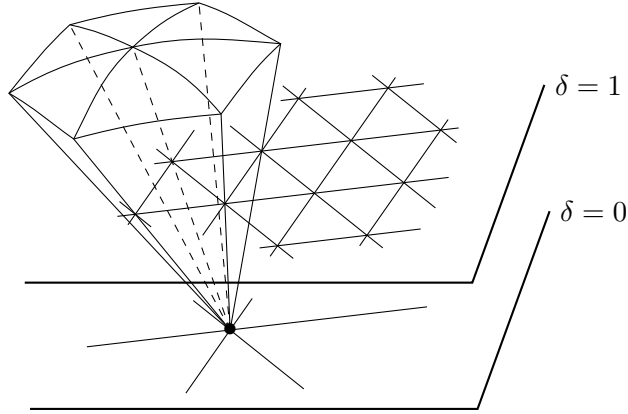

For each $\alpha\in\Phi^s$, we denote by $\iota(\alpha)\in\Phi_+^{\re}$
the unique positive real root such that $\pi(\iota(\alpha))=\alpha$ and
$\iota(\alpha)-\delta\notin\Phi_+^{\re}$. Thus
$\iota:\Phi^s\to\Phi_+^{\re}$ is the ``minimal'' right inverse to $\pi$.

It is often convenient to embed the spherical root system $\Phi^s$
in the affine root system $\Phi$. To do that, we choose an extending
vertex $0$ in $I$ and we set $I_0=I\setminus\{0\}$. Then the spherical
Weyl group $W_0$ can be identified with the parabolic subgroup
$\langle s_i\mid i\in I_0\rangle$ of $W$. Further,
$\{\pi(\alpha_i)\mid i\in I_0\}$ is a basis of the spherical
root system $\Phi^s$, whence a dominant spherical Weyl chamber
$C_0^s=\{\theta\in\mathfrak t\mid\forall i\in I_0,\
\langle\theta,\alpha_i\rangle>0\}$. The highest root $\widetilde\alpha$
of $\Phi^s$ relative to this set of simple roots satisfies
$\delta=\alpha_0+\iota(\widetilde\alpha)$. We denote by
$\{\varpi_i\mid i\in I_0\}$ the basis of $\mathfrak t$ dual to
the basis $\{\pi(\alpha_i)\mid i\in I_0\}$ of $\mathfrak t^*$.

\subsection{Biconvex sets (affine type)}
\label{ss:BiconvSetsAffType}
One nice feature of the affine type is the following key result, which is
a direct application of Theorem~3.12 in~\cite{CelliniPapi98}.

\begin{proposition}
\label{pr:ApproxBiconv}
Let $A\subseteq\Phi_+$ be a biconvex subset. If $\delta\notin A$, then
$A$ is the union of an increasing sequence of finite biconvex subsets.
If $\delta\in A$, then $A$ is the intersection of a decreasing sequence
of cofinite biconvex subsets.
\end{proposition}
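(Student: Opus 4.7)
The plan is to reduce everything to Cellini--Papi's classification of biconvex subsets in an affine root system, namely \cite[Thm.~3.12]{CelliniPapi98}, which the author has already flagged as the key input.

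First I would exploit a self-duality: the condition ``biconvex'' is symmetric in $A$ and $\Phi_+\setminus A$, so complementation preserves membership in $\mathscr V$ while exchanging ``$\delta\in A$'' with ``$\delta\notin A$'' and exchanging finite with cofinite. It therefore suffices to prove the first statement; the second then follows by applying the first to $\Phi_+\setminus A$ and complementing back. As a standing remark, $\delta\notin A$ forces $n\delta\notin A$ for every $n\geq1$: induct on $n$ using $n\delta=\delta+(n-1)\delta$ together with the closedness of $\Phi_+\setminus A$. In particular, any such $A$ is contained in $\Phi_+^{\re}$.

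The core step is to invoke \cite[Thm.~3.12]{CelliniPapi98}. Its consequence for us is that any biconvex $A\subseteq\Phi_+$ with $\delta\notin A$ arises from an infinite reduced word: there is a sequence $w_1,w_2,\ldots\in W$ with $w_{n+1}^{-1}=w_n^{-1}s_{i_{n+1}}$ a reduced decomposition at each step, such that
$$A=\bigcup_{n\geq1}N_{w_n^{-1}}.$$
Each $N_{w_n^{-1}}$ is a finite biconvex subset by Example~\ref{ex:BiconvexSets}(ii). Applying Lemma~\ref{le:CombiCox} to the pair $(u,v)=(w_n^{-1},s_{i_{n+1}})$ gives $N_{w_n^{-1}}\subseteq N_{w_{n+1}^{-1}}$, so the exhaustion is ascending. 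That settles the first statement, and the self-duality delivers the second.

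The main obstacle is entirely packed into \cite[Thm.~3.12]{CelliniPapi98} itself; once that is cited, the rest is bookkeeping. The subtle point in Cellini and Papi's argument is showing that at each stage one can peel off a simple root $\alpha_i\in A$---with the resulting set $s_i(A)\setminus\{-\alpha_i\}$ again biconvex---and that the hypothesis $\delta\notin A$ prevents this inductive procedure from stalling, ensuring that $\bigcup_n N_{w_n^{-1}}$ genuinely exhausts $A$ rather than stopping short of it. Without this control, one could imagine the union being a proper biconvex subset of $A$, so avoiding that scenario is exactly where the affine hypothesis and the absence of $\delta$ do their work.
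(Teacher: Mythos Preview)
Your proposal is correct and matches the paper's approach exactly: the paper gives no proof at all beyond the sentence ``a direct application of Theorem~3.12 in~\cite{CelliniPapi98}'', and you have simply unpacked that citation, adding the complementation duality and the mechanics of the infinite reduced word. The only cosmetic point is that for finite $A$ there is no infinite reduced word, but then $A=N_w$ already and the constant sequence works, so nothing is missing.
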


\begin{other}{Example}
\label{ex:ALambdaMinMax}
Let $\lambda\in Q^\vee$. With the notation of
Remark~\ref{ex:BiconvexSets}~\ref{it:BSc}, we then have
$$A_\lambda^{\min}=\bigcup_{n\in\mathbb N}A_{t_{n\lambda}}\quad\text{and}
\quad A_\lambda^{\max}=\bigcap_{n\in\mathbb N}A^{t_{n\lambda}};$$
these equalities readily follow from the formula
$t_{n\lambda}(\alpha)=\alpha-n\langle\lambda,\alpha\rangle\delta$.
\end{other}

\begin{lemma}
\label{le:ProjBiconv}
Let $A\subseteq\Phi_+$ be a biconvex subset such that $\delta\notin A$
and let $X=\pi(A)$. Then $X$ is contained in a positive root system of
$\Phi^s$ and $\iota(X)\subseteq A$.
\end{lemma}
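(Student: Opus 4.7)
My plan rests on first observing, via Proposition~\ref{pr:ApproxBiconv} and Example~\ref{ex:BiconvexSets}~\ref{it:BSb}, that $A$ is the increasing union of finite biconvex subsets $A_{w_m}=N_{w_m^{-1}}$; in particular $A\subseteq\Phi_+^{\re}$, so $A$ contains no positive multiple of $\delta$. Both assertions of the lemma will follow from this together with the closedness of $\Phi_+\setminus A$ and a height argument.

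To prove $\iota(X)\subseteq A$, I would pick $\bar\alpha\in X$, choose a lift $\alpha\in A$, and write $\alpha=\iota(\bar\alpha)+k\delta$ with $k\geq0$. If $k\geq1$, I claim $\alpha-\delta\in A$: otherwise $\alpha-\delta$ and $\delta$ both lie in $\Phi_+\setminus A$, and their sum $\alpha$ is a root, so the closedness of $\Phi_+\setminus A$ places $\alpha$ in $\Phi_+\setminus A$, contradicting $\alpha\in A$. Iterating this descent yields $\iota(\bar\alpha)\in A$.

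For the statement that $X$ lies in a positive root system of $\Phi^s$, I invoke Gordan's theorem in $\mathfrak t^*$: it suffices to rule out any relation $\sum_i c_i\bar\alpha_i=0$ with $c_i\in\mathbb Z_{>0}$ and $\bar\alpha_i\in X$. Suppose such a relation exists and choose lifts $\beta_i\in A$; then $\sum c_i\beta_i$ projects to $0$ in $\mathfrak t^*$, so it equals $n\delta$ for some integer $n$, and comparing heights forces $n>0$. The finitely many $\beta_i$ belong to a common $A_{w_M}=N_{w_M^{-1}}$, so $w_M\beta_i\in\Phi_-$ for every $i$; applying $w_M$ to the relation and using $w_M(\delta)=\delta$ then expresses the positive-height element $n\delta$ as a positive combination of negative roots, a contradiction. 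The main obstacle I anticipate is precisely this last reduction from the biconvexity of an arbitrary $A$ to a clean identity inside one finite $N_{w_M^{-1}}$, for which Proposition~\ref{pr:ApproxBiconv} is the crucial input; once that reduction is in place, the $W$-invariance of $\delta$ does all the remaining work.
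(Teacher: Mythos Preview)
Your argument is correct. The proof of $\iota(X)\subseteq A$ is essentially the paper's: where you descend by subtracting $\delta$ one step at a time, the paper takes the minimal-height lift $\beta\in A\cap\pi^{-1}(\bar\alpha)$ directly and observes that $\beta-\delta$ lies neither in $A$ (minimality) nor in $\Phi_+\setminus A$ (closedness of the complement), hence $\beta=\iota(\bar\alpha)$.

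For the containment of $X$ in a positive system, the two proofs diverge. The paper argues purely from the definition of biconvex: since $A$ is clos and the real roots of an untwisted affine system are exactly $\Phi^s+\mathbb Z\delta$, the image $X=\pi(A)$ is clos in $\Phi^s$; and since $\delta\notin A$ forces every $n\delta\notin A$, one gets $X\cap(-X)=\varnothing$; then Bourbaki (chapitre~6, \S1, Proposition~22) gives the conclusion. Your route instead invokes Proposition~\ref{pr:ApproxBiconv} to place any finite collection of lifts inside a single $N_{w_M^{-1}}$, and then uses the $W$-invariance of $\delta$ to derive a height contradiction from any positive integral relation, appealing to Gordan/Stiemke to finish. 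Both are valid; the paper's argument is shorter and uses nothing beyond the definition of biconvex and a standard reference, while yours trades that reference for Proposition~\ref{pr:ApproxBiconv} (which is itself a nontrivial structural input) and a separating-hyperplane argument. One small point: Gordan only hands you a functional strictly positive on the finite set $X$, and you should note the routine perturbation to a \emph{regular} functional so that the associated half-space really is a positive system of $\Phi^s$.
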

\begin{proof}
Since $A$ is clos, so is $X$. Since $\delta\notin A$, we furthermore
have $X\cap(-X)=\varnothing$. By \cite{Bourbaki68}, chapitre~6, \S1,
n\textordmasculine\;7, Proposition~22, $X$ is contained in a positive
root system of $\Phi^s$. Lastly, let $\alpha\in X$, and choose
$\beta\in A\cap\pi^{-1}(\alpha)$ of minimal height. Then
$\beta-\delta$ is not in $A$. It is not in $\Phi_+\setminus A$
either, for this latter is clos and contains $\delta$ but not
$\beta$. Therefore $\beta-\delta\notin\Phi_+$, which means that
$\beta=\iota(\alpha)$. We have shown that $\iota(X)\subseteq A$.
\end{proof}

\begin{lemma}
\label{le:ApproxAdjBiconv}
Let $\alpha\in\Phi_+^{\re}$ and let $A$ and $B$ be two biconvex
subsets such that $B=A\sqcup\{\alpha\}$. We assume that
$\delta\notin A$. Then, for each finite subset $X\subseteq A$,
there are finite biconvex subsets $A'\subseteq A$ and $B'\subseteq B$
such that $X\subseteq A'$ and $B'=A'\sqcup\{\alpha\}$.
\end{lemma}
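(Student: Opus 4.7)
My plan is to reduce to finding, for a given finite biconvex $A_0 \subseteq A$ containing $X$, a larger finite biconvex $A' \subseteq A$ whose union with $\{\alpha\}$ is still biconvex; then $A'$ and $B' := A' \sqcup \{\alpha\}$ will witness the conclusion. First I would observe that $\alpha \in \Phi_+^{\re}$ forces $\alpha \neq \delta$, so $\delta \notin B$. A short induction on $n$, using closedness of $\Phi_+ \setminus A$ and of $\Phi_+ \setminus B$ (both of which contain $\delta$), shows that neither $A$ nor $B$ contains any positive imaginary root $n\delta$; hence $A, B \subseteq \Phi_+^{\re}$. Applying Proposition~\ref{pr:ApproxBiconv} to $A$, the finite set $X$ sits inside some finite biconvex $A_0 \subseteq A$, which will serve as the starting point of the construction.

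The next step is to translate biconvexity of $A' \sqcup \{\alpha\}$ into concrete conditions on $A'$. Unpacking the axioms, $A' \sqcup \{\alpha\}$ is biconvex if and only if:
(a) for every $\gamma \in A'$ with $\alpha + \gamma \in \Phi_+$, $\alpha + \gamma \in A'$;
(b) for every $\mu \in A'$ with $\mu - \alpha \in \Phi_+$, $\mu - \alpha \in A'$;
(c) for every decomposition $\alpha = \beta_1 + \beta_2$ with $\beta_1, \beta_2 \in \Phi_+$, at least one of $\beta_1, \beta_2$ lies in $A'$.
The point is that biconvexity of $A$ and $B$ makes the required enlargements possible within $A$: in (a), closedness of $B$ together with $\alpha + \gamma \neq \alpha$ puts $\alpha + \gamma \in A$; in (b), if $\mu - \alpha$ lay in $\Phi_+ \setminus A$, then $\alpha$ and $\mu - \alpha$ would both lie in the closed set $\Phi_+ \setminus A$, forcing $\mu \in \Phi_+ \setminus A$, contradicting $\mu \in A$; in (c), if both $\beta_1, \beta_2$ lay in $\Phi_+ \setminus B$, closedness would put $\alpha \in \Phi_+ \setminus B$, again a contradiction.

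Finally I would build $A'$ by iterated enlargement. Set $A^{(0)}$ to be a finite biconvex subset of $A$ (existing by Proposition~\ref{pr:ApproxBiconv}) containing $A_0$ together with, for each of the finitely many decompositions $\alpha = \beta_1 + \beta_2$ in (c), a chosen summand lying in $A$. Inductively, let $A^{(k+1)}$ be a finite biconvex subset of $A$ containing $A^{(k)}$ together with the (finite) set
$$\{\alpha + \gamma : \gamma \in A^{(k)},\; \alpha + \gamma \in A\} \cup \{\mu - \alpha : \mu \in A^{(k)},\; \mu - \alpha \in A\}.$$
\textbf{The hard part will be proving termination of this iteration.} The key input is that in an affine root system, the $\alpha$-string $\{\gamma + k\alpha : k \in \mathbb Z\} \cap \Phi^{\re}$ through any real root $\gamma$ is finite, so iterated application of $\pm\alpha$ inside $A \subseteq \Phi_+^{\re}$ produces only finitely many elements starting from any fixed root. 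By a careful choice of the biconvex enlargements at each step --- arranging that the elements added by biconvex completion do not spawn fresh infinite $\alpha$-orbits outside $A^{(k)}$, which can be done by keeping heights controlled --- the sequence $(A^{(k)})_{k \geq 0}$ stabilizes in finitely many steps. The stabilized $A'$ then satisfies (a), (b), (c), so $B' := A' \sqcup \{\alpha\}$ is finite biconvex, contained in $B$, and satisfies $A' = B' \setminus \{\alpha\} \subseteq A$, completing the proof.
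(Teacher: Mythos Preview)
Your reduction to conditions (a)--(c) and the observation that these constraints are satisfiable within $A$ (thanks to biconvexity of $A$ and $B$) are correct; in fact condition (b) is redundant, since if $\mu\in A'$ decomposes as $\alpha+(\mu-\alpha)$ with $\mu-\alpha\in\Phi_+\setminus(A'\sqcup\{\alpha\})$, the summand $\alpha$ already lies in $A'\sqcup\{\alpha\}$, so closure of the complement is not violated. The genuine gap is termination. Your iteration alternates two operations: adjoining the finitely many $\alpha$-translates required by (a), and then passing to a finite biconvex subset of $A$ containing the result. The second step is only guaranteed to exist by Proposition~\ref{pr:ApproxBiconv}, which gives no height bound on the enlargement; each biconvex completion can introduce new roots $\gamma$ of arbitrary height, each spawning a new requirement $\alpha+\gamma\in A'$. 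Finiteness of the $\alpha$-string through a \emph{fixed} $\gamma$ does not control this, and ``keeping heights controlled'' is asserted but not argued---it is not clear how to choose the biconvex completions so that the process stabilises.

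The paper avoids iteration entirely by working on the $B$ side and exploiting the explicit description of finite biconvex sets. Having fixed a finite biconvex $A_0\subseteq A$ containing $X$, it applies Proposition~\ref{pr:ApproxBiconv} once more, now to $B$ (noting $\delta\notin B$), to obtain a finite biconvex $B_0\subseteq B$ containing $A_0\cup\{\alpha\}$. By Example~\ref{ex:BiconvexSets}~\ref{it:BSb} one writes $A_0=N_u$ and $B_0=N_{uv}$ with $\ell(uv)=\ell(u)+\ell(v)$ (Lemma~\ref{le:CombiCox}). Choosing a reduced decomposition $v=s_{i_1}\cdots s_{i_\ell}$, the root $\alpha$ appears as $us_{i_1}\cdots s_{i_{k-1}}\alpha_{i_k}$ for some $k$; setting $A'=N_{us_{i_1}\cdots s_{i_{k-1}}}$ and $B'=N_{us_{i_1}\cdots s_{i_k}}$ gives finite biconvex sets with $X\subseteq A_0\subseteq A'$, $B'=A'\sqcup\{\alpha\}$, $B'\subseteq B_0\subseteq B$, and $A'=B'\setminus\{\alpha\}\subseteq A$. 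The structural fact that every finite biconvex set is an $N_w$, together with the chain $N_u\subseteq N_{us_{i_1}}\subseteq\cdots\subseteq N_{uv}$ adding one root at a time, is what replaces your termination argument.
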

\begin{proof}
Since $A$ is the increasing union of finite biconvex subsets, one
can find a biconvex $A_0\subseteq A$ that contains $X$. Similarly,
one can find a finite biconvex $B_0\subseteq B$ that contains
$A_0\cup\{\alpha\}$. By Example~\ref{ex:BiconvexSets}~\ref{it:BSb},
we can write $A_0=N_u$ and $B_0=N_{uv}$, with $(u,v)\in W^2$.
Lemma~\ref{le:CombiCox} says then that $\ell(uv)=\ell(u)+\ell(v)$.
Let us write a reduced decomposition $s_{i_1}\cdots s_{i_\ell}$ for
$v$. There exists $k$ such that $\alpha=us_{i_1}\cdots s_{i_{k-1}}
\alpha_{i_k}$. We then take $A'=N_{us_{i_1}\cdots s_{i_{k-1}}}$
and $B'=N_{us_{i_1}\cdots s_{i_k}}$.
\end{proof}

One can of course obtain a similar statement in the case where
$\delta\in B$ by taking complements in $\Phi_+$.

\begin{lemma}
\label{le:DeltaAdj}
\begin{enumerate}
\item
\label{it:DAa}
Let $A$ and $B$ be two biconvex subsets such that $B=A\sqcup(\mathbb
Z_{>0}\delta)$. Then there is a positive system $X\subseteq\Phi^s$
such that $A=\{\alpha\in\Phi_+\mid\pi(\alpha)\in X\}$.
\item
\label{it:DAb}
Let $A\subseteq B$ be two biconvex subsets. Suppose that
$A$ is finite and that $B$ is cofinite. Then there is a
positive system $X\subseteq\Phi^s$ such that
$A\subseteq\{\alpha\in\Phi_+\mid\pi(\alpha)\in X\}\subseteq B$.
\end{enumerate}
\end{lemma}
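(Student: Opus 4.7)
The plan is to reduce both parts to finding an appropriate positive root system $X' \subseteq \Phi^s$; in each case this candidate will be obtained by combining the spherical projections $X = \pi(A)$ and $Y = \pi(\Phi_+ \setminus B)$ and invoking Lemma~\ref{le:ProjBiconv} together with Bourbaki's Proposition~22 cited in its proof.

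For~(i), I would first note that $A$ contains no imaginary roots (otherwise closure under addition forces infinitely many positive multiples of~$\delta$ into $A$, contradicting $A \cap \mathbb{Z}_{>0}\delta = \varnothing$), and similarly $\Phi_+ \setminus B$ contains none, giving the partition $\Phi_+^{\re} = A \sqcup (\Phi_+ \setminus B)$. Lemma~\ref{le:ProjBiconv} then places each of $X, Y$ inside a positive system of $\Phi^s$ and supplies the lifts $\iota(X) \subseteq A$, $\iota(Y) \subseteq \Phi_+ \setminus B$. Using the fiber description $\pi^{-1}(\alpha) \cap \Phi_+^{\re} = \{\iota(\alpha) + n\delta \mid n \geq 0\}$, the partition of $\Phi_+^{\re}$ forces $X \cup Y = \Phi^s$. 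A short case analysis on the shifts $a, b$ of two putative elements $\iota(\alpha) + a\delta \in A$ and $\iota(\alpha) + b\delta \in \Phi_+ \setminus B$ then rules out $X \cap Y \neq \varnothing$: the case $a < b$ contradicts cloturity of $B$, while $a > b$ contradicts cloturity of $\Phi_+ \setminus A$. Combined with $X \cap (-X) = \varnothing$ from Lemma~\ref{le:ProjBiconv}, the partition $X \sqcup Y = \Phi^s$ forces $Y = -X$, so $X$ is a positive system; the description $A = \{\alpha \in \Phi_+ \mid \pi(\alpha) \in X\}$ follows from the same fiber description together with cloturity of $B$.

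For~(ii), the goal is to show that $X \cup (-Y) \subseteq \Phi^s$ is clos and has no opposite pairs; Bourbaki's proposition will then enclose it in a positive system $X'$, and $\{\alpha \in \Phi_+ \mid \pi(\alpha) \in X'\}$ will be sandwiched between $A$ and $B$ (the right-hand inclusion because $X' \cap Y = \varnothing$). Absence of opposite pairs reduces to $X \cap Y = \varnothing$, which goes through by the same case analysis on shifts as in~(i), using that $B$ cofinite gives $\mathbb{Z}_{>0}\delta \subseteq B$, $A$ finite has no imaginary roots, and $A \subseteq B$ gives $\Phi_+ \setminus B \subseteq \Phi_+ \setminus A$. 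For cloturity of $X \cup (-Y)$, take $\mu, \nu$ in it with $\mu + \nu \in \Phi^s$: when both lie in $X$ (resp.\ in $-Y$), the positive real lift $\iota(\mu) + \iota(\nu)$ combined with cloturity of $A$ (resp.\ of $\Phi_+ \setminus B$) yields the claim. The mixed case $\mu \in X$, $\nu \in -Y$ is the heart of the argument: the element $\iota(\mu) - \iota(-\nu)$ projects to $\mu + \nu \in \Phi^s$ and is therefore a real root, positive or negative. If positive, cloturity of $\Phi_+ \setminus A$ applied to the decomposition $\iota(\mu) = (\iota(\mu) - \iota(-\nu)) + \iota(-\nu)$, together with $\iota(-\nu) \in \Phi_+ \setminus B \subseteq \Phi_+ \setminus A$ and $\iota(\mu) \in A$, forces $\iota(\mu) - \iota(-\nu) \in A$, hence $\mu + \nu \in X$. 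If negative, the symmetric computation $\iota(-\nu) = (\iota(-\nu) - \iota(\mu)) + \iota(\mu)$ combined with cloturity of $B$ forces $\iota(-\nu) - \iota(\mu) \in \Phi_+ \setminus B$, hence $\mu + \nu \in -Y$.

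The main obstacle is this mixed cloturity case in~(ii): it requires picking the right one among $A$, $B$, $\Phi_+ \setminus A$, $\Phi_+ \setminus B$ to apply cloturity to in each sign alternative, and crucially relies on the hypothesis $A \subseteq B$ (equivalently $\Phi_+ \setminus B \subseteq \Phi_+ \setminus A$) to make both alternatives go through.
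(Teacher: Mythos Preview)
Your proof is correct. For part~(i) your approach coincides with the paper's: set $X=\pi(A)$ and $Y=\pi(\Phi_+\setminus B)$, show they partition $\Phi^s$, and conclude that $X$ is a positive system. (Your case analysis on shifts for $X\cap Y=\varnothing$ is more than needed, since Lemma~\ref{le:ProjBiconv} already gives $\iota(X)\subseteq A$ and $\iota(Y)\subseteq\Phi_+\setminus B$, which are disjoint because $A\subseteq B$; the paper then finishes by observing that $X$ and $Y$ are clos and citing Bourbaki's Corollary~1 to Proposition~20, rather than your route via $Y=-X$.)

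For part~(ii) your argument is genuinely different. The paper exploits the Weyl-group parametrization of finite and cofinite biconvex sets (Example~\ref{ex:BiconvexSets}~\ref{it:BSb}): writing $A=N_{u^{-1}}$ and $\Phi_+\setminus B=N_v$, the hypothesis $A\subseteq B$ becomes $\ell(uv)=\ell(u)+\ell(v)$, and one applies Lemma~\ref{le:ProjBiconv} once to the single finite biconvex set $N_{uv}$; translating the resulting positive system by $u^{-1}$ then separates $-N_{u^{-1}}$ from $N_v$. Your argument instead stays purely at the level of the spherical projections and proves directly that $X\cup(-Y)$ is clos without opposite pairs, the mixed case being handled by the sign dichotomy on $\iota(\mu)-\iota(-\nu)$. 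Your route is more elementary in that it never invokes the structure of $W$, at the cost of that somewhat delicate mixed-case check; the paper's route is shorter but relies on the Weyl-group input. Both ultimately rest on the same Bourbaki lemma.
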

\begin{proof}
Let us first show \ref{it:DAa}. We take $A$ and $B$ as in the
statement to be proved. Let $X=\pi(A)$ and $Y=\pi(\Phi_+\setminus B)$.
Certainly, $\Phi^s=\pi(\Phi_+^{\re})=X\cup Y$. In addition,
$X$ and $Y$ are disjoint, because otherwise the inclusions
$\iota(X)\subseteq A$ and $\iota(Y)\subseteq\Phi_+\setminus B$
given by Lemma~\ref{le:ProjBiconv} would force $A$ and
$\Phi_+\setminus B$ to share a common element. Lastly,
$X$ and $Y$ are clos. By \cite{Bourbaki68}, chapitre~6, \S1,
n\textordmasculine\;7, Corollaire~1 to Proposition~20, $X$ is a
positive system in $\Phi^s$. Assertion~\ref{it:DAa} then follows
from the observation that $A\subseteq\pi^{-1}(X)$ and
$\pi(\Phi_+\setminus A)\subseteq Y\cup\{0\}$.

Now we consider \ref{it:DAb}. By
Example~\ref{ex:BiconvexSets}~\ref{it:BSb}, there
are $(u,v)\in W^2$ such that $A=A_u=N_{u^{-1}}$ and
$B=A^v=\Phi_+\setminus N_v$. The condition $A\subseteq B$
means that $N_{u^{-1}}\cap N_v=\varnothing$, so
$\ell(uv)=\ell(u)+\ell(v)$ by Lemma~\ref{le:CombiCox}.
By Lemma~\ref{le:ProjBiconv}, $\pi(N_{uv})$ is contained
in a positive root system of $\Phi^s$, say~$Y$.
Since $\pi:\mathbb RI\to\mathfrak t^*$ is $W$-equivariant,
$\pi(u^{-1}N_{uv})$ is contained in $u^{-1}Y$.
Let $X=-u^{-1}Y$. From the equality
$u^{-1}N_{uv}=u^{-1}(N_u\sqcup uN_v)=(-N_{u^{-1}})\sqcup N_v$,
we deduce that $\pi(N_{u^{-1}})\subseteq X$ and that
$\pi(N_v)\cap X=\varnothing$. Therefore
$N_{u^{-1}}\subseteq\{\alpha\in\Phi_+\mid\pi(\alpha)\in X\}
\subseteq\Phi_+\setminus N_v$.
\end{proof}

\begin{lemma}
\label{le:BiconvPosRS}
For any biconvex set $A$, the convex cones spanned by $A$ and
by $\Phi_+\setminus A$ intersect only at the origin.
\end{lemma}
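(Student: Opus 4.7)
The plan is to reduce the statement to the case of a \emph{finite} biconvex subset via Proposition~\ref{pr:ApproxBiconv}, and then to strictly separate such a finite set from its complement in $\Phi_+$ by a single linear form.

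Assume for contradiction that some nonzero vector $v$ lies in both convex cones. Writing $v=\sum_{\alpha\in X}c_\alpha\alpha=\sum_{\beta\in Y}d_\beta\beta$ with $X\subseteq A$ and $Y\subseteq\Phi_+\setminus A$ finite and all coefficients positive, we distinguish two cases. If $\delta\notin A$, Proposition~\ref{pr:ApproxBiconv} supplies an increasing sequence of finite biconvex sets $A_n$ whose union is $A$; choose $n$ such that $X\subseteq A_n$. Since $A_n\subseteq A$ forces $\Phi_+\setminus A\subseteq\Phi_+\setminus A_n$, the vector $v$ still lies in both cones spanned by $A_n$ and by $\Phi_+\setminus A_n$, which reduces us to the finite case. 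If instead $\delta\in A$, then $\delta\notin\Phi_+\setminus A$ and the same argument applied to the biconvex set $\Phi_+\setminus A$ performs the reduction (after swapping the roles of the two cones).

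For the finite case, Example~\ref{ex:BiconvexSets}\ref{it:BSb} lets us write $A=N_w$ for some $w\in W$. Fix $\rho\in(\mathbb{R}I)^*$ with $\langle\rho,\alpha_i\rangle>0$ for every $i\in I$; since every positive root, real or an integer multiple of $\delta$, is a nonnegative combination of simple roots with at least one positive coefficient, this choice gives $\langle\rho,\alpha\rangle>0$ for all $\alpha\in\Phi_+$. Set $\theta=-w\rho$, so that $\langle\theta,\alpha\rangle=-\langle\rho,w^{-1}\alpha\rangle$. Because $W$ fixes $\delta$ and hence permutes $\Phi_+^{\im}$, the set $\Phi_+\setminus N_w$ consists exactly of the positive roots that $w^{-1}$ sends into $\Phi_+$, whereas $N_w$ consists of those sent into $\Phi_-$. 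Thus $\theta$ is strictly positive on $A$ and strictly negative on $\Phi_+\setminus A$. Evaluating on the two expressions for $v$ yields simultaneously $\langle\theta,v\rangle\geq 0$ and $\langle\theta,v\rangle\leq 0$, whence $\langle\theta,v\rangle=0$; but then every $c_\alpha$ and every $d_\beta$ must vanish, contradicting $v\neq 0$.

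The only delicate point is the behaviour of the separating form on the imaginary direction; this is what dictates choosing $\rho$ strictly positive on every simple root (so that $\langle\rho,\delta\rangle>0$ automatically), and also what explains why the reduction step needs Proposition~\ref{pr:ApproxBiconv} rather than a naive one-shot construction of a separating $\theta$ in the infinite case.
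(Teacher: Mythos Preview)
Your proof is correct and follows essentially the same route as the paper's: reduce via Proposition~\ref{pr:ApproxBiconv} (or complementation) to a finite biconvex set $N_w$, then separate by a linear form. The only cosmetic difference is that the paper obtains the separating form by quoting Proposition~\ref{pr:ATheta} (which gives $N_w=A_\theta^{\min}$ for a suitable $\theta$), whereas you construct it explicitly as $\theta=-w\rho$ with $\rho\in C_0$; these are the same form.
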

\begin{proof}
Taking complements in $\Phi_+$, we can assume that $\delta\notin A$.
Suppose that the convex cones spanned by $A$ and $\Phi_+\setminus
A$ share a common nonzero element $x$. This $x$ can be expressed
as a non-negative linear combination of a finite family of elements
of $A$, so using Proposition~\ref{pr:ApproxBiconv}, we can find
a finite biconvex subset $B\subseteq A$ such that $x$ belongs to
the convex cone spanned by $B$. Further there exists
$\theta\in(\mathbb RI)^*$ such that $B=A_\theta^{\min}$,
by Example~\ref{ex:BiconvexSets}~\ref{it:BSb} and
Proposition~\ref{pr:ATheta}. We then have $\langle\theta,x\rangle>0$.
On the other hand, $x$ belongs to the convex cone spanned by
$\Phi_+\setminus B=A_{-\theta}^{\max}$, and so
$\langle-\theta,x\rangle\geq0$. This contradiction shows that
the convex cones spanned by $A$ and $\Phi_+\setminus A$ do not
share any nonzero element.
\end{proof}

\subsection{Convex orders}
\label{ss:ConvOrder}
One motivation for studying biconvex subsets comes from the
notion of ``convex order'' on $\Phi_+$. Specifically, a preorder
$\preccurlyeq$ on $\Phi_+$ is called a convex order if for all
$(\alpha,\beta)\in\Phi_+^2$, the three following conditions hold:
\begin{gather*}
\alpha\preccurlyeq\beta\;\text{ or }\;\beta\preccurlyeq\alpha,\\[4pt]
\bigl(\alpha+\beta\in\Phi_+\;\text{ and }\;\alpha\preccurlyeq
\beta\bigr)\ \Longrightarrow\
\alpha\preccurlyeq\alpha+\beta\preccurlyeq\beta,\\[4pt]
\bigl(\alpha\preccurlyeq\beta\;\text{ and }\;
\beta\preccurlyeq\alpha\bigr)\ \Longleftrightarrow\
\alpha\text{ and }\beta\text{ are proportional.}
\end{gather*}
In this section, we restrict to affine type. In this case, in
the last condition above, $\alpha$ and $\beta$ are proportional
if and only if they are equal or they are both imaginary.

A terminal section for a convex order $\preccurlyeq$ is a subset
$A\subseteq\Phi_+$ such that
$$\bigl(\alpha\in A\;\text{ and }\;\alpha\preccurlyeq\beta\bigr)\
\Longrightarrow\ \beta\in A.$$
We denote the set of terminal sections of $\preccurlyeq$ by
$\mathscr U(\preccurlyeq)$. The following result, implicit in
\cite{CelliniPapi98,Ito01}, provides the link between biconvex
subsets and convex orders. We leave its (routine) proof as an
exercise for the reader.

\begin{lemma}
\label{le:OrderBiconv}
For each convex order $\preccurlyeq$, the set
$\mathscr U(\preccurlyeq)$ is a maximal totally
ordered subset of $\mathscr V$. The datum of
$\mathscr U(\preccurlyeq)$ completely determines
$\preccurlyeq$.
\end{lemma}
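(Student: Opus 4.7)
The statement has four parts: every terminal section is biconvex (so $\mathscr U(\preccurlyeq)\subseteq\mathscr V$), $\mathscr U(\preccurlyeq)$ is totally ordered by inclusion, this chain is maximal in $\mathscr V$, and $\preccurlyeq$ can be reconstructed from $\mathscr U(\preccurlyeq)$. The first two parts are direct from the axioms. For biconvexity of a terminal section $A$, if $\alpha,\beta\in A$ and $\alpha+\beta\in\Phi_+$, use totality to assume $\alpha\preccurlyeq\beta$; then convexity gives $\alpha\preccurlyeq\alpha+\beta$ so that $\alpha+\beta\in A$, and a symmetric argument shows $\Phi_+\setminus A$ is clos. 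For the total order on $\mathscr U(\preccurlyeq)$, given $A,B\in\mathscr U(\preccurlyeq)$ with $\alpha\in A\setminus B$, any $\beta\in B$ must satisfy $\alpha\preccurlyeq\beta$ (otherwise $\beta\preccurlyeq\alpha$ would force $\alpha\in B$), hence $\beta\in A$; so $B\subseteq A$.

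The main obstacle is maximality. Given a biconvex $C$ not in $\mathscr U(\preccurlyeq)$, there exist $\alpha\in C$ and $\beta\in\Phi_+\setminus C$ with $\alpha\preccurlyeq\beta$; the plan is to exhibit a terminal section incomparable with $C$. The natural candidate is the principal terminal section $A=\{\gamma\in\Phi_+\mid\beta\preccurlyeq\gamma\}$: one has $\beta\in A\setminus C$, so $A\not\subseteq C$, and to get $C\not\subseteq A$ one needs $\alpha\notin A$, i.e.\ $\alpha\prec\beta$ strictly. If instead $\alpha$ and $\beta$ were equivalent for $\preccurlyeq$, the third convex-order axiom would force them proportional, hence equal (impossible, since one lies in $C$ and the other does not) or both imaginary. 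To rule out the imaginary case I would prove the auxiliary fact that for any biconvex $C$, the intersection $C\cap\Phi_+^{\im}$ is either empty or all of $\Phi_+^{\im}$: starting from $n\delta\in C$, applying the closedness of $\Phi_+\setminus C$ to $(k-1)\delta+\delta=k\delta$ for $k$ descending from $n$ forces $\delta\in C$, and then closedness of $C$ yields $m\delta\in C$ for every $m\geq1$ by induction on $m$. Consequently if $\alpha$ and $\beta$ were both imaginary, $\alpha\in C$ would give $\beta\in C$, contradicting our choice. Thus $\alpha\prec\beta$ strictly, and the candidate $A$ is incomparable with $C$ as required.

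For reconstruction, I claim $\alpha\preccurlyeq\beta$ if and only if every $A\in\mathscr U(\preccurlyeq)$ containing $\alpha$ also contains $\beta$. One direction is precisely the definition of a terminal section. For the converse, if $\alpha\not\preccurlyeq\beta$ then $\beta\prec\alpha$ by totality, and the terminal section $\{\gamma\mid\alpha\preccurlyeq\gamma\}$ contains $\alpha$ but not $\beta$, so the condition fails. The dichotomy for imaginary roots in biconvex sets is really the only subtle ingredient in the whole proof; the remaining parts are routine unpackings of the convexity, transitivity, and totality axioms.
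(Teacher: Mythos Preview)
The paper does not actually prove this lemma: it states the result and explicitly leaves its ``(routine) proof as an exercise for the reader.'' Your argument correctly supplies that routine proof, including the one non-obvious point (that a biconvex set in affine type either contains all of $\Phi_+^{\im}$ or none of it), so there is nothing to compare against and nothing to correct.
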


\begin{other}{Remarks}
\label{rk:RkConvexOrder}
\begin{enumerate}
\item
\label{it:RCOa}
It is known that any biconvex subset is the terminal section
of a convex order (Corollary~3.13 in~\cite{CelliniPapi98}).
\item
\label{it:RCOb}
Let us say that a pair $(A,B)$ of biconvex subsets is adjacent if
$A\subsetneq B$ and if there does not exist a biconvex subset $C$ such
that $A\subsetneq C\subsetneq B$. Each $(w,i)\in W\times I$ with
$\ell(ws_i)>\ell(w)$ gives such a pair, namely $(N_w,N_{ws_i})$;
indeed, one here observes that $N_{ws_i}=N_w\sqcup\{w\alpha_i\}$,
so there is no room between $N_w$ and $N_{ws_i}$. Using
Lemma~\ref{le:CombiCox}, one easily shows that any adjacent pair
of finite biconvex subsets is of this form, so the notion of
adjacent biconvex subset generalizes the covering relation for the
weak Bruhat order.
\item
\label{it:RCOc}
Given a real positive root $\alpha\in\Phi_+^{\re}$, let us say
that a pair $(A,B)$ of biconvex subsets is $\alpha$-adjacent if
$B=A\sqcup\{\alpha\}$. Let us say that $(A,B)$ is $\delta$-adjacent
if $B=A\sqcup\bigl(\mathbb Z_{>0}\delta\bigr)$. We conjecture that
a pair $(A,B)$ of biconvex subsets is adjacent (in the sense of
\ref{it:RCOb} above) if and only if there is a root $\beta$ (real
or imaginary) such that $(A,B)$ is $\beta$-adjacent. This conjecture
seems reasonable in view of our current understanding of biconvex
subsets, but we were not able to extract it from the papers
\cite{CelliniPapi98,Ito05}. If it is correct, then Lemma
\ref{le:OrderBiconv} admits a converse, and ``maximal totally
ordered subset of $\mathscr V$'' would be a notion equivalent to
that of ``convex order''. In any case, Zorn's lemma shows that
any totally ordered subset of $\mathscr V$ can be completed to
a maximal one.
\item
\label{it:RCOd}
Let $\preccurlyeq$ be a convex order. The terminal sections
$$\{\beta\in\Phi_+\mid\beta\succ\delta\}\quad\text{and}
\quad\{\beta\in\Phi_+\mid\beta\succcurlyeq\delta\}$$
satisfy the assumptions of Lemma~\ref{le:DeltaAdj}~\ref{it:DAa},
so there is a positive system $X\subseteq\Phi^s$ such that
$\{\beta\in\Phi_+\mid\beta\succ\delta\}=\{\beta\in\Phi_+\mid
\pi(\beta)\in X\}$. This fact was announced in
section~\ref{ss:LuszData}, see equation~\eqref{eq:ConvOrderDelta}.
\end{enumerate}
\end{other}

\begin{other}{Examples}
\label{ex:ExConvexOrder}
\begin{enumerate}
\item
\label{it:ECOa}
Let us consider the type $\widetilde A_1$. As is customary, we use
$I=\{0,1\}$. Then
$$\Phi_+=\{\alpha_0+n\delta,\alpha_1+n\delta,(n+1)\delta\mid
n\in\mathbb N\}.$$
There are exactly two convex orders on $\Phi_+$. One of them is
$$\alpha_1\prec\alpha_1+\delta\prec\alpha_1+2\delta\prec\cdots\prec\delta
\prec\cdots\prec\alpha_0+2\delta\prec\alpha_0+\delta\prec\alpha_0,$$
the other is the opposite order.
\item
\label{it:ECOb}
A linear form $\theta\in(\mathbb RI)^*$ defines a preorder
on $\Phi_+$, as follows: we say that $\alpha\preccurlyeq\beta$ if
$\langle\theta,\alpha\rangle/\height(\alpha)\leq\langle\theta,
\beta\rangle/\height(\beta)$. For $\theta$ general enough (outside
countably many hyperplanes), this preorder is a convex order.
\end{enumerate}
\end{other}

\subsection{GGMS polytopes in affine type}
\label{ss:GGMSPol}
To a non-empty compact convex subset $K\subseteq\mathbb RI$, one
associates its support function $\psi_K:(\mathbb RI)^*\to\mathbb R$,
defined by $\psi_K(\theta)=\max(\theta(K))$. One can reconstruct
$K$ from the datum of $\psi_K$. If $P$ is a convex polytope, then
$\psi_P$ is piecewise linear; the maximal regions of linearity
are closed cones that cover $(\mathbb RI)^*$. The relative interiors
of these cones and of their faces form the normal fan $\mathscr N_P$
of $P$. In addition, each face of $P$ is of the form
$P_\theta=\{x\in P\mid\langle\theta,x\rangle=\psi_P(\theta)\}$
for some $\theta\in(\mathbb RI)^*$.

We say that a convex lattice polytope $P\subseteq\mathbb RI$ is
GGMS if each open cone of the affine Weyl fan $\mathscr W$ is
contained in an open cone of the normal fan $\mathscr N_P$.
In other words, we ask that for each open cone $C\in\mathscr W$,
there is a vertex $x$ of $P$ such that $P_\theta=\{x\}$ for all
$\theta\in C$. It follows that an edge of a GGMS polytope always
points in a root direction. As we will see later in this section,
a convex lattice polytope $P$ is GGMS if and only if $\mathscr N_P$
is a coarsening of $\mathscr W$, in the sense that each cone of
$\mathscr N_P$ is the union of cones of $\mathscr W$.

Let us fix a GGMS polytope $P$. If $A$ is a finite or cofinite
biconvex subset, then there is a unique open cone $C\in\mathscr W$
such that $A=A_\theta^{\min}=A_\theta^{\max}$ for each
$\theta\in C$, by Example~\ref{ex:BiconvexSets} and
Proposition~\ref{pr:ATheta}, and we denote by $\mu_P(A)$ the
vertex $x$ of $P$ such that $P_\theta=\{x\}$ for each
$\theta\in C$.

We now want to extend this definition to all biconvex subsets
$A$. As mentioned above, the edges of a GGMS polytope $P$
point in root directions. Let us denote by
$E_P\subseteq(\Phi_+^{\re}\sqcup\{\delta\})$ the
(finite) set of all these directions. Furthermore, let us
denote the symmetric difference between two sets $A$ and $B$
by $A\Delta B=(A\setminus B)\cup(B\setminus A)$.

\begin{lemma}
\label{le:DiffVertices}
Let $P$ be a GGMS polytope and let $A$ and $B$ be finite or
cofinite biconvex subsets. Then there is a family of nonnegative
integers $(n_\alpha)_{\alpha\in A\Delta B}$ such that
$n_\alpha=0$ if $\alpha\notin E_P$ and
$$\mu_P(B)-\mu_P(A)=\sum_{\alpha\in B\setminus A}n_\alpha\alpha-
\sum_{\alpha\in A\setminus B}n_\alpha\alpha.$$
\end{lemma}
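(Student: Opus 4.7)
The plan is to connect $\mu_P(A)$ and $\mu_P(B)$ by a path in the $1$-skeleton of $P$ obtained by sweeping a straight line in $(\mathbb RI)^*$ between generic linear forms associated with $A$ and $B$, then to read off the displacement edge by edge.

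First I would use Example~\ref{ex:BiconvexSets}~\ref{it:BSb} and Proposition~\ref{pr:ATheta} (with $J=\varnothing$) to pick linear forms $\theta_A,\theta_B\in(\mathbb RI)^*$ in the interiors of top-dimensional cones of the affine Weyl fan $\mathscr W$ such that $A=A_{\theta_A}^{\max}$ and $B=A_{\theta_B}^{\max}$, with no root orthogonal to $\theta_A$ or $\theta_B$. Because $\mathscr N_P$ has only finitely many cones, I may further perturb $\theta_A$ and $\theta_B$ inside their open cones so that the segment $\theta(t)=(1-t)\theta_A+t\theta_B$, $t\in[0,1]$, meets each codimension-one cone of $\mathscr N_P$ transversally at an isolated time and avoids every cone of codimension at least two.

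Next I would examine the piecewise-constant map $t\mapsto\mu_P(A_{\theta(t)}^{\max})$; by the genericity it takes finitely many values $x_0=\mu_P(A),x_1,\ldots,x_k=\mu_P(B)$, in this order. At each transition time $t_i$ the segment passes through the relative interior of a codimension-one cone of $\mathscr N_P$, so $x_{i-1}$ and $x_i$ lie on a common edge of $P$ of direction some $\alpha_i\in E_P$; write $x_i-x_{i-1}=m_i\alpha_i$ with $m_i\in\mathbb Z\setminus\{0\}$. Since $x_i$ maximizes $\langle\theta(t),\cdot\rangle$ on $P$ just after $t_i$ and $x_{i-1}$ just before, one has $\langle\theta_B-\theta_A,x_i-x_{i-1}\rangle>0$, so the sign of $m_i$ equals that of $\langle\theta_B-\theta_A,\alpha_i\rangle$. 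A straight line meets each affine hyperplane in at most one point, hence the set of roots $\alpha$ for which $\langle\theta_A,\alpha\rangle$ and $\langle\theta_B,\alpha\rangle$ have opposite signs is exactly $A\Delta B$; so each $\alpha_i$ lies in $A\Delta B$, with $\alpha_i\in B\setminus A$ iff $m_i>0$ and $\alpha_i\in A\setminus B$ iff $m_i<0$. Setting $n_{\alpha_i}=|m_i|$ and $n_\alpha=0$ for every other $\alpha\in A\Delta B$ (which automatically forces $n_\alpha=0$ whenever $\alpha\notin E_P$), the equality $\mu_P(B)-\mu_P(A)=\sum_{i=1}^k m_i\alpha_i$ becomes the required formula.

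The main subtlety is the handling of imaginary roots, since all positive multiples $n\delta$ share the single hyperplane $\{\delta=0\}$. When $\theta_A$ lies in $C_T$ and $\theta_B$ in $-C_T$, every $n\delta$ with $n\geq1$ belongs to $A\Delta B$, yet the segment crosses $\{\delta=0\}$ only once, producing at most one edge of direction $\delta\in E_P$ of some integer length $m\geq 0$. Assigning $n_\delta=m$ and $n_{k\delta}=0$ for $k\geq2$ is compatible with the constraint $n_\alpha=0$ for $\alpha\notin E_P$ and reproduces exactly that edge's contribution $m\delta$. A secondary technical point is that $\mathscr W$ is not locally finite in the affine case, so generically avoiding the codimension-two cones of $\mathscr N_P$ has to be established by a perturbation argument using the finiteness of the normal fan rather than of $\mathscr W$ itself.
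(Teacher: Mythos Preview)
Your proof is correct and follows essentially the same approach as the paper: choose generic linear forms in open Weyl cones realizing $A$ and $B$, sweep the segment between them while avoiding codimension-two cones of $\mathscr N_P$, and read off the edge contributions along the resulting path in the $1$-skeleton. Your explicit discussion of the imaginary-root crossing and of why the perturbation argument uses the finiteness of $\mathscr N_P$ rather than $\mathscr W$ is more detailed than the paper's, but the core argument is identical.
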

\begin{proof}
We choose $\theta_0$ and $\theta_1$ in open cones of $\mathscr W$
such that $A=\{\alpha\in\Phi_+\mid\langle\theta_0,\alpha\rangle>0\}$
and $B=\{\alpha\in\Phi_+\mid\langle\theta_1,\alpha\rangle>0\}$.
By moving $\theta_0$ and $\theta_1$ if necessary, we may assume
that the segment $[\theta_0,\theta_1]$ does not meet any cone
of codimension $2$ in the normal fan $\mathscr N_P$. We consider
$\theta(t)=(1-t)\theta_0+t\theta_1$. As $t$ varies from $0$ to
$1$, the face $P_{\theta(t)}$ is generally a vertex of $P$,
occasionally an edge, but never a face of higher dimension.
The vertices and edges found in this way form a path in the
$1$-skeleton of $P$ from $\mu_P(A)$ to $\mu_P(B)$. Each edge
traversed by this path points in the direction of a root
$\alpha$ such that
$\langle\theta_0,\alpha\rangle<0<\langle\theta_1,\alpha\rangle$,
so that either $\alpha\in\Phi_+\cap(B\setminus A)$ or
$-\alpha\in\Phi_+\cap(A\setminus B)$, and moreover the length of
this edge is an integral multiple of $\alpha$, because $P$ is a
lattice polytope.
\end{proof}

With the notation of the lemma, we have $\mu_P(A)=\mu_P(B)$ as
soon as $A\Delta B$ does not meet $E_P$. We can thus extend $\mu_P$
to a map from all of $\mathscr V$ to the set of vertices of $P$
as follows. If $A$ is a biconvex subset such that $\delta\notin A$,
then we set $\mu_P(A)=\mu_P(B)$, where $B$ is any finite biconvex subset
such that $A\cap E_P\subseteq B\subseteq A$; the result does not
depend on the choice of $B$, because the set of all possible $B$
is filtered. Similarly, if $A$ is a biconvex subset such that
$\delta\in A$, then we set $\mu_P(A)=\mu_P(B)$, where $B$ is
any cofinite biconvex subset such that
$A\subseteq B\subseteq(A\cup(\Phi_+\setminus E_P))$.
With these conventions, Lemma~\ref{le:DiffVertices}
trivially extends to any biconvex subsets $A$ and~$B$.

Recall the biconvex subsets $A_\theta^{\min}$ and $A_\theta^{\max}$
from Example~\ref{ex:BiconvexSets}~\ref{it:BSc}.

\begin{proposition}
\label{pr:SuppGGMS}
The support function $\Psi_P$ of a GGMS polytope $P$ is given by
$$\psi_P(\theta)=\langle\theta,\mu_P(A_\theta^{\min})\rangle
=\langle\theta,\mu_P(A_\theta^{\max})\rangle,$$
for all $\theta\in(\mathbb RI)^*$.
\end{proposition}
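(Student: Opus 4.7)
The plan is to perturb $\theta$ generically to a nearby $\theta'$ lying in a top-dimensional open cone of $\mathscr{W}$, thereby reducing the statement to the defining property of $\mu_P$ on finite or cofinite biconvex subsets. The key preliminary observation, to be extracted from the extension procedure preceding the proposition together with Lemma~\ref{le:DiffVertices}, is that the vertex $\mu_P(A)$ depends on the biconvex set $A$ only through the finite intersection $A\cap E_P$; indeed, for any two biconvex sets $A_1$ and $A_2$ with $A_1\cap E_P=A_2\cap E_P$, the extension selects finite or cofinite biconvex approximants $B_i$ (of the appropriate type, depending on whether $\delta\in A_i$) satisfying $B_i\cap E_P=A_i\cap E_P$, and Lemma~\ref{le:DiffVertices} applied to the pair $(B_1,B_2)$ then forces $\mu_P(B_1)=\mu_P(B_2)$.

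To prove the ``$\min$'' equality, the next step is to choose $\eta_-\in(\mathbb{R}I)^*$ satisfying $\langle\eta_-,\alpha\rangle<0$ for every $\alpha$ in the finite set $E_P\cap\{\alpha\in\Phi_+:\langle\theta,\alpha\rangle=0\}$ and $\langle\eta_-,\alpha\rangle\neq0$ for every root $\alpha\in\Phi$. The first requirement is a finite collection of strict inequalities and the second avoids countably many hyperplanes, so such $\eta_-$ exists. Setting $\theta'_-:=\theta+\epsilon\eta_-$, for $\epsilon>0$ sufficiently small one can simultaneously arrange that $\theta'_-$ lies in a top-dimensional cone of $\mathscr{W}$ (by avoiding the countable set of bad values $\epsilon=-\langle\theta,\alpha\rangle/\langle\eta_-,\alpha\rangle$ as $\alpha$ runs over $\Phi$), that the sign of $\langle\theta'_-,\alpha\rangle$ matches that of $\langle\theta,\alpha\rangle$ for each $\alpha\in E_P$ with $\langle\theta,\alpha\rangle\neq0$, and that $\theta'_-$ lies in a single cone $\tau$ of $\mathscr{N}_P$.

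By the GGMS condition, a top-dimensional cone of $\mathscr{W}$ is contained in a top-dimensional cone of $\mathscr{N}_P$, so $\tau$ is top-dimensional, $A_{\theta'_-}^{\min}=A_{\theta'_-}^{\max}$ is finite or cofinite biconvex, and $\mu_P(A_{\theta'_-}^{\min})$ is the unique vertex $x$ of $P$ with $P_{\theta'_-}=\{x\}$. Because $\theta\in\overline{\tau}$, this vertex also lies on the face $P_\theta$, so $\langle\theta,x\rangle=\psi_P(\theta)$. The sign conditions on $\eta_-$ and $\epsilon$ readily yield $A_{\theta'_-}^{\min}\cap E_P=A_\theta^{\min}\cap E_P$, and then the opening observation gives $\mu_P(A_\theta^{\min})=\mu_P(A_{\theta'_-}^{\min})=x$, whence $\psi_P(\theta)=\langle\theta,\mu_P(A_\theta^{\min})\rangle$. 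The ``$\max$'' equality is proved identically, using a perturbation $\theta'_+=\theta+\epsilon\eta_+$ with $\langle\eta_+,\alpha\rangle>0$ for $\alpha\in E_P\cap\{\langle\theta,\cdot\rangle=0\}$, which forces $A_{\theta'_+}^{\min}\cap E_P=A_\theta^{\max}\cap E_P$.

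The main subtlety will be justifying the opening observation in full generality: one must handle the ``mixed'' situation in which one of $A_1,A_2$ contains $\delta$ while the other does not, so that the extension produces approximants of different types (one finite, one cofinite). Once this case is settled, Lemma~\ref{le:DiffVertices} applies uniformly to any pair of finite or cofinite biconvex subsets, and the remainder is just a careful generic perturbation within the finitely many constraints imposed by $E_P$.
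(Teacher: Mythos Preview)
Your perturbation argument is correct, but it takes a considerably longer route than the paper's proof. The paper observes that the extended Lemma~\ref{le:DiffVertices} (which, as stated just before the proposition, ``trivially extends to any biconvex subsets $A$ and $B$'') already gives everything directly: for any biconvex $B$, every root in $A_\theta^{\min}\setminus B$ satisfies $\langle\theta,\alpha\rangle>0$ and every root in $B\setminus A_\theta^{\min}$ satisfies $\langle\theta,\alpha\rangle\leq0$, so the formula of Lemma~\ref{le:DiffVertices} forces $\langle\theta,\mu_P(A_\theta^{\min})-\mu_P(B)\rangle\geq0$. Since every vertex of $P$ is some $\mu_P(B)$, this immediately shows $\mu_P(A_\theta^{\min})$ realizes the maximum of~$\theta$; the $\max$ case is identical. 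No perturbation is needed.

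Your approach instead moves $\theta$ into an open Weyl chamber and uses the invariance $\mu_P(A_1)=\mu_P(A_2)$ whenever $A_1\cap E_P=A_2\cap E_P$. This is valid, but two remarks are in order. First, the ``main subtlety'' you flag (the mixed case where $\delta\in A_1$, $\delta\notin A_2$) is already absorbed into the extension of Lemma~\ref{le:DiffVertices}: since $(A_1\Delta A_2)\cap E_P=\varnothing$, all coefficients $n_\alpha$ vanish and $\mu_P(A_1)=\mu_P(A_2)$ with no further work. Second, the genuinely delicate point in your argument is elsewhere: when $\theta\in\mathfrak t$, the bad values of~$\epsilon$ (those for which $\theta'_-$ hits a root hyperplane) accumulate at~$0$, so you cannot take ``$\epsilon$ sufficiently small'' in the usual sense; you must argue that $(0,\epsilon_0)$ minus a countable set is nonempty, and separately that for a \emph{sequence} of such good $\epsilon\to0$ the resulting vertices $\mu_P(A_{\theta'_-}^{\min})$ coincide (via your opening observation), so that $\theta$ lies in the closure of a single normal cone. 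These steps are all fine once spelled out, but they make your proof substantially longer than the paper's two-line sign computation.
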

\begin{proof}
Let $P$ be a GGMS polytope, let $\theta\in(\mathbb RI)^*$,
and let $B$ be any biconvex subset. By definition,
$\langle\theta,\alpha\rangle>0$ for each
$\alpha\in A_\theta^{\min}\setminus B$, and
$\langle\theta,\alpha\rangle\leq0$ for each
$\alpha\in B\setminus A_\theta^{\min}$.
Lemma~\ref{le:DiffVertices} then implies that
$\langle\theta,\mu_P(A_\theta^{\min})-\mu_P(B)\rangle\geq0$.
Since each vertex of $P$ can be written as a $\mu_P(B)$,
it follows that $\langle\theta,\mu_P(A_\theta^{\min})\rangle$ is
the supremum of $\theta$ on $P$, whence the first equality.
The second equality also directly follows
Lemma~\ref{le:DiffVertices}.
\end{proof}

The biconvex subsets $A_\theta^{\min}$ and $A_\theta^{\max}$
only depend on the cone of $\mathscr W$ to which $\theta$ belongs.
From Proposition~\ref{pr:SuppGGMS}, we then deduce that the support
function of a GGMS polytope $P$ is linear on each cone of
$\mathscr W$. Therefore the normal fan $\mathscr N_P$ is a
coarsening of $\mathscr W$, as announced earlier in this section.

The fact that $\mathscr N_P$ is a coarsening of $\mathscr W$
restricts the shape of the $2$-faces of $P$. Specifically,
given~$\theta$ in a codimension $2$ face of $\mathscr W$,
two cases can happen: either $\pm\theta\in C_T$, and then
$\Phi\cap(\ker\theta)$ is a finite root system of type
$A_1\times A_1$ or $A_2$; or $\theta$ belongs to a facet of
the spherical Weyl fan, and then $\Phi\cap(\ker\theta)$ is
an affine root system of type $\widetilde A_1$. In both cases,
$P_\theta$ is a GGMS polytope of the same type as
$\Phi\cap(\ker\theta)$. In the former case, we thus say
that $P_\theta$ is a $2$-face of finite type; in the latter,
we say that $P_\theta$ is a $2$-face of affine type.

Now fix a GGMS polytope $P$ and a convex order $\preccurlyeq$ on
$\Phi_+$. For $\alpha\in\Phi_+^{\re}\sqcup\{\delta\}$, look at
$$A=\{\beta\in\Phi_+\mid\beta\succ\alpha\}\quad\text{and}\quad
B=\{\beta\in\Phi_+\mid\beta\succcurlyeq\alpha\}.$$
These are biconvex subsets such that $B=A\sqcup\{\alpha\}$,
if $\alpha$ is real, or $B=A\sqcup\bigl(\mathbb Z_{>0}\delta\bigr)$,
if $\alpha=\delta$. In either case, $[\mu_P(A),\mu_P(B)]$ is an edge
of $P$ (possibly degenerate) which points in the direction of
$\alpha$, so we may write $\mu_P(B)-\mu_P(A)=n_\alpha\alpha$,
where $n_\alpha$ is a non-negative integer (nonzero only if
$\alpha\in E_P$). For any $A\in\mathscr U(\preccurlyeq)$, we then have
$$\mu_P(A)=\sum_{\alpha\in A\cap(\Phi_+^{\re}\sqcup\{\delta\})}
n_\alpha\alpha.$$

The collection of numbers $(n_\alpha)$ will be called the Lusztig
datum of $P$ in direction $\preccurlyeq$. We will however later
decorate our GGMS polytopes in order to refine the information
carried by $n_\delta$, taking into account all the imaginary
roots and their multiplicities.

\section{Torsion pairs and Harder-Narasimhan polytopes}
\label{se:TorHNPoly}
In this section we will study general facts about torsion pairs and
Harder-Narasimhan polytopes.
We consider an essentially small abelian category $\mathscr A$ such
that all objects have finite length. This assumption ensures that
the Grothendieck group $\mathbf K(\mathscr A)$ is a free abelian group,
with basis the set of isomorphism classes of simple objects. As usual,
we denote by $[T]$ the class in $\mathbf K(\mathscr A)$ of an object
$T\in\mathscr A$. Our subcategories will always be full subcategories.

\subsection{Torsion pairs}
\label{ss:TorPairs}
Following~\cite{Assem90}, a torsion pair in $\mathscr A$ is
a pair $(\mathscr T,\mathscr F)$ of two subcategories, called the
torsion class and the torsion-free class, that
satisfy the following two axioms:
\begin{description}
\item[(T1)]
$\Hom_{\mathscr A}(X,Y)=0$ for each
$(X,Y)\in\mathscr T\times\mathscr F$.
\item[(T2)]
Each object $T\in\mathscr A$ has a subobject~$X$
such that $(X,T/X)\in\mathscr T\times\mathscr F$.
\end{description}
Axiom (T1) forces the subobject $X$ in (T2) to be the largest
subobject of $T$ that belongs to $\mathscr T$, and a fortiori to
be unique; this $X$ is called the torsion subobject of $T$ with
respect to the torsion pair $(\mathscr T,\mathscr F)$.

An equivalent set of axioms are the two requirements:
\begin{description}
\item[(T'1)]
$\mathscr T=\{X\in\mathscr A\mid\forall Y\in\mathscr F,\
\Hom(X,Y)=0\}$.
\item[(T'2)]
$\mathscr F=\{Y\in\mathscr A\mid\forall X\in\mathscr T,\
\Hom(X,Y)=0\}$.
\end{description}
With this second formulation, it is clear that $\mathscr T$ is closed
under taking quotients and extensions and that $\mathscr F$ is closed
under taking subobjects and extensions.

Given two torsion pairs $(\mathscr T',\mathscr F')$ and
$(\mathscr T'',\mathscr F'')$, we write
$(\mathscr T',\mathscr F')\preccurlyeq(\mathscr T'',\mathscr F'')$
if the following three equivalent conditions hold:
$$\mathscr T'\subseteq\mathscr T'',\qquad
\mathscr F'\supseteq\mathscr F'',\qquad
\mathscr T'\cap\mathscr F''=\{0\}.$$
In this case, each object $T\in\mathscr A$ is endowed with
a three-step filtration $0\subseteq X'\subseteq X''\subseteq T$,
where $X'$ and $X''$ are the torsion subobjects of $T$ with respect
to $(\mathscr T',\mathscr F')$ and $(\mathscr T'',\mathscr F'')$,
respectively. Since $\mathscr F'$ is stable under taking subobjects
and $\mathscr T''$ is stable under taking quotients, we have
$(X',X''/X',T/X'')\in(\mathscr T',\mathscr F'\cap\mathscr
T'',\mathscr F'')$.

A typical example of torsion pair is obtained by the following
construction, directly translated from the well-known theories
of Harder-Narasimhan filtrations and stability conditions
\cite{Shatz77,Reineke03,Rudakov97}. Fix a group homomorphism
$\theta:\mathbf K(\mathscr A)\to\mathbb R$ and define five subcategories
$\mathscr I_\theta$, $\overline{\mathscr I}_\theta$, $\mathscr P_\theta$,
$\overline{\mathscr P}_\theta$ and $\mathscr R_\theta$ of $\mathscr A$:
\begin{itemize}
\item
An object $T$ is in $\mathscr I_\theta$ (respectively,
$\overline{\mathscr I}_\theta$) if any nonzero
quotient $X$ of $T$ satisfies $\theta([X])>0$ (respectively,
$\theta([X])\geq0$).
\item
An object $T$ is in $\mathscr P_\theta$ (respectively,
$\overline{\mathscr P}_\theta$) if any nonzero
subobject $X$ of $T$ satisfies $\theta([X])<0$ (respectively,
$\theta([X])\leq0$).
\item
An object $T$ is in $\mathscr R_\theta$ if
$\theta([T])=0$ and any nonzero subobject $X$ of $T$
satisfies $\theta([X])\leq0$.
\end{itemize}

The objects in the category $\mathscr R_\theta$ are
called $\theta$-semistable~\cite{King94}. Note that
$\mathscr R_\theta=\overline{\mathscr I}_\theta\cap
\overline{\mathscr P}_\theta$.

\begin{proposition}
\label{pr:PRITheta}
Both $(\mathscr I_\theta,\overline{\mathscr P}_\theta)$ and
$(\overline{\mathscr I}_\theta,\mathscr P_\theta)$ are torsion
pairs in $\mathscr A$. The category $\mathscr R_\theta$ is an
abelian subcategory of $\mathscr A$.
\end{proposition}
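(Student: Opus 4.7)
The plan is to use the standard Harder-Narasimhan argument, specializing it to our slightly unusual setup where $\theta$ need not be an honest slope function but simply a $\mathbb R$-valued homomorphism on the Grothendieck group. The main organizing fact will be that since every object has finite length, the set $\{\theta([X])\mid X\subseteq T\}$ is finite (the Jordan-Hölder factors of any $X\subseteq T$ are among those of $T$), hence
\[m=\max\{\theta([X])\mid X\subseteq T\}\]
is attained. By Noetherianity/Artinianity one can then choose both the smallest subobject $X^{\min}\subseteq T$ with $\theta([X^{\min}])=m$ and the largest $X^{\max}$ with $\theta([X^{\max}])=m$. These are the candidates for the torsion subobjects of $T$ for the two pairs.

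First I would verify axiom (T1) for both pairs uniformly. Given $f:X\to Y$ nonzero with, say, $X\in\mathscr I_\theta$ and $Y\in\overline{\mathscr P}_\theta$, the object $\im f$ is a nonzero quotient of $X$ (so $\theta([\im f])>0$) and a nonzero subobject of $Y$ (so $\theta([\im f])\leq0$), a contradiction. The same image-factorization argument handles the pair $(\overline{\mathscr I}_\theta,\mathscr P_\theta)$. Then I would check (T2) for $(\mathscr I_\theta,\overline{\mathscr P}_\theta)$: take $X^{\min}$ as above. For any quotient $X^{\min}/X_1$ with $X_1\subsetneq X^{\min}$, additivity of $\theta$ on short exact sequences gives $\theta([X^{\min}/X_1])=m-\theta([X_1])>0$ by minimality, so $X^{\min}\in\mathscr I_\theta$; for any subobject $Z/X^{\min}\subseteq T/X^{\min}$, maximality of $m$ gives $\theta([Z/X^{\min}])=\theta([Z])-m\leq0$, so $T/X^{\min}\in\overline{\mathscr P}_\theta$. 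The dual argument with $X^{\max}$ settles (T2) for $(\overline{\mathscr I}_\theta,\mathscr P_\theta)$.

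For the second assertion I would first note the identity $\mathscr R_\theta=\overline{\mathscr I}_\theta\cap\overline{\mathscr P}_\theta$: the inclusion $\supseteq$ follows by taking $T$ as its own subobject and quotient (forcing $\theta([T])=0$), and the reverse from the fact that if $\theta([T])=0$ and all subobjects $X\subseteq T$ satisfy $\theta([X])\leq0$, then $\theta([T/X])=-\theta([X])\geq0$ for every quotient. Given this, closure of $\mathscr R_\theta$ under the various operations is a mechanical check using additivity: for a morphism $f:X\to Y$ in $\mathscr R_\theta$ one has $\theta([\ker f])\leq0$ and $\theta([\im f])\leq0$ summing to $\theta([X])=0$, so both vanish; subobjects of $\ker f$ are subobjects of $X$ and subobjects of $\coker f$ pull back to subobjects of $Y$, so they satisfy the semistability condition, proving $\ker f,\im f,\coker f\in\mathscr R_\theta$. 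Closure under extensions $0\to X\to E\to Y\to0$ follows from $\theta([Z])=\theta([Z\cap X])+\theta([Z/(Z\cap X)])\leq0$ for any $Z\subseteq E$, using that $Z\cap X\subseteq X$ and $Z/(Z\cap X)\hookrightarrow Y$.

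There is no real obstacle here beyond being careful about which direction of inequality one uses where; the only non-formal input is the existence of $X^{\min}$ and $X^{\max}$, which rests entirely on the finite-length hypothesis on $\mathscr A$. Everything else is formal manipulation of the defining conditions together with additivity of $\theta$ on short exact sequences.
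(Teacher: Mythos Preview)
Your proof is correct and takes a genuinely different route from the paper's. The paper establishes (T2) for $(\mathscr I_\theta,\overline{\mathscr P}_\theta)$ by first proving that $\mathscr I_\theta$ is closed under extensions, then choosing a \emph{maximal} subobject $X\subseteq T$ lying in $\mathscr I_\theta$, and finally arguing by contradiction (via a minimal destabilizing subobject of $T/X$) that $T/X\in\overline{\mathscr P}_\theta$. You instead work directly with the value $m=\max\{\theta([X])\mid X\subseteq T\}$ and pick the \emph{smallest} subobject attaining it; the verification that this subobject lies in $\mathscr I_\theta$ and its quotient in $\overline{\mathscr P}_\theta$ is then a one-line computation with no contradiction step and no need to first establish extension-closure. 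Your argument is the classical Harder--Narasimhan ``maximal destabilizer'' construction, and it is cleaner here; the paper's approach has the minor advantage that it makes the extension-closure of $\mathscr I_\theta$ explicit (a fact used later anyway, since torsion classes are always extension-closed). For $\mathscr R_\theta$ the paper simply declares the result well-known, whereas you supply the standard verification; that is fine.
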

\begin{proof}
Let us first prove that
$(\mathscr I_\theta,\overline{\mathscr P}_\theta)$ is a torsion pair.
The axiom (T1) is obvious, so we have to prove the axiom (T2).

We first show that $\mathscr I_\theta$ is closed under extensions.
Let $0\to T'\to T\xrightarrow fT''\to0$ be a short exact sequence
with $T'$ and $T''$ in $\mathscr I_\theta$ and let $g:T\to X$ be
an epimorphism. The pushout of $(f,g)$ then exhibits $X$ as the
extension of a quotient $X''$ of $T''$ by a quotient $X'$ of $T'$.
By assumption, $\theta([X'])$ and $\theta([X''])$ are both nonnegative,
so $\theta([X])\geq0$. Moreover, equality holds only if both $X'$
and $X''$ are zero, thus only if $X=0$.

Now let $T\in\mathscr A$. Our assumption of finite length allows
us to pick a maximal element $X$ among the subobjects of $T$ that
belong to $\mathscr I_\theta$. Suppose that $T/X$ is not in
$\overline{\mathscr P}_\theta$. Then it contains a subobject $Y$
such that $\theta([Y])>0$, and we may assume that $Y$ has been
chosen minimal with this property. Certainly, $Y$ does not belong
to $\mathscr I_\theta$; otherwise, the extension of $Y$ by $X$
inside $T$ would belong to $\mathscr I_\theta$, contradicting the
maximality of $X$. So $Y$ has a nonzero quotient $Y/Z$ such that
$\theta([Y/Z])\leq0$. Since $Z$ is a subobject of $T/X$ properly
contained in $Y$, the minimality of $Y$ requires
$\theta([Z])\leq0$. We thus reach a contradiction, namely
$0\geq\theta([Z])+\theta([Y/Z])=\theta([Y])>0$. Therefore
$T/X\in\overline{\mathscr P}_\theta$, which establishes (T2).

We have thus shown that
$(\mathscr I_\theta,\overline{\mathscr P}_\theta)$ is a torsion pair.
The proof for $(\overline{\mathscr I}_\theta,\mathscr P_\theta)$ is
similar. The fact that $\mathscr R_\theta$ is an abelian subcategory
is well-known.
\end{proof}

Since $(\mathscr I_\theta,\overline{\mathscr P}_\theta)
\preccurlyeq(\overline{\mathscr I}_\theta,\mathscr P_\theta)$,
these two torsion pairs endow each object $T\in\mathscr A$
with a three-step filtration $0\subseteq T_\theta^{\min}\subseteq
T_\theta^{\max}\subseteq T$. The quotient
$T_\theta^{\max}/T_\theta^{\min}$ belongs to $\mathscr R_\theta=
\overline{\mathscr I}_\theta\cap\overline{\mathscr P}_\theta$.

\begin{proposition}
\label{pr:TminTmax}
Let $\theta:\mathbf K(\mathscr A)\to\mathbb R$ be a group homomorphism
and let $T\in\mathscr A$. Then
$$\theta([T_\theta^{\min}])=\theta([T_\theta^{\max}])\geq\theta([X])$$
for any subobject $X\subseteq T$. Equality holds if only if
$T_\theta^{\min}\subseteq X\subseteq T_\theta^{\max}$ and
$X/T_\theta^{\min}$ is $\theta$-semistable.
\end{proposition}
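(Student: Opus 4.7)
The plan is to exploit the three-step filtration $0\subseteq T_\theta^{\min}\subseteq T_\theta^{\max}\subseteq T$ produced by the two torsion pairs of Proposition~\ref{pr:PRITheta}, together with the three defining properties: $T_\theta^{\min}\in\mathscr I_\theta$, $T_\theta^{\max}\in\overline{\mathscr I}_\theta$, and $T/T_\theta^{\max}\in\mathscr P_\theta$ (the quotient $T_\theta^{\max}/T_\theta^{\min}$ lives in $\mathscr R_\theta=\overline{\mathscr I}_\theta\cap\overline{\mathscr P}_\theta$, as remarked just before the statement).

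The first equality is immediate: since $T_\theta^{\max}/T_\theta^{\min}\in\mathscr R_\theta$ has $\theta$-value zero by the very definition of $\mathscr R_\theta$, additivity of $\theta$ on short exact sequences gives $\theta([T_\theta^{\min}])=\theta([T_\theta^{\max}])$. For the inequality, given an arbitrary subobject $X\subseteq T$, I would decompose it via $X\cap T_\theta^{\max}\subseteq X$, writing $\theta([X])=\theta([X\cap T_\theta^{\max}])+\theta([X/(X\cap T_\theta^{\max})])$. The quotient $X/(X\cap T_\theta^{\max})$ embeds into $T/T_\theta^{\max}\in\mathscr P_\theta$, so its $\theta$-value is $\leq 0$ and vanishes exactly when $X\subseteq T_\theta^{\max}$. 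On the other hand, $T_\theta^{\max}/(X\cap T_\theta^{\max})$ is a nonzero quotient of $T_\theta^{\max}\in\overline{\mathscr I}_\theta$ unless $X\cap T_\theta^{\max}=T_\theta^{\max}$, so $\theta([X\cap T_\theta^{\max}])\leq\theta([T_\theta^{\max}])$. Adding these gives $\theta([X])\leq\theta([T_\theta^{\max}])$.

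For the equality case, assume $\theta([X])=\theta([T_\theta^{\max}])$. The argument above forces $X\subseteq T_\theta^{\max}$ (otherwise the $\mathscr P_\theta$-part contributes a strict negative). It remains to prove $T_\theta^{\min}\subseteq X$ and $X/T_\theta^{\min}\in\mathscr R_\theta$. I would use the symmetric decomposition via $X\cap T_\theta^{\min}\subseteq X$: the quotient $X/(X\cap T_\theta^{\min})$ is isomorphic to $(X+T_\theta^{\min})/T_\theta^{\min}$, a subobject of $T_\theta^{\max}/T_\theta^{\min}\in\mathscr R_\theta$, hence of $\theta$-value $\leq 0$; this forces $\theta([X\cap T_\theta^{\min}])\geq\theta([X])=\theta([T_\theta^{\min}])$. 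Combined with the reverse bound coming from $T_\theta^{\min}\in\overline{\mathscr I}_\theta$ (which gives $\theta([X\cap T_\theta^{\min}])\leq\theta([T_\theta^{\min}])$), this yields $\theta([T_\theta^{\min}/(X\cap T_\theta^{\min})])=0$; but $T_\theta^{\min}\in\mathscr I_\theta$ then forces this quotient to be zero, i.e. $T_\theta^{\min}\subseteq X$. Once this is known, $X/T_\theta^{\min}$ has $\theta$-value zero and is a subobject of $T_\theta^{\max}/T_\theta^{\min}\in\mathscr R_\theta$, so its subobjects all have $\theta\leq 0$, and hence it lies in $\mathscr R_\theta$. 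The converse direction (that these conditions imply equality) is just additivity.

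The main subtlety, rather than any deep obstacle, is the careful bookkeeping between the strict $\mathscr I_\theta,\mathscr P_\theta$ and non-strict $\overline{\mathscr I}_\theta,\overline{\mathscr P}_\theta$ versions of the torsion classes: one must track which slot contributes the strict inequality and which contributes equality in order to correctly recover both $T_\theta^{\min}\subseteq X$ and $X\subseteq T_\theta^{\max}$ from the single numerical identity $\theta([X])=\theta([T_\theta^{\max}])$. Everything else is diagram chasing with the second isomorphism theorem and additivity of $\theta$.
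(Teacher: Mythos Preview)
Your proof is correct and follows essentially the same approach as the paper: both arguments use the second isomorphism theorem to decompose $[X]$ relative to $T_\theta^{\min}$ and $T_\theta^{\max}$, then exploit the defining properties $T_\theta^{\min}\in\mathscr I_\theta$, $T/T_\theta^{\max}\in\mathscr P_\theta$, and $T_\theta^{\max}/T_\theta^{\min}\in\overline{\mathscr P}_\theta$ to bound each piece. The only difference is organizational---the paper packages the three contributions into a single Grassmann relation $[X+T_\theta^{\max}]+[X\cap T_\theta^{\max}]=[X]+[T_\theta^{\max}]$ and reads off the inequality and the equality conditions simultaneously, whereas you proceed in two passes (first $X\subseteq T_\theta^{\max}$, then $T_\theta^{\min}\subseteq X$).
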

\begin{proof}
We adopt the notation of the statement. Let $X$ be a subobject of
$T$. Since $T_\theta^{\min}\in\mathscr I_\theta$, we have
$\theta([T_\theta^{\min}/(X\cap T_\theta^{\min})])\geq0$, with
equality only if $T_\theta^{\min}\subseteq X$. Since
$T/T_\theta^{\max}\in\mathscr P_\theta$, we have
$\theta([(X+T_\theta^{\max})/T_\theta^{\max}])\leq0$, with
equality only if $X\subseteq T_\theta^{\max}$. Lastly, we note
that $(X\cap T_\theta^{\max})/(X\cap T_\theta^{\min})$ is a
subobject of $T_\theta^{\max}/T_\theta^{\min}$; since the
latter is in $\overline{\mathscr P}_\theta$, we have
$\theta([(X\cap T_\theta^{\max})/(X\cap T_\theta^{\min})])\leq0$,
with equality if and only if $(X\cap T_\theta^{\max})/(X\cap
T_\theta^{\min})$ is $\theta$-semistable. The result now follows
from the Grassmann relation
$[X+T_\theta^{\max}]+[X\cap T_\theta^{\max}]=[X]+[T_\theta^{\max}]$.
\end{proof}

\subsection{Harder-Narasimhan polytopes}
\label{ss:HNPoly}
We set $\mathbf K(\mathscr A)_{\mathbb R}=\mathbf K(\mathscr
A)\otimes_{\mathbb Z}\mathbb R$. We view this $\mathbb R$-vector
space as the inductive limit of its finite dimensional subspaces;
it is thus a locally convex topological vector space. Linear forms
on this vector space are automatically continuous. We denote the
canonical pairing between $\mathbf K(\mathscr A)_{\mathbb R}$ and
its dual $(\mathbf K(\mathscr A)_{\mathbb R})^*$ by angle brackets.
We may regard a linear form
$\theta\in(\mathbf K(\mathscr A)_{\mathbb R})^*$ as a group
homomorphism $\theta:\mathbf K(\mathscr A)\to\mathbb R$, whence
the torsion pairs $(\mathscr I_\theta,\overline{\mathscr P}_\theta)$
and $(\overline{\mathscr I}_\theta,\mathscr P_\theta)$ and the
abelian subcategory $\mathscr R_\theta$.

Given an object $T\in\mathscr A$, there are finitely many classes
$[X]$ of subobjects $X\subseteq T$. The convex hull in
$\mathbf K(\mathscr A)_{\mathbb R}$ of all these points is a convex
lattice polytope. We call it the Harder-Narasimhan polytope of $T$
and we denote it by $\Pol(T)$. The support function $\psi_{\Pol(T)}$
of this polytope is defined as the function on
$(\mathbf K(\mathscr A)_{\mathbb R})^*$ that maps a linear form
$\theta$ to its maximum on $\Pol(T)$. As in section~\ref{ss:GGMSPol},
$P_\theta=\{x\in\Pol(T)\mid\langle\theta,x\rangle=\psi_{\Pol(T)}(\theta)\}$
is a face of~$\Pol(T)$.

The inclusion $i:\mathscr R_\theta\subseteq\mathscr A$ is an
exact functor, so it induces a group homomorphism
$\mathbf K(i):\mathbf K(\mathscr R_\theta)\to\mathbf K(\mathscr A)$
and a corresponding linear map $\mathbf K(i)_{\mathbb R}$.
Proposition~\ref{pr:TminTmax} then receives the following
interpretation.
\begin{corollary}
\label{co:FacesHN}
Let $\theta\in(\mathbf K(\mathscr A)_{\mathbb R})^*$. Let us
denote by $Q$ the HN polytope of $T_\theta^{\max}/T_\theta^{\min}$,
regarded as an object of $\mathscr R_\theta$ (so that
$Q\subseteq\mathbf K(\mathscr R_\theta)_{\mathbb R}$). Then
$$P_\theta=[T_\theta^{\min}]+\mathbf K(i)_{\mathbb R}(Q).$$
\end{corollary}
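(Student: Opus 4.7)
The plan is to identify the vertices of $P_\theta$ via Proposition~\ref{pr:TminTmax} and then match them with those of $Q$ under an affine map.

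First I would unwind the definitions. Since $\Pol(T)$ is by definition the convex hull in $\mathbf K(\mathscr A)_{\mathbb R}$ of the finitely many classes $[X]$ for $X\subseteq T$, the face $P_\theta$ is the convex hull of those particular $[X]$ at which the linear form $\langle\theta,\cdot\rangle$ attains its maximum $\psi_{\Pol(T)}(\theta)$ on $\Pol(T)$. Proposition~\ref{pr:TminTmax} identifies this maximum as $\theta([T_\theta^{\min}])$ and characterizes the $X$ that realize it: precisely those with $T_\theta^{\min}\subseteq X\subseteq T_\theta^{\max}$ such that $X/T_\theta^{\min}\in\mathscr R_\theta$.

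Next I would set up the dictionary with $Q$. Writing $M=T_\theta^{\max}/T_\theta^{\min}$, the lattice isomorphism theorem provides a bijection between subobjects $X$ of $T$ sandwiched between $T_\theta^{\min}$ and $T_\theta^{\max}$ and subobjects $Y$ of $M$, via $Y=X/T_\theta^{\min}$; under this bijection the condition ``$X/T_\theta^{\min}$ is $\theta$-semistable'' becomes ``$Y$ lies in $\mathscr R_\theta$.'' Because $\mathscr R_\theta$ is a full abelian subcategory and $i$ is exact, these $Y$ are exactly the subobjects of $M$ viewed inside $\mathscr R_\theta$, i.e.\ the subobjects whose classes parametrize the vertices of $Q\subseteq\mathbf K(\mathscr R_\theta)_{\mathbb R}$. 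Additivity of the class on the short exact sequence $0\to T_\theta^{\min}\to X\to Y\to 0$ gives $[X]=[T_\theta^{\min}]+\mathbf K(i)([Y])$ in $\mathbf K(\mathscr A)$.

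Finally I would conclude by taking convex hulls. The spanning set of $P_\theta$ is
$$\bigl\{[T_\theta^{\min}]+\mathbf K(i)([Y])\bigm|Y\subseteq M\text{ in }\mathscr R_\theta\bigr\},$$
which is the image of the spanning set of $Q$ under the affine map $x\mapsto[T_\theta^{\min}]+\mathbf K(i)_{\mathbb R}(x)$; since affine maps commute with taking convex hulls, we obtain $P_\theta=[T_\theta^{\min}]+\mathbf K(i)_{\mathbb R}(Q)$. I do not foresee a real obstacle here: the statement is essentially a repackaging of Proposition~\ref{pr:TminTmax} in polytopal language. The only point to check with care is that ``subobject of $M$ in the subcategory $\mathscr R_\theta$'' and ``subobject of $M$ in $\mathscr A$ that happens to belong to $\mathscr R_\theta$'' describe the same data, which is immediate from $\mathscr R_\theta$ being a full subcategory and $i$ exact.
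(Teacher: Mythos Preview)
Your proposal is correct and matches the paper's approach: the paper presents this corollary as an immediate reinterpretation of Proposition~\ref{pr:TminTmax} without a separate proof, and your argument is exactly the natural unpacking of that proposition into polytopal language. The one point you flag at the end---that subobjects of $M$ in $\mathscr R_\theta$ coincide with subobjects in $\mathscr A$ lying in $\mathscr R_\theta$---is indeed the only thing requiring a moment's thought, and it holds because $\overline{\mathscr I}_\theta$ is closed under quotients and $\overline{\mathscr P}_\theta$ under subobjects, so the cokernel in $\mathscr A$ of such a subobject automatically lands in $\mathscr R_\theta$.
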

Thus the face $P_\theta$ of $\Pol(T)$ is the HN polytope
of $T_\theta^{\max}/T_\theta^{\min}$, computed relative to the
category $\mathscr R_\theta$, and shifted by $[T_\theta^{\min}]$.

A consequence of this observation is that $[T_\theta^{\min}]$ and
$[T_\theta^{\max}]$ are vertices of $\Pol(T)$. Another noteworthy
consequence is the following rigidity property: if $x$ is a vertex
of $\Pol(T)$, then $T$ has a unique subobject $X$ such that $[X]=x$.

We may also interpret the categories $\mathscr I_\theta$,
$\overline{\mathscr I}_\theta$, etc., in terms of HN polytopes.
For instance, an object $T$ belongs to $\overline{\mathscr I}_\theta$
if and only if $T=T_\theta^{\max}$, hence if and only if the top
vertex $[T]$ of $\Pol(T)$ lies on the face defined by $\theta$.

Given $T$ and $\theta$, the subfaces of $P_\theta$ are obtained
by perturbing slightly $\theta$. The following result states this
formally.
\begin{proposition}
\label{pr:SubfacPert}
Let $(\theta,\eta)\in\Hom_{\mathbb Z}(\mathbf K(\mathscr A),
\mathbb R)^2$, let $T\in\mathscr A$, let
$X=T_\theta^{\max}/T_\theta^{\min}$, and let
$i:\mathscr R_\theta\subseteq\mathscr A$ be the inclusion functor.
Let $0\subseteq X_{\eta\circ\mathbf K(i)}^{\min}\subseteq
X_{\eta\circ\mathbf K(i)}^{\max}\subseteq X$ be the filtration
of $X$, regarded as an object in $\mathscr R_\theta$, relative
to the group homomorphism $\eta\circ\mathbf K(i)\in\Hom_{\mathbb
Z}(\mathbf K(\mathscr R_\theta),\mathbb R)$. Call
$T_\theta^{\min}\subseteq T'\subseteq T''\subseteq T_\theta^{\max}$
the pull-back of this filtration by the canonical epimorphism
$T_\theta^{\max}\to X$. Then for $m$ large enough,
$T'=T_{m\theta+\eta}^{\min}$ and $T''=T_{m\theta+\eta}^{\max}$.
\end{proposition}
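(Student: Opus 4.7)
The plan is to reduce the claim to Proposition~\ref{pr:TminTmax} applied twice: once in $\mathscr A$ with the form $m\theta+\eta$, and once inside the abelian subcategory $\mathscr R_\theta$ with the form $\eta\circ\mathbf K(i)$. The bridge between these two applications is the elementary observation that the subobjects of $T$ that are ``good'' for $\theta$ in the sense of Proposition~\ref{pr:TminTmax}, namely those $X$ with $T_\theta^{\min}\subseteq X\subseteq T_\theta^{\max}$ and $X/T_\theta^{\min}\in\mathscr R_\theta$, correspond bijectively and order-preservingly, via the quotient map $T_\theta^{\max}\twoheadrightarrow X$, to subobjects of $X$ in $\mathscr R_\theta$.

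The key quantitative step is the following finiteness/gap argument. Since $T$ has finite length, it has only finitely many subobjects, so the sets of values $\{\theta([Y])\mid Y\subseteq T\}$ and $\{\eta([Y])\mid Y\subseteq T\}$ are finite. Let $M=\theta([T_\theta^{\max}])$ and set
\[
\delta=\min\bigl\{M-\theta([Y])\bigm|Y\subseteq T,\ \theta([Y])<M\bigr\}>0,\qquad
B=\max\bigl\{|\eta([Y])|\bigm|Y\subseteq T\bigr\}.
\]
By Proposition~\ref{pr:TminTmax}, the subobjects $Y$ with $\theta([Y])=M$ are exactly the good ones. For any non-good $Y$ and any good $Y'$ we have
\[
(m\theta+\eta)([Y])\leq mM-m\delta+B,\qquad
(m\theta+\eta)([Y'])\geq mM-B,
\]
so as soon as $m>2B/\delta$, every subobject of $T$ achieving the maximum of $m\theta+\eta$ is good. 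In particular, by Proposition~\ref{pr:TminTmax} applied in $\mathscr A$ to $m\theta+\eta$, both $T_{m\theta+\eta}^{\min}$ and $T_{m\theta+\eta}^{\max}$ are good.

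On the set of good subobjects the function $\theta$ is constant, so maximizing $m\theta+\eta$ is the same as maximizing $\eta$; under the order-preserving bijection between good subobjects of $T$ and subobjects of $X$ in $\mathscr R_\theta$, this corresponds to maximizing $\eta\circ\mathbf K(i)$ on the latter. Proposition~\ref{pr:TminTmax} applied inside $\mathscr R_\theta$ to the form $\eta\circ\mathbf K(i)$ then identifies the smallest and largest $\mathscr R_\theta$-subobjects of $X$ achieving this maximum with $X_{\eta\circ\mathbf K(i)}^{\min}$ and $X_{\eta\circ\mathbf K(i)}^{\max}$ respectively. Pulling these back through $T_\theta^{\max}\twoheadrightarrow X$ yields exactly $T'$ and $T''$, while the same Proposition in $\mathscr A$ identifies these extremes with $T_{m\theta+\eta}^{\min}$ and $T_{m\theta+\eta}^{\max}$, giving the desired equalities.

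The only genuinely delicate point is justifying that the subobjects $Y\subseteq T$ with $T_\theta^{\min}\subseteq Y\subseteq T_\theta^{\max}$ and $Y/T_\theta^{\min}\in\mathscr R_\theta$ are in inclusion-preserving bijection with the subobjects of $X$ in $\mathscr R_\theta$: this is immediate from the fact that $\mathscr R_\theta$ is closed under subobjects inside $\overline{\mathscr P}_\theta$, combined with the lattice isomorphism between subobjects of $T_\theta^{\max}$ containing $T_\theta^{\min}$ and subobjects of $X$. Everything else is bookkeeping with Proposition~\ref{pr:TminTmax}, so I do not expect any further obstacle.
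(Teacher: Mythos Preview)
Your argument is correct, modulo one slip of phrasing: finite length does \emph{not} imply finitely many subobjects (think of $S\oplus S$ over an infinite field). What you actually need, and what the paper records at the start of section~\ref{ss:HNPoly}, is that there are finitely many \emph{classes} $[Y]\in\mathbf K(\mathscr A)$ of subobjects $Y\subseteq T$; this suffices for your gap argument since $\theta$ and $\eta$ factor through $[Y]$.

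Your route is genuinely different from the paper's. The paper argues directly with torsion classes: it picks $m$ so that the sign of $\theta$ on any subquotient class controls the sign of $m\theta+\eta$, then checks by hand that $T_\theta^{\min}\in\mathscr I_{m\theta+\eta}$, that $T_{m\theta+\eta}^{\min}/T_\theta^{\min}\in\mathscr R_\theta$, etc., and finally verifies that the induced three-step filtration of $X$ lands in $\mathscr I_{\eta\circ\mathbf K(i)}\times\mathscr R_{\eta\circ\mathbf K(i)}\times\mathscr P_{\eta\circ\mathbf K(i)}$. You instead reformulate everything through Proposition~\ref{pr:TminTmax}: you characterize the $(m\theta+\eta)$-maximizers among all subobjects, show they coincide with the $\theta$-maximizers for $m$ large, and then transport the extremal-subobject problem to $\mathscr R_\theta$ via the order isomorphism between ``good'' subobjects of $T$ and $\mathscr R_\theta$-subobjects of $X$. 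Your approach is more polytope-flavored and makes the geometric content (subfaces come from perturbations) completely transparent; the paper's approach stays closer to the torsion-pair formalism and avoids invoking Proposition~\ref{pr:TminTmax} twice. Both are clean and of comparable length.
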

\begin{proof}
The classes in $\mathbf K(\mathscr A)$ of subquotients of $T$ are finitely
many. Pick $m$ large enough so that, for all subquotients $Z$ of $T$,
$$\theta([Z])>0\ \Longrightarrow\ (m\theta+\eta)([Z])>0
\quad\text{and}\quad
\theta([Z])<0\ \Longrightarrow\ (m\theta+\eta)([Z])<0.$$

Each nonzero quotient $Y$ of $T_\theta^{\min}$ satisfies
$\theta([Y])>0$, hence satisfies $(m\theta+\eta)([Y])>0$.
Therefore $T_\theta^{\min}\in\mathscr I_{m\theta+\eta}$,
and so $T_\theta^{\min}\subseteq T_{m\theta+\eta}^{\min}$.
The quotient $U=T_{m\theta+\eta}^{\min}/T_\theta^{\min}$
belongs to $\mathscr I_{m\theta+\eta}$ and to
$\overline{\mathscr P}_\theta$, so we have
$(m\theta+\eta)([U])\geq0$ and $\theta([U])\leq0$, which
forces $\theta([U])=0$.

In a similar fashion, we see that
$T_{m\theta+\eta}^{\max}\subseteq T_\theta^{\max}$ and
$\theta([T_\theta^{\max}/T_{m\theta+\eta}^{\max}])=0$.
We conclude that
$$T_\theta^{\min}\subseteq T_{m\theta+\eta}^{\min}\subseteq
T_{m\theta+\eta}^{\max}\subseteq T_\theta^{\max}$$
and that the subquotients of this filtration belong to
$\mathscr R_\theta$.

Reducing modulo $T_\theta^{\min}$, we get a three-step
filtration $0\subseteq X'\subseteq X''\subseteq X$ of
$X=T_\theta^{\max}/T_\theta^{\min}$, viewed as an object of
$\mathscr R_\theta$. Any nonzero quotient $Y$ of $X'$ in
$\mathscr R_\theta$ is a nonzero quotient of
$T_{m\theta+\eta}^{\min}$ in $\mathscr A$ such that $\theta([Y])=0$,
and so $\eta([Y])=(m\theta+\eta)([Y])>0$. Therefore $X'$ belongs to
the subcategory $\mathscr I_{\eta\circ\mathbf K(i)}$ of
$\mathscr R_\theta$. One checks in a similar fashion that
$X''/X'\in\mathscr R_{\eta\circ\mathbf K(i)}$ and
$X/X''\in\mathscr P_{\eta\circ\mathbf K(i)}$. We conclude that
$X'=X_{\eta\circ\mathbf K(i)}^{\min}$ and
$X''=X_{\eta\circ\mathbf K(i)}^{\max}$.
\end{proof}

Let us now compare this construction to the more usual notion
of Harder-Narasimhan filtration. To define the latter, we need
to fix a pair $(\eta,\theta)\in\Hom_{\mathbb Z}(\mathbf K(\mathscr
A),\mathbb R)^2$ such that $\eta([T])>0$ for each nonzero object
$T$. The slope of a nonzero object $T\in\mathscr A$ is defined
as $\mu(T)=\theta([T])/\eta([T])$ and an object $T$ is called
semistable if it is zero or if $\mu(X)\leq\mu(T)$ for any nonzero
subobject $X\subseteq T$. It can then be shown that any object
$T\in\mathscr A$ has a finite filtration
\begin{equation}
\label{eq:HNFilt}
0=T_0\subset T_1\subset\cdots\subset T_{\ell-1}\subset T_\ell=T
\end{equation}
whose subquotients are nonzero and semistable, with moreover
$\mu(T_k/T_{k-1})$ decreasing with $k$ (see for instance
\cite{Reineke03}, close to the present context). This filtration
is unique and is called the Harder-Narasimhan filtration of $T$.

Given $a\in\mathbb R$, the torsion subobject of $T$ with respect
to the torsion pair $(\mathscr I_{\theta-a\eta},\overline{\mathscr
P}_{\theta-a\eta})$ is $T_k$, where $k$ is the largest index such
that $\mu(T_k/T_{k-1})>a$. In our former notation, this means that
$T_k=T_{\theta-a\eta}^{\min}$; in particular, $[T_k]$ is a vertex
of $\Pol(T)$. One can even be more precise: the linear map
$\varphi:\mathbf K(\mathscr A)_{\mathbb R}\to\mathbb R^2$ given
in coordinates as $(\eta,\theta)$ projects $\Pol(T)$ to a convex
polygon of the plane, and the upper ridge of this polygon is
the polygonal line going successively through the points
$\varphi([T_k])$, for $0\leq k\leq\ell$. We leave the proof of
this fact to the reader.

The polygonal line just obtained is what Shatz calls the HN polygon
\cite{Shatz77}. Thus our HN polytopes are a multidimensional
analog of those HN polygons; they simply take into account the
existence of a whole space of stability conditions. There may
well exist sensible adaptations of this notion of HN polytope to
other contexts where spaces of stability conditions have been
defined (see for instance~\cite{Bridgeland09}).

\begin{other}{Remarks}
\label{rk:HNPol}
\begin{enumerate}
\item
\label{it:HNPa}
In Corollary~\ref{co:FacesHN}, the map $\mathbf K(i)_{\mathbb R}$
induces an actual loss of information. For example, in our study of
imaginary edges (sections~\ref{ss:ImagEdgesPart} and~\ref{ss:Cores}),
the category $\mathscr R_\theta$ has infinitely many simple objects;
since they all have the same dimension-vector, their classes have the
same image by $\mathbf K(i)_{\mathbb R}$.
\item
\label{it:HNPb}
The HN polytope of the direct sum of two objects is the Minkowski sum
of the HN polytopes of the two objects.
\end{enumerate}
\end{other}

\subsection{Nested families of torsion pairs}
\label{ss:NesTor}
The subobjects of $T$ that appear in the HN filtration
\eqref{eq:HNFilt} are the torsion subobjects with respect to the
torsion pairs $(\mathscr I_{\theta+a\eta},\overline{\mathscr
P}_{\theta+a\eta})$, as $a$ varies over $\mathbb R$. Observe that,
in the notation of section~\ref{ss:TorPairs},
$$\forall(a,b)\in\mathbb R^2,\qquad a\leq b\ \Longrightarrow\
(\mathscr I_{\theta+a\eta},\overline{\mathscr
P}_{\theta+a\eta})\preccurlyeq(\mathscr I_{\theta+b\eta},
\overline{\mathscr P}_{\theta+b\eta}).$$

This prompts the following definition: a nested
family of torsion pairs is the datum of a family
$(\mathscr T_a,\mathscr F_a)_{a\in A}$ of torsion
pairs, indexed by a totally ordered set $A$, such that
$$\forall(a,b)\in A^2,\qquad a\leq b\ \Longrightarrow\
(\mathscr T_a,\mathscr F_a)\preccurlyeq(\mathscr T_b,\mathscr F_b).$$
This definition is certainly less general than Rudakov's
study \cite{Rudakov97} but is sufficient for our
purposes.

A nested family of torsion pairs
$(\mathscr T_a,\mathscr F_a)_{a\in A}$ in $\mathscr A$ induces
a non-decreasing filtration $(T_a)_{a\in A}$ on any object
$T\in\mathscr A$: simply define $T_a$ as the torsion subobject
of $T$ with respect to $(\mathscr T_a,\mathscr F_a)$. As
already shown in section~\ref{ss:TorPairs}, the object $T_b/T_a$
is in $\mathscr F_a\cap\mathscr T_b$ whenever $a\leq b$.

\section{Background on preprojective algebras}
\label{se:RecPrepAlg}
\subsection{Basic definitions}
\label{ss:BasicDef}
We fix a base field $K$, which we assume for convenience to be
algebraically closed of characteristic $0$. As in section~\ref{ss:GenSetup},
we fix a graph $(I,E)$, where $I$ is the set of vertices and $E$ the set
of edges. We denote by $H$ the set of oriented edges of this graph. Thus
each edge in $E$ gives birth to two oriented edges in $H$, and $H$ comes
with a source map $s:H\to I$, a target map $t:H\to I$ and a fixed-point
free involution $a\mapsto\overline a$ such that $s(a)=t(\overline a)$ for
each $a\in H$.

An orientation is a subset $\Omega\subset H$ such that
$H=\Omega\sqcup\overline\Omega$. Such an orientation yields a quiver
$Q=(I,\Omega,s,t)$, and then $\overline Q=(I,H,s,t)$
is the double quiver of $Q$. We set $\varepsilon(a)=1$ if
$a\in\Omega$ and $\varepsilon(a)=-1$ if $a\notin\Omega$.

Let $K\overline Q$ be the path algebra of
$\overline Q$. The linear span
$\mathbf S=\linspan_K(e_i)_{i\in I}$ of the lazy paths is a
commutative semisimple subalgebra of $K\overline Q$.
The linear span $\mathbf A=\linspan_K(a)_{a\in H}$ of the paths of
length one is an $\mathbf S$-$\mathbf S$-bimodule.
Then $K\overline Q$ is the tensor algebra
$T_{\mathbf S}\mathbf A$. For $i\in I$, set
$$\rho_i=\sum_{\substack{a\in H\\s(a)=i}}\varepsilon(a)\overline aa,$$
the so-called preprojective relation at vertex $i$.
The linear span $\mathbf R=\linspan_K(\rho_i)_{i\in I}$ is a
$\mathbf S$-$\mathbf S$-subbimodule of $K\overline Q$.

By definition, the preprojective algebra of $Q$ is the quotient of
$K\overline Q$ by the ideal generated by~$\mathbf R$. This
is an augmented algebra over $\mathbf S$. Its completion with respect
to the augmentation ideal is called the completed preprojective algebra
and is denoted by $\Lambda_Q$. For brevity, we will generally drop
the $Q$ in the notation $\Lambda_Q$. Completing has the effect that
the augmentation ideal becomes the Jacobson radical; thus $\Lambda$
quotiented by its Jacobson radical is isomorphic to $\mathbf S$, and
the simple $\Lambda$-modules are just the simple $\mathbf S$-modules,
namely the one dimensional modules $S_i$. We denote by $\Lambda\mmod$
the category of finite dimensional left $\Lambda$-modules.

The involution $a\mapsto\overline a$ on the set $H$ of oriented
edges induces an anti-automorphism of $\Lambda$. If $M$ is a finite
dimensional left $\Lambda$-module, then we denote by $M^*$ the dual
module $\Hom_K(M,K)$, viewed as a left module by means of this
anti-automorphism.

Occasionally, we will have to write $\Lambda$-modules in a
concrete fashion. Our notation is as follows. An $\mathbf S$-module
$M$ is an $I$-graded vector space $M=\bigoplus_{i\in I}M_i$, where
$M_i=e_iM$; we define the dimension-vector of $M$ to be the element
$\dimvec M=\sum_{i\in I}(\dim M_i)\alpha_i$ in $\mathbb NI$.
A $K\overline Q$-module $M$ is the datum of an $\mathbf S$-module
and of linear maps $M_a:M_{s(a)}\to M_{t(a)}$ for each $a\in H$.

Since the simple $\Lambda$-modules are the modules $S_i$, concentrated
at a vertex of the quiver, it is natural to present a special notation
designed to analyze a $\Lambda$-module $M$ locally around a vertex $i$.
Specifically, we break the datum of $M$ in two parts: the first part
consists of the vector spaces $M_j$ for $j\neq i$ and of the linear
maps between them; the second part consists of the vector spaces and
of the linear maps that appear in the diagram
$$\bigoplus_{\substack{a\in H\\s(a)=i}}M_{t(a)}
\xrightarrow{\ (M_{\overline a})\ }M_i\xrightarrow{(\varepsilon(a)M_a)}
\bigoplus_{\substack{a\in H\\s(a)=i}}M_{t(a)}.$$
Here $(M_{\overline a})$ denotes a column-matrix, whose lines are
indexed by $\{a\in H\mid s(a)=i\}$; likewise, $(\varepsilon(a)M_a)$
denotes a row-matrix.

For brevity, we will write this diagram as
\begin{equation}
\label{eq:LocalDesc}
\widetilde M_i\xrightarrow{M_{\iin(i)}}M_i
\xrightarrow{M_{\out(i)}}\widetilde M_i.
\end{equation}
With this notation, the preprojective relation at $i$ is
$M_{\iin(i)}M_{\out(i)}=0$.

We define the $i$-socle of $M$ as the largest submodule of $M$
that is isomorphic to a direct sum of copies of $S_i$; we denote
it by $\soc_iM$. It is concentrated at the vertex $i$ and it
can be identified with the vector space $\ker M_{\out(i)}$.
Likewise, the largest quotient of $M$ that is isomorphic to a
direct sum of copies of $S_i$ is called the $i$-head of $M$ and
is denoted by $\hd_iM$. It is concentrated at the vertex $i$
and can be identified with the vector space $\coker M_{\iin(i)}$.

The assignment $[M]\mapsto\dimvec M$ provides an isomorphism
between the Grothendieck group $\mathbf K(\Lambda\mmod)$ and
the root lattice $\mathbb ZI$. Crawley-Boevey's formula
(Lemma~1 in~\cite{Crawley-Boevey00}) gives a module-theoretic
meaning to the bilinear form on the root lattice:
\begin{equation}
\label{eq:CrawleyBoeveyForm}
\dim\Hom_\Lambda(M,N)+\dim\Hom_\Lambda(N,M)-
\dim\Ext^1_\Lambda(M,N)=\bigl(\dimvec M,\dimvec N\bigr)
\end{equation}
for any finite dimensional $\Lambda$-modules $M$ and $N$.

\begin{other}{Remark}
\label{rk:PolDual}
The HN polytope $\Pol(T)$ of an object $T\in\Lambda\mmod$ lives
in $\mathbf K(\Lambda\mmod)_{\mathbb R}\cong\mathbb RI$. Since the
duality exchanges submodules and quotients and leaves the
dimension-vector unchanged, $\Pol(T^*)$ is the image of
$\Pol(T)$ under the involution $x\mapsto\dimvec T-x$ of
$\mathbb RI$. We leave it to the reader to check the equalities
$\mathscr I_{-\theta}=(\mathscr P_\theta)^*$,
$\overline{\mathscr I}_{-\theta}=(\overline{\mathscr
P}_\theta)^*$ and $\mathscr R_{-\theta}=(\mathscr R_\theta)^*$, for
any $\theta\in\Hom_{\mathbb Z}(\mathbf K(\Lambda\mmod),\mathbb R)$.
\end{other}

\subsection{Projective resolutions}
\label{ss:ProjRes}
We now recall Gei\ss, Leclerc and Schr\"oer's description of
the extension groups in the category $\Lambda\mmod$ (see
\cite{GeissLeclercSchroer07}, section~8).

Consider the complex of $\Lambda$-bimodules
\begin{equation}
\label{eq:GLSComp1}
\Lambda\otimes_{\mathbf S}\mathbf R\otimes_{\mathbf S}\Lambda
\xrightarrow{d_1}
\Lambda\otimes_{\mathbf S}\mathbf A\otimes_{\mathbf S}\Lambda
\xrightarrow{d_0}
\Lambda\otimes_{\mathbf S}\mathbf S\otimes_{\mathbf S}\Lambda
\to\Lambda\to0,
\end{equation}
where the map on the right is multiplication in $\Lambda$,
where for each $a\in H$
$$d_0(1\otimes a\otimes1)=a\otimes e_{s(a)}\otimes1-1\otimes
e_{t(a)}\otimes a,$$
and where for each $i\in I$
$$d_1(1\otimes\rho_i\otimes1)=\sum_{\substack{a\in H\\s(a)=i}}
\varepsilon(a)(\overline a\otimes a\otimes1+1\otimes\overline
a\otimes a).$$
Then \eqref{eq:GLSComp1} is the beginning of a projective
resolution of~$\Lambda$, by~\cite{GeissLeclercSchroer07},
Lemma~8.1.1.

Given $M,N\in\Lambda\mmod$, one can apply
$\Hom_\Lambda(?\otimes_\Lambda M,N)$ to \eqref{eq:GLSComp1}.
One then obtains the complex
\begin{equation}
\label{eq:GLSComp2}
0\to\bigoplus_{i\in I}\Hom_K(M_i,N_i)\xrightarrow{d^0_{M,N}}
\bigoplus_{a\in H}\Hom_K(M_{s(a)},N_{t(a)})\xrightarrow{d^1_{M,N}}
\bigoplus_{i\in I}\Hom_K(M_i,N_i),
\end{equation}
where
$$d^0_{M,N}:(f_i)_{i\in I}\mapsto
\bigl(N_af_{s(a)}-f_{t(a)}M_a\bigr)_{a\in H}$$
and
$$d^1_{M,N}:(g_a)_{a\in H}\mapsto
\left(\sum_{\substack{a\in H\\s(a)=i}}
\varepsilon(a)\bigl(N_{\overline a}g_a+g_{\overline a}M_a\bigr)\right)
\raisebox{-20pt}{$\scriptstyle i\in I$}.$$
Thus for $k\in\{0,1\}$, the extension group $\Ext^k_\Lambda(M,N)$
can be identified to the cohomology groups in degree~$k$ of the
complex \eqref{eq:GLSComp2}.

In \cite{GeissLeclercSchroer07}, section~8.2, Gei\ss, Leclerc and
Schr\"oer explain that in this identification, the bilinear map
$$\tau_1:\left(\bigoplus_{a\in H}\Hom_K(M_{s(a)},N_{t(a)})\right)\times
\left(\bigoplus_{a\in H}\Hom_K(N_{s(a)},M_{t(a)})\right)\to K$$
defined by
$$\tau_1\bigl((g_a),(h_a)\bigr)=\sum_{i\in I}\Tr\left(
\sum_{\substack{a\in H\\s(a)=i}}\varepsilon(a)g_{\overline a}h_a\right)$$
induces a non-degenerate pairing between $\Ext^1_\Lambda(M,N)$ and
$\Ext^1_\Lambda(N,M)$. Note that because of the cyclicity of the
trace and of the presence of the signs $\varepsilon(a)$, this
pairing $\tau_1$ is antisymmetric.

One sees likewise that the bilinear map
$$\tau_2:\left(\bigoplus_{i\in I}\Hom_K(M_i,N_i)\right)\times
\left(\bigoplus_{i\in I}\Hom_K(N_i,M_i)\right)\to K$$
defined by
$$\tau_2\bigl((f_i),(h_i)\bigr)=\sum_{i\in I}\Tr(f_ih_i)$$
induces a non-degenerate pairing between $\coker d^1_{M,N}$ and
$\Hom_\Lambda(N,M)$.

\subsection{Lusztig's nilpotent varieties}
\label{ss:LuszNilpVar}
Given a dimension-vector $\nu=\sum_{i\in I}\nu_i\alpha_i$ in
$\mathbb NI$, we set
$$\Rep_K(\overline Q,\nu)=\bigoplus_{a\in H}
\Hom_K(K^{\nu_{s(a)}},K^{\nu_{t(a)}}).$$
A structure of $\Lambda$-module on the $I$-graded vector space
$\bigoplus_{i\in I}K^{\nu_i}$ is specified by linear maps
$T_a:K^{\nu_{s(a)}}\to K^{\nu_{t(a)}}$, for each $a\in H$,
that is, by a point $T=(T_a)_{a\in H}$ in
$\Rep_K(\overline Q,\nu)$. To have an action of the
completed preprojective algebra, we must moreover impose the
preprojective relations and the nilpotency condition. These
equations define a subvariety
$$\Lambda(\nu)\subseteq\Rep_K\bigl(\overline Q,\nu\bigr),$$
called the affine variety of representations of $\Lambda$ or
Lusztig's nilpotent variety.

The group $G(\nu)=\prod_{i\in I}\GL_{\nu_i}(K)$ acts by conjugation
on $\Rep_K\bigl(\overline Q,\nu\bigr)$. This action
preserves the nilpotent variety $\Lambda(\nu)$. Then any isomorphism
class of a $\Lambda$-module of dimension-vector $\nu$ can be regarded
as a $G(\nu)$-orbit in~$\Lambda(\nu)$.

In~\cite{Lusztig91}, Lusztig shows that $\Lambda(\nu)$ is a
Lagrangian subvariety of $\Rep_K\bigl(\overline Q,\nu\bigr)$;
in particular, all the irreducible components of $\Lambda(\nu)$ have
dimension $\dim\bigl(\Rep_K\bigl(\overline Q,\nu\bigr)\bigr)/2$.
A straightforward calculation shows that this common dimension is
\begin{equation}
\label{eq:DimNilpVar}
\dim\Lambda(\nu)=\dim G(\nu)-(\nu,\nu)/2.
\end{equation}

A $\Lambda$-module $M$ is called rigid if $\Ext^1_\Lambda(M,M)=0$.
An immediate consequence of Crawley-Boevey's formula (see Corollary~3.15
in \cite{GeissLeclercSchroer06}) is that the closure of the
$G(\nu)$-orbit through a point $T\in\Lambda(\nu)$ is an irreducible
component if and only if $T$ is a rigid module.

As in the introduction, we denote by
$\mathfrak B(\nu)=\Irr\Lambda(\nu)$ the set of irreducible
components of the nilpotent variety and we define
$\mathfrak B=\bigsqcup_{\nu\in\mathbb NI}\mathfrak B(\nu)$.
This set $\mathfrak B$ is endowed with a crystal structure,
defined by Lusztig (\cite{Lusztig90b}, section~8), which we
quickly recall.

The weight of an element $Z\in\mathfrak B(\nu)$ is $\wt Z=\nu$. The
number $\varphi_i(Z)$ is the dimension of the $i$-head of a general point
$T\in Z$. The number $\varepsilon_i(Z)$ can then be found be the general
formula $\varphi_i(Z)-\varepsilon_i(Z)=\langle\alpha_i^\vee,\wt Z\rangle$.
The operators $\tilde e_i$ and $\tilde f_i$ add or remove a copy
of $S_i$ in general position at the top of a module $T\in Z$. In other
words, the relationship $Z'=\tilde e_iZ$ corresponds to extensions
$0\to T\to T'\to S_i\to0$ as general as possible: if $T$ runs over a
dense open subset of $Z$, then $T'$ will also run over a dense open
subset of $Z'$, and vice versa. The duality $*$ corresponds to the
involution $Z\mapsto Z^*$ on $\mathfrak B$, which preserves the weight.
We refer the reader to the literature for the formal definitions.

Kashiwara and Saito show in \cite{KashiwaraSaito97} that
the crystal $\mathfrak B$ is isomorphic to the crystal
$B(-\infty)$ of $U_q(\mathfrak n_+)$. This isomorphism is
canonical, because the only endomorphism of the crystal
$B(-\infty)$ is the identity. For $b\in B(-\infty)$, we write
$\Lambda_b$ for the image of $b$ under this isomorphism.
Moreover, the crystal $B(-\infty)$ is endowed with an involution,
usually also denoted by $*$ (see~\cite{Kashiwara95}, \S8.3),
and the proof in \cite{KashiwaraSaito97} shows that the
isomorphism $B(-\infty)\to\mathfrak B$ commutes with both
involutions --- thus, the use of the same notation $*$ does
not lead to any confusion.

\subsection{The canonical decomposition of a component}
\label{ss:CanonDec}
In this section, we quickly recall Crawley-Boevey and Schr\"oer's
results~\cite{Crawley-BoeveySchroer02} on the canonical
decomposition of irreducible components of module varieties,
specializing their results to the case of nilpotent varieties.

Let $\nu'$ and $\nu''$ be two dimension-vectors.
The function $(T',T'')\mapsto\dim\Hom_\Lambda(T',T'')$ on
$\Lambda(\nu')\times\Lambda(\nu'')$ is upper semicontinuous.
Given $Z'\in\Irr\Lambda(\nu')$ and $Z''\in\Irr\Lambda(\nu'')$,
we denote its minimum on $Z'\times Z''$ by $\hom_\Lambda(Z',Z'')$.
Then $\hom_\Lambda(Z',Z'')=\dim\Hom_\Lambda(T',T'')$
for $(T',T'')$ general in~$Z'\times Z''$.

Likewise, the function $(T',T'')\mapsto\dim\Ext^1_\Lambda(T',T'')$
on $\Lambda(\nu')\times\Lambda(\nu'')$ is upper semicontinuous.
Given $Z'\in\Irr\Lambda(\nu')$ and $Z''\in\Irr\Lambda(\nu'')$,
we denote its minimum on $Z'\times Z''$ by $\ext^1_\Lambda(Z',Z'')$.
Then $\ext^1_\Lambda(Z',Z'')=\dim\Ext^1_\Lambda(T',T'')$
for $(T',T'')$ general in~$Z'\times Z''$.

Let $n\geq1$, let $\nu_1$, \dots, $\nu_n$ be dimension-vectors,
and for $1\leq j\leq n$, let $Z_j\in\Irr\Lambda(\nu_j)$. Set
$\nu=\nu_1+\cdots+\nu_n$ and denote by $Z_1\oplus\cdots\oplus Z_n$
the set of all modules in $\Lambda(\nu)$ that are isomorphic to
a direct sum $T_1\oplus\cdots\oplus T_n$, with $T_j\in Z_j$ for all
$1\leq j\leq n$. This is an irreducible subset of $\Lambda(\nu)$.
Its closure $\overline{Z_1\oplus\cdots\oplus Z_n}$ is an
irreducible component of $\Lambda(\nu)$ if and only if
$\ext^1_\Lambda(Z_j,Z_k)=0$ for all $j\neq k$.

Conversely, for any $Z\in\Irr\Lambda(\nu)$, there exists $n$,
$\nu_j$ and $Z_j$ as above such that the general point in $Z_j$
is an indecomposable $\Lambda$-module and
$$Z=\overline{Z_1\oplus\cdots\oplus Z_n}.$$
Furthermore, the $Z_j$ are unique up to permutation. This is
called the canonical decomposition of $Z$.

\subsection{Torsion pairs in $\Lambda\mmod$}
\label{ss:TorLambda}
Here we consider the constructions of section~\ref{se:TorHNPoly}
in the category $\Lambda\mmod$. Since we are primarily interested
in the crystal $B(-\infty)$, we need to make sure that our constructions
go down to the level of irreducible components of the nilpotent varieties.

\begin{proposition}
\label{pr:PolConstr}
Let $\nu\in\mathbb NI$ be a dimension-vector and let
$\theta\in(\mathbb RI)^*$.
\begin{enumerate}
\item
\label{it:PCa}
For each $\xi\in\mathbb NI$, the set of all $T\in\Lambda(\nu)$
that contain a submodule of dimension-vector $\xi$ is closed.
\item
\label{it:PCb}
There are finitely many polytopes $\Pol(T)$, for
$T\in\Lambda(\nu)$. For each polytope $P\subseteq\mathbb RI$,
the set $\{T\in\Lambda(\nu)\mid\Pol(T)=P\}$ is constructible.
\item
\label{it:PCc}
For each category $\mathscr C$ among $\mathscr I_\theta$,
$\overline{\mathscr I}_\theta$, $\mathscr P_\theta$,
$\overline{\mathscr P}_\theta$ and $\mathscr R_\theta$,
the subset $\{T\in\Lambda(\nu)\mid T\in\mathscr C\}$ is open
in $\Lambda(\nu)$.
\end{enumerate}
\end{proposition}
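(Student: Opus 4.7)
The plan is to prove (i) first by a classical Grassmannian argument, then deduce (ii) and (iii) from (i) combined with the observation that submodule dimension-vectors of a fixed module in $\Lambda(\nu)$ belong to the finite set $\mathscr E=\{\xi\in\mathbb NI\mid\xi\leq\nu\text{ coordinatewise}\}$.

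For (i), I would consider the fixed $I$-graded vector space $V=\bigoplus_{i\in I}K^{\nu_i}$ and its graded Grassmannian $\mathrm{Gr}_\xi(V)$ of subspaces of dimension-vector $\xi$, which is a projective variety. Inside $\Lambda(\nu)\times\mathrm{Gr}_\xi(V)$, the incidence locus
$$Y=\bigl\{(T,W)\,\big|\,T_a(W_{s(a)})\subseteq W_{t(a)}\text{ for every }a\in H\bigr\}$$
is cut out by the closed conditions expressing that $W$ is $T$-stable. The projection $\pi\colon Y\to\Lambda(\nu)$ is proper because $\mathrm{Gr}_\xi(V)$ is projective, so its image --- which is precisely the set $\{T\in\Lambda(\nu)\mid T\text{ has a submodule of dimension-vector }\xi\}$ --- is closed.

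For (ii), the key observation is that any submodule $X\subseteq T$ with $T\in\Lambda(\nu)$ has $\dimvec X\in\mathscr E$, and $\mathscr E$ is finite. The invariant $S(T)=\{\xi\in\mathscr E\mid T\text{ has a submodule of dimension-vector }\xi\}$ therefore takes only finitely many values, and $\Pol(T)$ is the convex hull of $S(T)$. This immediately shows that there are only finitely many polytopes $\Pol(T)$. For each $S\subseteq\mathscr E$, the locus $\{T\mid S(T)=S\}$ is the intersection of the closed sets of~(i) for $\xi\in S$ with the complements of the closed sets for $\xi\in\mathscr E\setminus S$, hence is constructible; and $\{T\mid\Pol(T)=P\}$ is the finite union of such loci over those $S$ with $\mathrm{conv}(S)=P$.

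For (iii), I would rewrite each membership condition in terms of dimension-vectors of submodules. For instance, $T\in\overline{\mathscr I}_\theta$ amounts to saying that every proper submodule $Y\subsetneq T$ satisfies $\theta(\dimvec Y)\leq\theta(\nu)$ (apply the definition to the quotient $X=T/Y$), so the complement is the locus of $T$ admitting a submodule of some dimension-vector $\xi\in\mathscr E\setminus\{\nu\}$ with $\theta(\xi)>\theta(\nu)$: a finite union of the closed sets of~(i), hence closed. The four other categories are handled identically, with attention to strict versus non-strict inequalities and to swapping quotients for submodules where needed; the category $\mathscr R_\theta$ either coincides with $\overline{\mathscr I}_\theta\cap\overline{\mathscr P}_\theta$ on $\Lambda(\nu)$ (when $\theta(\nu)=0$) or meets $\Lambda(\nu)$ trivially (when $\theta(\nu)\neq0$), and is open either way.

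The only step requiring genuine geometric content is (i); everything afterwards is a finite combinatorial repackaging. The main thing to watch will be the correct handling of strict versus non-strict inequalities when translating each of the five semistability conditions into a statement about the finitely many possible submodule dimension-vectors.
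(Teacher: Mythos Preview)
Your proposal is correct and follows essentially the same approach as the paper's proof: the Grassmannian/incidence-variety argument for (i), the finite bookkeeping via the set $S(T)\subseteq\mathscr E$ for (ii), and the translation of each semistability condition into the non-existence of a submodule with a forbidden dimension-vector for (iii). The paper phrases (iii) via the observation that $T\in\overline{\mathscr I}_\theta$ iff the top vertex $\nu$ lies on the face $P_\theta$ of $\Pol(T)$, but this unwinds to exactly your condition on $S(T)$; your explicit treatment of the case split for $\mathscr R_\theta$ is a minor refinement in presentation only.
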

\begin{proof}
When we view a point $T\in\Lambda(\nu)$ as a $\Lambda$-module,
we tacitly agree that the underlying $I$-graded vector space
of this module is $\bigoplus_{i\in I}K^{\nu_i}$. Let $X$ be
the set of all its $I$-graded vector subspaces
$\bigoplus_{i\in I}V_i$ of dimension-vector $\xi$; as a
product of Grassmannians, $X$ is naturally endowed with the
structure of a smooth projective variety. The incidence variety
$Y$ consisting of all pairs $(T,V)\in\Lambda(\nu)\times X$ such
that $T_a(V_{s(a)})\subseteq V_{t(a)}$ for all $a\in H$ is closed.
The first projection $Y\to\Lambda(\nu)$ is therefore a projective
morphism, so is proper. Its image is therefore closed, which
shows \ref{it:PCa}.

Let $R=(\mathbb NI)\cap(\nu-\mathbb NI)$; this is a finite set.
If $T\in\Lambda(\nu)$, then the dimension-vectors of the submodules
of $T$ form a subset $S(T)$ of $R$. Assertion~\ref{it:PCa} says
that $\{T\in\Lambda(\nu)\mid\xi\in S(T)\}$ is closed for each
$\xi\in R$. This implies that $\{T\in\Lambda(\nu)\mid S(T)=S\}$
is locally closed for each subset $S\subseteq R$. Gathering these
locally closed subsets according to the convex hull of $S$, we
obtain assertion~\ref{it:PCb}.

According to a remark following Corollary~\ref{co:FacesHN}, a
point $T\in\Lambda(\nu)$ belongs to $\overline{\mathscr I}_\theta$
if and only if $\nu$ lies on the face of $\Pol(T)$ defined by
$\theta$. This condition means that $S(T)$ does not meet
$\{\xi\in R\mid\langle\theta,\xi\rangle>\langle\theta,\nu\rangle\}$.
Thus assertion~\ref{it:PCa} exhibits $\{T\in\Lambda(\nu)\mid
T\in\overline{\mathscr I}_\theta\}$ as a finite intersection of
open subsets of $\Lambda(\nu)$. This shows the case $\mathscr
C=\overline{\mathscr I}_\theta$ in assertion~\ref{it:PCc}. The
other cases are dealt with in a similar fashion.
\end{proof}

Now let us fix a torsion pair $(\mathscr T,\mathscr F)$ in
$\Lambda\mmod$. All torsion submodules mentioned hereafter in
this section are taken with respect to it. We make the following
assumption:
\begin{description}
\item[(O)]
For each $\nu\in\mathbb NI$, both sets $\{T\in\Lambda(\nu)\mid
T\in\mathscr T\}$ and $\{T\in\Lambda(\nu)\mid T\in\mathscr F\}$
are open.
\end{description}

Under this assumption, it is legitimate to consider the set
$\mathfrak T(\nu)$ (respectively, $\mathfrak F(\nu)$) of all
irreducible components of $\Lambda(\nu)$ whose general point
belongs to $\mathscr T$ (respectively, $\mathscr F$). We then
get two subsets
$$\mathfrak T=\bigsqcup_{\nu\in\mathbb NI}\mathfrak T(\nu)
\quad\text{and}\quad
\mathfrak F=\bigsqcup_{\nu\in\mathbb NI}\mathfrak F(\nu)$$
of $\mathfrak B$. Our aim now is to construct a
bijection $\Xi:\mathfrak T\times\mathfrak F\to\mathfrak B$
that reflects at the component level the decomposition of
$\Lambda$-modules provided by $(\mathscr T,\mathscr F)$.

Let $(\nu_t,\nu_f)\in(\mathbb NI)^2$. We set $\nu=\nu_t+\nu_f$
and define $\Lambda^{\mathscr T}(\nu_t)=\{T_t\in\Lambda(\nu_t)\mid
T_t\in\mathscr T\}$ and $\Lambda^{\mathscr
F}(\nu_f)=\{T_f\in\Lambda(\nu_f)\mid T_f\in\mathscr F\}$.
We define $\Theta(\nu_t,\nu_f)$ as the set of all tuples
$(T,T_t,T_f,f,g)$ such that $T\in\Lambda(\nu)$,
$(T_t,T_f)\in\Lambda^{\mathscr T}(\nu_t)\times\Lambda^{\mathscr F}(\nu_f)$,
and $0\to T_t\xrightarrow fT\xrightarrow gT_f\to0$ is an exact
sequence in $\Lambda\mmod$. This is a quasi-affine algebraic
variety. We can then form the diagram
\begin{equation}
\label{eq:TorIrrComp}
\Lambda^{\mathscr T}(\nu_t)\times\Lambda^{\mathscr F}(\nu_f)
\xleftarrow p\Theta(\nu_t,\nu_f)\xrightarrow q\Lambda(\nu)
\end{equation}
in which $p$ and $q$ are the obvious projections.

\begin{lemma}
\label{le:TorFiber}
The map $p$ is a locally trivial fibration with a smooth and
connected fiber of dimension $\dim G(\nu)-(\nu_t,\nu_f)$.
The image of $q$ is the set of all points $T\in\Lambda(\nu)$
whose torsion submodule has dimension-vector $\nu_t$. The
non-empty fibers of $q$ are isomorphic to $G(\nu_t)\times G(\nu_f)$.
\end{lemma}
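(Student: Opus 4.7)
The plan is to handle the three claims in reverse order of difficulty, disposing of the last two first.

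For part~2 (the image of $q$), I use the uniqueness of the torsion subobject. If $(T, T_t, T_f, f, g) \in \Theta(\nu_t, \nu_f)$, then $f(T_t) \subseteq T$ belongs to $\mathscr T$ and the quotient $T/f(T_t) \cong T_f$ belongs to $\mathscr F$; the characterization following axiom~(T2) then forces $f(T_t)$ to be the torsion subobject of $T$, which therefore has dimension-vector $\nu_t$. Conversely, given $T \in \Lambda(\nu)$ whose torsion subobject $V$ has dimension-vector $\nu_t$, arbitrary $\mathbf{S}$-linear identifications with the standard underlying graded spaces produce the required tuple. For part~3, the torsion subobject $V \subseteq T$ and the quotient $T/V$ are intrinsic to $T$, so prescribing the data $(T_t, T_f, f, g)$ amounts to prescribing a pair of $\mathbf S$-linear isomorphisms $T_t \simeq V$ and $T/V \simeq T_f$, which is parametrized by $G(\nu_t) \times G(\nu_f)$.

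For part~1, I describe the fiber of $p$ over a fixed $(T_t, T_f) \in \Lambda^{\mathscr T}(\nu_t) \times \Lambda^{\mathscr F}(\nu_f)$ as a tower whose stages are smooth and irreducible. The first stage records the image $V = f(T_t)$ as a point in $\prod_i \mathrm{Gr}(\nu_{t,i}, \nu_i)$, contributing $\sum_i \nu_{t,i}\nu_{f,i}$ to the dimension. The second records the $\mathbf S$-linear isomorphisms $T_t \simeq V$ and $(\bigoplus_i K^{\nu_i})/V \simeq T_f$, a torsor under $G(\nu_t) \times G(\nu_f)$ of dimension $\sum_i \nu_{t,i}^2 + \sum_i \nu_{f,i}^2$. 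The third records the off-diagonal blocks of the $\Lambda$-module structure on $\bigoplus_i K^{\nu_i}$: picking a Zariski-local splitting of the quotient map, such a structure is encoded by an element of the kernel of the differential $d^1_{T_f, T_t}$ of the complex~\eqref{eq:GLSComp2} applied with $M = T_f$ and $N = T_t$. Together these stages exhibit $p$ as Zariski-locally trivial with smooth connected fibers.

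The technical heart of the argument is to show that $\dim \ker d^1_{T_f, T_t}$ is constant on the family and equal to $\sum_i \nu_{t,i} \nu_{f,i} - (\nu_t, \nu_f)$. The complex~\eqref{eq:GLSComp2} gives
$$\dim \ker d^1_{T_f, T_t} = \sum_i \nu_{t,i} \nu_{f,i} - \dim \Hom_\Lambda(T_f, T_t) + \dim \Ext^1_\Lambda(T_f, T_t).$$
The $\Hom$ and $\Ext^1$ dimensions individually fluctuate, but three inputs rigidify the combination: the antisymmetric non-degenerate pairing $\tau_1$ from section~\ref{ss:ProjRes} yields $\dim \Ext^1_\Lambda(T_f, T_t) = \dim \Ext^1_\Lambda(T_t, T_f)$; Crawley-Boevey's formula~\eqref{eq:CrawleyBoeveyForm} gives $\dim \Hom_\Lambda(T_t, T_f) + \dim \Hom_\Lambda(T_f, T_t) - \dim \Ext^1_\Lambda(T_t, T_f) = (\nu_t, \nu_f)$; and the torsion-pair axiom~(T1) forces $\Hom_\Lambda(T_t, T_f) = 0$. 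These collapse the right-hand side to $\sum_i \nu_{t,i} \nu_{f,i} - (\nu_t, \nu_f)$, independently of $(T_t, T_f)$. Summing the three contributions yields a total fiber dimension of $\sum_i \nu_i^2 - (\nu_t, \nu_f) = \dim G(\nu) - (\nu_t, \nu_f)$, as claimed.
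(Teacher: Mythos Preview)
Your proof is correct and follows essentially the same approach as the paper. The only cosmetic difference is in how the space of short exact sequences of $I$-graded vector spaces is parametrized: the paper treats it directly as a $G(\nu)$-homogeneous space $E$ with stabilizer $\{\id+fhg\mid h\in\Hom_{\mathbf S}(T_f,T_t)\}$, whereas you factor it as a product of Grassmannians followed by a $G(\nu_t)\times G(\nu_f)$-torsor; both lead to the same dimension count, and both identify the remaining degrees of freedom with $\ker d^1_{T_f,T_t}$ and compute its dimension via Crawley-Boevey's formula together with $\Hom_\Lambda(T_t,T_f)=0$.
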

\begin{proof}
The statements concerning $q$ are obvious, so we only have to deal
with $p$.

The points $(T_a)$, $(T_{t,a})$ and $(T_{f,a})$, chosen in the
nilpotent varieties $\Lambda(\nu)$, $\Lambda^{\mathscr T}(\nu_t)$
and $\Lambda^{\mathscr F}(\nu_f)$, define $\Lambda$-module structures
on the vector spaces $\bigoplus_{i\in I}K^{\nu_i}$,
$\bigoplus_{i\in I}K^{\nu_{t,i}}$ and $\bigoplus_{i\in I}K^{\nu_{f,i}}$.

Let first consider the complex~\eqref{eq:GLSComp2} from section
\ref{ss:ProjRes}, with the $\Lambda$-modules $T_f$ and $T_t$ in
place of $M$ and $N$. The maps $d^0_{T_f,T_t}$ and $d^1_{T_f,T_t}$
of the complex depend on the datum of the arrows
$(T_{t,a})\in\Lambda^{\mathscr T}(\nu_t)$ and
$(T_{f,a})\in\Lambda^{\mathscr F}(\nu_f)$, but the spaces of the
complex depend only on $\nu_f$ and $\nu_t$. The map $d^0_{T_f,T_t}$
has rank $\dim\Hom_{\mathbf S}(T_f,T_t)-\dim\Hom_\Lambda(T_f,T_t)$.
In addition, $\Ext^1_\Lambda(T_f,T_t)\cong\ker d^1_{T_f,T_t}/\im
d^0_{T_f,T_t}$. Using Crawley-Boevey's formula
\eqref{eq:CrawleyBoeveyForm} and using the axiom (T1) of torsion
pairs, we easily compute
$$\dim\ker d^1_{M,N}=\dim\Ext^1_\Lambda(T_f,T_t)+\rk d^0_{T_f,T_t}
=\dim\Hom_{\mathbf S}(T_f,T_t)-(\nu_f,\nu_t).$$
Remarkably, this dimension depends only on $\nu_f$ and $\nu_t$,
and not on the datum of the arrows $(T_{t,a})$ and $(T_{f,a})$.

Let $E$ be the set of all exact sequences $0\to T_t\xrightarrow
fT\xrightarrow gT_f\to0$ of $I$-graded vector spaces. This is
a homogeneous space for the group $G(\nu)$ and the stabilizer of a
point $(f,g)$ is $\{\id+fhg\mid h\in\Hom_{\mathbf S}(T_f,T_t)\}$.
It is thus a smooth connected variety of dimension
$\dim G(\nu)-\dim\Hom_{\mathbf S}(T_f,T_t)$.

The fiber of $p$ over a point $((T_{t,a}),(T_{f,a}))\in
\Lambda^{\mathscr T}(\nu_t)\times\Lambda^{\mathscr F}(\nu_f)$
consists of the datum of $(f,g)\in E$ and of $(T_a)\in\Lambda(\nu)$,
with a compatibility condition between the two. The datum of
$(f,g)$ corresponds to a trivial fiber bundle over
$\Lambda^{\mathscr T}(\nu_t)\times\Lambda^{\mathscr F}(\nu_f)$
with fiber $E$. Let us now examine how $(T_a)$ can be chosen
when $((T_{t,a}),(T_{f,a}))$ and $(f,g)$ are given.

Once chosen an $I$-graded complementary subspace of $\ker g$ in the
vector space $T$, the set of possible choices for $(T_a)$ is
isomorphic to $\ker d^1_{T_f,T_t}$; moreover, the isomorphism
depends smoothly on $(f,g)$. The linear map $d^1_{T_f,T_t}$ depends
smoothly on $((T_{t,a}),(T_{f,a}))$ and has constant rank, as we
have seen above, so its kernel depends smoothly on
$((T_{t,a}),(T_{f,a}))$. In this fashion, we eventually see that
the set of possible choices for $(T_a)$ depends smoothly on
$((T_{t,a}),(T_{f,a}),f,g)$. Choosing trivializations where needed,
we conclude that $p$ is a locally trivial fibration.

Finally, the dimension of the fibers of $p$ is the sum of two
contributions, namely $\dim E$ and $\dim\ker d^1_{T_f,T_t}$.
We find $\dim G(\nu)-(\nu_f,\nu_t)$, as announced.
\end{proof}

Let $(Z_t,Z_f)\in\Irr\Lambda^{\mathscr T}(\nu_t)\times\Irr
\Lambda^{\mathscr F}(\nu_f)$. In view of Lemma~\ref{le:TorFiber},
$p^{-1}(Z_t\times Z_f)$ is an irreducible component of
$\Theta(\nu_t,\nu_f)$. Then $Z=q(p^{-1}(Z_t\times Z_f))$ is an
irreducible subset of $\Lambda(\nu)$ which, by e.g.\ I, \S8,
Theorem~3 in \cite{Mumford99}, has dimension
$$\dim(Z_t\times Z_f)+(\dim G(\nu)-(\nu_t,\nu_f))-
\dim(G(\nu_t)\times G(\nu_f)).$$
Equation~\eqref{eq:DimNilpVar} shows that this dimension is equal
to that of $\Lambda(\nu)$, so $\overline Z\in\Irr\Lambda(\nu)$.

This construction defines a map $(\overline{Z_t},\overline{Z_f})
\mapsto\overline Z$ from $\mathfrak T(\nu_t)\times\mathfrak F(\nu_f)$
to $\mathfrak B(\nu)$. Gluing these maps for all possible
$(\nu_t,\nu_f)$, we eventually get a map
$\Xi:\mathfrak T\times\mathfrak F\to\mathfrak B$.

\begin{theorem}
\label{th:TorIrrComp}
The map $\Xi:\mathfrak T\times\mathfrak F\to\mathfrak B$ is
bijective.
\end{theorem}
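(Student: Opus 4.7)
The plan is to construct a two-sided inverse $\Psi:\mathfrak B\to\mathfrak T\times\mathfrak F$: given $Z\in\mathfrak B(\nu)$, I will extract from a general point $T\in Z$ the dimension-vector of its torsion submodule and then use the diagram \eqref{eq:TorIrrComp} to recover $\Psi(Z)$ as the pair of irreducible components that record $T_t$ and $T/T_t$.

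First I would check that, for each $\nu,\xi\in\mathbb NI$, the locus
$$\Lambda(\nu)_\xi=\bigl\{T\in\Lambda(\nu)\bigm|\text{the torsion submodule of }T\text{ has dimension-vector }\xi\bigr\}$$
is constructible in $\Lambda(\nu)$. Combining Proposition~\ref{pr:PolConstr}\ref{it:PCa} (closedness of the incidence variety of submodules of prescribed dimension) with hypothesis (O) (openness of the $\mathscr T$-locus), the set of $T$ admitting some submodule of dimension-vector $\xi$ in $\mathscr T$ is constructible; since the sum of two $\mathscr T$-submodules again lies in $\mathscr T$ (closure under quotients and extensions), the torsion submodule is the unique largest $\mathscr T$-submodule, so $\Lambda(\nu)_\xi$ is obtained from those sets by finitely many Boolean operations. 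Consequently, on each $Z\in\mathfrak B(\nu)$ the dimension-vector of the torsion submodule is constant equal to some $\nu_t$ on a dense open $U\subseteq Z$; set $\nu_f=\nu-\nu_t$.

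Second, I would transport irreducible components across the diagram~\eqref{eq:TorIrrComp} using Lemma~\ref{le:TorFiber}. Since $p$ is a locally trivial fibration with smooth connected fibers, $p^{-1}$ induces a bijection from $\Irr\bigl(\Lambda^{\mathscr T}(\nu_t)\times\Lambda^{\mathscr F}(\nu_f)\bigr)=\mathfrak T(\nu_t)\times\mathfrak F(\nu_f)$ onto $\Irr\Theta(\nu_t,\nu_f)$. On the other side, by uniqueness of the torsion submodule, $q$ maps $\Theta(\nu_t,\nu_f)$ surjectively onto $\Lambda(\nu)_{\nu_t}$ with irreducible fibers of constant dimension $\dim G(\nu_t)+\dim G(\nu_f)$; combining Lemma~\ref{le:TorFiber} with~\eqref{eq:DimNilpVar} then yields the crucial equality $\dim\Lambda(\nu)_{\nu_t}=\dim\Lambda(\nu)$. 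Hence every top-dimensional irreducible subset of $\overline{\Lambda(\nu)_{\nu_t}}$ is an irreducible component of $\Lambda(\nu)$, and conversely every $Z\in\mathfrak B(\nu)$ whose general point lies in $\Lambda(\nu)_{\nu_t}$ arises this way. Thus $q$ induces a bijection between $\Irr\Theta(\nu_t,\nu_f)$ and the subset of $\mathfrak B(\nu)$ whose general point has torsion submodule of dimension-vector $\nu_t$; composing with the bijection obtained from $p$ defines $\Psi(Z)=(Z_t,Z_f)$.

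Finally, both $\Xi\circ\Psi=\id$ and $\Psi\circ\Xi=\id$ follow formally from the construction: $\Xi(Z_t,Z_f)=\overline{q(p^{-1}(Z_t\times Z_f))}$ is, by design, the very component of $\Lambda(\nu)$ matched to $(Z_t,Z_f)$ by the composite bijection. The subtlest step is the constructibility of the stratification $\Lambda(\nu)=\bigsqcup_\xi\Lambda(\nu)_\xi$, where one must carefully combine the closed-submodule condition from Proposition~\ref{pr:PolConstr} with the open conditions supplied by (O) to single out the \emph{torsion} submodule rather than any other $\mathscr T$-submodule. Once this stratification is in place, the dimension-matching $\dim\Lambda(\nu)_{\nu_t}=\dim\Lambda(\nu)$ reduces the matching of irreducible components to a formal correspondence.
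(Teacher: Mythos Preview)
Your proposal is correct and follows essentially the same route as the paper: stratify $\Lambda(\nu)$ by the dimension-vector of the torsion submodule into constructible pieces, use Lemma~\ref{le:TorFiber} and the dimension count~\eqref{eq:DimNilpVar} to show each stratum has full dimension, and match irreducible components across the diagram~\eqref{eq:TorIrrComp}. The only simplification the paper makes is that it gets constructibility of the strata for free from Chevalley's theorem, since Lemma~\ref{le:TorFiber} already identifies $\Lambda(\nu)_{\nu_t}$ with the image $q(\Theta(\nu_t,\nu_f))$, avoiding your more hands-on argument via Proposition~\ref{pr:PolConstr} and hypothesis~(O).
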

\begin{proof}
Given $\nu_t$ and $\nu_f$, $\Xi$ is a bijection from
$\mathfrak T(\nu_t)\times\mathfrak F(\nu_f)$ onto the set of
irreducible components of $\overline{q(\Theta(\nu_t,\nu_f))}$.

Now we take $\nu\in\mathbb NI$. We consider the diagrams
\eqref{eq:TorIrrComp} for all $\nu_t$ and $\nu_f$ such that
$\nu_t+\nu_f=\nu$. In this fashion, we split $\Lambda(\nu)$
according to the dimension-vector of the torsion submodule:
$$\Lambda(\nu)=\bigsqcup_{\nu_t+\nu_f=\nu}q(\Theta(\nu_t,\nu_f)).$$
Each piece of this partition is constructible, therefore an
irreducible component of $\Lambda(\nu)$ is contained in one
and only one closure $\overline{q(\Theta(\nu_t,\nu_f))}$.

Taking the union, we see that $\Xi$ defines a bijection from
$\bigsqcup_{\nu_t+\nu_f=\nu}(\mathfrak T(\nu_t)\times\mathfrak
F(\nu_f))$ onto $\mathfrak B(\nu)$.
\end{proof}

The construction of $\Xi$ implies that, if $T$ is a general
point of $\overline Z$, then the torsion submodule $X$ of $T$
has dimension-vector $\nu_t$ and the point $(X,T/X)$ is general
in $\overline{Z_t}\times\overline{Z_f}$. To see this, take a
$G(\nu_t)$-invariant dense open subset $U_t\subseteq Z_t$ and a
$G(\nu_f)$-invariant dense open subset of $U_f\subseteq Z_f$.
Then $p^{-1}(U_t\times U_f)$ is dense in $p^{-1}(Z_t\times Z_f)$.
The subset $q(p^{-1}(U_t\times U_f))$ is thus dense in the
irreducible set $\overline Z$, and is constructible by
Chevalley's theorem, so it contains a dense open subset $U$ of
$\overline Z$. By construction, if $T$ belongs to $U$, then
$\dimvec X=\nu_t$ and $(X,T/X)$ belongs to the prescribed open
subset $U_t\times U_f$, as desired.

Suppose now that we are given two torsion pairs
$(\mathscr T',\mathscr F')$ and $(\mathscr T'',\mathscr F'')$
in $\Lambda\mmod$ that both satisfy the openness condition (O).
They give rise to subsets $\mathfrak T'$, $\mathfrak F'$,
$\mathfrak T''$ and $\mathfrak F''$ of $\mathfrak B$ and to
bijections $\Xi':\mathfrak T'\times\mathfrak F'\to\mathfrak B$
and $\Xi'':\mathfrak T''\times\mathfrak F''\to\mathfrak B$.

\begin{proposition}
\label{pr:NesTorIrrComp}
Assume that $(\mathscr T',\mathscr F')\preccurlyeq(\mathscr
T'',\mathscr F'')$. Then the map $\Xi'$ restricts to a bijection
$\mathfrak T'\times(\mathfrak F'\cap\mathfrak T'')\to\mathfrak T''$,
the map $\Xi''$ restricts to a bijection
$(\mathfrak F'\cap\mathfrak T'')\times\mathfrak F''\to\mathfrak F'$,
and we have a commutative diagram
$$\xymatrix@C=5em{\mathfrak T'\times(\mathfrak F'\cap\mathfrak
T'')\times\mathfrak F''\ar[r]^(.6){\Xi'\times\id}
\ar[d]_{\id\times\Xi''}&
\mathfrak T''\times\mathfrak F''\ar[d]^{\Xi''}\\
\mathfrak T'\times\mathfrak F'\ar[r]_{\Xi'}&\mathfrak B.}$$
\end{proposition}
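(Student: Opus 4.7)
The plan is to combine two ingredients that have been established upstream. The first is the genericity statement proved in the paragraph following Theorem~\ref{th:TorIrrComp}: when $T$ is a general point of $\Xi(\overline{Z_t},\overline{Z_f})$, its torsion submodule is general in $Z_t$ and the corresponding quotient is general in $Z_f$. The second is the standard closure properties recalled in section~\ref{ss:TorPairs}: the torsion class $\mathscr T''$ is closed under extensions and quotients, while the torsion-free class $\mathscr F'$ is closed under extensions and submodules. With these in hand, the three claims all reduce to reading off what happens to the three-step filtration $0 \subseteq X' \subseteq X'' \subseteq T$ attached to $(\mathscr T',\mathscr F') \preccurlyeq (\mathscr T'',\mathscr F'')$.

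I would first check that the restrictions take the stated values. Given $(Z_t,Z_m) \in \mathfrak T' \times (\mathfrak F' \cap \mathfrak T'')$ and setting $Z = \Xi'(Z_t,Z_m)$, a general $T \in Z$ sits in a short exact sequence whose outer terms are in $\mathscr T'$ and $\mathscr F' \cap \mathscr T''$, so $T \in \mathscr T''$ by closure under extensions, giving $Z \in \mathfrak T''$. The symmetric argument using closure of $\mathscr F'$ under extensions handles $\Xi''$. To see surjectivity of the restriction of $\Xi'$, I would take $Z \in \mathfrak T''$, let $(Z_t,Z_f) = (\Xi')^{-1}(Z)$, pick a general $T \in Z$, and note that since $T \in \mathscr T''$ and $\mathscr T''$ is closed under quotients, $T/X \in \mathscr T''$, hence $Z_f \in \mathfrak F' \cap \mathfrak T''$. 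The surjectivity for $\Xi''$ follows dually. Injectivity of both restricted maps is automatic from the bijectivity asserted in Theorem~\ref{th:TorIrrComp}.

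For commutativity I would take $(Z_1,Z_2,Z_3) \in \mathfrak T' \times (\mathfrak F' \cap \mathfrak T'') \times \mathfrak F''$, form $Z = \Xi''(\Xi'(Z_1,Z_2),Z_3)$, and analyze a general $T \in Z$. Iterating the generic description produces a filtration $0 \subseteq T_1 \subseteq U \subseteq T$ whose successive subquotients $T_1,T_2,T_3$ are general in $Z_1,Z_2,Z_3$ respectively. The decisive observation is that $T_1$ is automatically the $\mathscr T'$-torsion submodule of $T$: one has $T_1 \in \mathscr T'$, while $T/T_1$ is an extension of $T_3 \in \mathscr F'' \subseteq \mathscr F'$ by $T_2 \in \mathscr F'$, hence lies in $\mathscr F'$. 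Reading the filtration through the other composition then shows that $T/T_1$ is generic in $\Xi''(Z_2,Z_3)$ and that $T$ is generic in $\Xi'(Z_1,\Xi''(Z_2,Z_3))$; the two components therefore coincide.

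The main subtlety, rather than a deep obstacle, is the bookkeeping of genericity across two successive applications of $\Xi$: a handful of dense open conditions must hold simultaneously at each stage. This is harmless provided one observes that the openness hypothesis (O) transfers to $\mathscr F' \cap \mathscr T''$, so that $\mathfrak F' \cap \mathfrak T''$ really does coincide with the set of components whose general point belongs to that intersection, and hence that the inverse images under $\Xi'$ and $\Xi''$ of a component carrying a double filtration can be described by simultaneously generic data.
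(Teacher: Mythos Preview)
Your proof is correct and follows essentially the same approach as the paper's. Both rely on the genericity statement after Theorem~\ref{th:TorIrrComp} together with the closure properties of torsion and torsion-free classes; your treatment of the restriction claims splits into well-definedness plus surjectivity what the paper phrases as a single ``if and only if'', and for commutativity you identify the three-step filtration of a general $T$ with the canonical one and read off the other composite, whereas the paper computes both composites and matches the resulting general triples---these are the same argument organized slightly differently.
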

\begin{proof}
Let $(Z_1,Z_2)\in\mathfrak T'\times\mathfrak F'$ and set
$Z=\Xi'(Z_1,Z_2)$. Let $T$ be a general point of $Z$ and
let $X$ be the torsion submodule of $T$ with respect to
$(\mathscr T',\mathscr F')$. Then $X\in\mathscr T''$ and
the point $(X,T/X)$ is general in $Z_1\times Z_2$. Since a
torsion class is stable under taking quotients and extensions,
$T$ belongs to $\mathscr T''$ if and only is $T/X$ does.
This means that $Z$ belongs to $\mathfrak T''$ if and only
if $Z_2$ does. Thus $\Xi'$ restricts to a bijection
$\mathfrak T'\times(\mathfrak F'\cap\mathfrak T'')\to\mathfrak T''$,
as announced.

One shows that $\Xi''$ restricts to a bijection
$(\mathfrak F'\cap\mathfrak T'')\times\mathfrak
F''\to\mathfrak F'$ in a similar fashion.

Now let $(Z_1,Z_2,Z_3)\in\mathfrak T'\times(\mathfrak F'\cap
\mathfrak T'')\times\mathfrak F''$. Set $Z_4=\Xi'(Z_1,Z_2)$ and
$Z=\Xi''(Z_4,Z_3)$. Let $T$ be a general point of $Z$ and let
$X'$ and $X''$ be the torsion submodules of $T$ with respect to
$(\mathscr T',\mathscr F')$ and $(\mathscr T'',\mathscr F'')$,
respectively. Then the point $(X'',T/X'')$ is general in
$Z_4\times Z_3$. Since $X'$ is the torsion submodule of $X''$
with respect to $(\mathscr T',\mathscr F')$, the point
$(X',X''/X',T/X'')$ is general in $Z_1\times Z_2\times Z_3$.

A similar reasoning shows that $(X',X''/X',T/X'')$ is also
general in $\widetilde Z_1\times\widetilde Z_2\times\widetilde Z_3$,
where $(\widetilde Z_1,\widetilde Z_2,\widetilde Z_3)=
(\Xi'\circ(\id\times\Xi''))^{-1}(Z)$. Therefore a point
can be general in $Z_1\times Z_2\times Z_3\strut$ and in
$\widetilde Z_1\times\widetilde Z_2\times\widetilde Z_3$
at the same time. Then necessarily $(Z_1,Z_2,Z_3)=(\widetilde
Z_1,\widetilde Z_2,\widetilde Z_3)$, which establishes the
commutativity.
\end{proof}

The torsion pairs
$(\mathscr I_\theta,\overline{\mathscr P}_\theta)$ and
$(\overline{\mathscr I}_\theta,\mathscr P_\theta)$ satisfy
the assumption (O), thanks to
Proposition~\ref{pr:PolConstr}~\ref{it:PCc}. Applying
Proposition~\ref{pr:NesTorIrrComp} to them, we get a bijection
$$\Xi_\theta:\mathfrak I_\theta\times\mathfrak R_\theta\times
\mathfrak P_\theta\to\mathfrak B,$$
where $\mathfrak I_\theta$, $\mathfrak R_\theta$ and
$\mathfrak P_\theta$ are the subsets of $\mathfrak B$,
consisting of components whose general points belong to
$\mathscr I_\theta$, $\mathscr R_\theta$ and
$\mathscr P_\theta$, respectively.

We also note that Proposition~\ref{pr:NesTorIrrComp} can be
generalized in an obvious fashion to any finite nested sequence
$$(\mathscr T_0,\mathscr F_0)\preccurlyeq\cdots\preccurlyeq(\mathscr
T_\ell,\mathscr F_\ell)$$
of torsion pairs that satisfy (O).

\section{Tilting theory on preprojective algebras}
\label{se:TiltTheoPrepAlg}
\subsection{Reflection functors}
\label{ss:ReflFunc}
Let $I_i=\Lambda(1-e_i)\Lambda$; in other words, let $I_i$ be the
annihilator of the simple $\Lambda$-module $S_i$. Amplifying their
previous work \cite{IyamaReiten08}, Iyama and Reiten, in a joint
paper~\cite{BuanIyamaReitenScott09} with Buan and Scott, show
that $I_i$ is a tilting $\Lambda$-module of projective dimension
at most one and that~$\End_\Lambda(I_i)\cong\Lambda$ (at least,
when no connected component of $(I,E)$ is of Dynkin type).

This certainly invites us to look at the endofunctors
$\Sigma_i=\Hom_\Lambda(I_i,?)$ and $\Sigma_i^*=I_i\otimes_\Lambda?$
of the category $\Lambda\mmod$. It turns out that these
functors can be described in a very explicit fashion.

Recall that we locally depict a $\Lambda$-module $M$ around the
vertex $i$ by the diagram~\eqref{eq:LocalDesc}.

\begin{proposition}
\label{pr:DescReflFunc}
\begin{enumerate}
\item
\label{it:DRFa}
The module $\Sigma_iM$ is obtained by replacing~\eqref{eq:LocalDesc}
with
$$\widetilde M_i\xrightarrow{\overline M_{\out(i)}M_{\iin(i)}}
\ker M_{\iin(i)}\hookrightarrow\widetilde M_i,$$
where the map $\overline M_{\out(i)}:M_i\to\ker M_{\iin(i)}$ is
induced by $M_{\out(i)}$.
\item
\label{it:DRFb}
The module $\Sigma_i^*M$ is obtained by replacing~\eqref{eq:LocalDesc}
with
$$\widetilde M_i\twoheadrightarrow\coker M_{\out(i)}
\xrightarrow{M_{\out(i)}\overline M_{\iin(i)}}\widetilde M_i,$$
where the map $\overline M_{\iin(i)}:\coker M_{\out(i)}\to M_i$ is
induced by $M_{\iin(i)}$.
\end{enumerate}
\end{proposition}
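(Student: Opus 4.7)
The plan is to compute $\Sigma_i M$ and $\Sigma_i^* M$ directly, by first observing that the $\Lambda$-bimodule structure on $I_i$ gives $(\Sigma_i M)_j = \Hom_\Lambda(I_i e_j, M)$ and $(\Sigma_i^* M)_j = (e_j I_i)\otimes_\Lambda M$, and then by substituting an explicit projective presentation of $I_i e_j$ (respectively, $e_j I_i$) for each $j\in I$.

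The key input is a projective resolution of $S_i$ as a left $\Lambda$-module. Since $\Lambda/I_i \cong S_i$ as bimodules (indeed $e_j = e_j(1-e_i)e_j \in I_i$ for $j\neq i$), tensoring the Gei{\ss}-Leclerc-Schr\"oer bimodule resolution~\eqref{eq:GLSComp1} on the right with $S_i$ yields
$$\Lambda e_i \xrightarrow{\partial_1} \bigoplus_{a\in H,\,s(a)=i}\Lambda e_{t(a)} \xrightarrow{\partial_0} \Lambda e_i \to S_i \to 0,$$
where $\partial_0$ sends the $a$-th generator to the arrow $a \in \mathrm{rad}(\Lambda e_i)$ and $\partial_1$ encodes the preprojective relation $\rho_i=\sum_a\varepsilon(a)\bar a a$. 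For $j\neq i$ one therefore has $I_i e_j=\Lambda e_j$, while exactness at the middle term provides a right-exact sequence $P_2\to P_1\to I_i e_i\to 0$. Applying $\Hom_\Lambda(?,M)$ produces a left-exact sequence
$$0\to\Hom_\Lambda(I_ie_i,M)\to\widetilde M_i\xrightarrow{M_{\iin(i)}}M_i,$$
which identifies $(\Sigma_i M)_i$ with $\ker M_{\iin(i)}$; the parallel calculation with the right-module version of the same resolution and the right-exactness of $?\otimes_\Lambda M$ yields $(\Sigma_i^* M)_i=\coker M_{\out(i)}$.

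It then remains to read off the action of the arrows incident to $i$ from these identifications. For an arrow $b$ with $s(b)=i$, left multiplication on $f\in\Hom_\Lambda(I_ie_i,M)$ is $(bf)(x)=f(xb)$, and evaluating at $x=e_{t(b)}$ shows that the resulting map $\ker M_{\iin(i)}\to M_{t(b)}$ is simply projection to the $b$-th coordinate, so assembling over all such $b$ recovers the inclusion $\ker M_{\iin(i)}\hookrightarrow\widetilde M_i$. For an arrow $c$ with $t(c)=i$, the action sends $m\in M_{s(c)}$ (viewed as an element of $\Hom_\Lambda(\Lambda e_{s(c)},M)$) to the tuple whose $a$-th component is $f(ac)=M_aM_c(m)$, which when assembled is exactly $\overline{M_{\out(i)}}\circ M_{\iin(i)}$. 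A parallel bookkeeping in terms of $b\cdot(x\otimes m)=bx\otimes m$ handles $\Sigma_i^*$. The main obstacle is to track the sign convention $\varepsilon(a)$ consistently, since it appears both in $\partial_1$ and in the definitions of $M_{\iin(i)}, M_{\out(i)}$; but since it enters symmetrically, it does not affect the final description up to the canonical isomorphisms being constructed.
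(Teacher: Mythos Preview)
Your approach is essentially the same as the paper's: both tensor the Gei\ss--Leclerc--Schr\"oer bimodule resolution~\eqref{eq:GLSComp1} with $S_i$ to obtain a projective presentation of the relevant piece of $I_i$, then apply $\Hom_\Lambda(?,M)$ or $?\otimes_\Lambda M$, and finally check the arrow actions by hand. The only cosmetic difference is that the paper carries out the computation for $\Sigma_i^*$ in detail (working with $e_iI_i$ as a right module via $S_i\otimes_\Lambda?$) and then says the $\Sigma_i$ case is similar, whereas you do the reverse; the paper likewise acknowledges the sign-placement issue you flag, referring to \cite{BaumannKamnitzer12}, Remark~2.4, for why it is harmless.
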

\begin{proof}
Applying the functor $S_i\otimes_\Lambda?$ to the resolution
\eqref{eq:GLSComp1} and changing the right arrow by a sign, one
finds the following exact sequence of right $\Lambda$-modules:
\begin{equation}
\label{eq:GLSComp3}
K\rho_i\otimes_{\mathbf S}\Lambda\xrightarrow{\partial_1}
\bigoplus_{\substack{a\in H\\s(a)=i}}
K\overline a\otimes_{\mathbf S}\Lambda\xrightarrow{\partial_0}
e_iI_i\to0,
\end{equation}
where
$$\partial_1(\rho_i\otimes1)=\sum_{\substack{a\in H\\s(a)=i}}
\varepsilon(a)\overline a\otimes a\qquad\text{and}\qquad
\partial_0(\overline a\otimes1)=\overline a.$$
The sequence obtained by applying $?\otimes_\Lambda M$ to
\eqref{eq:GLSComp3} can be identified with
$$M_i\xrightarrow{M_{\out(i)}}\widetilde M_i\to
e_iI_i\otimes_\Lambda M\to0.$$
Using the decomposition $I_i=(1-e_i)\Lambda\oplus e_iI_i$, one
can subsequently identify $\Sigma_i^*M=I_i\otimes_\Lambda M$
with the vector space described in Statement~\ref{it:DRFb}.

Let us check the equality $(\Sigma_i^*M)_{\out(i)}=M_{\out(i)}
\overline M_{\iin(i)}$. Let $x\in\coker M_{\out(i)}$. It can be
represented by an element $(x_b)\in\widetilde M_i$, which, in
the identification
$$\widetilde M_i\cong\left(\bigoplus_{\substack{b\in H\\s(b)=i}}
K\overline b\otimes_{\mathbf S}\Lambda\right)\otimes_\Lambda M,\quad
\text{corresponds to}\quad\sum_{\substack{b\in H\\s(b)=i}}
\overline b\otimes x_b.$$
In the $\Lambda$-module $\Sigma_i^*M$, an arrow $a$ that starts
at $i$ maps $x$ to
$$\sum_{\substack{b\in H\\s(b)=i}}(a\overline b)\otimes x_b=
\sum_{\substack{b\in H\\s(b)=i}}M_aM_{\overline b}x_b,$$
where the left-hand side lives in $(1-e_i)\Lambda\otimes_\Lambda
M$. Therefore
$$(\Sigma_i^*M)_{\out(i)}(x)=
\left(\rule{0pt}{30pt}\varepsilon(a)\bigl(\Sigma_i^*M\bigr)_a(x)\right)
\raisebox{-17pt}{$\scriptstyle\substack{a\in H\\s(a)=i}$}=
\left(\sum_{\substack{b\in H\\s(b)=i}}
\varepsilon(a)M_aM_{\overline b}x_b\right)\raisebox{-17pt}{$\scriptstyle
\substack{a\in H\\s(a)=i}$}=M_{\out(i)}\overline M_{\iin(i)}(x).$$

One checks similarly that $(\Sigma_i^*M)_{\iin(i)}$ is the canonical
map $\widetilde M_i\to\coker M_{\out(i)}$, which concludes the
proof of~\ref{it:DRFb}. The proof of~\ref{it:DRFa} is similar, with
two differences: one starts with the exact sequence obtained by
applying $?\otimes_\Lambda S_i$ to the resolution~\eqref{eq:GLSComp1},
and one has to change the position of the signs $\varepsilon(a)$
(Remark~2.4 in~\cite{BaumannKamnitzer12} explains that this change
is without consequences).
\end{proof}

These mutually adjoint functors $\Sigma_i$ and $\Sigma_i^*$ are
called reflection functors. The concrete description afforded by
the proposition yields several important properties that they enjoy:
\begin{itemize}
\item
Adjunction morphisms can be inserted in functorial short exact
sequences
\begin{equation}
\label{eq:FuncSes}
0\to\soc_i\to\id\to\Sigma_i\Sigma_i^*\to0\quad\text{and}\quad
0\to\Sigma_i^*\Sigma_i\to\id\to\hd_i\to0
\end{equation}
(see Proposition~2.5 in~\cite{BaumannKamnitzer12}).
\item
They are exchanged by the $*$-duality; in other words,
$\Sigma_i^*T^*\cong(\Sigma_i T)^*$ for all finite dimensional
$\Lambda$-module~$T$.
\item
They induce Kashiwara and Saito's crystal reflections $S_i$ and
$S_i^*$ on $\mathfrak B$ (see section \ref{ss:TiltCrysOp}).
\item
The operation of restricting a representation of $\Lambda$ to a
representation of the quiver $Q$ intertwines the functors
$\Sigma_i$ and $\Sigma_i^*$ with the traditional
Bernstein-Gelfand-Ponomarev reflection functors (see
Proposition~7.1 in~\cite{BaumannKamnitzer12}).
\end{itemize}

\begin{other}{Remark}
\label{rk:HistReflFunc}
These functors $\Sigma_i$ and $\Sigma_i^*$ were introduced by
Iyama and Reiten in~\cite{IyamaReiten10} by means of the ideals
$I_i$, and also, independently, by the first two authors
in~\cite{BaumannKamnitzer12} by the explicit description of
Proposition~\ref{pr:DescReflFunc}. The link between the two
constructions was suggested to us by Amiot.
\end{other}

\subsection{The tilting ideals $I_w$}
\label{ss:TiltIdIw}
Reflection functors satisfy the braid relations, so it is natural
to study products of reflection functors computed according to
reduced decompositions of elements in $W$. We now look for an
analog of the exact sequences \eqref{eq:FuncSes} for such a product.

To simplify the presentation, we consider in this section the case
where no connected component of the diagram $(I,E)$ is of Dynkin type.
This allows us to rely on the following result, due to Buan, Iyama,
Reiten and Scott (section~II.1 in~\cite{BuanIyamaReitenScott09})
We will however argue in section~\ref{ss:DynkinType} that almost all
the results presented here hold true in general.

\begin{theorem}
\label{th:BuanIyamaReitenScott}
\begin{enumerate}
\item
\label{it:BIRSa}
If $s_{i_1}\cdots s_{i_\ell}$ is a reduced decomposition, then the
multiplication in $\Lambda$ gives rise to an isomorphism of bimodules
$I_{i_1}\otimes_\Lambda\cdots\otimes_\Lambda I_{i_\ell}\to
I_{i_1}\cdots I_{i_\ell}$.
\item
\label{it:BIRSb}
Under the same assumption, the product $I_{i_1}\cdots I_{i_\ell}$
depends only on $w=s_{i_1}\cdots s_{i_\ell}$; we can thus denote it
by $I_w$. It has finite codimension in $\Lambda$.
\item
\label{it:BIRSc}
Each $I_w$ is a tilting $\Lambda$-bimodule of projective dimension at
most~$1$ and $\End_\Lambda(I_w)\cong\Lambda$.
\item
\label{it:BIRSd}
If $\ell(ws_i)>\ell(w)$, then $\Tor_1^\Lambda(I_w,S_i)=0$.
If $\ell(s_iw)>\ell(w)$, then $\Ext^1_\Lambda(I_w,S_i)=0$.
\end{enumerate}
\end{theorem}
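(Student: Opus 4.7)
The plan is to prove the four assertions simultaneously by induction on $\ell(w)$, since they are tightly coupled: the Tor-vanishing in~\ref{it:BIRSd} is exactly what makes the multiplication map in~\ref{it:BIRSa} injective, the tensor-product identification in~\ref{it:BIRSa} underlies the projective-dimension bound in~\ref{it:BIRSc}, and the endomorphism computation in~\ref{it:BIRSc} is what lets one propagate adjunctions through the induction. For the base case $\ell=1$, parts~\ref{it:BIRSa} and~\ref{it:BIRSb} are vacuous, and~\ref{it:BIRSc} amounts to showing that $I_i$ is tilting: the splitting $I_i=(1-e_i)\Lambda\oplus e_iI_i$ of right $\Lambda$-modules reduces this to a $2$-term projective resolution of $e_iI_i$, which is provided by the first two terms of the Gei\ss--Leclerc--Schr\"oer bimodule complex~\eqref{eq:GLSComp1} after applying $S_i\otimes_\Lambda?$. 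The equality $\End_\Lambda(I_i)\cong\Lambda$ then follows by reading off the unit/counit short exact sequences~\eqref{eq:FuncSes} for the adjoint pair $(\Sigma_i^*,\Sigma_i)$. Part~\ref{it:BIRSd} in the base case reduces, via $0\to I_i\to\Lambda\to S_i\to 0$, to $\Tor_2^\Lambda(S_i,S_j)=0$ and $\Ext_\Lambda^2(S_i,S_j)=0$ for $i\neq j$; both hold by the $2$-Calabi-Yau duality on $\Lambda\mmod$ (available as long as no connected component of $(I,E)$ is Dynkin), which identifies these spaces with the dual of $\Hom_\Lambda(S_j,S_i)=0$.

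For the induction step, fix $w$ of length $\ell\geq 1$ and a simple reflection $s_i$ with $\ell(ws_i)>\ell(w)$, and set $w'=ws_i$. Tensoring $0\to I_i\to\Lambda\to S_i\to 0$ on the left by $I_w$ and invoking the induction-hypothesis vanishing $\Tor_1^\Lambda(I_w,S_i)=0$ from~\ref{it:BIRSd}, one obtains a short exact sequence
\[
0\to I_w\otimes_\Lambda I_i\to I_w\to I_w\otimes_\Lambda S_i\to 0,
\]
whose leftmost arrow is multiplication and hence factors through the product $I_wI_i\subseteq I_w$; comparing images shows that it surjects onto $I_wI_i$, giving~\ref{it:BIRSa} for $w'$. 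Composing with the induction hypothesis yields the iterated tensor-product description of $I_{w'}$. The finite codimension and the bound $\mathrm{pd}_\Lambda I_{w'}\leq 1$ in~\ref{it:BIRSc} now follow by tensoring the length-one projective resolution of $I_i$ with $I_w$ and again using the Tor-vanishing; the identification $\End_\Lambda(I_{w'})\cong\Lambda$ is obtained by combining the tensor--hom adjunction
\[
\Hom_\Lambda(I_w\otimes_\Lambda I_i,I_w\otimes_\Lambda I_i)\cong\Hom_\Lambda(I_i,\Hom_\Lambda(I_w,I_w\otimes_\Lambda I_i))
\]
with the fact that $\Hom_\Lambda(I_w,?)$ is an equivalence on the full subcategory where it is exact (a consequence of~\ref{it:BIRSc} for $w$), sending $I_w\otimes_\Lambda I_i$ back to $I_i$ and matching $\End_\Lambda(I_i)\cong\Lambda$. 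Finally,~\ref{it:BIRSd} for $w'$ falls out of the short exact sequence above and the Tor/Ext long exact sequences against $S_j$, using the $2$-Calabi-Yau duality together with the already-established vanishing data for the shorter word $w$.

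The main obstacle is~\ref{it:BIRSb}, the assertion that $I_{i_1}\cdots I_{i_\ell}$ depends only on $w$. By Matsumoto's theorem this reduces to checking a single braid relation for each pair $(i,j)$ with $m_{ij}<\infty$: in our symmetric setting, only $m_{ij}\in\{2,3\}$ occur, corresponding to no edge or exactly one edge between $i$ and $j$ in $(I,E)$ (multi-edges give $m_{ij}=\infty$ and impose no relation). The case $m_{ij}=2$ is immediate since $e_j\in I_i$ and $e_i\in I_j$ force $I_iI_j=I_i\cap I_j=I_jI_i$. The case $m_{ij}=3$ is the hard step: I would reduce to the rank-$2$ preprojective algebra $\Lambda_{\{i,j\}}$ of type $A_2$, in which a direct finite calculation compares $I_iI_jI_i$ and $I_jI_iI_j$ as two-sided ideals, and then promote the identity back to $\Lambda$ via the surjection $\Lambda\to\Lambda_{\{i,j\}}$. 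A more conceptual route, followed in~\cite{BuanIyamaReitenScott09}, is to prove that the functors $\Sigma_i$ satisfy the braid relations on the derived category of $\Lambda\mmod$ and to pull the braid-invariance back to the ideal level through the tilting equivalence of~\ref{it:BIRSc}. In either approach the $2$-Calabi-Yau property of $\Lambda$ underwrites every cohomological vanishing used along the way.
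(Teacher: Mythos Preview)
The paper does not give a proof of this theorem: it is quoted from Buan, Iyama, Reiten and Scott (section~II.1 of~\cite{BuanIyamaReitenScott09}), and the paper merely records the statement for later use. So there is no in-paper argument to compare your proposal against.

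That said, your sketch is a faithful outline of how the proof in~\cite{BuanIyamaReitenScott09} actually runs: the inductive engine is precisely the Tor-vanishing of~\ref{it:BIRSd}, which turns the multiplication map $I_w\otimes_\Lambda I_i\to I_wI_i$ into an isomorphism and propagates the projective-dimension bound; and~\ref{it:BIRSb} is reduced via Matsumoto to the rank~$2$ braid relations. One point worth making explicit in your reduction for $m_{ij}=3$: the surjection $\pi:\Lambda\to\Lambda_{\{i,j\}}$ does not by itself lift an equality of ideals back to $\Lambda$, but here it works because both $I_iI_jI_i$ and $I_jI_iI_j$ contain $\ker\pi=\Lambda(1-e_i-e_j)\Lambda$ (the idempotent $1-e_i-e_j$ lies in $I_i\cap I_j$, hence its cube lies in any triple product), so equality of their images in $\Lambda_{\{i,j\}}$ does force equality in $\Lambda$. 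Your derivation of $\End_\Lambda(I_i)\cong\Lambda$ from the unit/counit sequences~\eqref{eq:FuncSes} is a bit telegraphic, since those sequences are written for finite-dimensional modules; the cleaner route is to apply $\Hom_\Lambda(-,I_i)$ to $0\to I_i\to\Lambda\to S_i\to 0$ and use that $\Hom_\Lambda(S_i,I_i)=0$ and $\Ext^1_\Lambda(S_i,I_i)=0$ (the latter by the $2$-Calabi-Yau duality with $\Hom_\Lambda(I_i,S_i)=0$).
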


In view of Theorem~\ref{th:BuanIyamaReitenScott}~\ref{it:BIRSc}, it
is natural to apply Brenner and Butler's theorem. For that, we
define categories
\begin{xalignat*}2
\mathscr T_w&=\{T\mid I_w\otimes_\Lambda T=0\},&
\mathscr T^w&=\{T\mid\Ext^1_\Lambda(I_w,T)=0\},\\
\mathscr F_w&=\{T\mid\Tor^1_\Lambda(I_w,T)=0\},&
\mathscr F^w&=\{T\mid\Hom_\Lambda(I_w,T)=0\}.
\end{xalignat*}

\begin{theorem}
\label{th:BrennerButler}
\begin{enumerate}
\item
\label{it:BBa}
The pair $(\mathscr T^w,\mathscr F^w)$ is a torsion pair in
$\Lambda\mmod$. For each $\Lambda$-module $T$, the evaluation map
$I_w\otimes_\Lambda\Hom_\Lambda(I_w,T)\to T$ is injective and
its image is the torsion submodule of $T$ with respect to
$(\mathscr T^w,\mathscr F^w)$.
\item
\label{it:BBb}
The pair $(\mathscr T_w,\mathscr F_w)$ is a torsion pair in
$\Lambda\mmod$. For each $\Lambda$-module $T$, the coevaluation map
$T\to\Hom_\Lambda(I_w,I_w\otimes_\Lambda T)$ is surjective and
its kernel is the torsion submodule of $T$ with respect to
$(\mathscr T_w,\mathscr F_w)$.
\item
\label{it:BBc}
There are mutually inverse equivalences
$$\xymatrix@C=6em{\mathscr F_w\ar@<.6ex>[r]^{I_w\otimes_\Lambda?}&
\ar@<.6ex>[l]^{\Hom_\Lambda(I_w,?)}\mathscr T^w}.$$
\end{enumerate}
\end{theorem}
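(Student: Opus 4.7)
The plan is to view this as a direct application of the classical Brenner-Butler tilting theorem to the tilting bimodule $I_w$, with the inputs supplied by Theorem~\ref{th:BuanIyamaReitenScott}: namely, $I_w$ has projective dimension at most $1$ as a left $\Lambda$-module, $\End_\Lambda(I_w)\cong\Lambda$ (so that $\Hom_\Lambda(I_w,?)$ naturally lands again in $\Lambda\mmod$ via the right $\Lambda$-structure on $I_w$), and the self-Ext vanishing that is part of being tilting. Because $\Lambda$ is the endomorphism algebra on \emph{both} sides, the usual Brenner-Butler story unfolds inside the single category $\Lambda\mmod$ rather than between two module categories.

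First I would check that $\mathscr T^w$ is closed under quotients and extensions and that $\mathscr F^w$ is closed under submodules and extensions; this is immediate from the long exact sequence obtained by applying $\Hom_\Lambda(I_w,?)$ to a short exact sequence, using $\mathrm{pd}\,I_w\leq1$. Dually for $(\mathscr T_w,\mathscr F_w)$ using $\Tor$. The orthogonality axiom (T1) for $(\mathscr T^w,\mathscr F^w)$ follows from the fact that $\mathscr T^w$ coincides with $\mathrm{Gen}(I_w)$, i.e.\ the class of modules generated by $I_w$: any $X\in\mathscr T^w$ is a quotient of some $I_w^n$ (this is the content of the tilting property combined with $\Ext^1_\Lambda(I_w,I_w)=0$), so $\Hom_\Lambda(X,F)\hookrightarrow\Hom_\Lambda(I_w,F)^n=0$ for $F\in\mathscr F^w$.

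Next I would establish (T2) by taking $U\subseteq T$ to be the image of the evaluation map $\varepsilon_T\colon I_w\otimes_\Lambda\Hom_\Lambda(I_w,T)\to T$. That $U\in\mathscr T^w$ is clear since $U$ is a quotient of $I_w^n$ for some $n$. That $T/U\in\mathscr F^w$ comes from applying $\Hom_\Lambda(I_w,?)$ to $0\to U\to T\to T/U\to 0$ and using that $\varepsilon_T$ induces a surjection on $\Hom_\Lambda(I_w,?)$. The injectivity of $\varepsilon_T$ asserted in (i), and the dual surjectivity of the coevaluation in (ii), is where the real work lies: one shows that the cone of $\varepsilon_T$ in the derived category is concentrated in degrees where one can control it by means of the two-term resolution of $I_w$ and the vanishing of $\Tor^1_\Lambda(I_w,\Hom_\Lambda(I_w,T))$, which in turn follows because $\Hom_\Lambda(I_w,T)$ always lies in $\mathscr F_w$ (a formal consequence of the tilting identities $\Ext^1_\Lambda(I_w,I_w)=0$ and $\End_\Lambda(I_w)\cong\Lambda$). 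This is the main technical step, and I expect it to be the main obstacle; it is however a bookkeeping exercise with the Brenner-Butler spectral sequence, collapsing thanks to $\mathrm{pd}\,I_w\leq1$.

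Finally, (iii) is obtained by tracking unit and counit of the adjoint pair $(I_w\otimes_\Lambda?,\Hom_\Lambda(I_w,?))$. For $T\in\mathscr T^w$, part (i) shows that $\varepsilon_T$ is an isomorphism, and $\Hom_\Lambda(I_w,T)$ lies in $\mathscr F_w$ by the tilting identities. Dually, for $T\in\mathscr F_w$, part (ii) shows that the coevaluation $\eta_T$ is an isomorphism and that $I_w\otimes_\Lambda T\in\mathscr T^w$. Thus the two functors restrict to mutually inverse equivalences between $\mathscr F_w$ and $\mathscr T^w$, as claimed.
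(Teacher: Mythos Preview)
Your approach is correct and is essentially the same as the paper's: the paper simply cites the classical Brenner--Butler theorem as presented in Assem's survey \cite{Assem90} (specifically sections~1.6, 1.9, and~2.1 there), and your sketch is a faithful outline of that standard argument. One caveat the paper does flag (in Remark~\ref{rk:ProbInfDim}~\ref{it:PIDa}) and which you do not mention: since $\Lambda$ is not finite dimensional, one must check separately that $\Hom_\Lambda(I_w,?)$ and $I_w\otimes_\Lambda?$ preserve finite-dimensionality, which follows from the explicit description of the reflection functors for simple reflections (Proposition~\ref{pr:DescReflFunc}) and composition.
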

\begin{proof}
See \cite{Assem90}, in particular the lemma in section~1.6,
the corollary in section~1.9, and the theorem and its
corollary in section~2.1.
\end{proof}

Given $w\in W$ and $T\in\Lambda\mmod$, we will denote by $T^w$
and $T_w$ the torsion submodules of $T$ with respect to
$(\mathscr T^w,\mathscr F^w)$ and $(\mathscr T_w,\mathscr F_w)$,
respectively.

If $u,v\in W$ are such that $\ell(u)+\ell(v)=\ell(uv)$, then
$I_{uv}\cong I_u\otimes_\Lambda I_v$. It immediately follows that
$$\mathscr F^{uv}\supseteq\mathscr F^u\quad\text{and}\quad
\mathscr T_v\subseteq\mathscr T_{uv},$$
which can be written
\begin{equation}
\label{eq:TorsTheoMono1}
(\mathscr T^{uv},\mathscr F^{uv})\preccurlyeq
(\mathscr T^u,\mathscr F^u)\quad\text{and}\quad
(\mathscr T_v,\mathscr F_v)\preccurlyeq
(\mathscr T_{uv},\mathscr F_{uv}).
\end{equation}

\begin{other}{Remarks}
\label{rk:ProbInfDim}
\begin{enumerate}
\item
\label{it:PIDa}
The reader may here object that we apply a theorem valid for finite
dimensional algebras to an infinite dimensional framework. In fact,
there is no difficulty. The non-obvious point is to show that the
functors $\Hom_\Lambda(I_w,?)$ and $I_w\otimes_\Lambda?$ preserve
the category of finite dimensional $\Lambda$-modules. In the
case where $w$ is a simple reflection, this follows from
Proposition~\ref{pr:DescReflFunc}. The general case follows by
composition.
\item
\label{it:PIDb}
By Theorem~\ref{th:BrennerButler}~\ref{it:BBc}, any module
$T\in\mathscr T^w$ is isomorphic to a module of the form
$I_w\otimes_\Lambda X$. Conversely, one easily checks that
if $T$ is of the form $I_w\otimes_\Lambda X$, then the evaluation
map $I_w\otimes_\Lambda\Hom_\Lambda(I_w,T)\to T$ is surjective,
so $T$ belongs to $\mathscr T^w$ by Theorem~\ref{th:BrennerButler}
\ref{it:BBa}. Therefore $\mathscr T^w$ is the essential image of
the functor $I_w\otimes_\Lambda?$. Likewise, one shows that
$\mathscr F_w$ is the essential image of $\Hom_\Lambda(I_w,?)$.
\end{enumerate}
\end{other}

\begin{other}{Examples}
\label{ex:TorsTheo}
\begin{enumerate}
\item
\label{it:TTa}
The case where $w$ is a simple reflection has been dealt with
in the previous section: the functorial short exact sequences
\eqref{eq:FuncSes} imply that the torsion submodule of $T$ with
respect to $(\mathscr T_{s_i},\mathscr F_{s_i})$ is the $i$-socle
of $T$ and that the torsion-free quotient of $T$ with respect to
$(\mathscr T^{s_i},\mathscr F^{s_i})$ is the $i$-head of $T$.
Therefore
\begin{xalignat*}2
\mathscr T_{s_i}&=\add S_i,&
\mathscr T^{s_i}&=\{T\mid\hd_iT=0\},\\
\mathscr F_{s_i}&=\{T\mid\soc_iT=0\},&
\mathscr F^{s_i}&=\add S_i,
\end{xalignat*}
where $\add S_i$ is the additive closure of $S_i$ in $\Lambda\mmod$.
These equalities were also obtained by Sekiya and Yamaura (Lemma~2.23
in~\cite{SekiyaYamaura13}).
\item
\label{it:TTb}
Let us generalize the first example. Let $J\subseteq I$ and let $Q_J$
be the full subquiver $Q$ with set of vertices $J$. The preprojective
algebra $\Lambda_J$ is a quotient of $\Lambda$. The kernel of
the natural morphism $\Lambda\to\Lambda_J$ is the ideal
$I_J=\Lambda\bigl(1-\sum_{j\in J}e_j\bigr)\Lambda$.
The pull-back functor allows to identify $\Lambda_J\mmod$ with
the full subcategory
$$\{M\in\Lambda\mmod\mid e_jM\neq0\ \Rightarrow j\in J\}$$
of $\Lambda\mmod$. Moreover, a $\Lambda$-module $T$ belongs to
$\Lambda_J\mmod$ if and only if $\Hom_\Lambda(I_J,T)=0$.
In fact, if this equality holds, then $I_JT=0$, hence $T$ is a
$\Lambda/I_J$-module. Conversely, if $T\in\Lambda_J\mmod$, then
$$\Ext^1_\Lambda(\Lambda_J,T)\cong\Ext^1_{\Lambda_J}(\Lambda_J,T)=0,$$
for $\Lambda_J\mmod$ is closed under extensions; since the first
arrow in the short exact sequence
$$\Hom_\Lambda(\Lambda_J,T)\hookrightarrow\Hom_\Lambda(\Lambda,T)\to
\Hom_\Lambda(I_J,T)\to\Ext^1_\Lambda(\Lambda_J,T)=0$$
is an isomorphism, we get $\Hom_\Lambda(I_J,T)=0$.

Assume now that $Q_J$ is of Dynkin type. Then Theorem~II.3.5 in
\cite{BuanIyamaReitenScott09} says that $I_J$ is the ideal $I_{w_J}$,
where $w_J$ is the longest element in the parabolic subgroup
$W_J=\langle s_j\mid j\in J\rangle$ of $W$. We conclude that
$\mathscr F^{w_J}=\Lambda_J\mmod$. Further, by duality, we also
have $\mathscr T_{w_J}=\Lambda_J\mmod$.

Thus the torsion-free part of a module $T$ with respect to
the torsion pair $(\mathscr T^{w_J},\mathscr F^{w_J})$ is the
largest quotient of $T$ that belongs to $\Lambda_J\mmod$, and
the torsion part of $T$ with respect to the torsion pair
$(\mathscr T_{w_J},\mathscr F_{w_J})$ is the largest submodule
of $T$ that belongs to $\Lambda_J\mmod$.
\item
\label{it:TTc}
We have already mentioned that the reflection functors $\Sigma_i$
and $\Sigma_i^*$ are exchanged by the $*$-duality. By composition,
we obtain $(I_w\otimes_\Lambda T)^*=\Hom_\Lambda(I_{w^{-1}},T^*)$.
This readily implies
$$\mathscr F^w=(\mathscr T_{w^{-1}})^*\quad\text{and}\quad
\mathscr T^w=(\mathscr F_{w^{-1}})^*.$$
\item
\label{it:TTd}
We will explain in Examples~\ref{ex:CompIR} and~\ref{ex:CompGLS}
that $\mathscr F^w$ is Buan, Iyama, Reiten and Scott's category
$\Sub(\Lambda/I_w)$ \cite{BuanIyamaReitenScott09} and that
$\mathscr T_w$ is Gei\ss, Leclerc and Schr\"oer's category
$\mathcal C_w$ \cite{GeissLeclercSchroer11}.
\end{enumerate}
\end{other}

We conclude this section by showing that the equivalence of
categories described in Theorem~\ref{th:BrennerButler}~\ref{it:BBc}
can be broken into pieces according to any reduced decomposition
of $w$.
\begin{proposition}
\label{pr:CompEquiv}
Let $(u,v,w)\in W^3$ be such that $\ell(uvw)=\ell(u)+\ell(v)+\ell(w)$.
Then one has mutually inverse equivalences
$$\xymatrix@C=6em{\mathscr F_{uv}\cap\mathscr T^w
\ar@<.6ex>[r]^{I_v\otimes_\Lambda?}&
\ar@<.6ex>[l]^{\Hom_\Lambda(I_v,?)}\mathscr F_u\cap\mathscr T^{vw}}.$$
\end{proposition}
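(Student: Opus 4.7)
The plan is to bootstrap the proposition from Theorem~\ref{th:BrennerButler}~\ref{it:BBc}, which already provides mutually inverse equivalences $I_v\otimes_\Lambda?\colon\mathscr F_v\leftrightarrow\mathscr T^v\colon\Hom_\Lambda(I_v,?)$. Since mutually inverse equivalences restrict to mutually inverse equivalences between corresponding full subcategories, it will suffice to verify that $I_v\otimes_\Lambda?$ carries $\mathscr F_{uv}\cap\mathscr T^w$ into $\mathscr F_u\cap\mathscr T^{vw}$, and that $\Hom_\Lambda(I_v,?)$ does the reverse. First I would note the containments $\mathscr F_{uv}\subseteq\mathscr F_v$ and $\mathscr T^{vw}\subseteq\mathscr T^v$ coming from the nesting inequalities~\eqref{eq:TorsTheoMono1}, so that both subcategories sit inside the domains of the known equivalence.

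The hypothesis $\ell(uvw)=\ell(u)+\ell(v)+\ell(w)$ combined with Theorem~\ref{th:BuanIyamaReitenScott}~\ref{it:BIRSa} gives bimodule isomorphisms $I_{uv}\cong I_u\otimes_\Lambda I_v$ and $I_{vw}\cong I_v\otimes_\Lambda I_w$; I would argue that these also compute the derived tensor products, using that each ideal has projective dimension at most one. Then for $T\in\mathscr F_v$, the vanishing $\Tor_1^\Lambda(I_v,T)=0$ means $I_v\otimes_\Lambda^L T\simeq I_v\otimes_\Lambda T$ in $D^b(\Lambda\mmod)$, and associativity of derived tensor product together with the derived fully-faithfulness of $I_v\otimes_\Lambda^L?$ (i.e.\ the derived Brenner--Butler equivalence) yields the two identities
\[\Tor_1^\Lambda(I_{uv},T)\cong\Tor_1^\Lambda(I_u,I_v\otimes_\Lambda T),\qquad\Ext^1_\Lambda(I_{vw},I_v\otimes_\Lambda T)\cong\Ext^1_\Lambda(I_w,T).\]
The first identity tells us that $T\in\mathscr F_{uv}$ iff $I_v\otimes_\Lambda T\in\mathscr F_u$, and the second that $T\in\mathscr T^w$ iff $I_v\otimes_\Lambda T\in\mathscr T^{vw}$. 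This already establishes that $I_v\otimes_\Lambda?$ sends $\mathscr F_{uv}\cap\mathscr T^w$ into $\mathscr F_u\cap\mathscr T^{vw}$.

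For the reverse inclusion, take $T'\in\mathscr F_u\cap\mathscr T^{vw}\subseteq\mathscr T^v$; Theorem~\ref{th:BrennerButler}~\ref{it:BBc} produces $T=\Hom_\Lambda(I_v,T')$ with $T\in\mathscr F_v$ and $I_v\otimes_\Lambda T\cong T'$, and the same two identifications applied in reverse give $T\in\mathscr F_{uv}\cap\mathscr T^w$. The main obstacle is the derived-functor step: one must know that the multiplication map $I_u\otimes_\Lambda I_v\to I_{uv}$ of Theorem~\ref{th:BuanIyamaReitenScott}~\ref{it:BIRSa} is a quasi-isomorphism, i.e.\ that $\Tor_1^\Lambda(I_u,I_v)=0$. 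This can be checked by induction on $\ell(v)$: the case $v=s_i$ is handled by Theorem~\ref{th:BuanIyamaReitenScott}~\ref{it:BIRSd}, which ensures $\Tor_1^\Lambda(I_u,S_i)=0$ when $\ell(us_i)>\ell(u)$, and the inductive step follows by writing $I_v=I_{s_i}\otimes_\Lambda I_{v'}$ and propagating the vanishing through a two-term projective resolution of $I_{s_i}$.
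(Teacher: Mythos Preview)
Your argument is correct, but it takes a noticeably heavier route than the paper's. You pass to the derived category, establish $\Tor_1^\Lambda(I_u,I_v)=0$ by induction on $\ell(v)$, and then invoke the derived tilting equivalence $I_v\otimes_\Lambda^L?$ to obtain the two key identifications of $\Tor_1$ and $\Ext^1$. This works, and the ``iff'' statements you extract are a pleasant by-product. The induction for the Tor-vanishing is a little more delicate than your last sentence suggests (one must check that $Q_j\otimes_\Lambda I_{s_i}$ is a direct summand of copies of $I_{s_i}$, hence has vanishing $\Tor_1$ against $I_{v'}$ by the inductive hypothesis with $u=s_i$), but it goes through.

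The paper's proof avoids all of this. It uses only the characterisation of $\mathscr T^w$ and $\mathscr F_w$ as essential images (Remark~\ref{rk:ProbInfDim}~\ref{it:PIDb}) together with the injectivity of the evaluation map in Theorem~\ref{th:BrennerButler}~\ref{it:BBa}. Concretely: $I_v\otimes_\Lambda?$ maps the essential image of $I_w\otimes_\Lambda?$ into that of $I_{vw}\otimes_\Lambda?$, giving $\mathscr T^w\to\mathscr T^{vw}$; and any $T\in\mathscr F_{uv}$ has the form $\Hom_\Lambda(I_v,X)$ with $X\in\mathscr F_u$, so $I_v\otimes_\Lambda T\hookrightarrow X$ lies in $\mathscr F_u$ because torsion-free classes are closed under subobjects. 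The reverse direction is dual. This stays entirely in the module category and sidesteps both the Tor-vanishing and the derived equivalence; it is also better adapted to the paper's setting, where $\Lambda$ is infinite dimensional and Remark~\ref{rk:ProbInfDim} has only justified the underived Brenner--Butler statement on $\Lambda\mmod$.
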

\begin{proof}
Certainly, $I_v\otimes_\Lambda?$ maps $\mathscr T^w$, the essential
image of $I_w\otimes_\Lambda?$, to $\mathscr T^{vw}$, the essential
image of $I_{vw}\otimes_\Lambda?$. On the other side, any module
$T\in\mathscr F_{uv}$ belongs to the essential image of
$$\Hom_\Lambda(I_{uv},?)=\Hom_\Lambda(I_v,\Hom_\Lambda(I_u,?)),$$
hence is isomorphic to $\Hom_\Lambda(I_v,X)$, with $X\in\mathscr F_u$.
By Theorem~\ref{th:BrennerButler}~\ref{it:BBa}, $I_v\otimes_\Lambda T$
is a submodule of $X$, hence belongs to $\mathscr F_u$. To sum up,
$I_v\otimes_\Lambda?$ maps $\mathscr T^w$ to $\mathscr T^{vw}$ and
maps $\mathscr F_{uv}$ to $\mathscr F_u$, hence maps
$\mathscr F_{uv}\cap\mathscr T^w$ to $\mathscr F_u\cap\mathscr T^{vw}$.

One shows in a dual fashion that $\Hom_\Lambda(I_v,?)$ maps
$\mathscr F_u\cap\mathscr T^{vw}$ to $\mathscr F_{uv}\cap\mathscr T^w$.
\end{proof}

Under the assumptions of the proposition, one thus has a chain of
equivalences of categories
$$\xymatrix@C=6em{\mathscr F_{uvw}
\ar@<.6ex>[r]^(.45){I_w\otimes_\Lambda?}&
\ar@<.6ex>[l]^(.55){\Hom_\Lambda(I_w,?)}\mathscr F_{uv}\cap\mathscr T^w
\ar@<.6ex>[r]^{I_v\otimes_\Lambda?}&
\ar@<.6ex>[l]^{\Hom_\Lambda(I_v,?)}\mathscr F_u\cap\mathscr T^{vw}
\ar@<.6ex>[r]^(.55){I_u\otimes_\Lambda?}&
\ar@<.6ex>[l]^(.45){\Hom_\Lambda(I_u,?)}\mathscr T^{uvw}}.$$

\begin{corollary}
\label{co:ReflDimVec}
Let $w\in W$ and let $T\in\mathscr F_w$. Then
$$\dimvec I_w\otimes_\Lambda T=w(\dimvec T).$$
\end{corollary}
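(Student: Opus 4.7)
The plan is to proceed by induction on $\ell(w)$, with the case $w=e$ trivial since $I_e=\Lambda$. For $\ell(w)\geq1$, I would choose a reduced decomposition $w=s_iv$ with $\ell(v)=\ell(w)-1$. By Theorem~\ref{th:BuanIyamaReitenScott}\,(i), the multiplication map yields an isomorphism $I_w\otimes_\Lambda T\cong I_{s_i}\otimes_\Lambda(I_v\otimes_\Lambda T)$, so the identity for $w$ will follow once we know it for $v$ and for $s_i$, applied in cascade. The monotonicity recorded in \eqref{eq:TorsTheoMono1} gives $\mathscr F_w\subseteq\mathscr F_v$, so the inductive hypothesis applies to $T\in\mathscr F_w$ and yields $\dimvec(I_v\otimes_\Lambda T)=v(\dimvec T)$. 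To chain this with the $s_i$-step, I need $I_v\otimes_\Lambda T\in\mathscr F_{s_i}$; this is precisely what Proposition~\ref{pr:CompEquiv} provides when applied to the triple $(s_i,v,e)$, for which the required length-additivity reduces to $\ell(w)=1+\ell(v)$ and the trailing factor $\mathscr T^e$ is all of $\Lambda\mmod$ (since $I_e=\Lambda$ is projective).

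It remains to handle the base case $w=s_i$. For $T\in\mathscr F_{s_i}$, Example~\ref{ex:TorsTheo}\,(i) identifies $\mathscr F_{s_i}$ with the modules $T$ such that $\soc_iT=0$, which in the local notation~\eqref{eq:LocalDesc} amounts to $M_{\out(i)}$ being injective. Proposition~\ref{pr:DescReflFunc}\,(ii) then describes $I_{s_i}\otimes_\Lambda T$ explicitly: the components at vertices $j\neq i$ are unchanged, while the component at $i$ becomes $\coker M_{\out(i)}$, of dimension $\dim\widetilde M_i-\dim M_i$. A direct matching of this dimension change with the action of $s_i$ on $\dimvec T$, using $(\alpha_j,\alpha_i)=-n_{ij}$ for $j\neq i$ together with $\dim\widetilde M_i=\sum_{j\neq i}n_{ij}\dim M_j$ (valid because $(I,E)$ has no loops, so every arrow out of $i$ targets some $j\neq i$), closes the base case.

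I do not expect any real obstacle. The argument is essentially bookkeeping built on the concrete formula of Proposition~\ref{pr:DescReflFunc} and the torsion-theoretic monotonicity packaged in Proposition~\ref{pr:CompEquiv}; the only subtlety is keeping track of the $\mathscr F_{\bullet}$-conditions along the induction, which amounts to noting that the choice of decomposition $w=s_iv$ meets the length-additivity requirements of both Theorem~\ref{th:BuanIyamaReitenScott}\,(i) and Proposition~\ref{pr:CompEquiv}, and then invoking the inclusion $\mathscr F_w\subseteq\mathscr F_v$ from~\eqref{eq:TorsTheoMono1}.
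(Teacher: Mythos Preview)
Your proof is correct and follows essentially the same approach as the paper: induction on $\ell(w)$, with the simple-reflection case computed directly from the explicit description in Proposition~\ref{pr:DescReflFunc}, and the inductive step justified via the chain of equivalences in Proposition~\ref{pr:CompEquiv} (together with the monotonicity~\eqref{eq:TorsTheoMono1}). The paper's version is simply terser---it cites Lemma~2.5 of~\cite{AmiotIyamaReitenTodorov10} as an alternative for the base case and leaves the chaining implicit---but the underlying argument is the same.
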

\begin{proof}
The particular case where $w$ is a simple reflection is Lemma~2.5
in~\cite{AmiotIyamaReitenTodorov10}; it can also be proved in a more
elementary way with the help of Proposition~\ref{pr:DescReflFunc}.
The general case is obtained by writing $w$ as a product of $\ell(w)$
simple reflections.
\end{proof}

\subsection{Layers and stratification}
\label{ss:LayStrat}
In section~10 of~\cite{GeissLeclercSchroer11}, Gei\ss, Leclerc and
Schr\"oer construct a filtration of the objects in $\mathscr T_w$.
In section~2 of \cite{AmiotIyamaReitenTodorov10}, Amiot, Iyama,
Reiten and Todorov construct a filtration of the finite dimensional
module $\Lambda/I_w$ by layers. Our aim in this section is to show
that these two constructions are $*$-dual to each other and are
defined by the torsion pairs associated to the tilting
ideals~$I_w$. These results will help us later to identify the
categories $\mathscr T^w$, $\mathscr F^w$, $\mathscr T_w$ and
$\mathscr F_w$ with the categories $\mathscr I_\theta$ and
$\mathscr P_\theta$ from section~\ref{ss:TorPairs}.

We begin by introducing the layers. The next lemma is identical
to Corollary~9.3 in~\cite{GeissLeclercSchroer11}, to Theorem~2.6
in~\cite{AmiotIyamaReitenTodorov10}, and to Lemma~5.11
in~\cite{SekiyaYamaura13}. We cannot resist offering a fourth
proof of this basic result.
\begin{lemma}
\label{le:Layers}
Let $(w,i)\in W\times I$ be such that $\ell(ws_i)>\ell(w)$.
Then the simple module $S_i$ belongs to $\mathscr F_w$. Moreover, the
module $I_w\otimes_\Lambda S_i$ belongs to $\mathscr F^{ws_i}$
and has dimension-vector $w\alpha_i$.
\end{lemma}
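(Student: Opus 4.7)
The plan is to dispense with parts (i) and (iii) as direct citations, then reduce part (ii) to a small $\Hom$ computation.

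Part (i) is immediate from Theorem~\ref{th:BuanIyamaReitenScott}~\ref{it:BIRSd}: the hypothesis $\ell(ws_i) > \ell(w)$ gives $\Tor_1^\Lambda(I_w, S_i) = 0$, which is precisely the assertion $S_i \in \mathscr F_w$. Once part (i) is in hand, part (iii) follows from Corollary~\ref{co:ReflDimVec}, since $\dimvec(I_w \otimes_\Lambda S_i) = w(\dimvec S_i) = w\alpha_i$.

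For part (ii), fix a reduced decomposition $s_{i_1}\cdots s_{i_\ell}$ of $w$; then $s_{i_1}\cdots s_{i_\ell} s_i$ is reduced for $ws_i$, so Theorem~\ref{th:BuanIyamaReitenScott}~\ref{it:BIRSa}--\ref{it:BIRSb} yields the bimodule identification $I_{ws_i} \cong I_w \otimes_\Lambda I_{s_i}$. Hom-tensor adjunction then gives
\[
\Hom_\Lambda(I_{ws_i},\, I_w \otimes_\Lambda S_i)
\;\cong\;
\Hom_\Lambda\bigl(I_{s_i},\, \Hom_\Lambda(I_w, I_w \otimes_\Lambda S_i)\bigr).
\]
Since $S_i \in \mathscr F_w$ by part (i), the equivalence of categories in Theorem~\ref{th:BrennerButler}~\ref{it:BBc} identifies $\Hom_\Lambda(I_w, I_w \otimes_\Lambda S_i)$ with $S_i$. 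Hence the whole question reduces to showing that $\Hom_\Lambda(I_i, S_i) = 0$.

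To establish this final vanishing I would exploit the short exact sequence $0 \to I_i \to \Lambda \to S_i \to 0$, where $\Lambda/I_i \cong S_i$ follows from the absence of loops in $(I,E)$ (any path from $i$ to $i$ in $\overline Q$ must leave $i$, so is killed in $\Lambda/I_i$). Applying $\Hom_\Lambda(\,\cdot\,, S_i)$ and using that $\Hom_\Lambda(S_i, S_i) \to \Hom_\Lambda(\Lambda, S_i)$ is an isomorphism of one-dimensional spaces and that $\Ext^1_\Lambda(\Lambda, S_i) = 0$, one reads off $\Hom_\Lambda(I_i, S_i) \cong \Ext^1_\Lambda(S_i, S_i)$. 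Crawley-Boevey's formula~\eqref{eq:CrawleyBoeveyForm} with $M = N = S_i$ gives $2 - \dim \Ext^1_\Lambda(S_i, S_i) = (\alpha_i, \alpha_i) = 2$, so $\Ext^1_\Lambda(S_i, S_i) = 0$, completing the argument. There is no real obstacle here: the only nontrivial point is the bookkeeping of chaining adjunction and the tilting equivalence so as to reduce to the simple root computation, and the explicit role of the no-loop hypothesis in identifying $\Lambda/I_i$ with $S_i$.
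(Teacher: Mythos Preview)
Your proof is correct and takes a cleaner route than the paper's. The paper proceeds by induction on $\ell(w)$: writing $w = s_j v$ with $\ell(v) < \ell(w)$, it applies $\Hom_\Lambda(I_v,?)$ to the functorial short exact sequence $0 \to \soc_j \to \id \to \Sigma_j\Sigma_j^* \to 0$ evaluated at $I_v \otimes_\Lambda S_i$, and uses the inductive hypothesis (twice) together with a dimension-vector argument to show that the $\soc_j$ term must vanish, thereby establishing $\Hom_\Lambda(I_w, I_w \otimes_\Lambda S_i) \cong S_i$ step by step. You instead observe that part (i) is literally Theorem~\ref{th:BuanIyamaReitenScott}~\ref{it:BIRSd}, which immediately places you inside the Brenner--Butler equivalence and gives the key isomorphism $\Hom_\Lambda(I_w, I_w \otimes_\Lambda S_i) \cong S_i$ in one stroke; the rest then reduces to the single-reflection computation $\Hom_\Lambda(I_i, S_i) = 0$. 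Both proofs invoke the same external ingredients (Theorem~\ref{th:BuanIyamaReitenScott}~\ref{it:BIRSd} and Theorem~\ref{th:BrennerButler}), but your organization avoids the induction entirely. As a minor remark, your final vanishing $\Hom_\Lambda(I_i, S_i) = 0$ is also immediate from Example~\ref{ex:TorsTheo}~\ref{it:TTa} (which records $\mathscr F^{s_i} = \add S_i$) or from the explicit description in Proposition~\ref{pr:DescReflFunc}, so the detour through $\Ext^1_\Lambda(S_i,S_i)$ and Crawley-Boevey's formula, while correct, is not needed.
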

\begin{proof}
We proceed by induction on the length of $w$. The case $w=1$ is obvious.
Assume $\ell(w)>0$ and write $w=s_jv$ with $\ell(v)<\ell(w)$. Applying
$\Hom_\Lambda(I_v,?)$ to the short exact sequence
$$0\to\soc_j(I_v\otimes_\Lambda S_i)\to I_v\otimes_\Lambda S_i\to
\Sigma_j\Sigma_j^*(I_v\otimes_\Lambda S_i)\to0$$
leads to
$$0\to\Hom_\Lambda(I_v,S_j)^{\oplus n}\to
\Hom_\Lambda(I_v,I_v\otimes_\Lambda S_i)\to
\Hom_\Lambda(I_w,I_w\otimes_\Lambda S_i)\to0$$
with $n=\dim\soc_j(I_v\otimes_\Lambda S_i)$, by
Theorem~\ref{th:BuanIyamaReitenScott}~\ref{it:BIRSd}. Applied to
$(v,i)$, the inductive hypothesis says that the middle term is
isomorphic to $S_i$, hence is simple. Applied to $(v^{-1},j)$,
the inductive hypothesis, together with
Example~\ref{ex:TorsTheo}~\ref{it:TTc} and Corollary~\ref{co:ReflDimVec},
implies that the left term has dimension-vector $n\,v^{-1}(\alpha_j)$.
Moreover, the assumption $\ell(ws_i)>\ell(w)$ means that
$v^{-1}(\alpha_j)\neq\alpha_i$. We conclude that necessarily $n=0$
and that the right term is isomorphic to $S_i$. Therefore $S_i$
belongs to the essential image of $\Hom_\Lambda(I_w,?)$, that is,
to $\mathscr F_w$. We also see that
$$\Hom_\Lambda(I_{ws_i},I_w\otimes_\Lambda S_i)\cong
\Sigma_iS_i=0,$$
which means that $I_w\otimes_\Lambda S_i$ belongs to
$\mathscr F^{ws_i}$. The last claim follows from
Corollary~\ref{co:ReflDimVec}.
\end{proof}

The following result is in essence due to Amiot, Iyama, Reiten and
Todorov~\cite{AmiotIyamaReitenTodorov10} and to Gei\ss, Leclerc and
Schr\"oer~\cite{GeissLeclercSchroer11}.
\begin{theorem}
\label{th:GLSFilt}
\begin{enumerate}
\item
\label{it:GLSFa}
Let $(w,i)\in W\times I$ be such that $\ell(ws_i)>\ell(w)$ and
let $T\in\Lambda\mmod$. Then $T^w/T^{ws_i}$ is the largest quotient
of $T^w$ isomorphic to a direct sum of copies of the module
$I_w\otimes_\Lambda S_i$.
\item
\label{it:GLSFb}
Let $w=s_{i_1}\cdots s_{i_\ell}$ be a reduced decomposition. Then
a module $T\in\Lambda\mmod$ belongs to $\mathscr F^w$ if and only
if it has a filtration whose subquotients are modules of the
form $I_{s_{i_1}\cdots s_{i_{k-1}}}\otimes_\Lambda S_{i_k}$ with
$1\leq k\leq\ell$.
\item
\label{it:GLSFc}
Let $w=s_{i_1}\cdots s_{i_\ell}$ be a reduced decomposition. Then
a module $T\in\Lambda\mmod$ belongs to $\mathscr T^w$ if and only
if there is no epimorphism $T\twoheadrightarrow I_{s_{i_1}\cdots
s_{i_{k-1}}}\otimes_\Lambda S_{i_k}$ with $1\leq k\leq\ell$.
\end{enumerate}
\end{theorem}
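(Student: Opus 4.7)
The plan is to prove part (i) first, then deduce (ii) by iterating (i) along the chosen reduced decomposition, and finally deduce (iii) from (ii) using the torsion–pair characterization of $\mathscr T^w$.

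\emph{Plan for (i).} Set $v=ws_i$. The monotonicity \eqref{eq:TorsTheoMono1} gives
$(\mathscr T^v,\mathscr F^v)\preccurlyeq(\mathscr T^w,\mathscr F^w)$,
so $T^v\subseteq T^w$ and the quotient $T^w/T^v$ lies in
$\mathscr F^v\cap\mathscr T^w$. Since $T^w\in\mathscr T^w$,
Theorem~\ref{th:BrennerButler}\ref{it:BBc} lets me write $T^w\cong I_w\otimes_\Lambda Y$ with
$Y=\Hom_\Lambda(I_w,T^w)\in\mathscr F_w$. I will then apply
Proposition~\ref{pr:CompEquiv} with triple $(1,w,s_i)$, which identifies
$\mathscr F_w\cap\mathscr T^{s_i}$ with $\mathscr T^v$ via $I_w\otimes_\Lambda?$.
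Since $\mathscr T^{s_i}=\{M\mid\hd_iM=0\}$ and $\mathscr F^{s_i}=\add S_i$
(Example~\ref{ex:TorsTheo}\ref{it:TTa}), the torsion submodule of $Y$ for
$(\mathscr T^{s_i},\mathscr F^{s_i})$ is $Y^{s_i}=\ker(Y\twoheadrightarrow\hd_iY)$,
and the quotient $Y/Y^{s_i}\cong\hd_iY$ lies in $\add S_i\subseteq\mathscr F_w$
(the inclusion $S_i\in\mathscr F_w$ is Lemma~\ref{le:Layers}). Consequently the
sequence $0\to Y^{s_i}\to Y\to\hd_iY\to0$ stays exact after $I_w\otimes_\Lambda?$
and yields a short exact sequence $0\to I_w\otimes Y^{s_i}\to T^w\to I_w\otimes\hd_iY\to0$.
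The submodule $I_w\otimes Y^{s_i}$ lies in $\mathscr T^v$ by the restricted
equivalence, and the quotient $I_w\otimes\hd_iY\in\add(I_w\otimes S_i)$ lies in
$\mathscr F^v$ by Lemma~\ref{le:Layers}; uniqueness of the torsion decomposition forces
$I_w\otimes Y^{s_i}=T^v$, so $T^w/T^v\cong I_w\otimes\hd_iY$ is a direct sum of copies
of $I_w\otimes S_i$. For the maximality claim, any quotient of $T^w$ in
$\add(I_w\otimes S_i)$ corresponds, under $\Hom_\Lambda(I_w,?)$, to a quotient of $Y$
in $\add S_i$, hence factors through $\hd_iY$; applying $I_w\otimes_\Lambda?$ back
shows that it factors through $T^w/T^v$.

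\emph{Plan for (ii).} Fix a reduced decomposition $w=s_{i_1}\cdots s_{i_\ell}$ and set
$w_k=s_{i_1}\cdots s_{i_k}$. The monotonicity \eqref{eq:TorsTheoMono1} produces a descending
chain $T=T^{w_0}\supseteq T^{w_1}\supseteq\cdots\supseteq T^{w_\ell}=T^w$, and when
$T\in\mathscr F^w$ the rightmost term vanishes. Part (i), applied to the pair
$(w_{k-1},s_{i_k})$, identifies each subquotient $T^{w_{k-1}}/T^{w_k}$ as a direct sum
of copies of $I_{w_{k-1}}\otimes S_{i_k}$. This provides the desired filtration.
The converse is immediate: by Lemma~\ref{le:Layers} each layer
$I_{w_{k-1}}\otimes S_{i_k}$ already lies in $\mathscr F^{w_k}\subseteq\mathscr F^w$,
and $\mathscr F^w$, as a torsion-free class, is closed under extensions.

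\emph{Plan for (iii).} Using the torsion pair $(\mathscr T^w,\mathscr F^w)$,
the class $\mathscr T^w$ consists exactly of those $T$ admitting no nonzero
homomorphism to any object of $\mathscr F^w$, equivalently no nonzero quotient in
$\mathscr F^w$. If $T\twoheadrightarrow I_{w_{k-1}}\otimes S_{i_k}$ for some $k$,
the target is in $\mathscr F^w$ by Lemma~\ref{le:Layers}, so $T\notin\mathscr T^w$.
Conversely, if $T\notin\mathscr T^w$, let $Q\neq0$ be a quotient of $T$ in
$\mathscr F^w$; by (ii), $Q$ admits a filtration whose subquotients lie in
$\add(I_{w_{k-1}}\otimes S_{i_k})$. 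Choosing $k$ to be the smallest index for which
the corresponding subquotient is nonzero yields a surjection from $Q$ onto a nonzero
object of $\add(I_{w_{k-1}}\otimes S_{i_k})$, which further projects onto a single
copy of $I_{w_{k-1}}\otimes S_{i_k}$, producing the required epimorphism from $T$.

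The main obstacle is the exactness argument in (i): one must verify that applying
$I_w\otimes_\Lambda?$ to $0\to Y^{s_i}\to Y\to\hd_iY\to0$ yields an exact sequence
and that the resulting submodule of $T^w$ is precisely $T^{ws_i}$. The first point
hinges on $\hd_iY\in\mathscr F_w$ (so that $\Tor_1^\Lambda(I_w,\hd_iY)=0$), which
requires the nontrivial input $S_i\in\mathscr F_w$ from Lemma~\ref{le:Layers};
the identification $I_w\otimes Y^{s_i}=T^{ws_i}$ then relies on carefully matching
the restricted equivalence from Proposition~\ref{pr:CompEquiv} with the two layered
torsion pairs.
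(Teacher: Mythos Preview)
Your argument is correct and, for the bulk of the proof, coincides with the paper's: the short exact sequence $0\to Y^{s_i}\to Y\to\hd_iY\to0$ you use is exactly the sequence $0\to\Sigma_i^*\Sigma_iX\to X\to\hd_iX\to0$ the paper starts from (with $Y=X=\Hom_\Lambda(I_w,T)$), exactness after $I_w\otimes_\Lambda?$ is in both cases the vanishing $\Tor_1^\Lambda(I_w,S_i)=0$, and your treatments of (ii) and (iii) match the paper's almost verbatim.

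The one genuine difference is the maximality claim in (i). The paper does \emph{not} settle it before moving on: it first proves only that $T^w/T^{ws_i}$ has the right form, then establishes (ii) and (iii), and only afterwards returns to maximality via a contradiction argument showing that any kernel $Z$ of a surjection $T^w\twoheadrightarrow L_\ell^{\oplus n}$ must lie in $\mathscr T^w$, then transporting through $\Hom_\Lambda(I_w,?)$. Your direct route is cleaner, though the step ``a quotient of $T^w$ in $\add(I_w\otimes_\Lambda S_i)$ corresponds to a quotient of $Y$ in $\add S_i$'' deserves one line of justification: writing such an epimorphism as $I_w\otimes_\Lambda g$ for some $g:Y\to S_i^{\oplus n}$ (fullness of the equivalence), right exactness of $I_w\otimes_\Lambda?$ gives $\coker(I_w\otimes_\Lambda g)=I_w\otimes_\Lambda\coker g$, and since $\coker g\in\add S_i$ with $I_w\otimes_\Lambda S_i\neq0$, surjectivity of $I_w\otimes_\Lambda g$ forces $\coker g=0$. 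In fact there is an even shorter argument that bypasses the equivalence entirely: since $I_w\otimes_\Lambda S_i\in\mathscr F^{ws_i}$ (Lemma~\ref{le:Layers}) and $T^{ws_i}\in\mathscr T^{ws_i}$, any morphism $T^w\to M$ with $M\in\add(I_w\otimes_\Lambda S_i)$ kills $T^{ws_i}$ by axiom~(T1), hence factors through $T^w/T^{ws_i}$.
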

\begin{proof}
Let $w$, $i$ and $T$ as in~\ref{it:GLSFa} and set
$X=\Hom_\Lambda(I_w,T)$.
Applying $I_w\otimes_\Lambda?$ to the short exact sequence
$$0\to\Sigma_i^*\Sigma_iX\to X\to\hd_iX\to0$$
and using $\Tor_1^\Lambda(I_w,S_i)=0$
(Theorem~\ref{th:BuanIyamaReitenScott}~\ref{it:BIRSd}), we get
$$0\to I_{ws_i}\otimes_\Lambda\Hom_\Lambda(I_{ws_i},T)\to
I_w\otimes_\Lambda\Hom_\Lambda(I_w,T)\to I_w\otimes_\Lambda
S_i^{\oplus m}\to0,$$
where $m=\dim\hd_iX$. Therefore $T^w/T^{ws_i}$ has the desired form.

To finish the proof of~\ref{it:GLSFa}, it remains to show the
maximality. Before that, we look at Statements~\ref{it:GLSFb}
and~\ref{it:GLSFc}. For $1\leq k\leq\ell$, we set
$L_k=I_{s_{i_1}\cdots s_{i_{k-1}}}\otimes_\Lambda S_{i_k}$.

Let $T\in\Lambda\mmod$. What has already been showed from
Statement~\ref{it:GLSFa} implies that $T/T^w$ has a filtration
whose subquotients are all isomorphic to some $L_k$.
If $T$ belongs to $\mathscr F^w$, then $T=T/T^w$ has such a
filtration: this shows the necessity of the condition proposed
in Statement~\ref{it:GLSFb}.
If $T$ has no quotient isomorphic to a module $L_k$, then
$T=T^w$, and therefore $T$ belongs to $\mathscr T^w$: this shows
the sufficiency of the condition proposed in Statement~\ref{it:GLSFc}.

By Lemma~\ref{le:Layers}, $L_k$ belongs to
$\mathscr F^{s_{i_1}\cdots s_{i_k}}$, hence to $\mathscr F^w$.
Any iterated extension of modules in the set
$\{L_k\mid1\leq k\leq\ell\}$ therefore belongs to $\mathscr F^w$,
for $\mathscr F^w$ is closed under extensions: this shows the
sufficiency of the condition in Statement~\ref{it:GLSFb}. On the
other hand, a module in $\mathscr T^w$ cannot have a nonzero map
to a module in $\mathscr F^w$, hence has no quotient isomorphic to
a module $L_k$: this shows the necessity of the condition in
Statement~\ref{it:GLSFc}.

Now let us go back to Statement~\ref{it:GLSFa}, resuming the proof
where we left it. We choose a reduced decomposition
$w=s_{i_1}\cdots s_{i_{\ell-1}}$ and set $i_\ell=i$.
For $1\leq k\leq\ell$, we set
$L_k=I_{s_{i_1}\cdots s_{i_{k-1}}}\otimes_\Lambda S_{i_k}$.

Assume the existence of a short exact sequence of the form
$0\to Z\to T^w\to L_\ell^{\oplus n}\to0$, where $n$ is a
positive integer and $Z$ does not belong to $\mathscr T^w$.
Then $Z$ has a quotient $Z/Y$ isomorphic to some $L_k$, with $k<\ell$,
and we have an extension $0\to L_k\to T^w/Y\to L_\ell^{\oplus n}\to0$.
By Lemma~\ref{le:Layers}, $\dimvec L_k$ and $\dimvec L_\ell$ are
distinct real roots, hence are not proportional; therefore
$\dimvec T^w/Y$ is not a multiple of $\dimvec L_\ell$. Let us set
$U=T^w/Y$. This module belongs to $\mathscr T^w$, because $T^w$
does, so $U=U^w$; it also belongs to $\mathscr F^{ws_i}$, because
$L_k$ and $L_\ell$ do and because $\mathscr F^{ws_i}$ is closed
under extensions, so $U^{ws_i}=0$. Therefore $U=U^w/U^{ws_i}$,
which implies (by the first part of the proof applied to $U$)
that $U$ is a direct sum of copies of $L_\ell$. We thus reach
a contradiction, and conclude that the assumption at the
beginning of this paragraph is wrong.

Consider a short exact sequence $0\to Z\to T^w\to L_\ell^{\oplus n}\to0$.
The preceding paragraph says that $Z$ belongs to $\mathscr T^w$.
Further, we note that $\Hom_\Lambda(I_w,T^w)=\Hom_\Lambda(I_w,T)=X$,
by Theorem~\ref{th:BrennerButler} and
Remark~\ref{rk:ProbInfDim}~\ref{it:PIDb}. Applying
$\Hom_\Lambda(I_w,?)$ to the short exact sequence, we then get
$0\to\Hom_\Lambda(I_w,Z)\to X\to S_i^{\oplus n}\to0$. Therefore,
by definition of the $i$-head, $\Hom_\Lambda(I_w,Z)$ contains
$\Sigma_i^*\Sigma_iX$, and moreover the quotient
$\Hom_\Lambda(I_w,Z)/\Sigma_i^*\Sigma_iX$ is a direct sum
of copies of $S_i$. Applying $I_w\otimes_\Lambda?$ to the inclusion
$\Sigma_i^*\Sigma_iX\hookrightarrow\Hom_\Lambda(I_w,Z)$ and using
Theorem~\ref{th:BuanIyamaReitenScott}~\ref{it:BIRSd}, we
conclude that $Z=I_w\otimes_\Lambda\Hom_\Lambda(I_w,Z)$
contains $T^{ws_i}=I_w\otimes_\Lambda\Sigma_i^*\Sigma_iX$.
\end{proof}

This theorem admits a dual version, which we state for reference.

\begin{theorem}
\label{th:GLSFiltDual}
\begin{enumerate}
\item
\label{it:GLSFDa}
Let $(w,i)\in W\times I$ be such that $\ell(s_iw)>\ell(w)$ and
let $T\in\Lambda\mmod$. Then $T_{s_iw}/T_w$ is the largest submodule
of $T/T_w$ isomorphic to a direct sum of copies of the module
$\Hom_\Lambda(I_w,S_i)$.
\item
\label{it:GLSFDb}
Let $w=s_{i_\ell}\cdots s_{i_1}$ be a reduced decomposition. Then
a module $T\in\Lambda\mmod$ belongs to $\mathscr T_w$ if and only
if it has a filtration whose subquotients are modules of the
form $\Hom_\Lambda(I_{s_{i_{k-1}}\cdots s_{i_1}},S_{i_k})$ with
$1\leq k\leq\ell$.
\item
\label{it:GLSFDc}
Let $w=s_{i_\ell}\cdots s_{i_1}$ be a reduced decomposition. Then
a module $T\in\Lambda\mmod$ belongs to $\mathscr F_w$ if and only
if there is no monomorphism $\Hom_\Lambda(I_{s_{i_{k-1}}\cdots
s_{i_1}},S_{i_k})\hookrightarrow T$ with $1\leq k\leq\ell$.
\end{enumerate}
\end{theorem}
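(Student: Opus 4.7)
The plan is to deduce Theorem~\ref{th:GLSFiltDual} from Theorem~\ref{th:GLSFilt} by applying the $*$-duality on $\Lambda\mmod$. Two preliminary identifications do all the work. First, Example~\ref{ex:TorsTheo}(iii) gives $\mathscr F^{w^{-1}} = (\mathscr T_w)^*$ and $\mathscr T^{w^{-1}} = (\mathscr F_w)^*$; since $*$ is contravariant, it swaps torsion and torsion-free classes, so the $*$-dual of the torsion pair $(\mathscr T_w,\mathscr F_w)$ is $(\mathscr T^{w^{-1}},\mathscr F^{w^{-1}})$. Dualizing the canonical short exact sequence $0 \to T_w \to T \to T/T_w \to 0$ and invoking the uniqueness of the torsion subobject identifies $(T/T_w)^* \cong (T^*)^{w^{-1}}$ and $(T_w)^* \cong T^*/(T^*)^{w^{-1}}$. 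Second, because $*$ exchanges $\Sigma_i$ and $\Sigma_i^*$ as recalled in section~\ref{ss:ReflFunc}, composition yields $(I_w \otimes_\Lambda X)^* \cong \Hom_\Lambda(I_{w^{-1}}, X^*)$; in particular $\Hom_\Lambda(I_w, S_i) \cong (I_{w^{-1}} \otimes_\Lambda S_i)^*$, using $S_i^* \cong S_i$.

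For (i), the hypothesis $\ell(s_iw) > \ell(w)$ is equivalent to $\ell(w^{-1}s_i) > \ell(w^{-1})$, so Theorem~\ref{th:GLSFilt}(i) applies to $(w^{-1}, i)$ and $T^*$ and describes the largest quotient of $(T^*)^{w^{-1}}$ isomorphic to a direct sum of copies of $I_{w^{-1}} \otimes_\Lambda S_i$ as $(T^*)^{w^{-1}}/(T^*)^{w^{-1}s_i}$. Applying the identifications above to both $w$ and $s_iw$, this quotient equals $(T_{s_iw}/T_w)^*$. Dualizing back converts the phrase ``largest quotient isomorphic to a direct sum of $I_{w^{-1}}\otimes_\Lambda S_i$'' into ``largest submodule of $T/T_w$ isomorphic to a direct sum of $\Hom_\Lambda(I_w, S_i)$'', which is exactly~(i).

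Parts (ii) and (iii) follow by the same translation. A reduced decomposition $w = s_{i_\ell} \cdots s_{i_1}$ gives a reduced decomposition $s_{i_1}\cdots s_{i_\ell}$ of $w^{-1}$, and by the first paragraph $T \in \mathscr T_w$ iff $T^* \in \mathscr F^{w^{-1}}$. Theorem~\ref{th:GLSFilt}(ii) expresses the latter by the existence of a filtration of $T^*$ whose subquotients are the modules $L_k = I_{s_{i_1}\cdots s_{i_{k-1}}} \otimes_\Lambda S_{i_k}$; dualizing reverses the order of the filtration and turns each $L_k$ into $L_k^* = \Hom_\Lambda(I_{s_{i_{k-1}} \cdots s_{i_1}}, S_{i_k})$, producing the filtration demanded in (ii). For (iii) one argues identically: $T\in\mathscr F_w$ iff $T^*\in\mathscr T^{w^{-1}}$, iff (by Theorem~\ref{th:GLSFilt}(iii)) no epimorphism $T^* \twoheadrightarrow L_k$ exists, iff no monomorphism $L_k^* \hookrightarrow T$ exists. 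No substantive difficulty arises; the whole argument is a bookkeeping exercise under $*$-duality, and the only point that deserves checking is that the identification $(T/T_w)^*\cong(T^*)^{w^{-1}}$ genuinely matches the image of the evaluation map for $I_{w^{-1}}$ on $T^*$, which is immediate from the adjunction between $I_{w^{-1}}\otimes_\Lambda ?$ and $\Hom_\Lambda(I_{w^{-1}},?)$ together with the compatibility of $*$ with this adjunction.
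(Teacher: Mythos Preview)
Your proposal is correct and follows precisely the approach the paper intends: the paper itself gives no explicit proof of Theorem~\ref{th:GLSFiltDual}, merely introducing it with the sentence ``This theorem admits a dual version, which we state for reference,'' so the argument by $*$-duality from Theorem~\ref{th:GLSFilt} that you have written out is exactly what the authors have in mind. Your identifications (Example~\ref{ex:TorsTheo}\ref{it:TTc} for the torsion pairs, the formula $(I_w\otimes_\Lambda T)^*\cong\Hom_\Lambda(I_{w^{-1}},T^*)$, and the self-duality $S_i^*\cong S_i$) are all established earlier in the paper, and the bookkeeping is accurate.
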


As already mentioned, most of the results presented above in this
section can be found in papers by Iyama, Reiten et al.\ and by
Gei\ss, Leclerc and Schr\"oer. The aim of the next three examples is
to explain some connections in more detail.

\begin{other}{Example}
\label{ex:CompBKAIRT}
Let us denote by $\{\omega_i\mid i\in I\}$ the set of fundamental
weights of the root system $\Phi$; these weights are elements of
a representation of $W$ which contains $\mathbb RI$, and for all
$(i,j)\in I^2$, we have $s_j\omega_i=\omega_i-\delta_{ij}\alpha_i$,
where $\delta_{ij}$ is Kronecker's symbol. Let us now fix
$(i,w)\in I\times W$. By \cite{BaumannKamnitzer12}, Theorem~3.1~(ii),
there exists a unique $\Lambda$-module $N(-w\omega_i)$ whose
dimension-vector is $\omega_i-w\omega_i$ and whose socle is $0$
or $S_i$. (The paper~\cite{BaumannKamnitzer12} mainly deals
with the case where $\mathfrak g$ is finite dimensional, but the
constructions in sections~2 and~3 are valid in general, with the
exception of Proposition~3.6.) The aim of this example is to show that
$N(-w\omega_i)\cong((\Lambda/I_w)\otimes_\Lambda\Lambda e_i)^*$, where
$e_i$ is the lazy path at vertex $i$ (see section~\ref{ss:BasicDef}).
For that, we consider a reduced decomposition $w=s_{j_1}\cdots
s_{j_\ell}$. The filtration $\Lambda\supset I_{s_{j_1}}\supset
I_{s_{j_1}s_{j_2}}\supset\cdots\supset I_{s_{j_1}\cdots
s_{j_{\ell-1}}}\supset I_w$ induces a filtration of
$\Lambda/I_w$, whose $k$-th subquotient is the layer
$$I_{s_{j_1}\cdots s_{j_{k-1}}}/I_{s_{j_1}\cdots s_{j_k}}\cong
I_{s_{j_1}\cdots s_{j_{k-1}}}\otimes_\Lambda(\Lambda/I_{j_k})
\cong I_{s_{j_1}\cdots s_{j_{k-1}}}\otimes_\Lambda S_{j_k}$$
studied by Amiot, Iyama, Reiten and Todorov
\cite{AmiotIyamaReitenTodorov10}. Tensoring this filtration
on the right with the projective $\Lambda$-module $\Lambda e_i$
kills all the subquotients with $j_k\neq i$, so by
Lemma~\ref{le:Layers},
$$\dimvec(\Lambda/I_w)\otimes_\Lambda\Lambda e_i=
\sum_{\substack{1\leq k\leq\ell\\[2pt]j_k=i}}
s_{j_1}\cdots s_{j_{k-1}}\alpha_{j_k}=\omega_i-w\omega_i.$$
In addition, the head of $\Lambda e_i$ is equal to $S_i$, so the
head of $(\Lambda/I_w)\otimes_\Lambda\Lambda e_i$ is $0$ or $S_i$.
The dual of $(\Lambda/I_w)\otimes_\Lambda\Lambda e_i$ therefore
satisfies the two conditions that characterize $N(-w\omega_i)$.
\end{other}

\begin{other}{Example}
\label{ex:CompIR}
Theorem~\ref{th:GLSFilt}~\ref{it:GLSFb} and Corollary~2.9 in
\cite{IyamaReiten10} imply that our category $\mathscr F^w$ is equal
to $\Sub(\Lambda/I_w)$, the full subcategory of $\Lambda\mmod$ whose
objects are the modules that can be embedded in a finite direct sum
of copies of $\Lambda/I_w$. (To make sure that the assumptions of
the statements in~\cite{IyamaReiten10} are fulfilled, observe that
a module $T\in\mathscr F^w$ satisfies $\Hom_\Lambda(I_w,T)=0$
by definition, whence $I_wT=0$, so $T$ can be seen as a
$\Lambda/I_w$-module.)
\end{other}

\begin{other}{Example}
\label{ex:CompGLS}
We now compare Theorem~\ref{th:GLSFilt} to Gei\ss, Leclerc
and Schr\"oer's stratification. We fix a reduced decomposition
$w=s_{i_\ell}\cdots s_{i_1}$. Then Gei\ss, Leclerc and Schr\"oer
define modules $V_k$ and $M_k$ for $1\leq k\leq\ell$ (sections~2.4
and~9 of~\cite{GeissLeclercSchroer11}). By construction, $V_k$
is a submodule of the injective hull of $S_{i_k}$, hence has
socle $0$ or $S_{i_k}$; moreover, the dimension-vector of
$V_k$ is $\omega_{i_k}-s_{i_1}\cdots s_{i_k}\omega_{i_k}$, by
Corollary~9.2 in~\cite{GeissLeclercSchroer11}. Comparing with
Example~\ref{ex:CompBKAIRT}, we see that
$$V_k\cong N(-s_{i_1}\cdots s_{i_k}\omega_{i_k})\cong
((\Lambda/I_{s_{i_1}\cdots s_{i_k}})\otimes_\Lambda\Lambda e_{i_k})^*.$$
If $\{s\in\{1,\dots,k-1\}\mid i_s=i_k\}$ is not empty,
then $k^-$ is defined to be the largest element in this set and
$M_k$ is defined to be the quotient $V_k/V_{k^-}$; note here that
$$V_{k^-}\cong N(-s_{i_1}\cdots s_{i_{k^-}}\omega_{i_{k^-}})=
N(-s_{i_1}\cdots s_{i_{k-1}}\omega_{i_k})\cong((\Lambda/I_{s_{i_1}
\cdots s_{i_{k-1}}})\otimes_\Lambda\Lambda e_{i_k})^*.$$
Otherwise, $M_k$ is defined to be $V_k$. In both case, $M_k$ is
the dual of
$$(I_{s_{i_1}\cdots s_{i_{k-1}}}/I_{s_{i_1}\cdots s_{i_k}})
\otimes_\Lambda\Lambda e_{i_k}\cong I_{s_{i_1}\cdots s_{i_{k-1}}}
\otimes_\Lambda(\Lambda/I_{i_k})\otimes_\Lambda\Lambda e_{i_k}\cong
I_{s_{i_1}\cdots s_{i_{k-1}}}\otimes_\Lambda S_{i_k}.$$
By Example~\ref{ex:TorsTheo}~\ref{it:TTc}, this gives
$M_k\cong\Hom_\Lambda(I_{s_{i_{k-1}}\cdots s_{i_1}},S_{i_k})$.
Further, Gei\ss, Leclerc and Schr\"oer consider the module
$V=V_1\oplus\cdots V_\ell$ and the category $\mathcal C_w=\Fac(V)$,
the full subcategory of $\Lambda\mmod$ whose objects are the
homomorphic images of a direct sum of copies of $V$. Comparing
Lemma~10.2 in~\cite{GeissLeclercSchroer11} with
Theorem~\ref{th:GLSFiltDual}~\ref{it:GLSFDb}, we see that
$\mathcal C_w$ is our category $\mathscr T_w$ and that the
stratification constructed in section~10 of~\cite{GeissLeclercSchroer11}
on a $\Lambda$-module $T\in\mathcal C_w$ coincides with the
filtration by the submodules $T_{s_{i_k}\cdots s_{i_1}}$. Combining
this with the conclusion of Example~\ref{ex:CompIR}, we additionally
obtain that $\mathcal C_w=\Fac((\Lambda/I_{w^{-1}})^*)$, in agreement
with Theorem~2.8~(iv) in~\cite{GeissLeclercSchroer11}.
\end{other}

Theorem~\ref{th:GLSFilt} has the following noteworthy consequence.
\begin{proposition}
\label{pr:TorsTheoMono2}
Let $(u,v)\in W^2$ such that $\ell(u)+\ell(v)=\ell(uv)$.
Then $(\mathscr T_u,\mathscr F_u)\preccurlyeq
(\mathscr T^v,\mathscr F^v)$.
\end{proposition}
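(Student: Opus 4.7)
My plan is to verify the inclusion $\mathscr{T}_u \subseteq \mathscr{T}^v$, which is one of the three equivalent formulations of $(\mathscr{T}_u, \mathscr{F}_u) \preccurlyeq (\mathscr{T}^v, \mathscr{F}^v)$. I will fix a reduced decomposition $v = s_{j_1} \cdots s_{j_{\ell(v)}}$ and write $L_k = I_{s_{j_1} \cdots s_{j_{k-1}}} \otimes_\Lambda S_{j_k}$ for $1 \leq k \leq \ell(v)$. By Theorem~\ref{th:GLSFilt}~\ref{it:GLSFc}, to show that a given $T \in \mathscr{T}_u$ lies in $\mathscr{T}^v$ it suffices to rule out epimorphisms from $T$ onto any of the $L_k$; and since torsion classes are closed under quotients (section~\ref{ss:TorPairs}), such an epimorphism would force $L_k \in \mathscr{T}_u$. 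The whole statement thus reduces to the claim that $L_k \notin \mathscr{T}_u$ for every $k$.

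To establish this, I will compute the tensor product $I_u \otimes_\Lambda L_k$ directly. The hypothesis $\ell(u)+\ell(v)=\ell(uv)$ guarantees that for each $0 \leq k \leq \ell(v)$ the word $u\,s_{j_1} \cdots s_{j_k}$ is reduced, so Theorem~\ref{th:BuanIyamaReitenScott}~\ref{it:BIRSa} yields
$$I_u \otimes_\Lambda L_k \cong I_u \otimes_\Lambda I_{s_{j_1} \cdots s_{j_{k-1}}} \otimes_\Lambda S_{j_k} \cong I_{u\,s_{j_1} \cdots s_{j_{k-1}}} \otimes_\Lambda S_{j_k}.$$
Applying Lemma~\ref{le:Layers} to the pair $(u\,s_{j_1} \cdots s_{j_{k-1}}, j_k)$---whose length condition is ensured by the same reducedness---shows that the right-hand side has dimension-vector $(u\,s_{j_1} \cdots s_{j_{k-1}})\alpha_{j_k}$, a real root and in particular nonzero. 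Hence $L_k \notin \mathscr{T}_u$, completing the argument.

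The proof is essentially bookkeeping on top of earlier results, so I do not foresee a genuine obstacle; the only point requiring care is ensuring that every intermediate word $u\,s_{j_1}\cdots s_{j_k}$ remains reduced when repeatedly invoking the factorization $I_{u'v'} \cong I_{u'}\otimes_\Lambda I_{v'}$, but this is immediate from the combinatorial hypothesis on the pair $(u,v)$.
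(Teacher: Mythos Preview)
Your proof is correct and takes a genuinely different route from the paper's. The paper verifies the equivalent condition $\mathscr T_u\cap\mathscr F^v=\{0\}$ by contradiction: it picks a nonzero $T$ of minimal dimension in this intersection, applies the layer filtration of Theorem~\ref{th:GLSFilt}~\ref{it:GLSFb} on the $\mathscr F^v$-side and the dual filtration of Theorem~\ref{th:GLSFiltDual}~\ref{it:GLSFDb} on the $\mathscr T_u$-side, and minimality forces $T$ to be simultaneously a single layer $I_{s_{j_1}\cdots s_{j_{q-1}}}\otimes_\Lambda S_{j_q}$ and a single layer $\Hom_\Lambda(I_{s_{i_{p-1}}\cdots s_{i_1}},S_{i_p})$; comparing dimension-vectors gives a root in $N_{u^{-1}}\cap N_v$, contradicting Lemma~\ref{le:CombiCox}. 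Your argument instead uses the characterization of $\mathscr T^v$ in Theorem~\ref{th:GLSFilt}~\ref{it:GLSFc} and checks directly that each layer $L_k$ fails to lie in $\mathscr T_u$ by computing $I_u\otimes_\Lambda L_k\cong I_{us_{j_1}\cdots s_{j_{k-1}}}\otimes_\Lambda S_{j_k}\neq 0$. Your approach is more direct and avoids both the minimality argument and any appeal to the dual Theorem~\ref{th:GLSFiltDual}; the paper's approach is more symmetric in $u$ and $v$ and makes the underlying root-system obstruction $N_{u^{-1}}\cap N_v=\varnothing$ explicit.
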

\begin{proof}
We want to show that $\mathscr T_u\cap\mathscr F^v=\{0\}$.
Assume the contrary and choose $T\neq0$ of minimal dimension
in the intersection. Write $u=s_{i_\ell}\cdots s_{i_1}$ and
$v=s_{j_1}\cdots s_{j_m}$. By
Theorem~\ref{th:GLSFilt}~\ref{it:GLSFb}, $T$ has a
filtration with subquotients of the
form $I_{s_{j_1}\cdots s_{j_{q-1}}}\otimes_\Lambda S_{j_q}$.
Any subquotient of this filtration belongs to $\mathscr F^v$,
and the top one also belongs to $\mathscr T_u$ since a
torsion class is closed under taking quotients. The minimality of
$\dim T$ imposes then that the filtration has just one step.
Dually, $T$ has a filtration with subquotients of the form
$\Hom_\Lambda(I_{s_{i_{p-1}}\cdots s_{i_1}},S_{i_p})$, and minimality
impose again that this filtration has just one step. We end up
with an isomorphism
$$\Hom_\Lambda(I_{s_{i_{p-1}}\cdots s_{i_1}},S_{i_p})\cong
I_{s_{j_1}\cdots s_{j_{q-1}}}\otimes_\Lambda S_{j_q}.$$
Taking dimension-vectors, we get $s_{i_1}\cdots s_{i_{p-1}}\alpha_{i_p}
=s_{j_1}\cdots s_{j_{q-1}}\alpha_{j_q}$, by Lemma~\ref{le:Layers}.
This contradicts the assumption $\ell(u)+\ell(v)=\ell(uv)$.
\end{proof}

We conclude this section with a proposition that slightly refines
Proposition~3.2 in~\cite{IyamaReiten10}.
\begin{proposition}
\label{pr:ShiftF^w}
Assume that $\ell(u)+\ell(v)=\ell(uv)$. Then one has equivalences
of categories
$$\xymatrix@C=6em{\mathscr F^v\ar@<.6ex>[r]^(.43){I_u\otimes_\Lambda?}&
\ar@<.6ex>[l]^(.57){\Hom_\Lambda(I_u,?)}\mathscr F^{uv}\cap\mathscr T^u}
\qquad\text{and}\qquad
\xymatrix@C=6em{\mathscr T_{uv}\cap\mathscr F_v\ar@<.6ex>[r]^(.59){I_v
\otimes_\Lambda?}&\ar@<.6ex>[l]^(.41){\Hom_\Lambda(I_v,?)}\mathscr T_u}.$$
\end{proposition}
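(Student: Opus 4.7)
The plan is to establish each equivalence by restricting the Brenner–Butler equivalence of Theorem~\ref{th:BrennerButler}~\ref{it:BBc} to a subcategory, using the factorization $I_{uv}\cong I_u\otimes_\Lambda I_v$ from Theorem~\ref{th:BuanIyamaReitenScott}~\ref{it:BIRSa} together with the tensor–hom adjunction $\Hom_\Lambda(I_{uv},X)\cong\Hom_\Lambda(I_v,\Hom_\Lambda(I_u,X))$. A preliminary remark from Proposition~\ref{pr:TorsTheoMono2}, used throughout, is that the assumption $\ell(u)+\ell(v)=\ell(uv)$ yields $\mathscr F^v\subseteq\mathscr F_u$ and $\mathscr T_u\subseteq\mathscr T^v$, so that the source and target categories stated in the proposition are both contained in the categories $\mathscr F_u$ and $\mathscr T^u$ (or $\mathscr F_v$ and $\mathscr T^v$) on which the Brenner–Butler equivalence is already defined.

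For the first equivalence, I would first show that $I_u\otimes_\Lambda?$ carries $\mathscr F^v$ into $\mathscr F^{uv}\cap\mathscr T^u$. Given $T\in\mathscr F^v$, one has $T\in\mathscr F_u$ by the preliminary remark, hence $I_u\otimes_\Lambda T$ lies in the essential image $\mathscr T^u$ (Remark~\ref{rk:ProbInfDim}~\ref{it:PIDb}) and $\Hom_\Lambda(I_u,I_u\otimes_\Lambda T)\cong T$. The adjunction then gives
\[
\Hom_\Lambda(I_{uv},I_u\otimes_\Lambda T)\cong
\Hom_\Lambda(I_v,\Hom_\Lambda(I_u,I_u\otimes_\Lambda T))\cong
\Hom_\Lambda(I_v,T)=0,
\]
so $I_u\otimes_\Lambda T\in\mathscr F^{uv}$. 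Conversely, if $X\in\mathscr F^{uv}\cap\mathscr T^u$, set $Y=\Hom_\Lambda(I_u,X)$; then $Y\in\mathscr F_u$ and $X\cong I_u\otimes_\Lambda Y$, so $\Hom_\Lambda(I_v,Y)\cong\Hom_\Lambda(I_{uv},X)=0$, proving $Y\in\mathscr F^v$. The two functors remain mutually inverse on the restricted subcategories because they already are on $\mathscr F_u\leftrightarrows\mathscr T^u$.

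For the second equivalence, the same idea works with tensor products instead of Hom. If $T\in\mathscr T_{uv}\cap\mathscr F_v$, then $I_v\otimes_\Lambda T\in\mathscr T^v$ by Brenner–Butler, and the strict associativity
\[
I_u\otimes_\Lambda(I_v\otimes_\Lambda T)\cong(I_u\otimes_\Lambda I_v)\otimes_\Lambda T\cong I_{uv}\otimes_\Lambda T=0
\]
shows $I_v\otimes_\Lambda T\in\mathscr T_u$. Conversely, for $X\in\mathscr T_u\subseteq\mathscr T^v$, let $Y=\Hom_\Lambda(I_v,X)\in\mathscr F_v$; since $X\cong I_v\otimes_\Lambda Y$, the same associativity yields $I_{uv}\otimes_\Lambda Y\cong I_u\otimes_\Lambda X=0$, so $Y\in\mathscr T_{uv}$. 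Again the functors are inverse to each other by restriction of the Brenner–Butler equivalence $\mathscr F_v\leftrightarrows\mathscr T^v$.

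No step is genuinely hard: the only point demanding care is checking that the bimodule isomorphism $I_u\otimes_\Lambda I_v\cong I_{uv}$ is a bona fide isomorphism (not just a derived one), so that ordinary associativity of $\otimes_\Lambda$ suffices and no $\Tor$ corrections appear; this is provided by Theorem~\ref{th:BuanIyamaReitenScott}~\ref{it:BIRSa}. Alternatively, the second equivalence can be obtained from the first by $*$-duality via Example~\ref{ex:TorsTheo}~\ref{it:TTc} applied to $(u^{-1},v^{-1})$, but the direct verification above seems more transparent.
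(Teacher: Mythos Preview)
Your proposal is correct and follows essentially the same approach as the paper: both use Proposition~\ref{pr:TorsTheoMono2} to get $\mathscr F^v\subseteq\mathscr F_u$, then the adjunction $\Hom_\Lambda(I_{uv},-)\cong\Hom_\Lambda(I_v,\Hom_\Lambda(I_u,-))$ to check that the functors land in the right subcategories, and conclude via the Brenner--Butler equivalence. The paper handles the second equivalence by saying it is ``proved in a similar fashion (or follows by duality),'' which matches both your direct argument and your remark about $*$-duality.
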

\begin{proof}
If $T\in\mathscr F^v$, then $T\in\mathscr F_u$, by
Proposition~\ref{pr:TorsTheoMono2}. Thus
$T=T/T_u\cong\Hom_\Lambda(I_u,I_u\otimes_\Lambda T)$, whence
$$\Hom_\Lambda(I_{uv},I_u\otimes_\Lambda T)\cong
\Hom_\Lambda(I_v,\Hom_\Lambda(I_u,I_u\otimes_\Lambda
T))\cong\Hom_\Lambda(I_v,T)=0,$$
which shows that $I_u\otimes_\Lambda T\in\mathscr F^{uv}$.
Conversely, if $T\in\mathscr F^{uv}$, then
$$\Hom_\Lambda(I_v,\Hom_\Lambda(I_u,T))=\Hom_\Lambda(I_{uv},T)=0,$$
hence $\Hom_\Lambda(I_u,T)\in\mathscr F^v$. The
first pair of equivalences then follows from
Theorem~\ref{th:BrennerButler}~\ref{it:BBc}.

The second equivalence is proved in a similar fasion (or follows
by duality).
\end{proof}

\subsection{Tilting structure and HN polytopes in $\Lambda\mmod$}
\label{ss:TiltPol}
We now relate all this material about the reflection
functors and the torsion pairs $(\mathscr T^w,\mathscr F^w)$ and
$(\mathscr T_w,\mathscr F_w)$ to the categories $\mathscr I_\theta$,
$\mathscr R_\theta$, etc., and to the HN polytopes defined in
section~\ref{se:TorHNPoly}.

The following result is almost identical to Theorem~3.5 in
\cite{SekiyaYamaura13}; we however prove the key point in a
different fashion.
\begin{theorem}
\label{th:SekiyaYamaura}
Let $\theta:\mathbb ZI\to\mathbb R$ be a group homomorphism
and let $i\in I$. If $\langle\theta,\alpha_i\rangle>0$, then
$\Sigma_i$ and $\Sigma_i^*$ induce mutually inverse equivalences
$$\xymatrix@C=3.4em{\mathscr R_\theta\ar@<.6ex>[r]^{\Sigma_i^*}&
\ar@<.6ex>[l]^{\Sigma_i}\mathscr R_{s_i\theta}}.$$
\end{theorem}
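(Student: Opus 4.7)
The plan is to show two containments, $\Sigma_i^*(\mathscr R_\theta)\subseteq\mathscr R_{s_i\theta}$ and $\Sigma_i(\mathscr R_{s_i\theta})\subseteq\mathscr R_\theta$, and then invoke the equivalence $\mathscr T^{s_i}\leftrightarrow\mathscr F_{s_i}$ from Theorem~\ref{th:BrennerButler}\ref{it:BBc} (together with Example~\ref{ex:TorsTheo}\ref{it:TTa}) to conclude. First I would observe that the hypothesis $\langle\theta,\alpha_i\rangle>0$ forces $\mathscr R_\theta\subseteq\mathscr F_{s_i}$: if $\soc_iT\neq0$, the submodule $S_i\subseteq T$ would have $\theta([S_i])=\langle\theta,\alpha_i\rangle>0$, contradicting $\theta$-semistability. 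Dually, $\mathscr R_{s_i\theta}\subseteq\mathscr T^{s_i}$: if $T'\twoheadrightarrow S_i$ with kernel $K\subseteq T'$, then $\langle s_i\theta,[K]\rangle=-\langle s_i\theta,\alpha_i\rangle=\langle\theta,\alpha_i\rangle>0$, a contradiction. These containments make the adjunction units/counits \eqref{eq:FuncSes} isomorphisms on the respective categories, so the dimension-vector formula $\dimvec\Sigma_i^*T=s_i\dimvec T$ (Corollary~\ref{co:ReflDimVec}) applies; in particular $\langle s_i\theta,[\Sigma_i^*T]\rangle=\langle\theta,[T]\rangle=0$.

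The heart of the proof is checking the semistability inequality for $\Sigma_i^*T$. Given an arbitrary submodule $Y\subseteq\Sigma_i^*T$, left exactness of $\Sigma_i$ together with $T\cong\Sigma_i\Sigma_i^*T$ yields an injection $\Sigma_iY\hookrightarrow T$; call its image $X\subseteq T$. Since $X\subseteq T\in\mathscr F_{s_i}$ we have $\Sigma_iY\cong X\in\mathscr F_{s_i}$, so the functorial short exact sequence $0\to\Sigma_i^*\Sigma_iY\to Y\to\hd_iY\to0$ combined with $\dimvec\Sigma_i^*\Sigma_iY=s_i[X]$ gives
\[
[Y]=s_i[X]+k\,\alpha_i,\qquad k=\dim\hd_iY\geq0.
\]
Pairing with $s_i\theta$ and using $\langle s_i\theta,\alpha_i\rangle=-\langle\theta,\alpha_i\rangle$,
\[
\langle s_i\theta,[Y]\rangle=\langle\theta,[X]\rangle-k\langle\theta,\alpha_i\rangle\leq0,
\]
since $\langle\theta,[X]\rangle\leq0$ by $T\in\mathscr R_\theta$ and $k\langle\theta,\alpha_i\rangle\geq0$. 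Hence $\Sigma_i^*T\in\mathscr R_{s_i\theta}$.

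For the reverse inclusion I would avoid redoing the analogous argument by appealing to $*$-duality. Apply the statement just proved with $\eta=-s_i\theta$ in place of $\theta$: the hypothesis is satisfied because $\langle\eta,\alpha_i\rangle=-\langle\theta,s_i\alpha_i\rangle=\langle\theta,\alpha_i\rangle>0$, so $\Sigma_i^*$ sends $\mathscr R_\eta$ into $\mathscr R_{s_i\eta}=\mathscr R_{-\theta}$. Given $T'\in\mathscr R_{s_i\theta}$, Remark~\ref{rk:PolDual} gives $(T')^*\in\mathscr R_{-s_i\theta}=\mathscr R_\eta$, hence $\Sigma_i^*(T')^*\in\mathscr R_{-\theta}$. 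Using $\Sigma_i^*(T')^*\cong(\Sigma_iT')^*$ (from section~\ref{ss:ReflFunc}) and Remark~\ref{rk:PolDual} again, we conclude $\Sigma_iT'\in\mathscr R_\theta$.

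Combining the two containments with the ambient equivalence $\Sigma_i\colon\mathscr T^{s_i}\rightleftarrows\mathscr F_{s_i}\colon\Sigma_i^*$, we obtain mutually inverse equivalences between $\mathscr R_{s_i\theta}$ and $\mathscr R_\theta$. The only slightly delicate step is Step~3: keeping track of the summand $k\alpha_i$ coming from $\hd_iY$, and verifying it contributes with the right sign. Everything else is formal manipulation with the torsion-pair formalism already set up.
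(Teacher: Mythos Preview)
Your proof is correct and follows essentially the same strategy as the paper: establish $\mathscr R_\theta\subseteq\mathscr F_{s_i}$ and $\mathscr R_{s_i\theta}\subseteq\mathscr T^{s_i}$, verify the semistability inequality after applying $\Sigma_i^*$, handle the reverse direction by a duality argument, and conclude via the Brenner--Butler equivalence. The only cosmetic difference is that the paper checks the inequality on \emph{quotients} $X$ of $\Sigma_i^*T$ (using that $\Sigma_i$ only modifies vertex $i$, so $\coker\Sigma_if$ is concentrated there), while you check it on \emph{submodules} $Y\subseteq\Sigma_i^*T$ (using left exactness of $\Sigma_i$ and the functorial sequence for $\hd_i$); these are dual implementations of the same idea and your sign bookkeeping is correct.
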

\begin{proof}
The assumption $\langle\theta,\alpha_i\rangle>0$ forbids a
module $T\in\mathscr R_\theta$ to have a submodule isomorphic
to $S_i$, so $\mathscr R_\theta\subseteq\mathscr F_{s_i}$ by
Example~\ref{ex:TorsTheo}~\ref{it:TTa}. Likewise,
$\mathscr R_{s_i\theta}\subseteq\mathscr T^{s_i}$.

Let $T\in\mathscr R_\theta$. Then $T\in\mathscr F_{s_i}$
and $\langle\theta,\dimvec T\rangle=0$. Using
Corollary~\ref{co:ReflDimVec} with $w=s_i$, we obtain
$\langle s_i\theta,\dimvec\Sigma_i^*T\rangle=0$.

To show that $\Sigma_i^*T$ is $s_i\theta$-semistable, it
remains to show that $\langle s_i\theta,\dimvec X\rangle\geq0$
for any quotient $X$ of $\Sigma_i^*T$. In this aim, consider a
surjective morphism $f:\Sigma_i^*T\to X$. The functor $\Sigma_i$
modifies only the vector space at vertex $i$, so the cokernel of
$\Sigma_if$ is concentrated at this vertex. There exists thus a
natural integer $n$ such that $\dimvec\coker(\Sigma_if)=n\alpha_i$.
Note now that not only $\Sigma_i^*T$, but also $X$ belong to
$\mathscr T^{s_i}$, for a torsion class is closed under taking
quotients. By Corollary~\ref{co:ReflDimVec} again, we have
$\dimvec\Sigma_iX=s_i\dimvec X$. We therefore have
$s_i\dimvec X=\dimvec\im(\Sigma_if)+n\alpha_i$. Since $T$ is
$\theta$-semistable, $\langle\theta,\dimvec\im(\Sigma_if)\rangle\geq0$.
We eventually find that $\langle s_i\theta,\dimvec X\rangle\geq
n\langle\theta,\alpha_i\rangle\geq0$, as desired.

We thus see that $\Sigma_i^*$ maps $\mathscr R_\theta$ to
$\mathscr R_{s_i\theta}$. A dual reasoning shows that $\Sigma_i$
maps $\mathscr R_{s_i\theta}$ to $\mathscr R_\theta$. The
theorem now follows from Theorem~\ref{th:BrennerButler}~\ref{it:BBc}
and from the inclusions $\mathscr R_\theta\subseteq\mathscr F_{s_i}$
and $\mathscr R_{s_i\theta}\subseteq\mathscr T^{s_i}$.
\end{proof}

Recall from section~\ref{ss:GenSetup} the definition of the dominant
cone $\overline C_0$ and of the Tits cone $C_T=\bigcup_{w\in
W}w\overline C_0$. A subset $J\subseteq I$ gives rise to a
face $F_J\subseteq\overline C_0$, to a parabolic subgroup
$W_J=\langle s_j\mid j\in J\rangle$ of $W$, and to a root
system $\Phi_J\subseteq\Phi$. In the case $W_J$ is finite, we
denote its longest element by $w_J$. Recall also that for $w\in W$,
we set $N_w=\Phi_+\cap w\Phi_-$ and that for any reduced
decomposition $w=s_{i_1}\cdots s_{i_\ell}$, we have
$$N_w=\{s_{i_1}\cdots s_{i_{k-1}}\alpha_{i_k}\mid1\leq k\leq\ell\}.$$

\begin{theorem}
\label{th:TiltPIR}
Let $J\subseteq I$, let $\theta\in F_J$, and let $w\in W$. We
assume that $w$ is $J$-reduced on the right, that is,
$\ell(ws_j)>\ell(w)$ for each $j\in J$.
\begin{enumerate}
\item
\label{it:TPIRa}
The category $\mathscr R_\theta$ coincides with the subcategory
$\Lambda_J\mmod$. Moreover, there are mutually inverse equivalences
$$\xymatrix@C=6em{\mathscr R_\theta\ar@<.6ex>[r]^{I_w\otimes_\Lambda?}&
\ar@<.6ex>[l]^{\Hom_\Lambda(I_w,?)}\mathscr R_{w\theta}}.$$
\item
\label{it:TPIRb}
We have $(\mathscr T^w,\mathscr F^w)=(\overline{\mathscr
I}_{w\theta},\mathscr P_{w\theta})$.
\item
\label{it:TPIRc}
If $W_J$ is finite, then have $(\mathscr T^{ww_J},\mathscr
F^{ww_J})=(\mathscr I_{w\theta},\overline{\mathscr P}_{w\theta})$.
\end{enumerate}
\end{theorem}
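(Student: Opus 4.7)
The plan is to prove (ii) and (iii) first by a common strategy built on Proposition~\ref{pr:ATheta} and the layer filtrations of Theorem~\ref{th:GLSFilt}, and then to deduce the equivalence in~(i) from Theorem~\ref{th:BrennerButler}~(iii). The set-theoretic equality $\mathscr R_\theta=\Lambda_J\mmod$ in~(i) is immediate: since $\theta\in F_J$, the pairing $\langle\theta,\alpha_i\rangle$ is strictly positive for $i\notin J$ and zero for $i\in J$, so the relation $\langle\theta,\dimvec T\rangle=0$ forces $T_i=0$ for $i\notin J$, i.e.\ $T\in\Lambda_J\mmod$, and the semistability condition on submodules then holds automatically.

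For~(ii), Proposition~\ref{pr:ATheta} translates the equality $A^w=A_{w\theta}^{\max}$ into $N_w=\{\beta\in\Phi_+\mid\langle w\theta,\beta\rangle<0\}$. By Theorem~\ref{th:GLSFilt}~(ii) together with Lemma~\ref{le:Layers}, every object of $\mathscr F^w$ is an iterated extension of layers whose dimension-vectors lie in $N_w$; since $\mathscr F^w$ is closed under submodules, every nonzero submodule has dimension-vector a nonzero nonnegative integral combination of elements of $N_w$, hence strictly negative $w\theta$-weight, which yields $\mathscr F^w\subseteq\mathscr P_{w\theta}$. Conversely, any $T\in\mathscr T^w$ equals $I_w\otimes_\Lambda\Hom_\Lambda(I_w,T)$ by Theorem~\ref{th:BrennerButler}~(i), so Corollary~\ref{co:ReflDimVec} gives $\langle w\theta,\dimvec T\rangle=\langle\theta,\dimvec\Hom_\Lambda(I_w,T)\rangle\geq0$, and the same inequality applied to every quotient (again in $\mathscr T^w$) gives $\mathscr T^w\subseteq\overline{\mathscr I}_{w\theta}$. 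As both $(\mathscr T^w,\mathscr F^w)$ and $(\overline{\mathscr I}_{w\theta},\mathscr P_{w\theta})$ are torsion pairs, the two inclusions force equality.

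Part~(iii) will follow by the parallel argument with $ww_J$ in place of~$w$: Proposition~\ref{pr:ATheta} gives $N_{ww_J}=\{\beta\in\Phi_+\mid\langle w\theta,\beta\rangle\leq0\}$, and the layer argument yields $\mathscr F^{ww_J}\subseteq\overline{\mathscr P}_{w\theta}$. The delicate point is to upgrade the nonstrict inequality of~(ii) to the strict one defining $\mathscr I_{w\theta}$: for nonzero $T\in\mathscr T^{ww_J}$, the companion module $X=\Hom_\Lambda(I_{ww_J},T)$ lies in $\mathscr F_{ww_J}$; by Example~\ref{ex:TorsTheo}~(ii) one has $\Lambda_J\mmod=\mathscr T_{w_J}$, and~\eqref{eq:TorsTheoMono1} embeds $\mathscr T_{w_J}$ into $\mathscr T_{ww_J}$, so $X\in\Lambda_J\mmod$ would place $X$ in $\mathscr T_{ww_J}\cap\mathscr F_{ww_J}=0$. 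Hence $\dimvec X$ has support outside $J$, and $\langle w\theta,\dimvec T\rangle=\langle\theta,\dimvec X\rangle>0$; the same bound for nonzero quotients yields $\mathscr T^{ww_J}\subseteq\mathscr I_{w\theta}$, and torsion-pair comparison completes the equality.

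For the equivalence in~(i), I will verify that $\Lambda_J\mmod\subseteq\mathscr F_w$, so that Theorem~\ref{th:BrennerButler}~(iii) restricts to $\mathscr R_\theta$. By the duality $\mathscr F_w=(\mathscr T^{w^{-1}})^*$ of Example~\ref{ex:TorsTheo}~(iii), and since $*$ preserves $\Lambda_J\mmod$, it suffices to show $\Lambda_J\mmod\subseteq\mathscr T^{w^{-1}}$; this follows from Theorem~\ref{th:GLSFilt}~(iii), since any quotient of an object of $\Lambda_J\mmod$ is supported on $J$, while the layers for a reduced decomposition of $w^{-1}$ have dimension-vectors in $N_{w^{-1}}\cap\Phi_J=\varnothing$ by Corollary~\ref{co:NwJred}. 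For $T\in\mathscr R_\theta=\Lambda_J\mmod$, Corollary~\ref{co:ReflDimVec} gives $\dimvec(I_w\otimes_\Lambda T)=w\dimvec T$ with $\langle w\theta,\cdot\rangle=0$, and~(ii) places $I_w\otimes_\Lambda T$ in $\overline{\mathscr I}_{w\theta}$, forcing it into $\mathscr R_{w\theta}$; the symmetric argument provides the inverse. The main obstacle is the strict-versus-nonstrict contrast between~(ii) and~(iii), resolved by the vanishing $I_{w_J}\otimes_\Lambda X=0$ for $X$ supported on~$J$, which underlies $\Lambda_J\mmod\subseteq\mathscr T_{ww_J}$.
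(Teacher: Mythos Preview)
Your argument is correct and follows the same overall strategy as the paper (layer filtrations from Theorem~\ref{th:GLSFilt}, Corollary~\ref{co:ReflDimVec}, and comparison of two torsion pairs), but you make two organizational choices that differ from the paper and are worth noting. First, for~(i) the paper builds the equivalence step by step via Theorem~\ref{th:SekiyaYamaura}, composing the single-reflection equivalences $\mathscr R_\theta\rightleftarrows\mathscr R_{s_{i_\ell}\theta}\rightleftarrows\cdots\rightleftarrows\mathscr R_{w\theta}$; you instead prove~(ii) first and then invoke Brenner--Butler globally, using the inclusions $\mathscr R_\theta\subseteq\mathscr F_w$ and $\mathscr R_{w\theta}\subseteq\mathscr T^w=\overline{\mathscr I}_{w\theta}$. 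Your route is slightly more economical, though it relies on the duality trick $\mathscr F_w=(\mathscr T^{w^{-1}})^*$ where a direct appeal to Theorem~\ref{th:GLSFiltDual}~(iii) would also work. Second, for the strict inequality in~(iii) the paper factors a nonzero quotient $X\in\mathscr T^{ww_J}$ through $I_w$ via Proposition~\ref{pr:CompEquiv} as $X=I_w\otimes_\Lambda Y$ with $Y\in\mathscr T^{w_J}$, and then argues $Y\notin\mathscr F^{w_J}=\Lambda_J\mmod$; you instead pull back all the way through $I_{ww_J}$ and use the dual identification $\mathscr T_{w_J}=\Lambda_J\mmod$ together with $\mathscr T_{w_J}\subseteq\mathscr T_{ww_J}$. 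Both approaches hinge on Example~\ref{ex:TorsTheo}~(ii). One small point you should make explicit in~(iii): the equality $\langle w\theta,\dimvec T\rangle=\langle\theta,\dimvec X\rangle$ requires $\dimvec T=(ww_J)\dimvec X$ from Corollary~\ref{co:ReflDimVec} together with $w_J\theta=\theta$, which holds because $\theta\in F_J$.
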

\begin{proof}
Let $J$, $\theta$, $w$ as in the statement of the theorem.

Given $T\in\Lambda\mmod$, the condition $\langle\theta,
\dimvec T\rangle=0$ is necessary for $T$ to be in $\mathscr
R_\theta$. It is also sufficient, because any quotient module
$X$ of $T$ satisfies $\langle\theta,\dimvec X\rangle\geq0$ by
the dominance of $\theta$. The equality
$\mathscr R_\theta=\Lambda_J\mmod$ then follows from
Example~\ref{ex:TorsTheo}~\ref{it:TTb}.

Since $\theta$ is in $F_J$, it takes a positive value at each root
in $\Phi_+\setminus\Phi_J$. Corollary~\ref{co:NwJred} then ensures
that $\theta$ take positive values on $N_{w^{-1}}$. Choosing a
reduced decomposition $w=s_{i_1}\cdots s_{i_\ell}$, we obtain
$\langle s_{i_{k+1}}\cdots s_{i_\ell}\theta,\alpha_{i_k}
\rangle>0$ for each $1\leq k\leq\ell$. Using
Theorem~\ref{th:SekiyaYamaura}, we get a chain
of equivalences of categories
$$\xymatrix@C=4.5em{\mathscr R_\theta
\ar@<.6ex>[r]^{\Sigma_{i_\ell}^*}&
\ar@<.6ex>[l]^{\Sigma_{i_\ell}}
\mathscr R_{s_{i_\ell}\theta}
\ar@<.6ex>[r]^(.55){\Sigma_{i_{\ell-1}}^*}&
\ar@<.6ex>[l]^(.45){\Sigma_{i_{\ell-1}}}
\cdots
\ar@<.6ex>[r]^(.4){\Sigma_{i_2}^*}&
\ar@<.6ex>[l]^(.6){\Sigma_{i_2}}
\mathscr R_{s_{i_\ell}\cdots s_{i_2}\theta}
\ar@<.6ex>[r]^(.6){\Sigma_{i_1}^*}&
\ar@<.6ex>[l]^(.4){\Sigma_{i_1}}
\mathscr R_{w\theta}}.$$
By composition, we get assertion~\ref{it:TPIRa}.

Let $T\in\mathscr T^w$ and let $X$ be a quotient of $T$; then
$X\in\mathscr T^w$. By Theorem~\ref{th:BrennerButler}~\ref{it:BBc}
and Corollary~\ref{co:ReflDimVec}, $\dimvec X$ is of the form
$w\nu$ with $\nu\in\mathbb NI$, and so $\langle w\theta,\dimvec
X\rangle=\langle\theta,\nu\rangle\geq0$. This proves that
$T\in\overline{\mathscr I}_{w\theta}$.

Let $T\in\mathscr F^w$ and let $X\subseteq T$ be a nonzero
submodule; then $X$ is in $\mathscr F^w$. Theorem~\ref{th:GLSFilt}
\ref{it:GLSFb} and Lemma~\ref{le:Layers} then imply that
$\dimvec X$ is a nontrivial $\mathbb N$-linear combination of
elements in $N_w$. In addition, $w\theta$ takes negative values on
$N_w$, for $\theta$ takes positive values on $N_{w^{-1}}=-w^{-1}N_w$,
as we have seen during the course of the proof of assertion
\ref{it:TPIRa}. Therefore $\langle w\theta,\dimvec X\rangle<0$.
This reasoning shows that $T\in\mathscr P_{w\theta}$.

We have established that $\mathscr T^w\subseteq\overline{\mathscr
I}_{w\theta}$ and that $\mathscr F^w\subseteq\mathscr P_{w\theta}$.
This implies assertion~\ref{it:TPIRb}.

We now prove assertion~\ref{it:TPIRc}, assuming that $W_J$ is
finite.

Consider first $T\in\mathscr T^{ww_J}$ and take a nonzero quotient
$X$ of $T$. Then $X$ also belongs to $\mathscr T^{ww_J}$, and by
Proposition~\ref{pr:CompEquiv}, we can write
$X=I_w\otimes_\Lambda Y$, with $Y\in\mathscr T^{w_J}$.
Since $X\neq0$, we have $Y\neq0$, hence $Y\notin\mathscr F^{w_J}$.
By Example~\ref{ex:TorsTheo}~\ref{it:TTb}, this means that
$\dimvec Y$ is not in $\mathbb NJ$, the set of $\mathbb N$-linear
combinations of elements in $\{\alpha_j\mid j\in J\}$.
Therefore $\langle w\theta,\dimvec X\rangle=\langle\theta,\dimvec
Y\rangle>0$. This proves that $T\in\mathscr I_{w\theta}$.

Now let $T\in\mathscr F^{ww_J}$ and take a submodule $X\subseteq T$.
Then $X\in\mathscr F^{ww_J}$ as well. Theorem~\ref{th:GLSFilt}
\ref{it:GLSFb} and Lemma~\ref{le:Layers} then imply that $\dimvec X$
is a $\mathbb N$-linear combination of elements in $N_{ww_J}$.
Certainly, $ww_J\theta$ takes nonpositive values on $N_{ww_J}$,
so $\langle ww_J\theta,\dimvec X\rangle\leq0$. Observing that
$w_J\theta=\theta$, we conclude that $T\in\overline{\mathscr
P}_{w\theta}$.

We have established that $\mathscr T^{ww_J}\subseteq\mathscr
I_{w\theta}$ and that $\mathscr F^{ww_J}\subseteq\overline{\mathscr
P}_{w\theta}$, whence assertion~\ref{it:TPIRc}.
\end{proof}

\begin{other}{Remarks}
\label{rk:ComTPIR}
\begin{enumerate}
\item
\label{it:CTPIRa}
In the context of assertions~\ref{it:TPIRb} and~\ref{it:TPIRc}
of Theorem~\ref{th:TiltPIR}, we have $T^w=T_{w\theta}^{\max}$ and
$T^{ww_J}=T_{w\theta}^{\min}$ for any $\Lambda$-module $T$.
This shows in particular that each $\dimvec T^w$ is a vertex
of the HN polytope $\Pol(T)$.
\item
\label{it:CTPIRb}
With the help of Remark~\ref{rk:PolDual} and
Example~\ref{ex:TorsTheo}~\ref{it:TTc}, we can complement the
statement of Theorem~\ref{th:TiltPIR} as follows:
$$(\mathscr T_{w^{-1}},\mathscr F_{w^{-1}})=
(\mathscr I_{-w\theta},\overline{\mathscr P}_{-w\theta})
\quad\text{and}\quad
(\mathscr T_{w_Jw^{-1}},\mathscr F_{w_Jw^{-1}})=
(\overline{\mathscr I}_{-w\theta},\mathscr P_{-w\theta}).$$
\item
\label{it:CTPIRc}
Combining~\ref{it:CTPIRb} with Proposition~\ref{pr:PolConstr}
\ref{it:PCc} and Remark~\ref{ex:CompGLS}, we get another
proof of the openness statement in Remark~14.2
of~\cite{GeissLeclercSchroer11}.
\item
\label{it:CTPIRd}
In the case where $W_J$ is finite, assertion~\ref{it:TPIRa} of
Theorem~\ref{th:TiltPIR} is a particular case of Proposition
\ref{pr:ShiftF^w}, since $\mathscr R_{w\theta}=\overline{\mathscr
I}_{w\theta}\cap\overline{\mathscr P}_{w\theta}=\mathscr
T^w\cap\mathscr F^{ww_J}$ and
$\mathscr R_\theta=\Lambda_J\mmod=\mathscr F^{w_J}$.
\end{enumerate}
\end{other}

Our first corollary to Theorem~\ref{th:TiltPIR} shows that the
position of the facets of our polytopes can be computed as the
dimension of homomorphism spaces.
\begin{corollary}
\label{co:FacetsViaHom}
Let $J$, $\theta$ and $w$ be as in Theorem~\ref{th:TiltPIR}.
Suppose that $\theta$ is integral, that is, each
$\langle\theta,\alpha_i\rangle$ is an integer. Set
$N(w\theta)=\bigoplus_{i\in I}(I_w\otimes_\Lambda\Lambda
e_i)^{\oplus\langle\theta,\alpha_i\rangle}$ and denote
by $N(-w\theta)$ the dual of
$\bigoplus_{i\in I}((\Lambda/I_w)\otimes_\Lambda
\Lambda e_i)^{\oplus\langle\theta,\alpha_i\rangle}$.
Then for any $\Lambda$-module $T$,
$$\dim\Hom_\Lambda(N(\pm w\theta),T)=\psi_{\Pol(T)}(\pm w\theta).$$
\end{corollary}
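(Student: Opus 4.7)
Write $V=\bigoplus_{i\in I}(Ke_i)^{\oplus c_i}$ with $c_i=\langle\theta,\alpha_i\rangle$, so that $P=\Lambda\otimes_{\mathbf S}V=\bigoplus_i(\Lambda e_i)^{\oplus c_i}$ is projective, $N(w\theta)\cong I_w\otimes_\Lambda P$, and $N(-w\theta)\cong M^*$ with $M=P/I_wP=(\Lambda/I_w)\otimes_{\mathbf S}V$. By Remark~\ref{rk:ComTPIR}, $\psi_{\Pol(T)}(w\theta)=\langle w\theta,\dimvec T^w\rangle$ and $\psi_{\Pol(T)}(-w\theta)=-\langle w\theta,\dimvec T_{w^{-1}}\rangle$, so those are the two equalities to establish.

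For $+w\theta$, tensor-hom adjunction gives $\Hom_\Lambda(N(w\theta),T)\cong\Hom_\Lambda(P,\Hom_\Lambda(I_w,T))$, and $\Hom_\Lambda(\Lambda e_i,X)=e_iX$ lets us rewrite this as having dimension $\sum_ic_i\dim e_i\Hom_\Lambda(I_w,T)=\langle\theta,\dimvec\Hom_\Lambda(I_w,T)\rangle$. Since $T/T^w\in\mathscr F^w$ and $T^w\in\mathscr T^w$, Theorem~\ref{th:BrennerButler} gives $\Hom_\Lambda(I_w,T^w)\xrightarrow{\sim}\Hom_\Lambda(I_w,T)$ together with $\Hom_\Lambda(I_w,T^w)\in\mathscr F_w$ and $I_w\otimes_\Lambda\Hom_\Lambda(I_w,T^w)\cong T^w$. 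Corollary~\ref{co:ReflDimVec} then yields $\dimvec\Hom_\Lambda(I_w,T)=w^{-1}\dimvec T^w$, so $\dim\Hom_\Lambda(N(w\theta),T)=\langle\theta,w^{-1}\dimvec T^w\rangle=\langle w\theta,\dimvec T^w\rangle$, settling this case.

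For $-w\theta$, apply the $+$ case to $T^*$. Combining the standard duality $\Hom_\Lambda(N(w\theta),T^*)\cong\Hom_\Lambda(T,N(w\theta)^*)$ with Remark~\ref{rk:PolDual} (which gives $\psi_{\Pol(T^*)}(w\theta)=\langle w\theta,\dimvec T\rangle+\psi_{\Pol(T)}(-w\theta)$) produces
\[
\psi_{\Pol(T)}(-w\theta)=\dim\Hom_\Lambda(T,N(w\theta)^*)-\langle w\theta,\dimvec T\rangle.
\]
Now dualize $0\to N(w\theta)\to P\to M\to 0$ to $0\to N(-w\theta)\to P^*\to N(w\theta)^*\to 0$; the middle term $P^*=\bigoplus_iJ_i^{\oplus c_i}$, where $J_i=(\Lambda e_i)^*$ is the injective envelope of $S_i$ in the ambient category of locally nilpotent $\Lambda$-modules, so $\Ext^1_\Lambda(T,P^*)=0$. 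The adjunction $\Hom_\Lambda(T,(\Lambda e_i)^*)\cong\Hom_\Lambda(\Lambda e_i,T^*)=e_iT^*$ gives $\dim\Hom_\Lambda(T,J_i)=\dim T_i$, hence $\dim\Hom_\Lambda(T,P^*)=\langle\theta,\dimvec T\rangle$, and the long exact Hom sequence reduces to
\[
\dim\Hom_\Lambda(T,N(w\theta)^*)=\langle\theta,\dimvec T\rangle-\dim\Hom_\Lambda(T,N(-w\theta))+\dim\Ext^1_\Lambda(T,N(-w\theta)).
\]

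To finish, feed into this the Crawley-Boevey identity~\eqref{eq:CrawleyBoeveyForm} for the pair $(N(-w\theta),T)$, combined with the $2$-CY symmetry $\dim\Ext^1_\Lambda(T,N(-w\theta))=\dim\Ext^1_\Lambda(N(-w\theta),T)$ recorded in section~\ref{ss:ProjRes}: this rewrites $\dim\Hom_\Lambda(T,N(-w\theta))-\dim\Ext^1_\Lambda(T,N(-w\theta))$ as $(\dimvec N(-w\theta),\dimvec T)-\dim\Hom_\Lambda(N(-w\theta),T)$. Example~\ref{ex:CompBKAIRT} computes $\dimvec N(-w\theta)=\sum_ic_i(\omega_i-w\omega_i)$, and the $W$-invariance of $(\,,\,)$ together with $(\omega_i,\alpha_j)=\delta_{ij}$ turn $(\dimvec N(-w\theta),\dimvec T)$ into $\langle\theta-w\theta,\dimvec T\rangle$. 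Substitution into the previous display then cancels the $\langle\theta,\dimvec T\rangle$ and $\langle w\theta,\dimvec T\rangle$ terms and yields $\dim\Hom_\Lambda(N(-w\theta),T)=\psi_{\Pol(T)}(-w\theta)$. The main obstacle is purely bookkeeping—matching $\theta\in(\mathbb RI)^*$ with an element of $\mathbb RI$ via $(\,,\,)$, keeping $w$ distinct from $w^{-1}$, and justifying the injectivity of $J_i$ in the correct ambient category—since no new module-theoretic input beyond the tilting theory already developed is required.
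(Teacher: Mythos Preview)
Your proof is correct and follows essentially the same strategy as the paper. The $+w\theta$ case is identical: adjunction, then Corollary~\ref{co:ReflDimVec} applied to $\Hom_\Lambda(I_w,T)$. For $-w\theta$, both you and the paper reduce to the $+$ case for $T^*$, invoke the four-term exact sequence coming from $0\to I_w e_i\to\Lambda e_i\to(\Lambda/I_w)e_i\to0$, and finish with Crawley-Boevey's formula together with $\dimvec(\Lambda/I_w)e_i=\omega_i-w\omega_i$ from Example~\ref{ex:CompBKAIRT} and Remark~\ref{rk:PolDual}.

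The only packaging difference is that the paper applies $\Hom_\Lambda(?,T^*)$ to the original sequence and uses $\Ext^1_\Lambda(\Lambda e_i,T^*)=0$ (projectivity), whereas you dualize first and apply $\Hom_\Lambda(T,?)$, needing $\Ext^1_\Lambda(T,(\Lambda e_i)^*)=0$ (injectivity). These are related by the natural isomorphism $\Hom_\Lambda(X,Y^*)\cong\Hom_\Lambda(Y,X^*)$; the paper's choice sidesteps the bookkeeping you flag about injectivity of $(\Lambda e_i)^*$ for infinite-dimensional $\Lambda e_i$, but the substance is the same.
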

\begin{proof}
The case $+w\theta$ comes from the computation
\begin{align*}
\dim\Hom_\Lambda(N(w\theta),T)
&=\sum_{i\in I}\langle\theta,\alpha_i\rangle
\dim\Hom_\Lambda(\Lambda e_i,\Hom_\Lambda(I_w,T))\\
&=\sum_{i\in I}\langle\theta,\alpha_i\rangle
\dim e_i\Hom_\Lambda(I_w,T)\\
&=\langle\theta,\dimvec\Hom_\Lambda(I_w,T)\rangle\\
&=\langle w\theta,\dimvec T^w\rangle\\
&=\langle w\theta,\dimvec T_{w\theta}^{\max}\rangle\\
&=\psi_{\Pol(T)}(w\theta).
\end{align*}
Here the first equality is adjunction, the fourth is
Corollary~\ref{co:ReflDimVec}, and the fifth is
Remark~\ref{rk:ComTPIR}~\ref{it:CTPIRa}.

Now applying the functor $\Hom_\Lambda(?,T^*)$ to the short exact
sequence $0\to I_w\otimes_\Lambda\Lambda e_i\to\Lambda e_i\to
(\Lambda/I_w)\otimes_\Lambda\Lambda e_i\to0$, we get a long
exact sequence
\begin{align*}
0\to\Hom_\Lambda((\Lambda/I_w)\otimes_\Lambda\Lambda e_i,T^*)\to
&\Hom_\Lambda(\Lambda e_i,T^*)\\
\to&\Hom_\Lambda(I_w\otimes_\Lambda\Lambda e_i,T^*)\to
\Ext^1_\Lambda((\Lambda/I_w)\otimes_\Lambda\Lambda e_i,T^*)\to0.
\end{align*}
Taking dimensions, and using Crawley-Boevey's formula
\eqref{eq:CrawleyBoeveyForm} and Example~\ref{ex:CompBKAIRT}, we
obtain
\begin{align*}
\dim&\Hom_\Lambda(T^*,(\Lambda/I_w)\otimes_\Lambda\Lambda e_i)\\
&=\dim\Hom_\Lambda(I_w\otimes_\Lambda\Lambda e_i,T^*)
-\dim\Hom_\Lambda(\Lambda e_i,T^*)+
(\dimvec(\Lambda/I_w)\otimes_\Lambda\Lambda e_i,\dimvec T^*)\\
&=\dim\Hom_\Lambda(I_w\otimes_\Lambda\Lambda e_i,T^*)
-\dim e_iT^*+(\omega_i-w\omega_i,\dimvec T^*).
\end{align*}
Multiplying by $\langle\theta,\alpha_i\rangle$ and summing over
$i\in I$ gives then
\begin{align*}
\dim\Hom_\Lambda(N(-w\theta),T)
&=\sum_{i\in I}\langle\theta,\alpha_i\rangle
\dim\Hom_\Lambda(T^*,(\Lambda/I_w)\otimes_\Lambda\Lambda e_i)\\
&=\dim\Hom_\Lambda(N(w\theta),T^*)-\langle\theta,\dimvec T^*\rangle
+\langle\theta-w\theta,\dimvec T^*\rangle\\
&=\psi_{\Pol(T^*)}(w\theta)-\langle w\theta,\dimvec T\rangle.
\end{align*}
Remark~\ref{rk:PolDual} shows that the right-hand side is
$\psi_{\Pol(T)}(-w\theta)$.
\end{proof}

Our second corollary compares the normal fan of an HN polytope to
the Tits fan.
\begin{corollary}
\label{co:SuppFunTitsFan}
The support function of the HN polytope of a $\Lambda$-module is
linear on each face $wF_J$ of the Tits cone.
\end{corollary}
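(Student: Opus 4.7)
The plan is to reduce the claim to Theorem~\ref{th:TiltPIR}~\ref{it:TPIRb} together with Proposition~\ref{pr:TminTmax}. Fix a $\Lambda$-module $T$. A face of the Tits cone has the form $wF_J$ for some $J\subseteq I$ and some coset $wW_J\in W/W_J$; I will pick $w$ to be the unique $J$-reduced-on-the-right representative of this coset, so that $wF_J$ is unchanged while the hypotheses of Theorem~\ref{th:TiltPIR} are fulfilled.

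First I would note that, as $\theta$ ranges over $F_J$ (and hence $w\theta$ ranges over $wF_J$), the torsion pair $(\mathscr T^w,\mathscr F^w)$ does not depend on $\theta$, so neither does the submodule $T^w\subseteq T$; in particular the class $\dimvec T^w\in\mathbb RI$ is a single fixed vector $v_{w,J}$. Then Theorem~\ref{th:TiltPIR}~\ref{it:TPIRb} identifies $(\mathscr T^w,\mathscr F^w)$ with $(\overline{\mathscr I}_{w\theta},\mathscr P_{w\theta})$ for every such $\theta$, so as recorded in Remark~\ref{rk:ComTPIR}~\ref{it:CTPIRa} we have $T^w=T_{w\theta}^{\max}$ for all $\theta\in F_J$.

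Next, Proposition~\ref{pr:TminTmax} (applied in $\Lambda\mmod$ to the linear form $w\theta$) states that $\psi_{\Pol(T)}(w\theta)=\langle w\theta,\dimvec T_{w\theta}^{\max}\rangle$. Combining this with the previous step gives
\[
\psi_{\Pol(T)}(\vartheta)=\langle\vartheta,v_{w,J}\rangle\qquad\text{for all }\vartheta\in wF_J,
\]
which is manifestly linear in $\vartheta$. Since any face of $C_T$ is of the form $wF_J$ for a suitably chosen $J$-reduced $w$, this establishes the corollary.

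The argument is short because the real content has already been packaged into Theorem~\ref{th:TiltPIR}; the only non-trivial step is the identification $T^w=T_{w\theta}^{\max}$, and this in turn rests on the $J$-reducedness assumption, which is exactly what lets us pick a canonical $w$ in each coset of $W_J$ so that the construction $T\mapsto T^w$ is a function of the face $wF_J$ alone.
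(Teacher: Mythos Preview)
Your proof is correct and follows essentially the same approach as the paper's: both use that $\psi_{\Pol(T)}(\theta)=\langle\theta,\dimvec T_\theta^{\max}\rangle$ (from Proposition~\ref{pr:TminTmax}) together with the identification $T_{w\theta}^{\max}=T^w$ for $\theta\in F_J$ with $w$ chosen $J$-reduced on the right (from Theorem~\ref{th:TiltPIR}~\ref{it:TPIRb}). The paper's proof is simply a more compressed version of yours.
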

\begin{proof}
Let $T$ be a $\Lambda$-module. The support function of $\Pol(T)$
is given by $\theta\mapsto\langle\theta,\dimvec T_\theta^{\max}
\rangle$. However, $T_\theta^{\max}=T^w$ if $\theta\in wF_J$,
with $w$ chosen $J$-reduced on the right.
\end{proof}

Using Remark~\ref{rk:PolDual}, we see that the support function
of an HN polytope is also linear on each face of the opposite
$-C_T$ of the Tits cone.

The third corollary describes the face of an HN polytope defined
by a linear form in the Tits cone.
\begin{corollary}
\label{co:FiniteFaces}
Let $J$, $\theta$ and $w$ as in the statement of
Theorem~\ref{th:TiltPIR} and let $T\in\Lambda\mmod$.
Set $X=\Hom_\Lambda(I_w,T_{w\theta}^{\max}/T_{w\theta}^{\min})$.
Then $X$ is in the subcategory $\Lambda_J\mmod$ and
$$\{x\in\Pol(T)\mid\langle w\theta,x\rangle=\psi_{\Pol(T)}(w\theta)\}
=\dimvec T_{w\theta}^{\min}+w\Pol(X).$$
\end{corollary}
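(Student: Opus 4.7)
The plan is to derive this corollary by combining three prior results: Corollary~\ref{co:FacesHN}, which describes the face $P_{w\theta}$ of $\Pol(T)$ as a shifted image of the HN polytope of $Y=T_{w\theta}^{\max}/T_{w\theta}^{\min}$ computed inside $\mathscr R_{w\theta}$; Theorem~\ref{th:TiltPIR}(i), which exhibits $\Hom_\Lambda(I_w,?)$ as an equivalence $\mathscr R_{w\theta}\to\mathscr R_\theta=\Lambda_J\mmod$ with quasi-inverse $I_w\otimes_\Lambda?$; and Corollary~\ref{co:ReflDimVec}, which says this equivalence transports dimension vectors by the action of $w$ on $\mathbb RI$.

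First I would set $Y=T_{w\theta}^{\max}/T_{w\theta}^{\min}$. By definition, $Y\in\mathscr R_{w\theta}$, so the equivalence of Theorem~\ref{th:TiltPIR}(i) sends $Y$ to $X=\Hom_\Lambda(I_w,Y)$, which lives in $\mathscr R_\theta=\Lambda_J\mmod$; this establishes the first claim. For the polytope identity, Corollary~\ref{co:FacesHN} applied to the linear form $w\theta$ gives
$$\bigl\{x\in\Pol(T)\mid\langle w\theta,x\rangle=\psi_{\Pol(T)}(w\theta)\bigr\}
=\dimvec T_{w\theta}^{\min}+\mathbf K(i)_{\mathbb R}(Q),$$
where $Q\subseteq\mathbf K(\mathscr R_{w\theta})_{\mathbb R}$ is the HN polytope of $Y$ computed in the abelian category $\mathscr R_{w\theta}$, and $i:\mathscr R_{w\theta}\hookrightarrow\Lambda\mmod$ is the inclusion. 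It then remains to identify $\mathbf K(i)_{\mathbb R}(Q)$ with $w\Pol(X)\subseteq\mathbb RI$.

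For this, I would use the fact that the equivalence of Theorem~\ref{th:TiltPIR}(i) is an equivalence of abelian categories, hence induces a bijection between $\mathscr R_{w\theta}$-subobjects of $Y$ and $\mathscr R_\theta$-subobjects of $X$: explicitly, a subobject $X'\subseteq X$ in $\Lambda_J\mmod$ corresponds to $Y'=I_w\otimes_\Lambda X'\subseteq Y$. Since $\mathscr R_\theta\subseteq\mathscr F_w$ (because $\mathscr R_\theta$ is contained in the essential image of $\Hom_\Lambda(I_w,?)$, which is $\mathscr F_w$ by Remark~\ref{rk:ProbInfDim}(ii)), Corollary~\ref{co:ReflDimVec} applies and gives $\dimvec Y'=w(\dimvec X')$ for every such $X'$. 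Taking convex hulls of these dimension vectors yields $\mathbf K(i)_{\mathbb R}(Q)=w\Pol(X)$, and substituting into the displayed formula above proves the corollary.

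The only subtle point, which I would verify explicitly but which requires no real work, is that the HN polytope of $Y$ computed in $\mathscr R_{w\theta}$ accounts for all subobjects coming from the equivalence; this is automatic since $\mathscr R_{w\theta}$ is a full abelian subcategory of $\Lambda\mmod$, so subobjects of $Y$ in $\mathscr R_{w\theta}$ are precisely the $\Lambda$-submodules of $Y$ that themselves lie in $\mathscr R_{w\theta}$, and these are exactly the $I_w\otimes_\Lambda X'$ by the equivalence. No obstacle of substance is expected; the proof is a direct assembly of the three results cited.
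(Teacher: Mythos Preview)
Your proof is correct and follows essentially the same approach as the paper, which simply says the result follows by combining Corollary~\ref{co:FacesHN}, Theorem~\ref{th:TiltPIR}~\ref{it:TPIRa}, Corollary~\ref{co:ReflDimVec}, and Example~\ref{ex:TorsTheo}~\ref{it:TTb}. You spell out the assembly in more detail, including the check that $\mathscr R_\theta\subseteq\mathscr F_w$ needed to apply Corollary~\ref{co:ReflDimVec}, but the logical structure is identical.
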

\begin{proof}
This follows by combining Corollary~\ref{co:FacesHN},
Theorem~\ref{th:TiltPIR}~\ref{it:TPIRa},
Corollary~\ref{co:ReflDimVec} and
Example~\ref{ex:TorsTheo}~\ref{it:TTb}.
\end{proof}

In the case $J=\varnothing$, that is, if $\theta$ belongs to the
open cone $C_0$, then $T^w=T_{w\theta}^{\min}=T_{w\theta}^{\max}$,
$\mathscr I_{w\theta}=\overline{\mathscr I}_{w\theta}=\mathscr T^w$,
$\mathscr P_{w\theta}=\overline{\mathscr P}_{w\theta}=\mathscr F^w$,
and $\mathscr R_{w\theta}\cong\mathscr R_\theta=\Lambda_J\mmod=\{0\}$.
This case corresponds of course to a vertex of $\Pol(T)$.

In the case where $J$ has just one element, say $i$, then $w_J=s_i$,
$(T_{w\theta}^{\min},T_{w\theta}^{\max})=(T^{ws_i},T^w)$,
$\mathscr R_\theta=\Lambda_J\mmod=\add S_i$, and
$\mathscr R_{w\theta}=\add(I_w\otimes_\Lambda S_i)$. This case
corresponds to an edge of $\Pol(T)$ that points in the direction
$\dimvec(I_w\otimes_\Lambda S_i)=w\alpha_i$.

The case where $J$ contains just two vertices $i$ and $j$ linked by
a single edge is more interesting. Here $w_J=s_is_js_i=s_js_is_j$,
$(T_{w\theta}^{\min},T_{w\theta}^{\max})=(T^{ww_J},T^w)$, and
$\mathscr R_\theta=\Lambda_J\mmod$ has four indecomposables.
This case corresponds to a $2$-face of type $A_2$. We will come
back to this case soon: if $T$ is a general point in an irreducible
component $Z$, then the shape of this $2$-face will be constrained
by the tropical Pl\"ucker relations.

\subsection{Tilting structure and crystal operations}
\label{ss:TiltCrysOp}
Thanks to Theorem~\ref{th:TorIrrComp}, we know that under a suitable
openness condition (O), each torsion pair $(\mathscr T,\mathscr F)$
gives rise to a bijection $\Xi:\mathfrak T\times\mathfrak
F\to\mathfrak B$. The aim of this section is to show that in the
case of the torsion pair $(\mathscr T^w,\mathscr F^w)$, this
bijection can be described by elementary operations on the
crystal $\mathfrak B$. Given $\nu\in\mathbb NI$, both sets
$\{T\in\Lambda(\nu)\mid T\in\mathscr T^w\}$ and
$\{T\in\Lambda(\nu)\mid T\in\mathscr F^w\}$ are open (Proposition
\ref{pr:PolConstr}~\ref{it:PCc} and Theorem~\ref{th:TiltPIR}
\ref{it:TPIRb}), so we can define the subsets $\mathfrak T^w$
and $\mathfrak F^w$ of $\mathfrak B$ formed by irreducible
components whose general point belongs to $\mathscr T^w$ and
$\mathscr F^w$. We define $\mathfrak T_w$ and $\mathfrak F_w$
in a similar fashion.

The first ingredient is the particular case where $w$ is a simple
reflection $s_i$. By Example~\ref{ex:TorsTheo}~\ref{it:TTa},
$\mathfrak F^{s_i}=\bigsqcup_{n\in\mathbb N}\mathfrak
B(n\alpha_i)$ is in bijection with $\mathbb N$; moreover, by
the definition of the crystal structure on $\mathfrak B$
(see section~\ref{ss:LuszNilpVar}), we have
$\mathfrak T^{s_i}=\{Z\in\mathfrak B\mid\varphi_i(Z)=0\}$. Under
the identification $\mathfrak F^{s_i}\cong \mathbb N$, the
bijection $\mathfrak T^{s_i}\times\mathfrak F^{s_i}\to\mathfrak B$
becomes the map $(Z,n)\mapsto\tilde e_i^nZ$. The inverse of this
map is $Z\mapsto(\tilde f_i^{\max}Z,\varphi_i(Z))$, where
$\tilde f_i^{\max}\,b=(\tilde f_i)^{\varphi_i(b)}\,b$.

To go further, we need to understand how the equivalence of categories
provided by Theorem~\ref{th:BrennerButler}~\ref{it:BBc} relates to
crystal operations. Twisting the usual crystal operators by the
involution~$*$, one defines the starred operators on $B(-\infty)$
as follows:
$$\tilde e_i^*:b\mapsto (\tilde e_i\,b^*)^*,\qquad
\tilde f_i^*:b\mapsto(\tilde f_i\,b^*)^*,\qquad
(\tilde f_i^*)^{\max}:b\mapsto(\tilde f_i^{\max}\,b^*)^*.$$
In \cite{Saito94}, Corollary~3.4.8 (see also \cite{KashiwaraSaito97},
section~8.2), Saito defines mutually inverse bijections
$$\xymatrix@C=3.5em{\{b\in B(-\infty)\mid\varphi_i(b)=0\}
\ar@<.6ex>[r]^{S_i}&\ar@<.6ex>[l]^{S_i^*}
\{b\in B(-\infty)\mid\varphi_i(b^*)=0\}}$$
by the rules
$S_i(b)=(\tilde e_i)^{\varepsilon_i(b^*)}(\tilde f_i^*)^{\max}\,b$
and
$S_i^*(b)=(\tilde e_i^*)^{\varepsilon_i(b)}\tilde f_i^{\max}\,b$.
This definition ensures that if $\varphi_i(b)=0$, then
$\wt S_i(b)=s_i(\wt b)$. For convenience, we extend $S_i$ and
$S_i^*$ on $B(-\infty)$ by setting $\sigma_ib=S_i(\tilde f_i^{\max}b)$
and $\sigma_i^*b=S_i^*((\tilde f_i^*)^{\max}b)$. By transport
through the bijection $B(-\infty)\cong\mathfrak B$, we can view the
maps $\sigma_i$ and $\sigma_i^*$ as maps from $\mathfrak B$ to itself.
In view of Example~\ref{ex:TorsTheo}~\ref{it:TTa}, $\sigma_i$ and
$\sigma_i^*$ restrict to mutually inverse bijections
$$\xymatrix@C=3.5em{\mathfrak T^{s_i}
\ar@<.6ex>[r]^{\sigma_i}&\ar@<.6ex>[l]^{\sigma_i^*}
\mathfrak F_{s_i}}.$$

\begin{proposition}
\label{pr:CrysBBEquiv}
\begin{enumerate}
\item
\label{it:CBBEa}
Let $\nu\in\mathbb NI$, let $i\in I$, let $Z\in\mathfrak F_{s_i}(\nu)$
and let $Z'=\sigma_i^*(Z)$, an element in $\mathfrak T^{s_i}(s_i\nu)$.
Let $U=\{T\in Z\mid\soc_iT=0\}$ and $U'=\{T'\in Z'\mid\hd_iT'=0\}$.
Let $\Theta$ be the set of triples $(T,T',h)$, such that
$(T,T')\in U\times U'$ and $h:T'\to\Sigma_i^*T$ is an isomorphism.
Then the first projection $\Theta\to U$ and the second one
$\Theta\to U'$ are locally trivial fibrations with a smooth and
connected fiber.
\item
\label{it:CBBEb}
Let $(u,v,i)\in W^2\times I$ be such that
$\ell(us_iv)=\ell(u)+\ell(v)+1$. Then
$\mathfrak F^v\subseteq\mathfrak F_{us_i}$ and
$\mathfrak F^{s_iv}\subseteq\mathfrak F_u$. In addition,
$\sigma_i$ and $\sigma_i^*$ restrict to mutually inverse bijections
$$\xymatrix@C=3.5em{\mathfrak F_u\cap\mathfrak T^{s_iv}
\ar@<.6ex>[r]^{\sigma_i}&\ar@<.6ex>[l]^{\sigma_i^*}
\mathfrak F_{us_i}\cap\mathfrak T^v}
\quad\text{and}\quad
\xymatrix@C=3.5em{\mathfrak F^{s_iv}\cap\mathfrak T^{s_i}
\ar@<.6ex>[r]^(.6){\sigma_i}&\ar@<.6ex>[l]^(.4){\sigma_i^*}
\mathfrak F^v}.$$
\end{enumerate}
\end{proposition}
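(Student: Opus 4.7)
My plan is to prove~(i) by identifying the two projections as torsors under the groups $G(s_i\nu)$ and $G(\nu)$, and then deduce~(ii) by lifting the categorical equivalences of Propositions~\ref{pr:CompEquiv} and~\ref{pr:ShiftF^w} to bijections of irreducible components with the help of~(i). The guiding principle is that Saito's crystal involutions $\sigma_i$ and $\sigma_i^*$ are realised geometrically by the reflection functors $\Sigma_i = \Hom_\Lambda(I_i,{-})$ and $\Sigma_i^* = I_i \otimes_\Lambda {-}$.

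For~(i), any $T \in U$ belongs to $\mathscr{F}_{s_i}$ (Example~\ref{ex:TorsTheo}), so by Theorem~\ref{th:BrennerButler} the module $\Sigma_i^* T$ lies in $\mathscr{T}^{s_i}$ and has vanishing $i$-head; its dimension-vector is $s_i\nu$ by Corollary~\ref{co:ReflDimVec}. The essential input is that the closure of the $G(s_i\nu)$-orbit swept out by $\Sigma_i^* T$, as $T$ ranges over $U$, is precisely $Z' = \sigma_i^*(Z)$; this is the geometric incarnation of Saito's theorem, which I would import from~\cite{BaumannKamnitzer12}. Granting this, the first projection $\Theta \to U$ is a $G(s_i\nu)$-torsor: over $T \in U$, the datum of $(T',h)$ is equivalent to an $I$-graded linear isomorphism from the standard space $\bigoplus_j K^{(s_i\nu)_j}$ to the underlying space of $\Sigma_i^* T$ (the module structure $T'$ is uniquely transported, lands automatically in $\mathscr{T}^{s_i}$, and lies in the orbit closure $Z'$, hence belongs to $U'$). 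For the second projection, one uses the functorial short exact sequences~\eqref{eq:FuncSes} to get a canonical isomorphism $T \cong \Sigma_i \Sigma_i^* T$ for $T \in U$; applying $\Sigma_i$ to $h$ and composing converts $(T,h)$ over $T' \in U'$ into $(T,\eta)$ with $\eta : \Sigma_i T' \to T$, and the fibre is parametrised by $I$-graded linear isomorphisms from $\bigoplus_j K^{\nu_j}$ to the underlying space of $\Sigma_i T'$, making it a $G(\nu)$-torsor.

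For~(ii), subadditivity of $\ell$ together with $\ell(us_iv) = \ell(u) + \ell(v) + 1$ forces $\ell(us_i) = \ell(u)+1$, $\ell(s_iv) = \ell(v)+1$, and $\ell(us_iv) = \ell(us_i) + \ell(v) = \ell(u) + \ell(s_iv)$. Proposition~\ref{pr:TorsTheoMono2} then yields $\mathscr{F}^v \subseteq \mathscr{F}_{us_i}$ and $\mathscr{F}^{s_iv} \subseteq \mathscr{F}_u$, which together with the openness statements of Proposition~\ref{pr:PolConstr} and Theorem~\ref{th:TiltPIR} give the two claimed inclusions of component sets. The bijections are obtained by applying Proposition~\ref{pr:CompEquiv} with middle element $s_i$ for the categorical equivalence $\mathscr{F}_{us_i} \cap \mathscr{T}^v \leftrightarrow \mathscr{F}_u \cap \mathscr{T}^{s_iv}$ and Proposition~\ref{pr:ShiftF^w} with $u$ replaced by $s_i$ for $\mathscr{F}^v \leftrightarrow \mathscr{F}^{s_iv} \cap \mathscr{T}^{s_i}$, both realised through $\Sigma_i^*$ and $\Sigma_i$; part~(i) then lifts each equivalence to a bijection of components, the target being identified with $\sigma_i^*(Z)$ or $\sigma_i(Z)$ respectively. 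That the two maps are mutually inverse is inherited from the already known global inverse bijections $\sigma_i$ and $\sigma_i^*$ between $\mathfrak{T}^{s_i}$ and $\mathfrak{F}_{s_i}$. The only genuine obstacle is the geometric identification of orbit closures with $\sigma_i^*(Z)$ invoked in~(i); the rest of the argument is torsor and openness bookkeeping on the geometric side and combinatorial Coxeter bookkeeping on the algebraic side.
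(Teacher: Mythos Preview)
Your proposal is correct and follows essentially the same approach as the paper: part~(i) is precisely \cite{BaumannKamnitzer12}, Theorem~5.3 (your torsor description is exactly what that theorem establishes, and you rightly flag the identification of the image component with $\sigma_i^*(Z)$ as the substantive input), and part~(ii) is deduced from Propositions~\ref{pr:CompEquiv}, \ref{pr:TorsTheoMono2} and~\ref{pr:ShiftF^w} just as the paper does. Your write-up simply unpacks the Coxeter length bookkeeping and the torsor structure that the paper leaves implicit.
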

\begin{proof}
Assertion~\ref{it:CBBEa} is \cite{BaumannKamnitzer12}, Theorem~5.3.
It is the precise way of stating that if $T$ is a general point in $Z$,
then $\Sigma_i^*T$ ``belongs'' to $Z'$ and is general in $Z'$. (The
quotes around ``belongs'' reflects the fact that a point of $Z'$ can
only be isomorphic to $\Sigma_i^*T$, and not equal to it.)
Assertion~\ref{it:CBBEb} follows then from
Propositions~\ref{pr:CompEquiv}, \ref{pr:TorsTheoMono2}
and~\ref{pr:ShiftF^w}.
\end{proof}

Let $(w,i)\in W\times I$ be such that $\ell(ws_i)>\ell(w)$.
By Proposition~\ref{pr:ShiftF^w} and
Example~\ref{ex:TorsTheo}~\ref{it:TTa}, we have
$\mathscr F^{ws_i}\cap\mathscr T^w=\add(I_w\otimes_\Lambda S_i)$,
so $\mathfrak F^{ws_i}\cap\mathfrak T^w$ is in bijection with
$\mathbb N$: to an integer $n$ corresponds the closure in
$\Lambda(w\alpha_i)$ of the orbit representing the module
$I_w\otimes_\Lambda S_i^{\oplus n}$.

Now choose a finite sequence $\mathbf i=(i_1,\ldots,i_\ell)$
such that $s_{i_1}\cdots s_{i_\ell}$ is a reduced decomposition.
Set $(\mathscr T_0,\mathscr F_0)=(\Lambda\mmod,\{0\})$, and
for $1\leq k\leq\ell$, set $(\mathscr T_k,\mathscr F_k)=
(\mathscr T^{s_{i_1}\cdots s_{i_k}},\mathscr F^{s_{i_1}\cdots
s_{i_k}})$. Then
$$(\mathscr T_\ell,\mathscr F_\ell)\preccurlyeq\cdots
\preccurlyeq(\mathscr T_0,\mathscr F_0).$$
The generalization of Proposition~\ref{pr:NesTorIrrComp} to
finite sequences provides a bijection
$$\Omega_{\mathbf i}:\mathfrak B\to\prod_{k=1}^\ell(\mathfrak
F_k\cap\mathfrak T_{k-1})\times\mathfrak T_\ell.$$
(Here we have used the inverse map to that defined in Proposition
\ref{pr:NesTorIrrComp} and have reversed the order of the factors.)
Under the identification $\mathfrak F_k\cap\mathfrak
T_{k-1}\cong\mathbb N$, this bijection $\Omega_{\mathbf i}$ can be
expressed in terms of the crystal operations in the following way.

\begin{proposition}
\label{pr:LusParTop}
Let $Z\in\mathfrak B$. Set $Z'=(\sigma_{i_1}^*\cdots\sigma_{i_\ell}^*
\sigma_{i_\ell}\cdots\sigma_{i_1})(Z)$ and
$n_k=\varphi_{i_k}(\sigma_{i_{k-1}}\cdots\sigma_{i_1}Z)$
for $1\leq k\leq\ell$. Then
$\Omega_{\mathbf i}(Z)=(n_1,\ldots,n_\ell,Z')$.
\end{proposition}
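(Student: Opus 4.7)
I would argue by induction on $\ell$, the case $\ell=0$ being trivial. For the inductive step, pick $T$ a general point of $Z$ and let $T_\ell\subseteq\cdots\subseteq T_1\subseteq T_0=T$ be the torsion filtration with respect to the pairs $(\mathscr T_k,\mathscr F_k)=(\mathscr T^{s_{i_1}\cdots s_{i_k}},\mathscr F^{s_{i_1}\cdots s_{i_k}})$. First, since $\mathscr T_0\cap\mathscr F_1=\mathscr F^{s_{i_1}}=\add S_{i_1}$ by Example~\ref{ex:TorsTheo}(i), the short exact sequence~\eqref{eq:FuncSes} gives $T_1=\Sigma_{i_1}^*\Sigma_{i_1}T$ and $T/T_1\cong S_{i_1}^{n_1}$ with $n_1=\dim\hd_{i_1}(T)=\varphi_{i_1}(Z)$. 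The crystal-theoretic description of $\tilde f_{i_1}^{\max}$ recalled in section~\ref{ss:LuszNilpVar} then shows that $T_1$ is a general point of $\tilde f_{i_1}^{\max}(Z)\in\mathfrak T^{s_{i_1}}$.

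Next, I would transport the remaining filtration across the equivalence $\Sigma_{i_1}\colon\mathscr T^{s_{i_1}}\to\mathscr F_{s_{i_1}}$ of Theorem~\ref{th:BrennerButler}(iii). Applying Proposition~\ref{pr:CompEquiv} to the factorization $1\cdot s_{i_1}\cdot s_{i_2}\cdots s_{i_k}$ shows that $\Sigma_{i_1}$ restricts to an equivalence $\mathscr T^{s_{i_1}\cdots s_{i_k}}\xrightarrow{\sim}\mathscr F_{s_{i_1}}\cap\mathscr T^{s_{i_2}\cdots s_{i_k}}$, and Proposition~\ref{pr:ShiftF^w} (with $u=s_{i_1}$, $v=s_{i_2}\cdots s_{i_k}$) similarly gives $\Sigma_{i_1}\colon\mathscr F^{s_{i_1}\cdots s_{i_k}}\cap\mathscr T^{s_{i_1}}\xrightarrow{\sim}\mathscr F^{s_{i_2}\cdots s_{i_k}}\cap\mathscr F_{s_{i_1}}$. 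Being half of an equivalence whose inverse is $\Sigma_{i_1}^*$, the functor $\Sigma_{i_1}$ is exact on $\mathscr T^{s_{i_1}}$, so it carries the filtration of $T_1$ to $\Sigma_{i_1}T_\ell\subseteq\cdots\subseteq\Sigma_{i_1}T_1=\Sigma_{i_1}T$, which is precisely the torsion filtration of $\Sigma_{i_1}T$ with respect to the pairs $(\mathscr T^{s_{i_2}\cdots s_{i_k}},\mathscr F^{s_{i_2}\cdots s_{i_k}})$ for $k=2,\dots,\ell$. By Proposition~\ref{pr:CrysBBEquiv}(i), $\Sigma_{i_1}T$ is a general point of $\sigma_{i_1}(Z)\in\mathfrak F_{s_{i_1}}$.

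I would then invoke the inductive hypothesis on the shorter sequence $(i_2,\dots,i_\ell)$ and the element $\sigma_{i_1}(Z)$: this yields $n_k=\varphi_{i_k}(\sigma_{i_{k-1}}\cdots\sigma_{i_2}\sigma_{i_1}(Z))$ for $2\leq k\leq\ell$, and that $\Sigma_{i_1}T_\ell$ is a general point of $W'=\sigma_{i_2}^*\cdots\sigma_{i_\ell}^*\sigma_{i_\ell}\cdots\sigma_{i_1}(Z)$. The multiplicities $n_k$ are preserved under the inverse equivalence $\Sigma_{i_1}^*$ because $\Sigma_{i_1}^*(I_{s_{i_2}\cdots s_{i_{k-1}}}\otimes_\Lambda S_{i_k})\cong I_{s_{i_1}\cdots s_{i_{k-1}}}\otimes_\Lambda S_{i_k}$; and the converse direction of Proposition~\ref{pr:CrysBBEquiv}(i) shows that $T_\ell=\Sigma_{i_1}^*\Sigma_{i_1}T_\ell$ is a general point of $\sigma_{i_1}^*(W')=Z'$. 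The main technical obstacle is the compatibility check in the second paragraph, namely that the reflection functor $\Sigma_{i_1}$ genuinely carries the nested chain of torsion pairs for $s_{i_1}\cdots s_{i_\ell}$ on $\mathscr T^{s_{i_1}}$ onto the chain for $s_{i_2}\cdots s_{i_\ell}$ on $\mathscr F_{s_{i_1}}$; once this compatibility is in place, tracing the surviving component through the equivalence is bookkeeping.
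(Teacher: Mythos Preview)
Your proof is correct and follows essentially the same approach as the paper's: induction on $\ell$, identifying $n_1=\varphi_{i_1}(Z)$ via the $i_1$-head, transporting the remaining filtration across the equivalence $\Sigma_{i_1}:\mathscr T^{s_{i_1}}\to\mathscr F_{s_{i_1}}$ using Propositions~\ref{pr:CompEquiv} and~\ref{pr:ShiftF^w}, and concluding via Proposition~\ref{pr:CrysBBEquiv}. The paper's version is terser---it wraps up the final identification of $Z'$ by a single appeal to Proposition~\ref{pr:CrysBBEquiv}~\ref{it:CBBEb} rather than tracing $\sigma_{i_1}^*$ back explicitly as you do---but the underlying argument is the same.
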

\begin{proof}
Set $i=i_1$ and $\mathbf{i'}=(i_2,\ldots,i_\ell)$.
For $2\leq k\leq\ell$, set $(\mathscr T'_k,\mathscr F'_k)=
(\mathscr T^{s_{i_2}\cdots s_{i_k}},\mathscr F^{s_{i_2}\cdots
s_{i_k}})$. By Propositions~\ref{pr:CompEquiv} and~\ref{pr:ShiftF^w},
$\Sigma_i$ and $\Sigma_i^*$ restrict to equivalences of categories
$$\xymatrix@C=3.5em{\mathscr T^{s_i}\cap\mathscr T_k
\ar@<.6ex>[r]^{\Sigma_i}&\ar@<.6ex>[l]^{\Sigma_i^*}
\mathscr F_{s_i}\cap\mathscr T'_k}
\quad\text{and}\quad
\xymatrix@C=3.5em{\mathscr T^{s_i}\cap\mathscr F_k
\ar@<.6ex>[r]^{\Sigma_i}&\ar@<.6ex>[l]^{\Sigma_i^*}
\mathscr F_{s_i}\cap\mathscr F'_k}.$$

Let $T\in Z$ be a general point and let $X$ be its torsion
submodule with respect to $(\mathscr T_1,\mathscr F_1)$.
Then $X$ is the top step in the filtration of $T$ defined
by our nested sequence of torsion pairs. The other
modules define a filtration of $X$, whose image by $\Sigma_i$
is the filtration on $\Sigma_iX$ defined by the nested sequence
$$(\mathscr T'_\ell,\mathscr F'_\ell)\preccurlyeq\cdots
\preccurlyeq(\mathscr T'_2,\mathscr F'_2).$$

Now $X$ is a general point of $\tilde f_i^{\max}Z$, so $\Sigma_iX$
is a general point of $\sigma_i(Z)$. By induction, we then have
$\Omega_{\mathbf{i'}}(\sigma_i(Z))=(n_2,\ldots,n_\ell,Z'')$,
where $Z''=\sigma_{i_2}^*\cdots\sigma_{i_\ell}^*\sigma_{i_\ell}
\cdots\sigma_{i_2}(\sigma_iZ)$. On the other hand, we have
$T/X\cong S_i^{\oplus n_1}$, because $X$ is the kernel of the
canonical map $T\to\hd_iT$ and $n_1$ is the dimension of the
$i$-head of $T$. The result now follows from
Proposition~\ref{pr:CrysBBEquiv}~\ref{it:CBBEb}.
\end{proof}

\begin{other}{Remarks}
\label{rk:RemLusParTop}
\begin{enumerate}
\item
\label{it:RLPTa}
Set $w=s_{i_1}\cdots s_{i_\ell}$. Obviously, $\Omega_{\mathbf i}$
induces a bijection between $\mathfrak F^w$ and $\mathbb N^l$. Up
to duality, this bijection corresponds to the parameterization
defined by Gei\ss, Leclerc and Schr\"oer
(\cite{GeissLeclercSchroer11}, Proposition~14.5), as one sees
from our discussion in Example~\ref{ex:CompGLS}. In this context,
Proposition~\ref{pr:LusParTop} has been independently proved by
Jiang~\cite{Jiang13}.
\item
\label{it:RLPTb}
Consider the case where $\mathfrak g$ is finite dimensional and
$s_{i_1}\cdots s_{i_\ell}=w_0$. Then $\Omega_{\mathbf i}$ induces
a bijection between $\mathfrak B$ and $\mathbb N^l$. For
$b\in B(-\infty)$, the procedure to compute
$\Omega_{\mathbf i}(\Lambda_b)$ given in
Proposition~\ref{pr:LusParTop} coincides with Saito's method to
determine the Lusztig datum of $b$ in direction $\mathbf i$
(compare with the proof of \cite{Saito94}, Lemma~4.1.3).
So in this case, $\Omega_{\mathbf i}$ gives the usual Lusztig
data. This gives us an incentive to use $\Omega_{\mathbf i}$ in
order to define Lusztig data in the affine type case as well;
we will pursue this road in section~\ref{ss:LusDatComp}.
\end{enumerate}
\end{other}

We now examine how $\Omega_{\mathbf i}$ depends on $\mathbf i$,
when $s_{i_1}\cdots s_{i_\ell}$ is fixed in $W$. Matsumoto's lemma
instructs us to look what happens under a braid move, so let us
locate a subword of the form $(i,j,i)$ in $\mathbf i$, where $i$
and $j$ are linked with a single edge in the graph $(I,E)$. Let
us denote by $m+1$ the index at which this subword begins and let
us denote by $\mathbf j$ the result of the substitution of
$(i,j,i)$ by $(j,i,j)$ in $\mathbf i$.
\begin{proposition}
\label{pr:LusPLBij1}
Let $Z\in\mathfrak B$ and write $\Omega^{\mathbf i}(Z)=(n_1,\ldots,
n_m,p,q,r,n_{m+4},\ldots,n_\ell,Z')$. Define $p'$, $q'$ and $r'$ by
the formulas
$$q'=\min(p,r),\quad p'+q'=q+r,\quad q'+r'=p+q.$$
Then $\Omega^{\mathbf j}(Z)=(n_1,\ldots,
n_m,p',q',r',n_{m+4},\ldots,n_\ell,Z')$.
\end{proposition}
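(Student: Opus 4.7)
The plan is to reduce to the rank-two subsystem generated by $i$ and $j$, where the statement becomes the classical tropical Plücker relation for $A_2$, and then transport this through the equivalence of categories provided by Theorem~\ref{th:TiltPIR}.

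First I would observe that the torsion pairs $(\mathscr T^w, \mathscr F^w)$ depend only on the Weyl group element $w$, not on any chosen reduced decomposition. Setting $v=s_{i_1}\cdots s_{i_m}$ and $w_J=s_is_js_i=s_js_is_j$ (the longest element of the parabolic subgroup $W_J$ for $J=\{i,j\}$), the braid relation gives $s_{i_1}\cdots s_{i_{m+3}} = vw_J = s_{j_1}\cdots s_{j_{m+3}}$. Hence the nested sequence of torsion pairs indexed by the first $m$ and last $\ell-m-3$ letters of $\mathbf i$ coincides with the one for $\mathbf j$. Invoking the commutativity of Proposition~\ref{pr:NesTorIrrComp} (generalized to finite nested sequences) and the expression of $\Omega_{\mathbf i}$ in terms of crystal operations in Proposition~\ref{pr:LusParTop}, the first $m$ entries, the last $\ell-m-3$ entries, and the final irreducible component $Z'$ agree in $\Omega_{\mathbf i}(Z)$ and $\Omega_{\mathbf j}(Z)$. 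Only the three middle coordinates can differ.

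Next I would analyse those three coordinates intrinsically. For $T$ general in $Z$, the subquotient $M = T^v/T^{vw_J}$ lies in $\mathscr R_{v\theta}$ for any $\theta \in F_J$ with $v$ being $J$-reduced on the right (Theorem~\ref{th:TiltPIR}\ref{it:TPIRb}, \ref{it:TPIRc} and Remark~\ref{rk:ComTPIR}\ref{it:CTPIRa}). By Theorem~\ref{th:TiltPIR}\ref{it:TPIRa}, the functor $\Hom_\Lambda(I_v,{-})$ induces an equivalence $\mathscr R_{v\theta}\cong\mathscr R_\theta=\Lambda_J\mmod$, where $\Lambda_J$ is the completed preprojective algebra of type $A_2$. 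Let $N$ be the image of $M$. Under this equivalence, the three intermediate torsion submodules read off from $\mathbf i$ (respectively $\mathbf j$) correspond to the filtration of $N$ arising from the reduced decomposition $(i,j,i)$ (respectively $(j,i,j)$) of $w_J$; Proposition~\ref{pr:LusParTop} then identifies $(p,q,r)$ and $(p',q',r')$ with the Lusztig data of $N$, viewed as a general point of an irreducible component of a Lusztig nilpotent variety for $\Lambda_J$, relative to these two reduced words.

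This reduces the claim to the finite-dimensional $A_2$ case: for $\Lambda_J$ of type $A_2$ and for any $b\in B(-\infty)$ for $\mathfrak{sl}_3$, the parameterizations $\Omega_{(i,j,i)}$ and $\Omega_{(j,i,j)}$ are related by the formulas $q'=\min(p,r)$, $p'+q'=q+r$, $q'+r'=p+q$. This is the classical tropical Plücker relation; it can be deduced from \cite{Kamnitzer10} together with the main result of \cite{BaumannKamnitzer12} (which, via Proposition~\ref{pr:LusParTop} and Remark~\ref{rk:RemLusParTop}\ref{it:RLPTb}, identifies our $\Omega_{\mathbf i}$ with the standard Lusztig datum read from the MV polytope in finite type). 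Alternatively one can verify it directly by enumerating the indecomposable $\Lambda_J$-modules and running Proposition~\ref{pr:LusParTop} on each summand appearing in the canonical decomposition of the component. The main technical obstacle is ensuring that the equivalence $\Hom_\Lambda(I_v,{-})$ preserves genericity (i.e., sends a general point of the middle piece of the nested filtration to a general point of the corresponding component for $\Lambda_J$), which follows from the openness of the relevant torsion classes (Proposition~\ref{pr:PolConstr}\ref{it:PCc}) and the fiber computations underlying Theorem~\ref{th:TorIrrComp} and Proposition~\ref{pr:NesTorIrrComp}; once that is in hand, the reduction is rigorous and the result follows from the known $A_2$ computation.
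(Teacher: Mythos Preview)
Your proposal is correct and follows essentially the same strategy as the paper: reduce to the type $A_2$ preprojective algebra $\Lambda_J$ and verify the relation there. The paper phrases the reduction via the crystal reflections $\sigma_{i_k}$ of Proposition~\ref{pr:LusParTop} (reducing to $m=0$, $\ell=3$, $Z\in\mathfrak F^{w_J}$) rather than via the equivalence $\Hom_\Lambda(I_v,{-})$, and then carries out explicitly the alternative you mention---enumerating the four indecomposable $\Lambda_J$-modules, writing a general point as $S_i^{\oplus a}\oplus S_j^{\oplus b}\oplus T_i^{\oplus c}\oplus T_j^{\oplus d}$ with $\min(a,b)=0$, and computing both Lusztig data directly.
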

\begin{proof}
We adopt the notation of the statement of the proposition.
Let us set $J=\{i,j\}$.

The nested sequences of torsion pairs defined by $\mathbf i$ and
$\mathbf j$ differ only in the places $m+1$, $m+2$ and $m+3$, so
$\Omega_{\mathbf i}(Z)$ and $\Omega_{\mathbf j}(Z)$ differ only
there. Moreover, thanks to Proposition~\ref{pr:LusParTop}, we can
reduce to the case where $m=0$ by replacing $Z$ by
$\sigma_{i_m}\cdots\sigma_{i_1}(Z)$. Finally, we can also assume
without loss of generality that $\ell=3$ and that
$Z\in\mathfrak F^{w_J}$.

By Example~\ref{ex:TorsTheo}~\ref{it:TTb}, the category
$\mathscr F^{w_J}$ is isomorphic to the category of
representations of the preprojective algebra $\Lambda_J$, of type
$A_2$. It is well-known that this category has four indecomposable
objects, namely two simple objects $S_i$ and $S_j$ and two objects
$T_i$ and $T_j$ of Loewy length $2$, obtained as the middle terms
of non-split extensions $0\to S_j\to T_i\to S_i\to0$ and
$0\to S_i\to T_j\to S_j\to0$; the structure of these modules
$T_i$ and $T_j$ can be represented pictorially as follows:
$$T_i=\left(\bsm i&&\\&\searrow&\\&&j\esm\right)\quad\text{and}\quad
T_j=\left(\bsm&&j\\&\swarrow&\\i&&\esm\right).$$
Thus, there exists $(a,b,c,d)\in\mathbb N^4$ such that a
general point in $Z$ is isomorphic to
$$S_i^{\oplus a}\oplus S_j^{\oplus b}\oplus
T_i^{\oplus c}\oplus T_j^{\oplus d},$$
with moreover $\min(a,b)=0$, because a module which has
$S_i\oplus S_j$ as summand cannot be general. One then easily
computes
$$\Omega^{(i,j,i)}(Z)=(a+c,d,b+c,\{0\})\qquad\text{and}\qquad
\Omega^{(j,i,j)}(Z)=(b+d,c,a+d,\{0\}).$$
Setting $p=a+c$, $q=d$ and $r=b+c$, one checks that $p'=b+d$,
$q'=c$, $r'=a+d$, which shows the desired result.
\end{proof}

Likewise, let us locate a subword of the form $(i,j)$ in $\mathbf i$,
where $i$ and $j$ are not linked in the graph $(I,E)$.
Let us denote by $m+1$ the index at which this subword begins and
let us denote by $\mathbf j$ the result of the substitution of
$(i,j)$ by $(j,i)$ in $\mathbf i$.
\begin{proposition}
\label{pr:LusPLBij2}
Let $Z\in\mathfrak B$ and write $\Omega^{\mathbf i}(Z)=(n_1,\ldots,
n_m,p,q,n_{m+3},\ldots,n_\ell,Z')$. Then
$\Omega^{\mathbf j}(Z)=(n_1,\ldots,
n_m,q,p,n_{m+3},\ldots,n_\ell,Z')$.
\end{proposition}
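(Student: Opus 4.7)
The plan is to mimic the proof of Proposition~\ref{pr:LusPLBij1} but in the easier type $A_1\times A_1$ setting. First I would note that the nested sequences of torsion pairs attached to $\mathbf i$ and $\mathbf j$ agree outside positions $m+1$ and $m+2$: indeed they are built from the ideals $I_{s_{i_1}\cdots s_{i_k}}$, and since $s_is_j=s_js_i$ we have $I_{s_{i_1}\cdots s_{i_m}s_is_j}=I_{s_{i_1}\cdots s_{i_m}s_js_i}$, while for $k\leq m$ or $k\geq m+2$ the $k$-th partial product is the same in both orderings. Consequently the tuples $\Omega^{\mathbf i}(Z)$ and $\Omega^{\mathbf j}(Z)$ can differ only in their $(m+1)$-st and $(m+2)$-nd coordinates, and they share the same last entry~$Z'$.

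Next, just as in Proposition~\ref{pr:LusPLBij1}, I would reduce to the case $m=0$ by applying Proposition~\ref{pr:LusParTop}: replacing $Z$ by $(\sigma_{i_m}\cdots\sigma_{i_1})(Z)$ strips off the first $m$ components without affecting the two positions we care about. A further reduction lets us assume $\ell=2$ and $Z\in\mathfrak F^{w_J}$ where $J=\{i,j\}$; the latter is legitimate because the remaining torsion pair in the nested chain is $(\mathscr T^{s_is_j},\mathscr F^{s_is_j})$, and we absorb everything further up into~$Z'$.

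Now comes the computational heart, which in this case is essentially trivial. By Example~\ref{ex:TorsTheo}\ref{it:TTb}, $\mathscr F^{w_J}=\Lambda_J\mmod$. Because $i$ and $j$ are not linked in $(I,E)$, the subgraph $(J,E_J)$ has no edges, so $\Lambda_J$ is just a product of two copies of the base field; thus $\Lambda_J\mmod$ is semisimple with only two indecomposable objects $S_i$ and $S_j$, and a general point of $Z$ is isomorphic to $S_i^{\oplus a}\oplus S_j^{\oplus b}$ for a unique $(a,b)\in\mathbb N^2$. A direct inspection of the torsion pair $(\mathscr T^{s_i},\mathscr F^{s_i})$ (for which the torsion-free quotient is the $i$-head, see Example~\ref{ex:TorsTheo}\ref{it:TTa}) shows that the first step of the filtration peels off $S_i^{\oplus a}$, leaving $S_j^{\oplus b}$, which is then peeled off by $(\mathscr T^{s_is_j},\mathscr F^{s_is_j})$. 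Hence $\Omega^{(i,j)}(Z)=(a,b,\{0\})$, and by symmetry $\Omega^{(j,i)}(Z)=(b,a,\{0\})$.

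The only point requiring any care is the reduction step: one must check that cutting off the $m$ initial torsion pairs via the $\sigma$-operators and identifying the remaining filtration with one carried out inside $\mathscr F^{s_is_j}$ is compatible with the bijection $\Omega_{\mathbf i}$. This is handled exactly as in the proof of Proposition~\ref{pr:LusPLBij1}, relying on Propositions~\ref{pr:CompEquiv}, \ref{pr:TorsTheoMono2}, \ref{pr:ShiftF^w} and~\ref{pr:CrysBBEquiv}\ref{it:CBBEb}; since we have already invoked these machineries in the $A_2$ case, no new obstacle arises in the disconnected rank~$2$ case, and the argument concludes by the elementary direct sum decomposition above.
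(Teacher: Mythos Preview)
Your proposal is correct and follows exactly the approach the paper intends: the paper's proof is simply the one-line remark that the argument is identical to that of Proposition~\ref{pr:LusPLBij1} but with $\Lambda_J$ now of type $A_1\times A_1$, and your write-up spells out precisely those details.
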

The proof is similar to that of Proposition~\ref{pr:LusPLBij1},
save that we now deal with a preprojective algebra $\Lambda_J$
of type $A_1\times A_1$.

For a fixed $w\in W$, let $\mathscr X(w)$ be the set of all
tuples $\mathbf i$ that represent a reduced decomposition of
$w$. Lusztig's piecewise linear bijections
$R_{\mathbf i}^{\mathbf j}:\mathbb N^\ell\to\mathbb N^\ell$
(\cite{Lusztig90a}, section~2.1) can be defined here just
as in the case where $w$ is the longest element in a finite $W$.
Since any two elements in $\mathscr X(w)$ can be related by a
sequence of braid and of commutation relations,
Propositions~\ref{pr:LusPLBij1} and~\ref{pr:LusPLBij2}
say that the numerical parts of $\Omega_{\mathbf i}$ and
$\Omega_{\mathbf j}$ are related by $R_{\mathbf i}^{\mathbf j}$.

To conclude, let us consider again Proposition~\ref{pr:LusParTop}
and choose a general point $T\in Z$. The integers $n_1$, \dots,
$n_\ell$ are equal to the lengths of the edges of $\Pol(T)$ along
the path
$$\dimvec T,\quad\dimvec T^{s_{i_1}},\quad\dimvec
T^{s_{i_1}s_{i_2}},\quad\ldots,\quad\dimvec T^{s_{i_1}\cdots
s_{i_\ell}}.$$
Now look at Proposition~\ref{pr:LusPLBij1}, set $J=\{i,j\}$,
choose $\theta\in F_J$, and set $u=s_{i_1}\cdots s_{i_m}$; then
$u$ is $J$-reduced on the right. Using
Theorem~\ref{th:TiltPIR}~\ref{it:TPIRa} and
Corollary~\ref{co:SuppFunTitsFan}, one can show that the vertices
on the face of $\Pol(T)$ defined by $u\theta$ are the six vertices
$\dimvec T^{uv}$, with $v\in W_J$. The relation given in
Proposition~\ref{pr:LusPLBij1} constrains the lengths of the edges
of this face; it is equivalent to the tropical Pl\"ucker relations
of~\cite{Kamnitzer10}.

\subsection{The finite type case}
\label{ss:DynkinType}
Our main focus of interest in this paper concerns the case where
$\Phi$ is of affine type, to which section~\ref{ss:TiltIdIw}
directly applies. In the finite type case, the ideals $I_w$ are
not tilting of projective dimension at most~$1$ anymore. They
nevertheless exist, so we can define the full subcategories
\begin{align*}
\mathscr T^w&=\text{essential image of }I_w\otimes_\Lambda?,\\
\mathscr F^w&=\text{kernel of }\Hom_\Lambda(I_w,?),\\
\mathscr T_w&=\text{kernel of }I_w\otimes_\Lambda?,\\
\mathscr F_w&=\text{essential image of }\Hom_\Lambda(I_w,?)
\end{align*}
of $\Lambda\mmod$. With this definition, all the results in
sections~\ref{ss:ReflFunc} hold unchanged, except of course
Theorem~\ref{th:BuanIyamaReitenScott}~\ref{it:BIRSc}.

To show this, one can adopt the method of Iyama, Reiten and their
collaborators, namely, one chooses an embedding of the Dynkin diagram
into a non-Dynkin one. One then get a natural surjective morphism
from the preprojective algebra $\widehat\Lambda$ of non-Dynkin type
onto the preprojective algebra $\Lambda$ of Dynkin type, and thus a
natural embedding of $\Lambda\mmod$ as a full subcategory of
$\widehat\Lambda\mmod$. This subcategory is abelian and closed under
extensions. Each $i\in I$ yields then an ideal $I_i$ of $\Lambda$
and an ideal $\widehat I_i$ of $\widehat\Lambda$. By
Proposition~\ref{pr:DescReflFunc}, the functors
$I_i\otimes_\Lambda?$ and $\widehat I_i\otimes_{\widehat\Lambda}?$
(respectively, $\Hom_\Lambda(I_i,?)$ and
$\Hom_{\widehat\Lambda}(\widehat I_i,?)$) coincide on
$\Lambda\mmod$.

Moreover, the Weyl group $W$ of the Dynkin diagram embeds as a
parabolic subgroup of the Weyl group $\widehat W$ of the larger
diagram. Given $w\in W$, one can then define $I_w$ as the bimodule
$I_{i_1}\otimes_\Lambda\cdots\otimes_\Lambda I_{i_\ell}$ by
choosing a reduced decomposition $w=s_{i_1}\cdots s_{i\ell}$.
Since the functors $I_w\otimes_\Lambda?$ and $\widehat
I_w\otimes_{\widehat\Lambda}?$ coincide on $\Lambda\mmod$, we deduce
the Dynkin case of Theorem~\ref{th:BuanIyamaReitenScott}~\ref{it:BIRSd}
from the non-Dynkin case. The proof of~\cite{BuanIyamaReitenScott09},
Proposition~II.1.5 then immediately implies
Theorem~\ref{th:BuanIyamaReitenScott}~\ref{it:BIRSa}. Using
that moreover the functors $\Hom_\Lambda(I_w,?)$ and
$\Hom_{\widehat\Lambda}(\widehat I_w,?)$ coincide on $\Lambda\mmod$,
we deduce the Dynkin case from the non-Dynkin case in
Theorem~\ref{th:BrennerButler}, Examples~\ref{ex:TorsTheo}
\ref{it:TTa} and~\ref{it:TTb}, Propositions~\ref{pr:CompEquiv}
and Corollary~\ref{co:ReflDimVec}.

Finally, to see that Examples~\ref{ex:TorsTheo}~\ref{it:TTc}
and~\ref{it:TTd} hold true in the Dynkin case, one can apply
Example~\ref{ex:CompIR}. One may here moreover note that
the surjection $\widehat\Lambda\to\Lambda$ induces an isomorphism
$\widehat\Lambda/\widehat I_w\cong\Lambda/I_w$ (this follows,
for instance, by an obvious dimension argument based on Amiot,
Iyama, Reiten and Todorov's filtration, see section~2 of
\cite{AmiotIyamaReitenTodorov10}).

Another pecularity of the finite type case is the fact that the
Tits cone $C_T$ fills the whole dual space of $\mathbb RI$.
By Theorem~\ref{th:TiltPIR}, any torsion pair
$(\overline{\mathscr I}_\theta,\mathscr P_\theta)$ or
$(\mathscr I_\theta,\overline{\mathscr P}_\theta)$
is of the form $(\mathscr T_w,\mathscr F_w)$, so these latter
are enough to completely describe the polytopes $\Pol(T)$. This
fact, combined with Corollary~\ref{co:FacetsViaHom}, shows
that the definition of $\Pol(T)$ given in \cite{BaumannKamnitzer12}
is identical to the definition used in the present paper.

In addition, the intersection between $C_T$ and its opposite is
not $\{0\}$ anymore, so it is possible to write
$-u^{-1}\eta=v\theta$ with $(u,v)\in W^2$ and both $\eta$ and
$\theta$ in $C_0$. In this case, we have
\begin{equation}
\label{eq:DynkinTopBot}
(\mathscr T^v,\mathscr F^v)=(\mathscr I_{v\theta},\mathscr
I_{v\theta})=(\mathscr I_{-u^{-1}\eta},\mathscr P_{-u^{-1}\eta})
=(\mathscr T_u,\mathscr F_u),
\end{equation}
by Theorem~\ref{th:TiltPIR}~\ref{it:TPIRb} and
Remark~\ref{rk:ComTPIR}~\ref{it:CTPIRb}.

A last peculiarity of the finite type case was mentioned
above in Remark~\ref{rk:RemLusParTop}~\ref{it:RLPTb}.

\section{The Hall functors}
\label{se:HallFunc}
In this section, we assume we are given two orthogonal rigid bricks
$S$ and $R$ in $\Lambda\mmod$ such that $\dim\Ext^1(S,R)=2$. In other
words, we assume that $S$ and $R$ are two finite dimensional
$\Lambda$-modules such that
\begin{equation}
\label{eq:DatumSR}
\begin{aligned}
\End_\Lambda(S)&=\End_\Lambda(R)=K,\\
\Ext^1_\Lambda(S,S)&=\Ext^1_\Lambda(R,R)=0,
\end{aligned}
\qquad\qquad
\begin{aligned}
\Hom_\Lambda(S,R)=\Hom_\Lambda(R,S)&=0,\\
\dim\Ext^1_\Lambda(S,R)=2.&
\end{aligned}
\end{equation}

We fix $\xi$ and $\eta$ in
$\bigoplus_{a\in H}\Hom_K(S_{s(a)},R_{t(a)})$
and $\hat\xi$ and $\hat\eta$ in
$\bigoplus_{a\in H}\Hom_K(R_{s(a)},S_{t(a)})$
such that, in the notation of section~\ref{ss:ProjRes},
\begin{xalignat}2
\label{eq:ApproxCoh1}
d^1_{S,R}(\xi)&=d^1_{S,R}(\eta)=0,&
d^1_{R,S}(\hat\xi)&=d^1_{R,S}(\hat\eta)=0,\\[4pt]
\label{eq:ApproxCoh2}
\tau_1(\xi,\hat\xi)&=\tau_1(\eta,\hat\eta)=1,&
\tau_1(\xi,\hat\eta)&=\tau_1(\eta,\hat\xi)=0.
\end{xalignat}

Thus $(\xi,\eta)$ can be regarded as a basis of
$\Ext^1_\Lambda(S,R)$ and $(\hat\xi,\hat\eta)$ can be
regarded as the dual basis of $\Ext^1_\Lambda(R,S)$.

\subsection{A combinatorial lemma}
\label{ss:CombiLem}
We denote by $\Pi$ the completed preprojective algebra of type
$\widetilde A_1$. This is the completed preprojective algebra
of the Kronecker quiver
$$\xymatrix@C=3em{0\ar@/^/[r]^\alpha\ar@/_/[r]_\beta&1.}$$
It contains orthogonal idempotents $e_0$ and $e_1$ and arrows
$\alpha,\beta\in e_1\Pi e_0$ and $\overline\alpha,\overline\beta\in
e_0\Pi e_1$. We denote its augmentation ideal by $J$.

For the following, it is useful to think of the algebra $\Pi$
as the quotient
$$\mathbf S\langle\langle\,\alpha,\beta,\overline\alpha,
\overline\beta\,\rangle\rangle/(\alpha\overline\alpha+
\beta\overline\beta,\overline\alpha\alpha+\overline\beta\beta)$$
of the ring of non-commutative formal power series in four
variables $\alpha$, $\beta$, $\overline\alpha$, $\overline\beta$
with coefficients in the commutative semisimple algebra
$\mathbf S=Ke_0\oplus Ke_1$.

\begin{lemma}
\label{le:CombiLem}
The image of the linear map
$$C:\bigl(e_1\Pi e_0\bigr)^2\times\bigl(e_0\Pi e_1\bigr)^2\to
e_0\Pi e_0\times e_1\Pi e_1$$
given by
$$C(x,y,\hat x,\hat y)=(-\hat x\alpha-\hat y\beta-\overline\alpha x-
\overline\beta y,\;x\overline\alpha+y\overline\beta+\alpha\hat x+
\beta\hat y)$$
contains:
\begin{itemize}
\item
any element of the form $(uv-vu,0)$, where
$(u,v)\in\bigl(e_0\Pi e_0\bigr)^2$;
\item
any element of the form $(0,uv-vu)$, where
$(u,v)\in\bigl(e_1\Pi e_1\bigr)^2$;
\item
any element of the form $(uv,-vu)$, where
$(u,v)\in e_0\Pi e_1\times e_1\Pi e_0$.
\end{itemize}
\end{lemma}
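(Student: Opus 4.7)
The map $C$ admits a natural interpretation: it is, up to sign, the first-order variation of the preprojective relations $\rho_0$ and $\rho_1$ under an infinitesimal deformation of the arrows of the Kronecker quiver. Explicitly, if we replace $\alpha, \beta, \overline\alpha, \overline\beta$ by $\alpha + \epsilon x$, $\beta + \epsilon y$, $\overline\alpha + \epsilon \hat x$, $\overline\beta + \epsilon \hat y$, then at first order the relations $\rho_0 = \overline\alpha\alpha + \overline\beta\beta$ and $\rho_1 = \alpha\overline\alpha + \beta\overline\beta$ are perturbed by $-C(x, y, \hat x, \hat y)_0$ and $+C(x, y, \hat x, \hat y)_1$, respectively. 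This viewpoint guides the search for preimages: any quadruple coming from an inner derivation of $\Pi$ produces $(0, 0)$, so non-trivial elements of the image of $C$ come from deformations that genuinely modify the algebra structure.

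My plan is to first reduce case (iii) to case (i), and then to establish cases (i) and (ii) by direct computation.

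For the reduction of case (iii) to case (i), given $u \in e_0\Pi e_1$ and $v \in e_1\Pi e_0$, I would decompose $u = u_1 \overline\alpha + u_2 \overline\beta$ with $u_1, u_2 \in e_0\Pi e_0$, which is possible since every non-scalar path in $e_0\Pi e_1$ must end on the right with $\overline\alpha$ or $\overline\beta$. A direct calculation then yields
\[
C(-vu_1,\, -vu_2,\, 0,\, 0) \;=\; \bigl(\overline\alpha vu_1 + \overline\beta vu_2,\ -vu\bigr).
\]
The second coordinate equals $-vu$ on the nose. The first coordinate differs from $uv = u_1\overline\alpha v + u_2\overline\beta v$ by the sum of commutators $[u_1, \overline\alpha v] + [u_2, \overline\beta v]$, both of which are commutators in $e_0\Pi e_0$. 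Applying case (i) to each commutator, I can absorb this discrepancy without altering the second coordinate, which produces $(uv, -vu)$ in the image of $C$.

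Cases (i) and (ii) form the technical heart of the lemma and are exchanged by the involution $a \leftrightarrow \overline a$ of the double quiver, so it suffices to handle one of them. For case (i), given $u, v \in e_0\Pi e_0$, the strategy is to construct a quadruple whose image is $([u, v], 0)$. Since $[u, v]$ lies in the augmentation ideal of $e_0\Pi e_0$, every summand of $[u, v]$ ends on the right with $\alpha$ or $\beta$, so I can write $[u, v] = -\hat x\alpha - \hat y\beta$ for suitable $\hat x, \hat y \in e_0\Pi e_1$; this choice produces $[u, v]$ in the first coordinate of $C(0, 0, \hat x, \hat y)$ but leaves a potentially non-zero residual second coordinate $\alpha\hat x + \beta\hat y$. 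The main obstacle is to arrange the decomposition so that this residual can be cancelled using the relation $\rho_1 = 0$ together with further adjustments in $x, y$. I expect the correct choice to arise by lifting $u, v$ to the tensor algebra, applying a formal non-commutative derivation along $\alpha, \beta$, and then using the cyclic invariance of the superpotential $W = \alpha\overline\alpha\beta\overline\beta - \alpha\overline\beta\beta\overline\alpha$ (whose cyclic derivatives recover $\rho_0$ and $\rho_1$) to guarantee that the residual second coordinate factors through $\rho_1$ and hence vanishes in $\Pi$.
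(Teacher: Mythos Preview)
Your reduction of case (iii) to case (i) is correct and pleasant: the computation $C(-vu_1,-vu_2,0,0)=(\overline\alpha vu_1+\overline\beta vu_2,\,-vu)$ is right, and the discrepancy in the first coordinate is indeed a sum of commutators in $e_0\Pi e_0$, so case (i) finishes the job. Likewise, the symmetry between (i) and (ii) under $a\leftrightarrow\overline a$ is valid. The paper instead treats all three cases by the same explicit construction, but your reduction is a genuine economy.

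The gap is in case (i). Writing $[u,v]=-\hat x\alpha-\hat y\beta$ and applying $C(0,0,\hat x,\hat y)$ does give $[u,v]$ in the first slot, but the residual $\alpha\hat x+\beta\hat y$ in the second slot does not vanish for a generic choice of decomposition, nor can it be cancelled by adjusting $x,y$ alone without also disturbing the first coordinate. One really needs to choose all four entries simultaneously. Your appeal to the superpotential is not correct as stated: the element $W=\alpha\overline\alpha\beta\overline\beta-\alpha\overline\beta\beta\overline\alpha$ is of degree four, while the preprojective relations $\rho_0=\overline\alpha\alpha+\overline\beta\beta$ and $\rho_1=\alpha\overline\alpha+\beta\overline\beta$ are of degree two, so they cannot be its cyclic derivatives. (A superpotential presentation of $\Pi$ exists only after adjoining auxiliary loops, and that framework does not directly yield what you need here.)

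The paper's argument supplies exactly the missing construction. For monomials $u=\overline{c_1}c_2\cdots\overline{c_{2\ell-1}}c_{2\ell}$ and $v$, it takes the cyclic rotations
\[
m_k=c_{2k}\cdots c_{2\ell}\,v\,\overline{c_1}\cdots c_{2k-2},\qquad
n_k=\overline{c_{2k+1}}\cdots c_{2\ell}\,v\,\overline{c_1}\cdots\overline{c_{2k-1}},
\]
and sets $x,y$ to be the sums of the $-m_k$ over $k$ with $\overline{c_{2k-1}}=\overline\alpha,\overline\beta$ respectively, and $\hat x,\hat y$ the sums of the $n_k$ over $k$ with $c_{2k}=\alpha,\beta$. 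The first coordinate then telescopes to $uv-vu$, while the second coordinate becomes $\sum_k c_{2k}n_k-\sum_k m_k\overline{c_{2k-1}}$, and each pair of summands is literally equal, giving zero. This simultaneous use of the $m_k$ and the $n_k$ is the point you are missing; a choice using only $\hat x,\hat y$ cannot achieve the cancellation.
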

\begin{proof}
Let $u$ and $v$ be two words of even length in the alphabet
$\bigl\{\alpha,\beta,\overline\alpha,\overline\beta\bigr\}$, in which
barred and non-barred letters alternate, and which start with a
barred letter. Thus $u$ and $v$ define elements in $e_0\Pi e_0$. We
write $u=\overline{c_1}c_2\cdots\overline{c_{2\ell-1}}c_{2\ell}$,
with $c_k\in\{\alpha,\beta\}$. For $1\leq k\leq\ell$, we set
$$m_k=c_{2k}\overline{c_{2k+1}}\cdots c_{2\ell}\;v\;\overline{c_1}
c_2\cdots c_{2k-2}\qquad\text{and}\qquad
n_k=\overline{c_{2k+1}}c_{2k+2}\cdots c_{2\ell}\;v\;\overline{c_1}
c_2\cdots\overline{c_{2k-1}}.$$
Then $(uv-vu,0)$
is the image of the element
$$\left(
-\sum_{\substack{1\leq k\leq\ell\\[3pt]
\overline{c_{2k-1}}=\overline\alpha}}m_k,\quad
-\sum_{\substack{1\leq k\leq\ell\\[3pt]
\overline{c_{2k-1}}=\overline\beta}}m_k,\quad
\sum_{\substack{1\leq k\leq\ell\\[3pt]c_{2k}=\alpha}}n_k,\quad
\sum_{\substack{1\leq k\leq\ell\\[3pt]c_{2k}=\beta}}n_k\right)$$
by our linear map. This shows that the elements of the first kind
belong to the image of our map. The two other cases are similar.
\end{proof}

\subsection{A universal lifting}
\label{ss:UnivLift}
Given a $K$-vector space $V$, two elements $X$ and $Y$ in
$V\otimes_K\Pi$, and an integer $k\geq0$, generalizing a
standard notation, we will write $X\equiv Y\bmod J^k$ to
express that $X-Y$ belongs to $V\otimes_K J^k$.

For $a\in H$, we set $\dot S_a=S_a\otimes e_0$, where
$S_a$ is the linear map attached to the arrow $a$ in the
$\Lambda$-module $S$; this $\dot S_a$ is an element in
$\Hom_K(S_{s(a)},S_{t(a)})\otimes_Ke_0\Pi e_0$. Likewise,
we set $\dot R_a=R_a\otimes e_1$; this is an element in
$\Hom_K(R_{s(a)},R_{t(a)})\otimes_Ke_1\Pi e_1$.

We set $P=\xi\otimes\alpha+\eta\otimes\beta$ and
$Q=\hat\xi\otimes\overline\alpha+\hat\eta\otimes\overline\beta$;
these are elements in
$$\bigoplus_{a\in H}\Hom_K(S_{s(a)},R_{t(a)})
\otimes_Ke_1\Pi e_0\qquad\text{and}\qquad
\bigoplus_{a\in H}\Hom_K(R_{s(a)},S_{t(a)})
\otimes_Ke_0\Pi e_1,$$
respectively. For $a\in H$, we write $P_a$ for the component of
$P$ in $\Hom_K(S_{s(a)},R_{t(a)})\otimes_Ke_1\Pi e_0$ and
$Q_a$ for the component of $Q$ in $\Hom_K(R_{s(a)},S_{t(a)})
\otimes_Ke_0\Pi e_1$.

\begin{lemma}
\label{le:ApproxCoh}
There are elements
\begin{gather*}
S^{(\infty)}\in\bigoplus_{a\in H}\Hom_K(S_{s(a)},S_{t(a)})
\otimes_Ke_0\Pi e_0,\qquad
R^{(\infty)}\in\bigoplus_{a\in H}\Hom_K(R_{s(a)},R_{t(a)})
\otimes_Ke_1\Pi e_1,\\[4pt]
P^{(\infty)}\in\bigoplus_{a\in H}\Hom_K(S_{s(a)},R_{t(a)})
\otimes_Ke_1\Pi e_0,\qquad
Q^{(\infty)}\in\bigoplus_{a\in H}\Hom_K(R_{s(a)},S_{t(a)})
\otimes_Ke_0\Pi e_1
\end{gather*}
such that for each $a\in H$,
$$\begin{pmatrix}S^{(\infty)}_a&Q^{(\infty)}_a\\
P^{(\infty)}_a&R^{(\infty)}_a\end{pmatrix}
\equiv\begin{pmatrix}\dot S_a&Q_a\\[2pt]P_a&\dot R_a\end{pmatrix}
\quad\bmod\ \begin{pmatrix}J^2&J^3\\[2pt]J^3&J^2\end{pmatrix},$$
and for each $i\in I$,
$$\sum_{\substack{a\in H\\s(a)=i}}\varepsilon(a)
\begin{pmatrix}S^{(\infty)}_{\overline a}&Q^{(\infty)}_{\overline a}\\
P^{(\infty)}_{\overline a}&R^{(\infty)}_{\overline a}\end{pmatrix}
\begin{pmatrix}S^{(\infty)}_a&Q^{(\infty)}_a\\
P^{(\infty)}_a&R^{(\infty)}_a\end{pmatrix}=0.$$
\end{lemma}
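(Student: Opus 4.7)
The plan is to construct $(S^{(\infty)}, R^{(\infty)}, P^{(\infty)}, Q^{(\infty)})$ by successive approximation in the Jacobson radical $J$, and then pass to the $J$-adic limit. At stage $n = 1$, I take $(\dot S, \dot R, P, Q)$: the diagonal $e_0e_0$ and $e_1e_1$ components of the preprojective relation reduce to the relations satisfied by the $\Lambda$-modules $S$ and $R$ (tensored with $e_0$ and $e_1$), while the off-diagonal $e_1e_0$ and $e_0e_1$ components at vertex $i$ equal respectively $d^1_{S,R}(\xi)_i\otimes\alpha+d^1_{S,R}(\eta)_i\otimes\beta$ and $d^1_{R,S}(\hat\xi)_i\otimes\overline\alpha+d^1_{R,S}(\hat\eta)_i\otimes\overline\beta$, both of which vanish by~\eqref{eq:ApproxCoh1}. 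So the preprojective relation is satisfied modulo $J^2$ on the diagonal and modulo $J^3$ off the diagonal, and the initial congruences hold by construction.

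For the inductive step, I suppose the truncations have been arranged so that the preprojective relation holds modulo $J^{n+1}$ on the diagonal and modulo $J^{n+2}$ off the diagonal, and I look for a correction $(\Delta S, \Delta R, \Delta P, \Delta Q)$ of matching $J$-adic order that kills the leading part of the defect. The correction acts linearly on the defect: the diagonal $\Delta^{SS}$ picks up a contribution $d^1_{S,S}(\Delta S)$ tensored with a $\Pi$-element, plus a cross-term built from $(\Delta P, Q, P, \Delta Q)$ exactly of the shape described by the map $C$ of Lemma~\ref{le:CombiLem}, and similarly for $\Delta^{RR}$; the off-diagonal defects are modified by $d^1_{S,R}(\Delta P)$ and $d^1_{R,S}(\Delta Q)$ up to still higher order terms. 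The off-diagonal obstructions vanish automatically: since $\Hom_\Lambda(S,R)=\Hom_\Lambda(R,S)=0$, the cokernels of $d^1_{S,R}$ and $d^1_{R,S}$ vanish by the non-degeneracy of the pairing $\tau_2$ recalled in section~\ref{ss:ProjRes}, so a suitable adjustment of $P$ and $Q$ alone always works.

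For the diagonal obstructions, I pair $\Delta^{SS}$ with $\id_S$ and $\Delta^{RR}$ with $\id_R$ via $\tau_2$. The resulting traces are elements of $(e_0\Pi e_0)_{n+1}$ and $(e_1\Pi e_1)_{n+1}$, which I compute by cyclicity of the trace, the orthogonality relations~\eqref{eq:ApproxCoh2}, and the two preprojective relations $\alpha\overline\alpha+\beta\overline\beta=0$ and $\overline\alpha\alpha+\overline\beta\beta=0$ in $\Pi$; they fall precisely into the image of the map $C$ of Lemma~\ref{le:CombiLem}, which supplies $(\Delta P, \Delta Q)$ killing them. The trace-free residues then lie in $\im d^1_{S,S}$ and $\im d^1_{R,R}$ by the rigidity assumption $\Ext^1_\Lambda(S,S)=\Ext^1_\Lambda(R,R)=0$, and are absorbed by suitable $(\Delta S, \Delta R)$. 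The main obstacle will be the trace computation at arbitrary order: one must rearrange the degree-$(n+1)$ piece of the defect, a polynomial in the original cocycles $(\xi,\eta,\hat\xi,\hat\eta)$ and the corrections accumulated at previous stages, into sums of commutators in $e_0\Pi e_0$ and $e_1\Pi e_1$ and $(uv,-vu)$-pieces modulo the two preprojective relations of $\Pi$. This alignment is exactly the combinatorial content of Lemma~\ref{le:CombiLem}, and once it is carried out the truncations are compatible and their $J$-adic limit provides the desired $S^{(\infty)}, R^{(\infty)}, P^{(\infty)}, Q^{(\infty)}$.
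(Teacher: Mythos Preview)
Your approach is essentially the paper's --- build the solution by successive $J$-adic approximation, using Lemma~\ref{le:CombiLem} to handle the trace obstruction at each stage --- but one step is wrongly justified. You write that the trace-free diagonal residues lie in $\im d^1_{S,S}$ and $\im d^1_{R,R}$ ``by the rigidity assumption $\Ext^1_\Lambda(S,S)=\Ext^1_\Lambda(R,R)=0$''. Rigidity controls $\ker d^1/\im d^0$, not $\coker d^1$, so it says nothing about which elements are hit by $d^1$. The correct reason is the one you already used for the off-diagonal blocks: the pairing $\tau_2$ from section~\ref{ss:ProjRes} identifies $\coker d^1_{S,S}$ with the dual of $\Hom_\Lambda(S,S)=K\cdot\id_S$, so an element lies in $\im d^1_{S,S}$ precisely when it is $\tau_2$-orthogonal to $\id_S$. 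It is therefore the brick condition $\End_\Lambda(S)=K$, not rigidity, that does the work; rigidity is in fact never used in the proof of this lemma.

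On organization, the paper packages the induction a little differently. Rather than computing the trace of the diagonal defect afresh at each stage and arguing it lies in $\im C$, the paper carries the two identities
\[
\dot\tau_1\bigl(S^{(k)},S^{(k)}\bigr)+\dot\tau_1\bigl(Q^{(k)},P^{(k)}\bigr)\equiv 0,\qquad
\dot\tau_1\bigl(R^{(k)},R^{(k)}\bigr)+\dot\tau_1\bigl(P^{(k)},Q^{(k)}\bigr)\equiv 0
\]
(modulo a suitable power of $J$) as part of the induction hypothesis. These make the diagonal defects automatically trace-free, so one corrects $S$ and $R$ first, then $P$ and $Q$ for the preprojective relation, and only at the very end adjusts $P$ and $Q$ by elements of the special form $\xi\otimes x+\eta\otimes y$ and $\hat\xi\otimes\hat x+\hat\eta\otimes\hat y$ to restore the two trace identities for the next round; condition~\eqref{eq:ApproxCoh1} guarantees this last adjustment does not disturb the preprojective relation already achieved. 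The cyclicity argument you sketch is exactly what shows that this final adjustment is possible.
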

\begin{proof}
The desired elements will be constructed as the limit in the $J$-adic
topology of elements $S^{(k)}$, $R^{(k)}$, $P^{(k)}$ and $Q^{(k)}$
such that
$$\begin{pmatrix}S^{(k+1)}&Q^{(k+1)}\\
P^{(k+1)}&R^{(k+1)}\end{pmatrix}
\equiv\begin{pmatrix}S^{(k)}&Q^{(k)}\\
P^{(k)}&R^{(k)}\end{pmatrix}\quad
\bmod\ \begin{pmatrix}J^{2k+2}&J^{2k+3}\\
J^{2k+3}&J^{2k+2}\end{pmatrix}.$$
For $k=0$, we define
$$S^{(0)}\in\bigoplus_{a\in H}\Hom_K(S_{s(a)},S_{t(a)})
\otimes_Ke_0\Pi e_0$$
by gathering the elements $\dot S_a$. We similarly define $R^{(0)}$
and we set $P^{(0)}=P$ and $Q^{(0)}=Q$. The conditions we impose at
step $k$ are
\begin{equation}
\label{eq:ApproxCoh3}
\sum_{\substack{a\in H\\s(a)=i}}\varepsilon(a)
\begin{pmatrix}S^{(k)}_{\overline a}&Q^{(k)}_{\overline a}\\
P^{(k)}_{\overline a}&R^{(k)}_{\overline a}\end{pmatrix}
\begin{pmatrix}S^{(k)}_a&Q^{(k)}_a\\
P^{(k)}_a&R^{(k)}_a\end{pmatrix}\equiv0\quad
\bmod\ \begin{pmatrix}J^{2k+2}&J^{2k+3}\\
J^{2k+3}&J^{2k+2}\end{pmatrix}
\end{equation}
for each $i\in I$,
\begin{equation}
\label{eq:ApproxCoh4}
\dot\tau_1\Bigl(S^{(k)},S^{(k)}\Bigr)+\dot\tau_1\Bigl(Q^{(k)},P^{(k)}\Bigr)
\equiv0\quad\bmod\ J^{2k+4}
\end{equation}
and
\begin{equation}
\label{eq:ApproxCoh5}
\dot\tau_1\Bigl(R^{(k)},R^{(k)}\Bigr)+\dot\tau_1\Bigl(P^{(k)},Q^{(k)}\Bigr)
\equiv0\quad\bmod\ J^{2k+4}.
\end{equation}
In~\eqref{eq:ApproxCoh4} and~\eqref{eq:ApproxCoh5}, the symbol
$\dot\tau_1$ represents $\tau_1\otimes\id_\Pi$, the map obtained
from the bilinear form $\tau_1$ defined in section~\ref{ss:ProjRes}
by extending the scalars from $K$ to $\Pi$. Similarly, we will
denote the map $\tau_2\otimes\id_\Pi$ by the symbol $\dot\tau_2$.

Thanks to the preprojective relations for the $\Lambda$-modules $S$ and
$R$ and to equations~\eqref{eq:ApproxCoh1} and \eqref{eq:ApproxCoh2},
the conditions~\eqref{eq:ApproxCoh3}--\eqref{eq:ApproxCoh5} are
fulfilled at step $k=0$.

Let us assume that $S^{(k)}$, $R^{(k)}$, $P^{(k)}$ and $Q^{(k)}$
have been constructed. We set
$$f_i=\sum_{\substack{a\in H\\s(a)=i}}\varepsilon(a)
\Bigl(S^{(k)}_{\overline a}S^{(k)}_a+Q^{(k)}_{\overline
a}P^{(k)}_a\Bigr)$$
and we regard $(f_i)$ as a formal series in $\alpha$, $\beta$,
$\overline\alpha$, $\overline\beta$ with coefficients in
$\bigoplus_{i\in I}\Hom_K(S_i,S_i)$ and valuation at least
$2k+2$. Then, thanks to~\eqref{eq:ApproxCoh4}, we have
$$\dot\tau_2\bigl((f_i),(\id_{S_i}\otimes e_0)\bigr)\in J^{2k+4};$$
in other words, the coefficient in $(f_i)$ of any monomial of
degree less than $2k+4$ is $\tau_2$-orthogonal to $\Hom_\Lambda(S,S)$.
We conclude that modulo $J^{2k+4}$, $(f_i)$ belongs $\im d^1_{S,S}$.
Therefore there exists
$$\widetilde S\in\bigoplus_{a\in H}\Hom_K(S_{s(a)},S_{t(a)})
\otimes_Ke_0\Pi e_0$$
of valuation at least $2k+2$ such that
$d^1_{S,S}\bigl(\widetilde S\bigr)\equiv(f_i)\ \bmod J^{2k+4}$.
We then set $S^{(k+1)}=S^{(k)}-\widetilde S$, and the upper
left corner of \eqref{eq:ApproxCoh3} is satisfied at step $k+1$.
One similarly finds first $R^{(k+1)}$, and next $P^{(k+1)}$ and
$Q^{(k+1)}$, that satisfy~\eqref{eq:ApproxCoh3} at step $k+1$.
However, the elements $P^{(k+1)}$ and $Q^{(k+1)}$ obtained in
this way are not the final ones, since they do not yet satisfy
\eqref{eq:ApproxCoh4} and~\eqref{eq:ApproxCoh5}.

The left-hand sides of \eqref{eq:ApproxCoh4}
and~\eqref{eq:ApproxCoh5} are the components of
$$D=\biggl(\dot\tau_1\Bigl(S^{(k+1)},S^{(k+1)}\Bigr)+
\dot\tau_1\Bigl(Q^{(k+1)},P^{(k+1)}\Bigr),\
\dot\tau_1\Bigl(R^{(k+1)},R^{(k+1)}\Bigr)+
\dot\tau_1\Bigl(P^{(k+1)},Q^{(k+1)}\Bigr)\biggr).$$
Thus $D\in e_0\Pi e_0\times e_1\Pi e_1$ and its two components
have valuations at least $2k+4$. Because of the cyclicity of the
trace and of the presence of the sign $\varepsilon(a)$ in
$$\dot\tau_1\Bigl(S^{(k+1)},S^{(k+1)}\Bigr)=\sum_{i\in I}
\Tr\left(\sum_{\substack{a\in H\\s(a)=i}}\varepsilon(a)
\Bigl(S^{(k+1)}_{\overline a}S^{(k+1)}_a\Bigr)\right),$$
the first term in the first component of $D$ is a
linear combination of elements of the kind $uv-vu$, with
$(u,v)\in\bigl(e_0\Pi e_0\bigr)^2$. Likewise, we see that the
contributions to $D$ of $R^{(k+1)}$, and of $P^{(k+1)}$ and $Q^{(k+1)}$,
are linear combination of elements of the second and third kind in
the statement of Lemma~\ref{le:CombiLem}. Therefore $D$ belongs
to the image of the map $C$, so we can find $(x,y)\in(e_1\Pi e_0)^2$
and $(\hat x,\hat y)\in(e_0\Pi e_1)^2$ of valuation at least $2k+3$
such that $D=C(x,y,\hat x,\hat y)$. If we now correct $P^{(k+1)}$ and
$Q^{(k+1)}$ by subtracting $\xi\otimes x+\eta\otimes y$ from the
former and $\hat\xi\otimes\hat x+\hat\eta\otimes\hat y$ from the
latter, then $D$ will vanish modulo $J^{2k+6}$, thanks
to~\eqref{eq:ApproxCoh2}, while the condition~\eqref{eq:ApproxCoh3}
remains satisfied, thanks to~\eqref{eq:ApproxCoh1}.
\end{proof}

\subsection{Construction of the Hall functors}
\label{ss:ConsHallFunc}
We are now in a position to define a fully faithful exact
functor $\mathscr H:\Pi\mmod\to\Lambda\mmod$.

Let $V$ be a $\Pi$-module. We define an $I$-graded vector space
$M=\bigoplus_{i\in I}M_i$ by
$$M_i=\bigl(S_i\otimes_KV_0\bigr)\oplus\bigl(R_i\otimes_KV_1\bigr).$$
Now let $a\in H$. Then $S^{(\infty)}_a$ is an element of
$\Hom_K(S_{s(a)},S_{t(a)})\otimes_Ke_0\Pi e_0$, hence can be
seen as a matrix with entries in $e_0\Pi e_0$. We can evaluate
these entries in the representation $V$; the result belongs to
$$\Hom_K(S_{s(a)},S_{t(a)})\otimes_K\Hom_K(V_0,V_0)=
\Hom_K(S_{s(a)}\otimes_KV_0,S_{t(a)}\otimes_KV_0).$$
Evaluating $R^{(\infty)}_a$, $P^{(\infty)}_a$ and $Q^{(\infty)}_a$
in a similar way, we map
$$\begin{pmatrix}S^{(\infty)}_a&Q^{(\infty)}_a\\
P^{(\infty)}_a&R^{(\infty)}_a\end{pmatrix}$$
to an element in $\Hom_K(M_{s(a)},M_{t(a)})$. The conditions
imposed in Lemma~\ref{le:ApproxCoh} assert that we then get
a $\Lambda$-module $M$.

Since we have worked with a universal formula (the same for
all $\Pi$-modules $V$), the assignment $V\mapsto M$ defines
a functor $\mathscr H$, which moreover is exact.
Let $\langle S,R\rangle$ be the smallest abelian, closed
under extensions, subcategory of $\Lambda\mmod$ that contains
the isomorphism classes of $S$ and~$R$.

\begin{theorem}
\label{th:HallFunc}
The functor $\mathscr H$ induces an equivalence of categories
between $\Pi\mmod$ and $\langle S,R\rangle$.
\end{theorem}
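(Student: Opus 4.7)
The plan is as follows. First I would identify the images of the two simple $\Pi$-modules. Let $V(0)$ (respectively $V(1)$) denote the simple $\Pi$-module at the vertex $0$ (resp.\ $1$). Taking $V=V(i)$ in the construction, only the leading term $\dot S_a$ (resp.\ $\dot R_a$) survives, because $e_0$ and $e_1$ act trivially off the diagonal and $P^{(\infty)},Q^{(\infty)}$ start with monomials in $J$. Hence $\mathscr H(V(0))\cong S$ and $\mathscr H(V(1))\cong R$. Since $\mathscr H$ is exact and every object of $\Pi\mmod$ is built from the two simples by iterated extensions, the essential image of $\mathscr H$ is contained in $\langle S,R\rangle$.

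Second, I would prove fully faithfulness by showing that for all $V,W\in\Pi\mmod$ both natural maps
\[
\Hom_\Pi(V,W)\to\Hom_\Lambda(\mathscr H V,\mathscr H W),\qquad
\Ext^1_\Pi(V,W)\to\Ext^1_\Lambda(\mathscr H V,\mathscr H W)
\]
induced by $\mathscr H$ are isomorphisms, proceeding by induction on $\dim V+\dim W$. The base case $V,W\in\{V(0),V(1)\}$ reduces to four checks. The Hom side is immediate from \eqref{eq:DatumSR}. For $\Ext^1$, the rigidity assumptions handle the cases $(V(0),V(0))$ and $(V(1),V(1))$. The case $(V(1),V(0))$ is the heart of the matter: by construction $P^{(\infty)}\equiv\xi\otimes\alpha+\eta\otimes\beta\pmod{J^3}$, so the map $\Ext^1_\Pi(V(1),V(0))\to\Ext^1_\Lambda(S,R)$ sends the basis $\{\alpha,\beta\}$ of $\Ext^1_\Pi(V(1),V(0))$ to the basis $\{\xi,\eta\}$ of $\Ext^1_\Lambda(S,R)$ fixed in \eqref{eq:ApproxCoh1}, so is an isomorphism. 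The case $(V(0),V(1))$ is handled in the same way using $Q^{(\infty)}$ and the Calabi-Yau duality $\Ext^1_\Lambda(R,S)\cong\Ext^1_\Lambda(S,R)^*$, matched against the analogous duality for $\Pi$ (of type $\widetilde A_1$, hence also $2$-Calabi-Yau). The inductive step is a five-lemma argument: given a short exact sequence $0\to V'\to V\to V''\to 0$, apply $\mathscr H$ to obtain an honest short exact sequence in $\Lambda\mmod$ (exactness of $\mathscr H$), then compare the long exact sequences $\Hom_\Pi(?,W),\Ext^1_\Pi(?,W)$ and $\Hom_\Lambda(?,\mathscr H W),\Ext^1_\Lambda(?,\mathscr H W)$ via the natural transformation induced by~$\mathscr H$; the five-lemma and the inductive hypothesis promote isomorphisms at the outer four positions to an isomorphism at the middle one. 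Do the same in the second variable.

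Third, for essential surjectivity onto $\langle S,R\rangle$, I would argue by induction on the total length of a filtration by copies of $S$ and~$R$. If $M\in\langle S,R\rangle$ fits in a short exact sequence $0\to M'\to M\to M''\to 0$ with $M'\cong\mathscr H V'$ and $M''\cong\mathscr H V''$, then by the $\Ext^1$-isomorphism just established the class of this extension in $\Ext^1_\Lambda(M'',M')$ comes from a class in $\Ext^1_\Pi(V'',V')$, whence a short exact sequence $0\to V'\to V\to V''\to 0$ in $\Pi\mmod$ with $\mathscr H V\cong M$ (by exactness of $\mathscr H$ and the uniqueness of the middle term once the extension class is fixed).

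The main obstacle is verifying the $\Ext^1$-isomorphism on simples, that is, confirming that the Yoneda class of the extension $0\to S\to\mathscr H V\to R\to 0$ produced by a length-two $\Pi$-module~$V$ with class $\lambda\alpha+\mu\beta\in\Ext^1_\Pi(V(1),V(0))$ is exactly $\lambda\xi+\mu\eta\in\Ext^1_\Lambda(S,R)$. This amounts to unwinding the universal formula of Lemma~\ref{le:ApproxCoh} and comparing, term by term, the leading coefficients $P^{(\infty)}_a$ with the cocycle description of Yoneda $\Ext^1$ afforded by the resolution~\eqref{eq:GLSComp1}–\eqref{eq:GLSComp2}; this is where the careful normalization~\eqref{eq:ApproxCoh1}–\eqref{eq:ApproxCoh2} of $\xi,\eta,\hat\xi,\hat\eta$ pays off. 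Once the base case is in hand, the rest of the proof is essentially formal.
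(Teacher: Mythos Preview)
Your overall plan is close to the paper's: both reduce to a check on the two simple $\Pi$-modules and then propagate. The paper packages the propagation via Lemma~11.7 in \cite{GabrielRoiter92}, whereas you do it by hand with the five lemma. That is fine in principle, but your induction has a genuine gap at $\Ext^2$.

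Concretely, to conclude that $\Ext^1_\Pi(V,W)\to\Ext^1_\Lambda(\mathscr H V,\mathscr H W)$ is an isomorphism via the five lemma applied to the long exact sequence coming from $0\to V'\to V\to V''\to 0$, the relevant five terms are
\[
\Hom(V',W)\to\Ext^1(V'',W)\to\Ext^1(V,W)\to\Ext^1(V',W)\to\Ext^2(V'',W),
\]
and you need the rightmost comparison map $\Ext^2_\Pi(V'',W)\to\Ext^2_\Lambda(\mathscr H V'',\mathscr H W)$ to be injective. Your inductive hypothesis says nothing about $\Ext^2$, and neither $\Pi$ nor $\Lambda$ has finite global dimension, so this map is not vacuously injective. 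Unwinding, what is actually required is injectivity on $\Ext^2$ between simples, i.e.\ that $\Ext^2_\Pi(W_p,W_q)\to\Ext^2_\Lambda(\mathscr H W_p,\mathscr H W_q)$ is injective for $p,q\in\{0,1\}$. This is precisely the $k=2$ check the paper performs: since $\tau_1(\xi,\hat\xi)=1$, the Yoneda product $\xi\hat\xi\in\Ext^2_\Lambda(R,R)$ is nonzero, so the map on the one-dimensional space $\Ext^2_\Pi(W_1,W_1)$ is nonzero, hence injective (and similarly for the other three pairs). Once you add this verification, your induction closes and the rest of your argument (essential surjectivity via the $\Ext^1$-isomorphism) goes through; alternatively, you can simply invoke \cite{GabrielRoiter92}, Lemma~11.7, which encodes exactly this reduction.
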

\begin{proof}
The category $\langle S,R\rangle$ has only (up to isomorphism)
two simple objects, namely $S$ and $R$, for these latter are
orthogonal bricks. In view of \cite{GabrielRoiter92}, Lemma~11.7,
it thus suffices to show that for any simple $\Pi$-modules $L$
and $L'$, the induced homomorphism $\Ext^k_\Pi(L,L')\to
\Ext^k_{\langle S,R\rangle}(\mathscr H(L),\mathscr H(L'))$ is
bijective for $k\in\{0,1\}$ and injective for $k=2$. We can here
replace the extension spaces in $\langle S,R\rangle$ by the
extension spaces in $\Lambda\mmod$: this does not change the
$\Ext^0$ nor the $\Ext^1$, for $\langle S,R\rangle$ is full and
closed under extensions; and if the injectivity condition holds
for $\Ext^2_{\Lambda}$, it will a fortiori holds for
$\Ext^2_{\langle S,R\rangle}$.

Let us call $W_0$ and $W_1$ the two simple $\Pi$-modules,
concentrated at vertices $0$ and $1$ respectively; then
$\mathscr H(W_0)=S$ and $\mathscr H(W_1)=R$. Obviously,
$$\End(W_0)=\End(W_1)=K\qquad\text{and}\qquad
\Hom(W_0,W_1)=\Hom(W_1,W_0)=0,$$
so the condition is fulfilled for $k=0$.

The $\Pi$-modules $T_\alpha$ and $T_\beta$ with dimension-vector
$(1,1)$ obtained by letting the arrows of $\Pi$ act by
$$(\alpha,\beta,\overline\alpha,\overline\beta)\mapsto(1,0,0,0)
\qquad\text{and}\qquad
(\alpha,\beta,\overline\alpha,\overline\beta)\mapsto(0,1,0,0)$$
are extensions of $W_0$ by $W_1$.
We denote their extension classes in $\Ext^1_\Pi(W_0,W_1)$ by
$\alpha$ and $\beta$, respectively. The extension classes of
$\mathscr H(T_\alpha)$ and $\mathscr H(T_\beta)$ are $\xi$ and
$\eta$. Thus, the induced homomorphism
$\Ext^1_\Pi(W_0,W_1)\to\Ext^1_\Lambda(S,R)$ maps the basis
$(\alpha,\beta)$ of the first space to the basis $(\xi,\eta)$
of the second space; it is therefore bijective. We check in a
similar way the other cases for $k=1$.

The equality $\tau_1(\xi,\hat\xi)=1$ implies that the Yoneda
product $\xi\hat\xi\in\Ext^2_\Lambda(R,R)$ does not vanish.
The induced homomorphism $\Ext^2_\Pi(W_1,W_1)\to\Ext^2_\Lambda(R,R)$
maps $\alpha\overline\alpha$ to $\xi\hat\xi$, so it cannot be zero. It
is thus injective, for $\Ext^2_\Pi(W_1,W_1)$ is one dimensional.
The other cases for $k=2$ are treated in like manner.
\end{proof}

We here note that a proof for Lemma~11.7 in~\cite{GabrielRoiter92}
can be found in~\cite{Kimura07}, Proposition~3.4.3.

\subsection{Irreducible components}
\label{ss:HFIrrComp}
We now study the consequences of the existence of a Hall functor
at the level of irreducible components of the nilpotent varieties.

Let $\mu=(\mu_0,\mu_1)$ be a dimension-vector for $\Pi$ and set
$\nu=\mu_0\dimvec S+\mu_1\dimvec R$. We denote by $\Pi(\mu)$
the nilpotent variety for $\Pi$ and by
$\Lambda_{\langle S,R\rangle}(\nu)$ the set of all points in
$\Lambda(\nu)$ that belong to $\langle S,R\rangle$. In addition,
we define $\Omega(\mu)$ to be the set of all triples $(V,M,f)$
such that $V\in\Pi(\mu)$, $M\in\Lambda(\nu)$ and
$f:\mathscr H(V)\to M$ is an isomorphism of $\Lambda$-modules.
We can then form the diagram
\begin{equation}
\label{eq:HFIrrComp}
\Pi(\mu)\xleftarrow p\Omega(\mu)\xrightarrow q\Lambda(\nu)
\end{equation}
in which $p$ and $q$ are the first and second projection.
Obviously, $p$ is a principal $G(\nu)$-bundle, the image of
$q$ is $\Lambda_{\langle S,R\rangle}(\nu)$, and each non-empty
fiber of $q$ is isomorphic to $G(\mu)$.

\begin{proposition}
\label{pr:HFIrrComp}
The subset $\Lambda_{\langle S,R\rangle}(\nu)$ is constructible
and all its irreducible components have full dimension in
$\Lambda(\nu)$. The diagram \eqref{eq:HFIrrComp} induces a
bijection between the irreducible components of $\Pi(\mu)$ and
the irreducible components of $\Lambda(\nu)$ whose general point
belongs to $\Lambda_{\langle S,R\rangle}(\nu)$.
\end{proposition}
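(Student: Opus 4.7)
The plan is to mimic the dimension-count strategy used in the proof of Theorem~\ref{th:TorIrrComp}, replacing the torsion-pair diagram by \eqref{eq:HFIrrComp}. The crucial numerical input is the equality $(\nu,\nu)_\Lambda=(\mu,\mu)_\Pi$ between the symmetric bilinear forms in types $Q$ and $\widetilde A_1$. To establish it I would apply Crawley-Boevey's formula \eqref{eq:CrawleyBoeveyForm} under the hypotheses \eqref{eq:DatumSR}, obtaining $(\dimvec S,\dimvec S)=(\dimvec R,\dimvec R)=2$ and $(\dimvec S,\dimvec R)=-2$; expanding $\nu=\mu_0\dimvec S+\mu_1\dimvec R$ then gives $(\nu,\nu)_\Lambda=2\mu_0^2-4\mu_0\mu_1+2\mu_1^2$, which is precisely $(\mu,\mu)_\Pi$ because the two vertices of the $\widetilde A_1$ diagram are joined by two edges.

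Next I would unwind the diagram \eqref{eq:HFIrrComp}. A point of $\Omega(\mu)$ above $V\in\Pi(\mu)$ amounts to an $I$-graded linear isomorphism $\mathscr H(V)\xrightarrow{\sim}K^{\oplus\nu}$ (the transported module structure being the resulting point of $\Lambda(\nu)$), so $p$ is a principal $G(\nu)$-bundle and \eqref{eq:DimNilpVar} gives $\dim\Omega(\mu)=\dim G(\mu)+\dim G(\nu)-(\mu,\mu)_\Pi/2$. Since the non-empty fibres of $q$ are isomorphic to $G(\mu)$, the fibre-dimension theorem forces every irreducible component of $q(\Omega(\mu))=\Lambda_{\langle S,R\rangle}(\nu)$ to have dimension $\dim G(\nu)-(\nu,\nu)_\Lambda/2=\dim\Lambda(\nu)$. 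Constructibility then follows from Chevalley's theorem once one checks that $\Omega(\mu)$ is genuinely an algebraic variety, which boils down to observing that on a nilpotent $\Pi$-module of fixed dimension-vector only finitely many terms of the series $S^{(\infty)}$, $R^{(\infty)}$, $P^{(\infty)}$, $Q^{(\infty)}$ act non-trivially, so that $\mathscr H(V)$ depends polynomially on $V$.

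The bijection then falls out in the same way as in Theorem~\ref{th:TorIrrComp}. Since $p$ is a principal bundle with irreducible fibres, $Y\mapsto p^{-1}(Y)$ is a bijection $\Irr\Pi(\mu)\to\Irr\Omega(\mu)$; composing with $\overline{q(\cdot)}$, the dimension count above shows that the output is an irreducible component of $\Lambda(\nu)$ whose general point lies in $\Lambda_{\langle S,R\rangle}(\nu)$. Conversely, given such a component $Z$, the intersection $Z\cap\Lambda_{\langle S,R\rangle}(\nu)$ is a dense constructible subset of $Z$, and $q^{-1}$ of it is irreducible, so it lies in a unique component of $\Omega(\mu)$, yielding the inverse assignment. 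The point requiring the most care is precisely this last irreducibility: it rests on producing a suitable local section of $q$ over a dense open subset of $Z$, i.e.\ on showing that $q$ is locally trivial with fibre $G(\mu)$ there, by the same kind of construction as in Lemma~\ref{le:TorFiber}. Once this local-triviality step is in place, the two maps are mutual inverses and the proof concludes.
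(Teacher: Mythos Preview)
Your approach is correct and matches the paper's own proof: the key step is the identity $(\mu,\mu)_\Pi=(\nu,\nu)_\Lambda$ obtained from Crawley-Boevey's formula \eqref{eq:CrawleyBoeveyForm} and the hypotheses \eqref{eq:DatumSR}, which via \eqref{eq:DimNilpVar} gives the needed dimension equality, after which the paper simply invokes ``general results of algebraic geometry, similar to those used in section~\ref{ss:TorLambda}.'' Your elaboration of that last phrase (Chevalley for constructibility, the principal-bundle structure of $p$ for the bijection $\Irr\Pi(\mu)\to\Irr\Omega(\mu)$, and the constant-fibre structure of $q$ for the passage to $\Irr\Lambda(\nu)$) is exactly what the paper has in mind, and your caution about local triviality of $q$ is more than the paper itself spells out but is a reasonable point to verify.
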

\begin{proof}
Combining the conditions~\eqref{eq:DatumSR} with
equation~\eqref{eq:CrawleyBoeveyForm}, we get
$$(\mu,\mu)=(\nu,\nu),$$
where $(\,,\,)$ in the left-hand side is the bilinear form on
$\mathbf K(\Pi\mmod)$ and $(\,,\,)$ in the right-hand side is the
bilinear form on $\mathbf K(\Lambda\mmod)\cong\mathbb ZI$. In
view of~\eqref{eq:DimNilpVar}, this translates to
$$\dim G(\mu)-\dim\Pi(\mu)=\dim G(\nu)-\dim\Lambda(\nu).$$
The proposition now results from general results of algebraic
geometry, similar to those used in section~\ref{ss:TorLambda}.
\end{proof}

\section{Preprojective algebras of affine type}
\label{se:PrepAlgAff}
From now on, $\mathfrak g$ is of symmetric affine type. We will apply
the theory we have been developing to finally obtain our affine MV
polytopes. We will use the notation concerning affine roots systems
discussed in section~\ref{ss:SetupAffTyp}.

\subsection{Torsion pairs associated to biconvex subsets}
\label{ss:TorBiconv}
From Corollary~\ref{co:SuppFunTitsFan}, it follows that the HN
polytope $P=\Pol(T)$ of a finite dimensional $\Lambda$-module
$T$ is GGMS. The construction in section~\ref{ss:GGMSPol} then
assigns a vertex $\mu_P(A)$ of $P$ to each biconvex subset
$A\subseteq\Phi_+$. As noticed after Corollary~\ref{co:FacesHN},
$\mu_P(A)$ is the dimension vector of a unique submodule $T_A$
of $T$. Our aim in this section is to construct a torsion pair
$(\mathscr T(A),\mathscr F(A))$ with respect to which $T_A$ is
the torsion submodule of $T$, for all $\Lambda$-module $T$.

As in section~\ref{ss:BiconvSetsGenType}, each $w\in W$ gives rise
to two biconvex subsets: a finite one, namely $A_w=N_{w^{-1}}$,
and a cofinite one, $A^w=\Phi_+\setminus N_w$
(see Example~\ref{ex:BiconvexSets}~\ref{it:BSb}). We set
$$(\mathscr T(A_w),\mathscr F(A_w))=(\mathscr T_w,\mathscr F_w)
\quad\text{and}\quad
(\mathscr T(A^w),\mathscr F(A^w))=(\mathscr T^w,\mathscr F^w).$$
In this fashion, we associate a torsion pair in $\Lambda\mmod$ to
each finite or cofinite biconvex set. (By \eqref{eq:DynkinTopBot},
this construction is unambiguous in the finite type case, where
biconvex sets are both finite and cofinite.)

Using Lemma~\ref{le:CombiCox}, \eqref{eq:TorsTheoMono1} and
Proposition~\ref{pr:TorsTheoMono2}, we deduce the following
monotonicity property: if $A\subseteq B$, then $(\mathscr
T(A),\mathscr F(A))\preccurlyeq(\mathscr T(B),\mathscr F(B))$.
We also note the following interpretation of
Example~\ref{ex:TorsTheo}~\ref{it:TTc}: if $B=\Phi_+\setminus A$,
then $(\mathscr T(B),\mathscr F(B))=(\mathscr F(A)^*,\mathscr T(A)^*)$.

By Proposition~\ref{pr:ApproxBiconv}, every biconvex subset
$A$ is either the increasing union of finite biconvex subsets or
the decreasing intersection of cofinite biconvex subsets. In the
former case, we set
$$\mathscr T(A)=\bigcup_{\substack{\text{$B$ finite biconvex}\\[1pt]
B\subseteq A}}\mathscr T(B)\qquad\text{and}\qquad
\mathscr F(A)=\bigcap_{\substack{\text{$B$ finite biconvex}\\[1pt]
B\subseteq A}}\mathscr F(B).$$
Then $(\mathscr T(A),\mathscr F(A))$ is a torsion pair. The
axiom~(T1) is indeed easily verified. To check~(T2), we take
$T\in\Lambda\mmod$. Each finite biconvex subset $B\subseteq A$
provides a submodule $T_B\subseteq T$ such that
$(T_B,T/T_B)\in\mathscr T(B)\times\mathscr F(B)$. The monotonicity
property implies that the map $B\mapsto T_B$ is non-decreasing, so
for dimension reasons, the family $(T_B)$ has an maximal element, say
$T_{B_0}$. Then for any $B_1\supseteq B_0$, we have $T_{B_1}=T_{B_0}$,
which shows that $(T_{B_0},T/T_{B_0})\in\mathscr T({B_1})\times\mathscr
F({B_1})$. We conclude that $(T_{B_0},T/T_{B_0})\in\mathscr
T(A)\times\mathscr F(A)$, as desired.

When $A$ is the intersection of cofinite biconvex subsets, we set
$$\mathscr T(A)=\bigcap_{\substack{\text{$B$ finite biconvex}\\[1pt]
B\supseteq A}}\mathscr T(B)\qquad\text{and}\qquad
\mathscr F(A)=\bigcup_{\substack{\text{$B$ finite biconvex}\\[1pt]
B\supseteq A}}\mathscr F(B).$$
We then have a torsion pair $(\mathscr T(A),\mathscr F(A))$
associated in a monotonous way to each biconvex subset~$A$, and
for $B=\Phi_+\setminus A$, we have $(\mathscr T(B),\mathscr
F(B))=(\mathscr F(A)^*,\mathscr T(A)^*)$.

\begin{proposition}
\label{pr:TorsionBiconv}
Let $T$ be a finite-dimensional $\Lambda$-module $T$, let
$P$ be its HN polytope, let $A$ be a biconvex subset, and
let $T_A$ be the torsion submodule of $T$ with respect to
the torsion pair $(\mathscr T(A),\mathscr F(A))$. Then
$\mu_P(A)=\dimvec T_A$.
\end{proposition}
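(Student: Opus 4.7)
The plan is to reduce the proposition to the finite and cofinite biconvex cases, where it falls out directly from the tilting-theoretic results of Section~\ref{se:TiltTheoPrepAlg}, and then to bootstrap to arbitrary biconvex subsets using the approximation results of Section~\ref{ss:BiconvSetsAffType} together with the finite-dimensionality of~$T$.

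First I would treat the cofinite case $A=A^w$ for $w\in W$. By construction, $(\mathscr T(A^w),\mathscr F(A^w))=(\mathscr T^w,\mathscr F^w)$, so $T_A=T^w$. Applying Theorem~\ref{th:TiltPIR}~\ref{it:TPIRb} with $J=\varnothing$ identifies this torsion pair with $(\overline{\mathscr I}_{w\theta},\mathscr P_{w\theta})$ for every $\theta\in C_0$, and Remark~\ref{rk:ComTPIR}~\ref{it:CTPIRa} gives $\dimvec T^w=\dimvec T_\eta^{\max}$ for every $\eta\in wC_0$. Since $\eta$ lies in an open chamber of the Tits fan, $T_\eta^{\min}=T_\eta^{\max}$, and hence the face $P_\eta$ is the single vertex $\dimvec T^w$. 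On the other hand, Proposition~\ref{pr:ATheta} (with $J=\varnothing$) gives $A^w=A_{w\theta}^{\max}$ for $\theta\in C_0$, so $wC_0$ is exactly the open cone of $\mathscr W$ on which $A_\eta^{\max}=A^w$; by the definition of $\mu_P$ on cofinite biconvex subsets, $\mu_P(A^w)=\dimvec T^w$. The finite case $A=A_w$ is handled symmetrically: Remark~\ref{rk:ComTPIR}~\ref{it:CTPIRb} identifies $(\mathscr T_w,\mathscr F_w)$ with $(\mathscr I_{-w^{-1}\theta},\overline{\mathscr P}_{-w^{-1}\theta})$ and Proposition~\ref{pr:ATheta} gives $A_w=A_{-w^{-1}\theta}^{\min}$, so the cone $-w^{-1}C_0$ witnesses both that $\mu_P(A_w)=\dimvec T_{-w^{-1}\theta}^{\min}$ and that $T_A=T_w=T_{-w^{-1}\theta}^{\min}$.

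Next, I would treat a general biconvex subset $A$ with $\delta\notin A$. By Proposition~\ref{pr:ApproxBiconv}, $A$ is the union of an increasing sequence of finite biconvex subsets. Revisiting the construction of the torsion pair $(\mathscr T(A),\mathscr F(A))$ made just before the proposition, the map $B\mapsto T_B$, indexed by finite biconvex $B\subseteq A$, is non-decreasing; since $T$ is finite-dimensional, it attains a maximum at some $B_0$, and then $T_A=T_{B_0}$ and $T_B=T_{B_0}$ for every finite biconvex $B$ with $B_0\subseteq B\subseteq A$. Now pick a finite biconvex $B_1$ with $B_0\cup(A\cap E_P)\subseteq B_1\subseteq A$, which exists because $E_P$ is finite and Proposition~\ref{pr:ApproxBiconv} guarantees that any finite subset of $A$ is contained in some finite biconvex subset of $A$. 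By definition, $\mu_P(A)=\mu_P(B_1)$; by the finite case already proved, $\mu_P(B_1)=\dimvec T_{B_1}$; and by the stabilization, $\dimvec T_{B_1}=\dimvec T_{B_0}=\dimvec T_A$. The dual situation $\delta\in A$ is dealt with in the same way via cofinite approximations, or, more economically, by applying the just-established case to $T^*$ and the biconvex set $\Phi_+\setminus A$, using the compatibility $(\mathscr T(\Phi_+\setminus A),\mathscr F(\Phi_+\setminus A))=(\mathscr F(A)^*,\mathscr T(A)^*)$ and the fact, recorded in Remark~\ref{rk:PolDual}, that $\Pol(T^*)$ is the image of $\Pol(T)$ under $x\mapsto\dimvec T-x$.

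The only real point to watch is the compatibility of the two a priori different approximation schemes used to define $\mu_P(A)$ and $T_A$: the former requires a finite biconvex $B$ with $A\cap E_P\subseteq B\subseteq A$, the latter is defined through a finite biconvex $B_0$ at which $T_B$ stabilizes. The key observation in the argument above is that a single finite biconvex $B_1\subseteq A$ can be chosen to witness both conditions simultaneously, and this is precisely what the filtering property from Proposition~\ref{pr:ApproxBiconv} provides.
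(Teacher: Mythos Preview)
Your proof is correct and follows essentially the same route as the paper: first establish the finite and cofinite cases via Theorem~\ref{th:TiltPIR} and Remark~\ref{rk:ComTPIR}, then pass to general biconvex subsets by approximation. The paper dispatches the general case in a single phrase (``monotonous approximation''), whereas you spell out the point that one finite biconvex $B_1$ can simultaneously witness $A\cap E_P\subseteq B_1$ and $B_0\subseteq B_1$; this is exactly the content the paper leaves implicit.
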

\begin{proof}
Suppose first that $A$ is cofinite. Let us choose $\theta\in C_0$
and let $w$ be the unique element in $W$ such that $A=A^w$
(Proposition~\ref{pr:ATheta}). We have $T^w=T_{w\theta}^{\min}
=T_{w\theta}^{\max}$ by Theorem~\ref{th:TiltPIR}. Therefore
the face $P_{w\theta}$ of $P$ is reduced to the vertex
$\dimvec T^w$. The definition in section~\ref{ss:GGMSPol}
then leads to $\mu_P(A)=\dimvec T^w$, which is the desired
equality in our case. The case where $A$ is finite is proved
in a similar fashion, using Remark~\ref{rk:ComTPIR}~\ref{it:CTPIRb}.
The case of a general $A$ then follows by monotonous
approximation.
\end{proof}

Recall from Example~\ref{ex:BiconvexSets}~\ref{it:BSc} that any
$\theta\in(\mathbb RI)^*$ defines two biconvex subsets
$$A_\theta^{\min}=\{\alpha\in\Phi_+\mid\langle\theta,
\alpha\rangle>0\}\quad\text{and}\quad A_\theta^{\max}=
\{\alpha\in\Phi_+\mid\langle\theta,\alpha\rangle\geq0\}.$$

\begin{proposition}
\label{pr:AThetaTor}
For each $\theta\in(\mathbb RI)^*$, we have
$$(\mathscr T(A_\theta^{\min}),\mathscr F(A_\theta^{\min}))=
(\mathscr I_\theta,\overline{\mathscr P}_\theta)\quad\text{and}\quad
(\mathscr T(A_\theta^{\max}),\mathscr F(A_\theta^{\max}))=
(\overline{\mathscr I}_\theta,\mathscr P_\theta).$$
\end{proposition}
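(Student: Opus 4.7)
My plan is to prove the first equality $(\mathscr T(A_\theta^{\min}), \mathscr F(A_\theta^{\min})) = (\mathscr I_\theta, \overline{\mathscr P}_\theta)$ for every $\theta \in (\mathbb R I)^*$; the second equality will then drop out by applying the first to $-\theta$, since $A_{-\theta}^{\min} = \Phi_+ \setminus A_\theta^{\max}$ and the complementation rule $(\mathscr T(\Phi_+ \setminus A), \mathscr F(\Phi_+ \setminus A)) = (\mathscr F(A)^*, \mathscr T(A)^*)$ noted in section~\ref{ss:TorBiconv}, combined with the dualities in Remark~\ref{rk:PolDual}, converts the $A_{-\theta}^{\min}$ statement into the $A_\theta^{\max}$ one. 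Because a torsion pair is determined by its torsion class, it will suffice to prove $\mathscr T(A_\theta^{\min}) = \mathscr I_\theta$. In affine type $(\mathbb R I)^* = C_T \cup (-C_T) \cup \mathfrak t$, so I will proceed case by case.

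The cases $\theta \in \pm C_T$ will reduce directly to the machinery of section~\ref{se:TiltTheoPrepAlg}. When $\theta \in C_T \setminus \{0\}$, I will write $\theta = u\phi$ with $\phi \in F_J$, $J \subsetneq I$, and $u$ being $J$-reduced on the right; since every proper subdiagram of a symmetric affine Dynkin diagram is of finite type, $W_J$ is finite, so Proposition~\ref{pr:ATheta} identifies $A_\theta^{\min}$ with the cofinite biconvex set $A^{uw_J}$ and Theorem~\ref{th:TiltPIR}\ref{it:TPIRc} yields the desired identification of torsion pairs. When $\theta \in -C_T \setminus \{0\}$, the same proposition applied to $-\theta = u\phi$ gives $A_\theta^{\min} = \Phi_+ \setminus A^u = A_{u^{-1}}$, and Remark~\ref{rk:ComTPIR}\ref{it:CTPIRb} identifies $(\mathscr T_{u^{-1}}, \mathscr F_{u^{-1}})$ with $(\mathscr I_\theta, \overline{\mathscr P}_\theta)$. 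The trivial case $\theta = 0$ is handled directly, as $A_0^{\min} = \varnothing = A_e$ and both torsion pairs degenerate to $(\{0\}, \Lambda\mmod)$.

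The hard case will be $\theta \in \mathfrak t \setminus \{0\}$, where $A_\theta^{\min}$ is neither finite nor cofinite and one must exploit the definition of $(\mathscr T(A),\mathscr F(A))$ via approximation. For $\mathscr T(A_\theta^{\min}) \subseteq \mathscr I_\theta$ I will unwind $\mathscr T(A_\theta^{\min}) = \bigcup_B \mathscr T_w$ over finite biconvex $B = A_w \subseteq A_\theta^{\min}$; Theorem~\ref{th:GLSFiltDual}\ref{it:GLSFDb} equips every object in $\mathscr T_w$ with a filtration whose layers have dimension vectors in $A_w$, and since $\mathscr T_w$ is closed under quotients, the same kind of filtration is available on every nonzero quotient, which therefore pairs strictly positively with $\theta$. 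The main obstacle will be the reverse inclusion, which I will settle by perturbation. Given $U \in \mathscr I_\theta$, fix $\eta \in C_0$ and consider $\theta_\epsilon = \theta - \epsilon\eta$. Since $\langle \eta,\delta\rangle > 0$ this $\theta_\epsilon$ lies in $-C_T$; the dimension vectors of nonzero quotients of $U$ form a finite set $\Sigma \subseteq \mathbb N I$ on which $\theta$ is strictly positive, and uniform shrinking of $\epsilon$ over $\Sigma$ preserves this positivity for $\theta_\epsilon$, so $U \in \mathscr I_{\theta_\epsilon}$. The already-treated $-C_T$ case then gives $U \in \mathscr T(A_{\theta_\epsilon}^{\min})$, and dominance of $\eta$ yields $A_{\theta_\epsilon}^{\min} \subseteq A_\theta^{\min}$, placing $U$ in $\mathscr T(A_\theta^{\min})$ as required; the only delicate point, arranging a uniform $\epsilon$, is routine given the finiteness of $\Sigma$.
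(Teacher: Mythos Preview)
Your proof is correct but follows a genuinely different route from the paper's. The paper gives a uniform argument, valid for all $\theta$ at once, by comparing the torsion-free classes: for a module $T$ with HN polytope $P$, one has $T\in\overline{\mathscr P}_\theta$ iff $\psi_P(\theta)=0$, which by Proposition~\ref{pr:SuppGGMS} amounts to $\langle\theta,\mu_P(A_\theta^{\min})\rangle=0$; since $\mu_P(A_\theta^{\min})$ is a nonnegative combination of roots on which $\theta$ is strictly positive (Lemma~\ref{le:DiffVertices}), this forces $\mu_P(A_\theta^{\min})=0$, which by Proposition~\ref{pr:TorsionBiconv} is exactly $T\in\mathscr F(A_\theta^{\min})$. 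The second equality then follows by $*$-duality. In contrast, you split according to whether $\theta$ lies in $C_T$, $-C_T$, or $\mathfrak t$, dispatching the first two cases by Proposition~\ref{pr:ATheta} together with Theorem~\ref{th:TiltPIR}\ref{it:TPIRc} and Remark~\ref{rk:ComTPIR}\ref{it:CTPIRb}, and handling the delicate $\mathfrak t$ case by a perturbation $\theta_\epsilon=\theta-\epsilon\eta$ into $-C_T$. Your approach is more hands-on and avoids the GGMS polytope machinery of sections~\ref{ss:GGMSPol} and~\ref{ss:TorBiconv} (in particular Proposition~\ref{pr:TorsionBiconv}); the paper's approach is shorter and case-free, but leans on that polytope infrastructure, which is precisely what it was built for.
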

\begin{proof}
Let $\theta\in(\mathbb RI)^*$.

Let $T$ be a finite dimensional $\Lambda$-module and let $P$ be its
HN polytope. By definition, $T\in\overline{\mathscr P}_\theta$ if
and only if $T_\theta^{\min}=0$. By Proposition~\ref{pr:TminTmax},
this is equivalent to the equation $\psi_P(\theta)=0$, which holds
if and only if $\langle\theta,\mu_P(A_\theta^{\min})\rangle=0$ by
Proposition~\ref{pr:SuppGGMS}. Since $\mu_P(A_\theta^{\min})$ is
a non-negative linear combination of roots $\alpha$ such that
$\langle\theta,\alpha\rangle>0$ (Lemma~\ref{le:DiffVertices}),
this equation amounts to $\mu_P(A_\theta^{\min})=0$.
By Proposition~\ref{pr:TorsionBiconv}, this holds if and only if
$T\in\mathscr F(A_\theta^{\min})$.

Therefore $\overline{\mathscr P}_\theta=\mathscr F(A_\theta^{\min})$,
whence the first equality. The second follows by $*$-duality.
\end{proof}

The biconvex subsets $A_\theta^{\min}$ and $A_\theta^{\max}$
only depend on the face of $\mathscr W$ to which $\theta$
belongs. By Proposition~\ref{pr:AThetaTor}, the same is true for
the torsion pairs $(\mathscr I_\theta,\overline{\mathscr P}_\theta)$
and $(\overline{\mathscr I}_\theta,\mathscr P_\theta)$, and also
for the category $\mathscr R_\theta=\overline{\mathscr I}_\theta\cap
\overline{\mathscr P}_\theta$. For each face $F$ of $\mathscr W$,
we may thus define categories $\mathscr I_F$, $\overline{\mathscr I}_F$,
etc.\ so that $\mathscr I_F=\mathscr I_\theta$,
$\overline{\mathscr I}_F=\overline{\mathscr I}_\theta$, etc.\
for any $\theta\in F$. Further, we denote by $\mathfrak I_F$,
$\mathfrak R_F$, and $\mathfrak P_F$ the subsets of $\mathfrak B$
consisting of irreducible components whose general point belong
to the categories $\mathscr I_F$, $\mathscr R_F$, and~$\mathscr P_F$,
respectively.

\begin{lemma}
\label{le:ContTPBiconv}
Let $T$ be a finite dimensional $\Lambda$-module and let
$A\subseteq B$ be two biconvex subsets. Denote by $T_A$ and
$T_B$ the torsion submodules of $T$ with respect to the torsion
pairs $(\mathscr T(A), \mathscr F(A))$ and
$(\mathscr T(B),\mathscr F(B))$. If $\dim T<\height\alpha$
for each $\alpha\in B\setminus A$, then $T_A=T_B$.
\end{lemma}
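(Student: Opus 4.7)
The plan is to combine the geometric description of the torsion submodules via the HN polytope with the additivity of height to derive a dimension contradiction if $T_A \neq T_B$.

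First I would observe that by the monotonicity $(\mathscr{T}(A),\mathscr{F}(A)) \preccurlyeq (\mathscr{T}(B),\mathscr{F}(B))$ established just before the lemma (using Lemma~\ref{le:CombiCox}, \eqref{eq:TorsTheoMono1}, and Proposition~\ref{pr:TorsTheoMono2}, plus monotonous approximation), the inclusion $T_A \subseteq T_B$ is automatic: $T_A$ lies in $\mathscr{T}(A) \subseteq \mathscr{T}(B)$ and $T_B$ is the largest such subobject of $T$. So only the reverse inclusion requires work, and it suffices to show $\dimvec T_A = \dimvec T_B$.

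Next I would appeal to Proposition~\ref{pr:TorsionBiconv} to identify $\dimvec T_A = \mu_P(A)$ and $\dimvec T_B = \mu_P(B)$, where $P = \Pol(T)$. Since $A \subseteq B$, we have $B \setminus A = A \Delta B$, and the extension of Lemma~\ref{le:DiffVertices} to arbitrary biconvex subsets (valid because $P$ is GGMS, by Corollary~\ref{co:SuppFunTitsFan}) gives nonnegative integers $(n_\alpha)_{\alpha \in B \setminus A}$ with
$$\dimvec T_B - \dimvec T_A \;=\; \mu_P(B) - \mu_P(A) \;=\; \sum_{\alpha \in B\setminus A} n_\alpha\,\alpha.$$

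Applying the linear form $\height$ to both sides and using that $T_B/T_A$ is a subquotient of $T$, the left side equals $\dim(T_B/T_A) \leq \dim T$, while the right side equals $\sum_{\alpha \in B \setminus A} n_\alpha \height(\alpha)$. Under the hypothesis $\dim T < \height \alpha$ for every $\alpha \in B \setminus A$, any nonzero $n_\alpha$ would contribute a summand strictly exceeding $\dim T$, a contradiction. Thus all $n_\alpha$ vanish, $\dimvec T_A = \dimvec T_B$, and combined with $T_A \subseteq T_B$ we conclude $T_A = T_B$.

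The argument is essentially routine once the right tools are in place; the main subtlety (which I do not expect to be a real obstacle) is simply to be sure that Lemma~\ref{le:DiffVertices} indeed applies in the generality of arbitrary biconvex $A \subseteq B$, which is exactly the extension recorded in the paragraph following its original statement.
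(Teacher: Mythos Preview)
Your proof is correct and follows essentially the same approach as the paper: identify the dimension-vectors of the torsion submodules with the vertices $\mu_P(A)$ and $\mu_P(B)$ via Proposition~\ref{pr:TorsionBiconv}, express their difference as a nonnegative combination of roots in $B\setminus A$ via Lemma~\ref{le:DiffVertices}, and then use the height bound to force this combination to be trivial. The paper's version is slightly terser (it does not spell out the inclusion $T_A\subseteq T_B$ separately), but the argument is the same.
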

\begin{proof}
Let $P$ be the HN polytope of $T$. By Proposition~\ref{pr:TorsionBiconv},
$\dimvec T_B/T_A=\mu_P(B)-\mu_P(A)$, whence
$\height(\mu_P(B)-\mu_P(A))=\dim T_B/T_A\leq\dim T$. On the other hand,
Lemma~\ref{le:DiffVertices} expresses the weight $\mu_P(B)-\mu_P(A)$
as a non-negative linear combination of roots in $B\setminus A$.
The last assumption in the statement forces this linear combination
to be trivial, which implies that $T_B/T_A=0$.
\end{proof}

\begin{proposition}
\label{pr:TPAdjBiconv}
Let $A$ and $B$ be two biconvex subsets and let
$\alpha\in\Phi_+^{\re}$. Assume that $B=A\sqcup\{\alpha\}$.
Then there is a rigid indecomposable $\Lambda$-module $L(A,B)$
of dimension-vector $\alpha$ such that
$\mathscr F(A)\cap\mathscr T(B)=\add L(A,B)$.
\end{proposition}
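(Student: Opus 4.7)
\emph{Proof plan.} Because $\alpha$ is a real root, Proposition~\ref{pr:ApproxBiconv} forces either $\delta\notin B$, in which case $A$ and $B$ are increasing unions of finite biconvex subsets, or $\delta\in A$, in which case they are decreasing intersections of cofinite biconvex subsets. The second scenario reduces to the first by passing to the complementary pair $(\Phi_+\setminus B,\Phi_+\setminus A)$, which is again adjacent with gap $\{\alpha\}$ and has $\delta$ absent from its larger element; the identities $\mathscr T(\Phi_+\setminus X)=\mathscr F(X)^*$ and $\mathscr F(\Phi_+\setminus X)=\mathscr T(X)^*$ recorded in section~\ref{ss:TorBiconv} let me then define $L(A,B)$ to be the $*$-dual of the module constructed in the first case. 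I therefore focus on the case $\delta\notin B$.

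I first handle the finite case. Writing $A=A_u$ and $B=A_{s_ju}$ with $\ell(s_ju)=\ell(u)+1$ yields $\alpha=u^{-1}\alpha_j$. Proposition~\ref{pr:ShiftF^w} (applied with its $u$ replaced by $s_j$ and its $v$ replaced by $u$) combined with Example~\ref{ex:TorsTheo}~\ref{it:TTa} supplies mutually inverse equivalences between $\add S_j$ and $\mathscr T_{s_ju}\cap\mathscr F_u=\mathscr F(A)\cap\mathscr T(B)$, with $\Hom_\Lambda(I_u,?)$ sending the former to the latter. I set
\[
L(A,B):=\Hom_\Lambda(I_u,S_j).
\]
As the image of the simple module $S_j$ under an equivalence, $L(A,B)$ is rigid and indecomposable. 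Example~\ref{ex:TorsTheo}~\ref{it:TTc} rewrites $L(A,B)$ as $(I_{u^{-1}}\otimes_\Lambda S_j)^*$, and since the hypothesis $\ell(s_ju)>\ell(u)$ is equivalent to $\ell(u^{-1}s_j)>\ell(u^{-1})$, Lemma~\ref{le:Layers} produces $\dimvec L(A,B)=u^{-1}\alpha_j=\alpha$.

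Next, I verify that the isomorphism class of $L(A',B')$ is independent of the chosen finite approximating pair $(A',B')\subseteq(A,B)$. For $A_{u_1}\subseteq A_{u_2}$, Lemma~\ref{le:CombiCox} yields $u_2=v^{-1}u_1$ with $\ell(u_2)=\ell(v)+\ell(u_1)$, and equating the two formulas for $\alpha$ gives $v\alpha_{j_2}=\alpha_{j_1}$. The positivity of $\alpha_{j_1}$ forces $\ell(vs_{j_2})>\ell(v)$, so $S_{j_2}\in\mathscr F_v$ by Lemma~\ref{le:Layers}, and the equivalence $I_v\otimes_\Lambda?\colon\mathscr F_v\to\mathscr T^v$ of Theorem~\ref{th:BrennerButler}~\ref{it:BBc} combined with Lemma~\ref{le:Layers} yields $I_v\otimes_\Lambda S_{j_2}$ as an indecomposable module of dimension vector $\alpha_{j_1}$, hence isomorphic to $S_{j_1}$. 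Combining the factorization $I_{u_2}\cong I_{v^{-1}}\otimes_\Lambda I_{u_1}$ from Theorem~\ref{th:BuanIyamaReitenScott}~\ref{it:BIRSa} with the duality $\Hom_\Lambda(I_{v^{-1}},S_{j_2})\cong(I_v\otimes_\Lambda S_{j_2})^*\cong S_{j_1}$ provided by Example~\ref{ex:TorsTheo}~\ref{it:TTc} gives
\[
L(A_{u_2},A_{s_{j_2}u_2})\cong\Hom_\Lambda(I_{u_1},\Hom_\Lambda(I_{v^{-1}},S_{j_2}))\cong\Hom_\Lambda(I_{u_1},S_{j_1})=L(A_{u_1},A_{s_{j_1}u_1}).
\]
Any two finite adjacent pairs inside $(A,B)$ are dominated by a third, obtained by applying Lemma~\ref{le:ApproxAdjBiconv} to $X=A_1'\cup A_2'\subseteq A$, so the isomorphism class $L(A,B)$ is well defined.

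Finally, I establish $\mathscr F(A)\cap\mathscr T(B)=\add L(A,B)$ in general. For $\supseteq$, membership $L(A,B)\in\mathscr T(B)$ is clear from any approximating pair, while membership in $\mathscr F(A)=\bigcap_{A'\text{ fin.\ biconv.}\subseteq A}\mathscr F(A')$ follows by applying Lemma~\ref{le:ApproxAdjBiconv} with $X=A'$ to obtain a finite adjacent pair $(A_1,B_1)$ with $A'\subseteq A_1\subseteq A$, so that monotonicity places $L(A,B)\cong L(A_1,B_1)\in\mathscr F(A_1)\subseteq\mathscr F(A')$. For $\subseteq$, any $T\in\mathscr F(A)\cap\mathscr T(B)$ lies in $\mathscr T(B_0)$ for some finite biconvex $B_0\subseteq B$; Lemma~\ref{le:ApproxAdjBiconv} applied to $X=B_0\cap A$ produces a finite adjacent pair $(A_1,B_1)$ with $B_0\subseteq B_1$ and $A_1\subseteq A$, whence $T\in\mathscr F(A_1)\cap\mathscr T(B_1)=\add L(A,B)$. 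The main obstacle is the coherence computation in the third paragraph: the uniformity of $L$ across approximating pairs rests crucially on the fortunate identity $v\alpha_{j_2}=\alpha_{j_1}$ together with Lemma~\ref{le:Layers}, which together pin down $I_v\otimes_\Lambda S_{j_2}$ as the simple module $S_{j_1}$.
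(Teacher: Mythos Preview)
Your proof is correct and follows essentially the same approach as the paper: construct $L(A,B)=\Hom_\Lambda(I_u,S_j)$ in the finite case, show independence of the approximating pair via the observation that a module of dimension-vector $\alpha_{j_1}$ must be $S_{j_1}$, then use Lemma~\ref{le:ApproxAdjBiconv} to push through to the general case. A few minor differences worth noting: the paper cites Theorem~\ref{th:GLSFiltDual}~\ref{it:GLSFDa} directly for the finite case rather than going through Proposition~\ref{pr:ShiftF^w}; in the independence step the paper works with $\Hom_\Lambda(I_v,S_j)$ directly rather than passing through $I_v\otimes_\Lambda S_{j_2}$ and $*$-duality; and for the inclusion $\subseteq$ the paper invokes Lemma~\ref{le:ContTPBiconv} to descend from $\mathscr T(B)$ to $\mathscr T(B'')$, whereas you bypass that lemma by first picking a finite $B_0$ witnessing $T\in\mathscr T(B)$ and then enlarging. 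Your reduction of the case $\delta\in A$ via complements and $*$-duality is slightly cleaner than the paper's parallel reworking with cofinite approximants, but the content is the same.
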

\begin{proof}
The case where $A$ and $B$ are both finite follows from
Theorem~\ref{th:GLSFiltDual}~\ref{it:GLSFDa}: if
$A=A_w$ and $B=A_{s_iw}$, then $L(A,B)=\Hom_\Lambda(I_w,S_i)$.
This module is rigid, because $S_i$ is rigid and the equivalence
in Theorem~\ref{th:BrennerButler}~\ref{it:BBc} preserves rigidity.

Assume now that $A$ and $B$ are infinite and that $\delta\notin A$.
By Lemma~\ref{le:ApproxAdjBiconv}, we can then find $A'\subseteq A$
and $B'\subseteq B$ finite biconvex subsets such that
$B'=A'\sqcup\{\alpha\}$. Certainly, $A'$ and $B'$ are not unique
subject to these requirements. We claim however that $L(A',B')$
does not depend on the choice of $A'$ and $B'$.

To see this, consider $A''$ and $B''$ finite biconvex subsets
with $B''=A''\sqcup\{\alpha\}$ and $A''\subseteq A$. We want to
show that $L(A',B')\cong L(A'',B'')$. Without loss of generality,
we may assume that $A''\supseteq A'$. We write $A'=A_u$, $B'=A_{s_iu}$,
$A''=A_{vu}$, $B''=A_{s_jvu}$ with $\ell(vu)=\ell(v)+\ell(u)$ and
$\alpha=u^{-1}\alpha_i=(vu)^{-1}\alpha_j$. Then
$L(A',B')=\Hom_\Lambda(I_u,S_i)$ and $L(A'',B'')=\Hom_\Lambda
(I_{vu},S_j)=\Hom_\Lambda(I_u,\Hom_\Lambda(I_v,S_j))$.
Observing that $\Hom_\Lambda(I_v,S_j)$ has dimension-vector
$v^{-1}\alpha_j=\alpha_i$, we obtain $L(A',B')\cong L(A'',B'')$,
as announced.

Certainly, $L(A',B')\in\mathscr T(B)$. Let $A_0$ be a finite
biconvex subset contained in $A$. By Lemma~\ref{le:ApproxAdjBiconv},
there is a finite biconvex subset $A''\subseteq A$ that contains
$A_0$ and such that $B''=A''\sqcup\{\alpha\}$ is biconvex.
Then $L(A',B')\cong L(A'',B'')$ belongs to $\mathscr F(A'')$,
hence to $\mathscr F(A_0)$. Since $A_0$ was arbitrary, we get
$L(A',B')\in\mathscr F(A)$. It follows that
$\add L(A',B')\subseteq\mathscr F(A)\cap\mathscr T(B)$.

Conversely, let $T\in\mathscr F(A)\cap\mathscr T(B)$. By
Lemma~\ref{le:ApproxAdjBiconv}, there is a biconvex subset
$A''\subseteq A$ that contains
$\{\beta\in A\mid\height\beta\leq\dim T\}$ and such that
$B''=A''\sqcup\{\alpha\}$ is biconvex. By
Lemma~\ref{le:ContTPBiconv}, we have
$T\in\mathscr T(B'')$. Therefore $T$ belongs to
$\mathscr F(A'')\cap\mathscr T(B'')=\add L(A'',B'')$.

Setting $L(A,B)=L(A',B')$, we thus have
$\mathscr F(A)\cap\mathscr T(B)=\add L(A,B)$, as desired.

It remains to deal with the case where $\delta\in A$.
When $A$ and $B$ are both cofinite, the result follows from
Theorem~\ref{th:GLSFilt}~\ref{it:GLSFa}: if $A=A^{ws_i}$
and $B=A^w$, then $L(A,B)=I_w\otimes_\Lambda S_i$. The general
case follows by approximation, in a similar fashion as above.
\end{proof}

We now claim that for any biconvex subset $A$, the torsion
pair $(\mathscr T(A),\mathscr F(A))$ satisfies the openness
condition~(O) of section~\ref{ss:TorLambda}. When $A$ is
finite or cofinite, or more generally when $A$ is of the form
$A_\theta^{\min}$ or $A_\theta^{\max}$, this follows from
Proposition~\ref{pr:PolConstr}~\ref{it:PCc}. The general case
is deduced from this particular case with the help of
Lemma~\ref{le:ContTPBiconv}. For instance in the case
$\delta\notin A$, for each dimension-vector $\nu\in\mathbb NI$,
we can find a finite biconvex subset $A_0\subseteq A$ that
contains $\{\alpha\in A\mid\height\alpha\leq\height\nu\}$; then
$$\{T\in\Lambda(\nu)\mid T\in\mathscr T(A)\}=
\{T\in\Lambda(\nu)\mid T\in\mathscr T(A_0)\}$$
and
$$\{T\in\Lambda(\nu)\mid T\in\mathscr F(A)\}=
\{T\in\Lambda(\nu)\mid T\in\mathscr F(A_0)\};$$
and thus condition~(O) for
$(\mathscr T(A),\mathscr F(A))$ in dimension-vector $\nu$
follows from the condition~(O) for
$(\mathscr T(A_0),\mathscr F(A_0))$.

We may thus apply the results of section~\ref{ss:TorLambda}:
each biconvex subset $A$ defines subsets $\mathfrak T(A)$ and
$\mathfrak F(A)$ of $\mathfrak B$, and we have a bijection
$$\Xi(A):\mathfrak T(A)\times\mathfrak F(A)\to\mathfrak B.$$

More generally if $\mathbf A=(A_0,\ldots,A_\ell)$ is a nondecreasing
list of biconvex subsets, then we have nested torsion pairs
$$(\mathscr T(A_0),\mathscr F(A_0))\preccurlyeq\cdots
\preccurlyeq(\mathscr T(A_\ell),\mathscr F(A_\ell)),$$
whence a bijection
$$\Xi(\mathbf A):\mathfrak T(A_0)\times\prod_{k=1}^\ell
\Bigl(\mathfrak F(A_{k-1})\cap\mathfrak T(A_k)\Bigr)
\times\mathfrak F(A_\ell)\to\mathfrak B.$$

We define the character of a subset
$\mathfrak X\subseteq\mathfrak B$ as the formal series
$$P_{\mathfrak X}(t)=\sum_{\nu\in\mathbb NI}\Card\mathfrak
X(\nu)\;t^\nu$$
in $\mathbb Z[[\,(t^{\alpha_i})_{i\in I}\,]]$,
where $\mathfrak X(\nu)=\mathfrak X\cap\mathfrak B(\nu)$ is the
set of elements of weight $\nu$ in $\mathfrak X$. We denote the
multiplicity of a root $\alpha$ by $m_\alpha$; thus $m_\alpha=1$
if $\alpha$ is real and $m_\alpha=r$ if $\alpha$ is imaginary.

\begin{proposition}
\label{pr:CaracTPBiconv}
Let $A\subseteq B$ be two biconvex subsets. Then
\begin{equation}
\label{eq:ProofCTPB0}
P_{\mathfrak F(A)\cap\mathfrak T(B)}=\prod_{\alpha\in
B\setminus A}\frac1{(1-t^\alpha)^{m_\alpha}}.
\end{equation}
\end{proposition}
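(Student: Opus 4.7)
The overall strategy is to reduce the proposition to two kinds of elementary steps by chaining biconvex subsets. Indeed, applying the bijection $\Xi(\mathbf A)$ constructed at the end of section~\ref{ss:TorBiconv} to a nondecreasing chain $A = A_0 \subseteq A_1 \subseteq \cdots \subseteq A_\ell = B$ of biconvex subsets immediately yields the multiplicativity
$$P_{\mathfrak F(A)\cap\mathfrak T(B)} = \prod_{k=1}^\ell P_{\mathfrak F(A_{k-1})\cap\mathfrak T(A_k)}.$$
Thus it suffices to build a chain whose steps are either \emph{real-adjacent}, that is $A_k = A_{k-1}\sqcup\{\alpha\}$ with $\alpha\in\Phi_+^{\re}$, or \emph{$\delta$-adjacent}, that is $A_k = A_{k-1}\sqcup(\mathbb Z_{>0}\delta)$, and to compute the corresponding elementary characters.

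For a real-adjacent step, Proposition~\ref{pr:TPAdjBiconv} provides a rigid indecomposable $L(A_{k-1},A_k)$ of dimension-vector $\alpha$ such that $\mathscr F(A_{k-1})\cap\mathscr T(A_k) = \add L(A_{k-1},A_k)$. Since $L(A_{k-1},A_k)$ is rigid, for each $n\geq0$ the closure of the $G(n\alpha)$-orbit through $L(A_{k-1},A_k)^{\oplus n}$ is the unique irreducible component of $\Lambda(n\alpha)$ whose general point lies in this subcategory (see section~\ref{ss:LuszNilpVar}). Summing gives the expected contribution $\sum_{n\geq 0}t^{n\alpha} = (1-t^\alpha)^{-1}$. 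For a $\delta$-adjacent step, Lemma~\ref{le:DeltaAdj}~\ref{it:DAa} says $\pi(A_{k-1})$ is a positive system $X\subseteq\Phi^s$. Choosing $\theta$ in the open spherical Weyl chamber $C$ such that $X=\{\alpha\in\Phi^s\mid\langle\theta,\iota(\alpha)\rangle>0\}$, we have $A_{k-1} = A_\theta^{\min}$ and $A_k = A_\theta^{\max}$, so by Proposition~\ref{pr:AThetaTor},
$$\mathscr F(A_{k-1})\cap\mathscr T(A_k) = \overline{\mathscr P}_\theta\cap\overline{\mathscr I}_\theta = \mathscr R_C.$$
It remains to check $P_{\mathfrak R_C} = \prod_{n\geq1}(1-t^{n\delta})^{-r}$; this will follow from Theorem~\ref{th:IntroCore2}, which produces $r$ distinguished indecomposable components $I(\gamma,n)$ in $\mathfrak R_C(n\delta)$, combined with Crawley-Boevey and Schr\"oer's canonical decomposition (section~\ref{ss:CanonDec}): since $\ext^1_\Lambda$ vanishes between any two components in $\mathfrak R_C$ (because $\mathscr R_C$ is closed under extensions and the components are indecomposable cores), an arbitrary element of $\mathfrak R_C$ is encoded by an unordered multiset of $I(\gamma,n_k)$'s, equivalently by an $r$-tuple of partitions indexed by $\Gamma\cap\overline C$, whose generating function is indeed $\prod_{n\geq1}(1-t^{n\delta})^{-r}$.

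It remains to construct the chain. When $A$ and $B$ are both finite, write $A=N_{u^{-1}}$, $B=N_{(vu)^{-1}}$ with $\ell(vu)=\ell(v)+\ell(u)$ (Lemma~\ref{le:CombiCox}) and insert the $N_{(v_ku)^{-1}}$ for the prefixes $v_k$ of any reduced decomposition of $v$; each step is then real-adjacent by Remark~\ref{rk:RkConvexOrder}~\ref{it:RCOb}. The cofinite-cofinite case follows by complementation and $*$-duality. When $A$ is finite and $B$ is cofinite, Lemma~\ref{le:DeltaAdj}~\ref{it:DAb} furnishes an intermediate $A'=\{\alpha\in\Phi_+\mid\pi(\alpha)\in X\}$ with $A\subseteq A'\subseteq A'\sqcup(\mathbb Z_{>0}\delta)\subseteq B$, which decomposes the passage into two finite/cofinite pieces linked by a single $\delta$-adjacent step. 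For the general case, fix a dimension-vector $\nu$ and choose, via Proposition~\ref{pr:ApproxBiconv}, finite or cofinite biconvex approximations $A'\subseteq A$ and $B\subseteq B'$ agreeing with $A$ and $B$ on all roots of height at most $\height\nu$; then Lemma~\ref{le:ContTPBiconv} shows $\mathfrak F(A)\cap\mathfrak T(B)$ and $\mathfrak F(A')\cap\mathfrak T(B')$ coincide in weight $\nu$, reducing the general case to the finite/cofinite one.

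The main obstacle is the $\delta$-adjacent step, which hinges on Theorem~\ref{th:IntroCore2}; this theorem is stated in the introduction but genuinely proved only in section~\ref{ss:Cores}, so the logical organization requires that the present character computation either forward-reference that section, or be used only after the core theorems have been established. The other potential difficulty, namely the fact (left as a conjecture in Remark~\ref{rk:RkConvexOrder}~\ref{it:RCOc}) that every adjacent pair of biconvex subsets is $\alpha$-adjacent for some root $\alpha$, is circumvented by the above chaining: we only need to produce \emph{some} chain of real- and $\delta$-adjacent steps from $A$ to $B$, not to enumerate the maximal chains.
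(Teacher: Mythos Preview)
Your chain-decomposition strategy is sound for the real-adjacent steps, but the $\delta$-adjacent step creates a genuine circularity that you have underestimated. You invoke Theorem~\ref{th:IntroCore2} together with Crawley-Boevey--Schr\"oer to obtain $P_{\mathfrak R_C}=\prod_{n\geq1}(1-t^{n\delta})^{-r}$. However, Theorem~\ref{th:IntroCore2} is proved in section~\ref{ss:Cores} via Theorem~\ref{th:DescRCham} and Proposition~\ref{pr:SimpCoreDom}, and both of those proofs explicitly use Corollary~\ref{co:CntIrrCompRF}, which is the special case $(A,B)=(A_\theta^{\min},A_\theta^{\max})$ of the very proposition you are trying to establish. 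So this is not merely a matter of reorganizing the exposition: the dependency graph has a cycle. Your parenthetical justification for $\ext^1_\Lambda=0$ between components of $\mathfrak R_C$ is also incorrect---closure of $\mathscr R_C$ under extensions does not force extensions to vanish; the actual argument (Lemma~\ref{le:CoreDom}) goes through Schur's lemma and Crawley-Boevey's formula, and in any case the surjectivity of the partitions-to-components map still needs the count from Corollary~\ref{co:CntIrrCompRF}.

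The paper avoids this entirely by a direct argument that treats all three characters $P_{\mathfrak T(A)}$, $P_{\mathfrak F(A)\cap\mathfrak T(B)}$, $P_{\mathfrak F(B)}$ simultaneously. From $\Xi((A,B))$ one gets their product equal to the Kostant series $P_{\mathfrak B}$; one then proves by induction on the height $n$ that the degree-$n$ part of each character matches the corresponding product over $A$, $B\setminus A$, $\Phi_+\setminus B$. The key point is Lemma~\ref{le:BiconvPosRS}: the convex cones spanned by these three sets meet only at the origin, so a monomial $t^\nu$ coming from $\mathfrak T(A)$ (which forces $\nu$ to lie in the cone over $A$, by Lemma~\ref{le:DiffVertices} and Proposition~\ref{pr:TorsionBiconv}) cannot appear in the products over $B\setminus A$ or $\Phi_+\setminus B$, and the degree-$n$ equation splits into three. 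This yields the proposition---and hence Corollary~\ref{co:CntIrrCompRF}---before any of the core theory is developed. If you wish to rescue your approach, you could compute the $\delta$-adjacent step by first handling $P_{\mathfrak T(A)}$ and $P_{\mathfrak F(B)}$ via approximation by finite and cofinite biconvex sets (your real-adjacent chains give these), and then deduce $P_{\mathfrak R_C}$ by dividing into $P_{\mathfrak B}$; but as written, the proof does not close.
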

\begin{proof}
Let $A\subseteq B$ be two biconvex subsets. The bijection
$\Xi((A,B))$ leads to the equation
$$P_{\mathfrak T(A)}\times P_{\mathfrak F(A)\cap\mathfrak T(B)}
\times P_{\mathfrak F(B)}=P_{\mathfrak B}.$$
Since $\mathfrak B$ indexes a basis for $U(\mathfrak n_+)$, the
series $P_{\mathfrak B}$ is given by the Kostant partition function
\begin{equation}
\label{eq:KostPartFun}
P_{\mathfrak B}(t)=\prod_{\alpha\in\Phi_+}
\frac1{(1-t^\alpha)^{m_\alpha}}.
\end{equation}

For any subset $S\subseteq\Phi_+$ consider the formal power series
$$Q_S=\prod_{\alpha\in S}\frac1{(1-t^\alpha)^{m_\alpha}},$$
and notice that a monomial $t^\nu$ can only occur in $Q_S$ if $\nu$
belongs to the convex cone spanned by $S$. Our aim is to show that
$P_{\mathfrak F(A)\cap\mathfrak T(B)}=Q_{B\setminus A}$.

To simplify the notation, let us set $Q'=Q_A$,
$Q''=Q_{B\setminus A}$, $Q'''=Q_{\Phi_+\setminus B}$,
$R'=P_{\mathfrak T(A)}$, $R''=P_{\mathfrak F(A)\cap\mathfrak T(B)}$
and $R'''=P_{\mathfrak F(B)}$. We then have
$$Q'Q''Q'''=R'R''R'''$$
because each side is equal to $P_\mathfrak B$.

Given a formal power series
$$P=\sum_{\nu\in\mathbb NI}a_\nu t^\nu$$
and a nonnegative integer $n$, we define
$$P_n=\sum_{\substack{\nu\in\mathbb NI\\[1pt]\height\nu=n}}a_\nu t^\nu
\qquad\text{and}\qquad P_{\leq n}=P_0+P_1+\cdots+P_n.$$
We now show the equations
\begin{equation*}
Q'_{\leq n}=R'_{\leq n},\qquad
Q''_{\leq n}=R''_{\leq n},\qquad
Q'''_{\leq n}=R'''_{\leq n}
\tag{E$_n$}
\end{equation*}
by induction on $n$.

Certainly, (E$_0$) holds, because all the series involved have
constant term equal to $1$. Assume that (E$_{n-1}$) holds.
Then
$$Q'Q''Q'''-Q'_{\leq n-1}Q''_{\leq n-1}Q'''_{\leq n-1}=
R'R''R'''-R'_{\leq n-1}R''_{\leq n-1}R'''_{\leq n-1}.$$
After appropriate truncation, this leads to
\begin{equation}
\label{eq:ProofCTPB}
Q'_n+Q''_n+Q'''_n=R'_n+R''_n+R'''_n.
\end{equation}
Now consider a monomial $t^\nu$ that occurs in the right-hand
side of~\eqref{eq:ProofCTPB}. If it appears in $R'_n$, then
there is a $\Lambda$-module in $\mathscr T(A)$ of dimension-vector
$\nu$, and it follows from Lemma~\ref{le:DiffVertices} and
Proposition~\ref{pr:TorsionBiconv} that $\nu$ belong to the
convex cone spanned by $A$. By Lemma~\ref{le:BiconvPosRS},
$\nu$ does not belong to the convex cones spanned by $B\setminus A$
or by $\Phi_+\setminus B$, hence $t^\nu$ cannot occur in $Q''$
nor in $Q'''$. Therefore $t^\nu$ can only appear in $Q'_n$.
Likewise, we see that if $t^\nu$ appears in $R''_n$ (respectively,
$R'''_n$), then it can only appear in $Q''_n$ (respectively, $Q'''_n$).
We conclude that equation~\eqref{eq:ProofCTPB} splits into the
three equations $Q'_n=R'_n$, $Q''_n=R''_n$ and $Q'''_n=R'''_n$,
and thus that (E$_n$) holds.

Therefore (E$_n$) holds for each $n\in\mathbb N$, whence $Q''=R''$.
\end{proof}

In view of its later use, the particular case
$(A,B)=(A_\theta^{\min},A_\theta^{\max})$ with
$\theta\in\mathfrak t$ deserves a special mention.
\begin{corollary}
\label{co:CntIrrCompRF}
Let $\theta\in\mathfrak t$. Then
$$P_{\mathfrak R_\theta}=
\left(\prod_{\substack{\alpha\in\Phi_+^{\re}\\
\langle\theta,\alpha\rangle=0}}\frac1{1-t^\alpha}\right)
\left(\prod_{n\geq1}\frac1{1-t^{n\delta}}\right)^{\!r}.$$
\end{corollary}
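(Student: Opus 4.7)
The plan is to deduce this corollary by specializing Proposition~\ref{pr:CaracTPBiconv} to the pair of biconvex subsets $A=A_\theta^{\min}$ and $B=A_\theta^{\max}$. Since $\theta\in\mathfrak t$ means $\langle\theta,\delta\rangle=0$, the imaginary roots are automatically in $B$ but not in~$A$, which is the crucial mechanism producing the imaginary-root factor in the formula.

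First I would translate the left-hand side $P_{\mathfrak R_\theta}$ into the framework of section~\ref{ss:TorBiconv}. By Proposition~\ref{pr:AThetaTor}, we have
\begin{equation*}
(\mathscr T(A_\theta^{\min}),\mathscr F(A_\theta^{\min}))=(\mathscr I_\theta,\overline{\mathscr P}_\theta)
\quad\text{and}\quad
(\mathscr T(A_\theta^{\max}),\mathscr F(A_\theta^{\max}))=(\overline{\mathscr I}_\theta,\mathscr P_\theta),
\end{equation*}
so $\mathscr F(A_\theta^{\min})\cap\mathscr T(A_\theta^{\max})=\overline{\mathscr P}_\theta\cap\overline{\mathscr I}_\theta=\mathscr R_\theta$. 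As the openness condition~(O) was established in section~\ref{ss:TorBiconv} for torsion pairs associated with biconvex subsets, and $\mathscr R_\theta$ coincides with the intersection category used to define $\mathfrak R_\theta$, this yields the identification $\mathfrak F(A_\theta^{\min})\cap\mathfrak T(A_\theta^{\max})=\mathfrak R_\theta$.

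Next I would compute the symmetric difference $B\setminus A=A_\theta^{\max}\setminus A_\theta^{\min}=\{\alpha\in\Phi_+\mid\langle\theta,\alpha\rangle=0\}$. Splitting this along the real/imaginary decomposition $\Phi_+=\Phi_+^{\re}\sqcup\mathbb Z_{>0}\delta$ and using that $\theta\in\mathfrak t$ vanishes on $\delta$, we find
\begin{equation*}
B\setminus A=\bigl\{\alpha\in\Phi_+^{\re}\mid\langle\theta,\alpha\rangle=0\bigr\}\sqcup\bigl\{n\delta\mid n\geq1\bigr\}.
\end{equation*}

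Finally I would plug into Proposition~\ref{pr:CaracTPBiconv}, recalling that $m_\alpha=1$ for $\alpha\in\Phi_+^{\re}$ and $m_{n\delta}=r$. The product $\prod_{\alpha\in B\setminus A}(1-t^\alpha)^{-m_\alpha}$ then factors as the product of the real-root factor displayed in the corollary and $\prod_{n\geq1}(1-t^{n\delta})^{-r}$, which is the desired right-hand side. There is no substantial obstacle here: the corollary is purely a bookkeeping consequence of Proposition~\ref{pr:CaracTPBiconv}, and all work has been done in establishing that proposition and in identifying $A_\theta^{\min}$, $A_\theta^{\max}$ with the relevant torsion pairs.
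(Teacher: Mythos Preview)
Your proposal is correct and follows exactly the approach of the paper, which presents this corollary as the particular case $(A,B)=(A_\theta^{\min},A_\theta^{\max})$ of Proposition~\ref{pr:CaracTPBiconv}; you have simply spelled out the bookkeeping that the paper leaves implicit.
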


\subsection{Simple regular modules}
\label{ss:SimpRegMod}
We come back to the description of the abelian categories $\mathscr R_F$.
Our aim in this section is to get information on their simple objects.

We begin with a general remark: let $F$ and $G$ be two faces
such that $F\subseteq\overline G$. If we pick $\theta\in F$ and
$\eta\in G$, then
$$A_\theta^{\min}\subseteq A_\eta^{\min}\subseteq
A_\eta^{\max}\subseteq A_\theta^{\max},$$
hence
$$(\mathscr I_\theta,\overline{\mathscr P}_\theta)\preccurlyeq
(\mathscr I_\eta,\overline{\mathscr P}_\eta)\preccurlyeq
(\overline{\mathscr I}_\eta,\mathscr P_\eta)\preccurlyeq
(\overline{\mathscr I}_\theta,\mathscr P_\theta);$$
in other words,
$$(\mathscr I_F,\overline{\mathscr P}_F)\preccurlyeq
(\mathscr I_G,\overline{\mathscr P}_G)\preccurlyeq
(\overline{\mathscr I}_G,\mathscr P_G)\preccurlyeq
(\overline{\mathscr I}_F,\mathscr P_F).$$
Therefore
\begin{equation}
\label{eq:CompCatSphFace}
\mathscr I_F\subseteq\mathscr I_G,\quad\mathscr P_F\subseteq\mathscr
P_G\quad\text{and}\quad\mathscr R_F\supseteq\mathscr R_G,
\end{equation}
and for any $\Lambda$-module $T$, we have a filtration
$0\subseteq T_\theta^{\min}\subseteq T_\eta^{\min}\subseteq
T_\eta^{\max}\subseteq T_\theta^{\max}\subseteq T$. The three
subquotients
$$T_\eta^{\min}/T_\theta^{\min}\in\overline{\mathscr P}_\theta
\cap\mathscr I_\eta,\quad
T_\eta^{\max}/T_\eta^{\min}\in\overline{\mathscr P}_\eta
\cap\overline{\mathscr I}_\eta\quad\text{and}\quad
T_\theta^{\max}/T_\eta^{\max}\in\mathscr P_\eta
\cap\overline{\mathscr I}_\theta$$
all belong to $\mathscr R_\theta$; in particular, a simple object
of $\mathscr R_F$ belongs either to $\mathscr I_G$, $\mathscr R_G$
or $\mathscr P_G$.

We denote by $\Irr\mathscr R_F$ the set of simple objects in
$\mathscr R_F$. Recall that two objects $T$ and $U$ in
$\Irr\mathscr R_F$ are said to be linked if there is a finite sequence
$T=X_0$, $X_1$, \dots, $X_n=U$ of objects in $\Irr\mathscr R_F$
such that $\Ext^1_\Lambda(X_{k-1},X_k)\neq0$ for each
$k\in\{1,\ldots,n\}$. (Note here that the groups $\Ext^1$ are the
same computed in $\Lambda\mmod$ and in $\mathscr R_F$, for the latter
is closed under extensions, and that $\Ext^1_\Lambda(X,Y)$ and
$\Ext^1_\Lambda(Y,X)$ are $K$-dual to each other.)
The linkage relation is an equivalence relation. Finally, recall
the map $\iota:\Phi^s\to\Phi_+^{\re}$ constructed in section
\ref{ss:SetupAffTyp} as a right inverse to the projection $\pi$.

\begin{theorem}
\label{th:DescSimpRF}
Let $F$ be a face of the spherical Weyl fan.
\begin{enumerate}
\item
\label{it:DSRFa}
If the dimension-vector of a simple object $T\in\mathscr R_F$ is
a multiple of $\delta$, then $\{T\}$ is a linkage class in
$\Irr\mathscr R_F$, and $T$ belongs to $\mathscr R_C$ for each
spherical Weyl chamber $C$ such that $F\subseteq\overline C$.
\item
\label{it:DSRFb}
The other objects in $\Irr\mathscr R_F$ are rigid. Their
dimension-vectors belong to $\iota(\Phi^s)$. Given
$\alpha\in\iota(\Phi^s)$, there is at most one simple object
in $\mathscr R_F$ of dimension-vector $\alpha$, up to isomorphism.
\end{enumerate}
\end{theorem}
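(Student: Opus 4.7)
My approach is to analyse a simple object $T\in\mathscr R_F$ according to its dimension vector, then deduce each of the claims via Crawley-Boevey's formula~\eqref{eq:CrawleyBoeveyForm} together with the torsion-pair machinery of section~\ref{ss:TorBiconv}. Fix $\theta$ in the relative interior of $F$, so that $\mathscr R_F=\mathscr R_\theta=\mathscr F(A_\theta^{\min})\cap\mathscr T(A_\theta^{\max})$. Schur's lemma (which applies since $\Lambda\mmod$ has finite length) gives $\End_\Lambda(T)=K$, whence Crawley-Boevey yields $(\dimvec T,\dimvec T)=2-\dim\Ext^1_\Lambda(T,T)\leq 2$. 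Moreover, by Proposition~\ref{pr:TorsionBiconv} and Lemma~\ref{le:DiffVertices}, $\dimvec T$ is a non-negative integer combination of elements of $A_\theta^{\max}\setminus A_\theta^{\min}$: namely of $\mathbb Z_{>0}\delta$ and of positive real roots $\alpha$ with $\pi(\alpha)\in\Phi^s\cap F^\perp$.

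The main technical step is to show that $\dimvec T$ lies in $\mathbb N\delta\cup\iota(\Phi^s)$. Combining the bound $(\dimvec T,\dimvec T)\leq 2$ with a Kac-type statement (any indecomposable $\Lambda$-module has dim-vector a positive root, available from the theory of irreducible components of Lusztig's nilpotent variety and Crawley-Boevey--Schr\"oer's canonical decomposition of section~\ref{ss:CanonDec}), we see that $\dimvec T$ is either a positive imaginary root $n\delta$ or a positive real root of the form $\iota(\bar\alpha)+m\delta$ with $\bar\alpha\in\Phi^s\cap F^\perp$ and $m\geq 0$. To exclude $m\geq 1$, I will perturb $\theta$ slightly inside $\mathfrak t$ in a direction that strictly separates the real roots of $A_\theta^{\max}\setminus A_\theta^{\min}$ from $\mathbb Z_{>0}\delta$, thus producing a biconvex subset $A'$ with $A_\theta^{\min}\subsetneq A'\subsetneq A_\theta^{\max}$. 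The induced nested torsion pair provides a proper nontrivial subobject of $T$ in $\mathscr R_F$, contradicting simplicity. This is the delicate technical heart of the theorem.

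Once the dim-vector classification is in hand, rigidity of a simple $T$ with $\dimvec T=\alpha\in\iota(\Phi^s)$ is immediate from $(\alpha,\alpha)=2$ and Crawley-Boevey. Uniqueness then follows by a short argument: if $T,T'$ are non-isomorphic simples of the same dim-vector $\alpha$, Schur gives $\Hom_\Lambda(T,T')=\Hom_\Lambda(T',T)=0$, while Crawley-Boevey would then force $\dim\Ext^1_\Lambda(T,T')=-(\alpha,\alpha)=-2$, which is absurd. For the linkage claim of part~(i), let $T$ be simple with $\dimvec T=n\delta$ and let $T'$ be any other simple in $\mathscr R_F$; Schur gives $\Hom_\Lambda(T,T')=\Hom_\Lambda(T',T)=0$, and since $\delta$ is $W$-invariant we have $(n\delta,\dimvec T')=0$, so Crawley-Boevey yields $\Ext^1_\Lambda(T,T')=0$. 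Hence $T$ is linked to no other simple, so $\{T\}$ is a linkage class.

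Finally, for the inclusion $T\in\mathscr R_C$ when $\dimvec T=n\delta$ and $F\subseteq\overline C$: pick $\eta$ in the spherical Weyl chamber $C$ and consider the three-step filtration $0\subseteq T_\eta^{\min}\subseteq T_\eta^{\max}\subseteq T$ induced by the torsion pairs $(\mathscr I_\eta,\overline{\mathscr P}_\eta)$ and $(\overline{\mathscr I}_\eta,\mathscr P_\eta)$. By~\eqref{eq:CompCatSphFace} all three subquotients lie in $\mathscr R_F$. The simplicity of $T$ forces each of them to be either $0$ or $T$. The case $T_\eta^{\min}=T$ is impossible because it would force $\langle\eta,n\delta\rangle>0$, and $T_\eta^{\max}=0$ is impossible because it would force $\langle\eta,n\delta\rangle<0$; since $\langle\eta,\delta\rangle=0$, both are ruled out. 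This leaves $T_\eta^{\min}=0$ and $T/T_\eta^{\max}=0$, whence $T=T_\eta^{\max}\in\overline{\mathscr I}_\eta\cap\overline{\mathscr P}_\eta=\mathscr R_\eta=\mathscr R_C$. The main obstacle is clearly the second step above, namely the combinatorial/torsion-theoretic exclusion of shifted real roots $\iota(\bar\alpha)+m\delta$ with $m\geq 1$ as dimension vectors of simples of $\mathscr R_F$.
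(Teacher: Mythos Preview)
Your arguments for part~\ref{it:DSRFa} (linkage via Schur and Crawley--Boevey, and $T\in\mathscr R_C$ via the three-step filtration) are correct and match the paper's. Likewise, rigidity and uniqueness in part~\ref{it:DSRFb} follow exactly as you say. Two points, however, need repair.

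First, the claim that ``any indecomposable $\Lambda$-module has dimension-vector a positive root'' is false for preprojective algebras (it holds for path algebras, not for $\Lambda$), and nothing in section~\ref{ss:CanonDec} provides it. The paper avoids this entirely: since $(\,,\,)$ is positive semi-definite with radical $\mathbb R\delta$ in affine type, if $\dimvec T$ is not a multiple of $\delta$ then $(\dimvec T,\dimvec T)$ is a positive even integer; combined with the Crawley--Boevey bound $(\dimvec T,\dimvec T)\le 2$ this forces $(\dimvec T,\dimvec T)=2$, and Kac's Proposition~5.10 then gives that $\dimvec T$ is a real root. No statement about indecomposable $\Lambda$-modules is needed.

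Second, and more seriously, your torsion-pair argument for excluding $\alpha=\iota(\bar\alpha)+m\delta$ with $m\ge1$ does not work as written. Perturbing $\theta$ within $\mathfrak t$ cannot ``separate real roots from $\mathbb Z_{>0}\delta$'': every $\theta'\in\mathfrak t$ satisfies $\langle\theta',\delta\rangle=0$, so the imaginary roots remain in $A_{\theta'}^{\max}\setminus A_{\theta'}^{\min}$. Even if you pick a specific biconvex $A'$ with $A_\theta^{\min}\subsetneq A'\subsetneq A_\theta^{\max}$, nothing prevents $T_{A'}=0$: for instance, with $A'=A_\theta^{\min}\cup\{\iota(\bar\alpha)+k\delta:0\le k\le m-1\}$, the root $\iota(\bar\alpha)+m\delta$ itself lies in $A_\theta^{\max}\setminus A'$, so the constraint from Lemma~\ref{le:DiffVertices} is vacuous. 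The paper's method is genuinely different: it invokes Corollary~\ref{co:CntIrrCompRF} to produce a module $X\in\mathscr R_F$ with $\dimvec X=\alpha-\delta$; since $(\dimvec X,\dimvec T)=(\alpha-\delta,\alpha)=(\alpha,\alpha)=2$, Crawley--Boevey forces a nonzero $\Hom$ between $X$ and $T$, and since $\dim X<\dim T$ this contradicts simplicity of $T$ in $\mathscr R_F$. The character formula of Corollary~\ref{co:CntIrrCompRF} is the missing ingredient.
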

\begin{proof}
Let $T$ be as in~\ref{it:DSRFa}. For any $X\in\Irr\mathscr R_F$
different from $T$, we have $\Hom_\Lambda(T,X)=\Hom_\Lambda(X,T)=0$,
by Schur's lemma applied in the category $\mathscr R_F$, so
$\Ext^1_\Lambda(T,X)=0$ by Crawley-Boevey's
formula~\eqref{eq:CrawleyBoeveyForm}. Therefore $\{T\}$ is
a linkage class. Let $C$ be a spherical Weyl chamber such that
$F\subseteq\overline C$. The assumption on $\dimvec T$ rules out
the possibility that $T\in\mathscr I_C$ or $\mathscr P_C$. We
conclude that $T\in\mathscr R_C$. Thus assertion~\ref{it:DSRFa}
is true.

We now turn to~\ref{it:DSRFb}. Let $T\in\Irr\mathscr R_F$, whose
dimension-vector $\alpha$ is not a multiple of $\delta$. Then
$(\alpha,\alpha)$ is a positive even integer. By Schur's lemma, the
endomorphism algebra of $T$ has dimension~$1$. Using Crawley-Boevey's
formula \eqref{eq:CrawleyBoeveyForm}, we then see that $(\alpha,\alpha)=2$
and $\dim\Ext^1_\Lambda(T,T)=0$. Thus $T$ is a rigid $\Lambda$-module,
and, by Proposition~5.10 in~\cite{Kac90}, $\alpha$ is a real root.

In this context, assume that $\alpha-\delta$ is a positive root.
Then, by Corollary~\ref{co:CntIrrCompRF}, $\Lambda(\alpha-\delta)$ has
an irreducible component whose general point belongs to $\mathscr R_F$.
In particular, there exists $X\in\mathscr R_F$ of dimension-vector
$\alpha-\delta$. But then $(\dimvec X,\dimvec T)=2$, and
\eqref{eq:CrawleyBoeveyForm} gives that $\Hom_\Lambda(T,X)$ or
$\Hom_\Lambda(X,T)$ is nonzero. Since $\dim T>\dim X$, this forbids
$T$ to be simple in $\mathscr R_F$, which contradicts our choice of $T$.
Therefore $\alpha-\delta\notin\Phi_+$, which means that
$\alpha\in\iota(\Phi^s)$.

Lastly, let $T'$ and $T''$ be two simple objects in $\mathscr R_F$
with the same dimension-vector $\alpha\in\iota(\Phi^s)$. Since
$(\dimvec T',\dimvec T'')=2$, \eqref{eq:CrawleyBoeveyForm} gives
that $\Hom_\Lambda(T',T'')$ or $\Hom_\Lambda(T'',T')$ is nonzero.
By Schur's lemma, $T'$ and $T''$ are isomorphic.
\end{proof}

One can show that the simple objects of $\mathscr R_F$ described in
Theorem~\ref{th:DescSimpRF}~\ref{it:DSRFa} always have
dimension-vector $\delta$ (see Corollary~\ref{co:CompDescSimpRF}).

\subsection{The type $\widetilde A_1$}
\label{ss:TypeTildeA1}
Let $\Pi$ be the completed preprojective algebra of the Kronecker
quiver, as in section~\ref{se:HallFunc}. We identify the Grothendieck
group of $\Pi\mmod$ with $\mathbb Z^2$ by writing the dimension-vector
of a $\Pi$-module $V$ as the pair $(\dim V_0,\dim V_1)$.
$$
\begin{tikzpicture}
\node (0) at (0,0){$0$};
\node (1) at (3,0){$1$};
\node (0N) at (0,0.2){};
\node (0S) at (0,-0.2){};
\node (1N) at (3,0.2){};
\node (1S) at (3,-0.2){};
\draw[->,dotted] (0N) to[out=30,in=150]
  node[midway,above]{$\scriptstyle\alpha$} (1N);
\draw[->] (0) to[out=30,in=150]
  node[midway,below]{$\scriptstyle\beta$} (1);
\draw[->,dotted] (1) to[out=-150,in=-30]
  node[midway,above]{$\scriptstyle\overline\alpha$} (0);
\draw[->] (1S) to[out=-150,in=-30]
  node[midway,below]{$\scriptstyle\overline\beta$} (0S);
\end{tikzpicture}
$$

As in section~\ref{ss:HFIrrComp}, we denote by $\Pi(\mu)$
the nilpotent variety of type $\widetilde A_1$ for a given
dimension-vector $\mu=(\mu_0,\mu_1)$. A point in $\Pi(\mu)$
is a $4$-tuple of matrices
$T=(T_\alpha,T_\beta,T_{\overline\alpha},T_{\overline\beta})$
which satisfy the equations
$T_\alpha T_{\overline\alpha}+T_\beta T_{\overline\beta}=0$
and $T_{\overline\alpha} T_\alpha+T_{\overline\beta} T_\beta=0$
and the nilpotency condition.

We denote the root system of type $\widetilde A_1$ by $\Delta$,
so $\Delta_+=\Delta_+^{\re}\sqcup(\mathbb Z_{>0}\delta)$,
where $\delta=(1,1)$ is the primitive imaginary root and
$$\Delta_+^{\re}=\{(1,0)+n\delta,(0,1)+n\delta,\mid n\in\mathbb N\}$$
(Note that we use the same letter $\delta$ to denote the
primitive imaginary root in both $\Phi_+$ and~$\Delta_+$; this
will not lead to confusion.)

There are two opposite spherical chamber coweights, namely
$$\gamma'(\mu_0,\mu_1)=\mu_0-\mu_1\quad\text{and}\quad
\gamma''(\mu_0,\mu_1)=\mu_1-\mu_0.$$
The spherical Weyl fan has three faces, namely $\{0\}$ and the
two chambers $\mathbb R_{>0}\gamma'$ and $\mathbb R_{>0}\gamma''$.

Given $n\in\mathbb N$, we denote by $\Pi(n\delta)^\times$ the open
subset of all points $T\in\Pi(n\delta)$ such that the $n\times n$
matrix $T_\alpha$ is invertible. Obviously, the dimension-vector
$\mu$ of a submodule of $T$ satisfies $\mu_0\leq\mu_1$, so $T$ is
$\gamma'$-semistable, and is even $\gamma'$-stable if $n=1$.

We now describe $\Pi(n\delta)^\times$ with the help of an auxiliary
variety. Let
$$Z_n=\{(X,Y)\in M_n(K)^2\mid X\text{ is nilpotent, }XY=YX\},$$
where $M_n(K)$ denotes the algebra of $n\times n$ matrices
over $K$. The group $\GL_n(K)$ acts by conjugation on $Z_n$.
Given a partition $\lambda$ of size $n$, let us denote by
$\mathscr O_\lambda\subseteq M_n(K)$ the adjoint orbit of nilpotent
matrices of Jordan type $\lambda$. The following lemma is due to
I.~Frenkel and Savage (it is a particular case of
\cite{FrenkelSavage03}, Proposition~2.9).

\begin{lemma}
\label{le:RegTildeA1}
\begin{enumerate}
\item
\label{it:RTAa}
The map $f:\Pi(n\delta)^\times\to Z_n$ defined by
$f(T)=(T_{\overline\beta}T_\alpha,T_{\alpha}^{-1}T_\beta)$
is a principal $\GL_n(K)$-bundle.
\item
\label{it:RTAb}
The first projection $Z_n\to M_n(K)$ identifies $Z_n$ with the disjoint
union of the conormal bundles $T^*_{\mathscr O_\lambda}M_n(K)$, where
$\lambda$ runs over the set of all partitions of $n$.
\end{enumerate}
\end{lemma}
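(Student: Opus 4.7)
The plan for (i) is to exhibit a global algebraic section of $f$, which then trivially renders $f$ a principal $\GL_n(K)$-bundle. First I would define
$$s\colon Z_n\to\Pi(n\delta)^\times,\qquad s(X,Y)=(I_n,\,Y,\,-XY,\,X),$$
that is, set $T_\alpha=I_n$, $T_\beta=Y$, $T_{\overline\alpha}=-XY$ and $T_{\overline\beta}=X$. The two preprojective relations
$T_{\overline\alpha}T_\alpha+T_{\overline\beta}T_\beta=0$ and $T_\alpha T_{\overline\alpha}+T_\beta T_{\overline\beta}=0$
both reduce to $[X,Y]=0$, and the nilpotency condition follows from the observation that every loop of positive length at either vertex acts on the underlying vector space as $\pm X^k\cdot p(Y)$ for some $p\in K[Y]$, where $2k$ is the length of the loop. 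Once $X^N=0$, the $(2N)$-th power of the augmentation ideal of $\Pi$ therefore annihilates the module, so $s(X,Y)$ really lies in $\Pi(n\delta)^\times$. Conversely, for $T\in\Pi(n\delta)^\times$ the pair $f(T)=(T_{\overline\beta}T_\alpha,\,T_\alpha^{-1}T_\beta)$ lies in $Z_n$: nilpotency of $T_{\overline\beta}T_\alpha$ is the nilpotency of a length-two loop at vertex~$0$, and $[X,Y]=0$ follows by substituting $T_{\overline\alpha}=-T_\alpha^{-1}T_\beta T_{\overline\beta}$ (obtained from the vertex~$1$ relation) into the vertex~$0$ relation.

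Next I would show that $f$ presents $\Pi(n\delta)^\times$ as a principal bundle for the subgroup $\{1\}\times\GL_n(K)\subseteq G(n\delta)$. A direct computation shows that $f$ is invariant under this subgroup, and the subgroup acts freely since $g_1T_\alpha=T_\alpha$ with $T_\alpha$ invertible forces $g_1=1$. For any $T$ with $f(T)=(X,Y)$, the element $g_1:=T_\alpha\in\GL_n(K)$ transports $T$ to the point $s(X,Y)$; hence the fibers of $f$ are exactly the $\{1\}\times\GL_n(K)$-orbits and $s$ is a global trivialization. This realizes $f$ as a trivial principal $\GL_n(K)$-bundle, proving~(i).

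For (ii), the nilpotent elements of $M_n(K)$ partition by Jordan type into the adjoint orbits $\mathscr O_\lambda$ indexed by partitions $\lambda$ of $n$, so the first projection already decomposes $Z_n$ into pieces $\{(X,Y):X\in\mathscr O_\lambda,\,[X,Y]=0\}$. Identifying $M_n(K)$ with its own cotangent fiber at every point through the nondegenerate trace pairing $\langle A,B\rangle:=\Tr(AB)$, the tangent space $T_X\mathscr O_\lambda=\im(\mathrm{ad}\,X)$ has annihilator equal to the commutant $Z(X)$, because $\Tr(Y[X,Z])=\Tr([Y,X]Z)$. Thus the fiber over $X$ is the conormal space to $\mathscr O_\lambda$ at~$X$, giving the announced identification $Z_n=\bigsqcup_\lambda T^*_{\mathscr O_\lambda}M_n(K)$. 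The only mildly delicate step in the whole argument is the nilpotency check for $s(X,Y)$, which is where the structure of the completed (as opposed to uncompleted) preprojective algebra actually matters; everything else is either a direct substitution or a standard adjoint-orbit identification.
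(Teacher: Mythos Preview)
Your proof is correct and follows essentially the same route as the paper's. For part~(ii) the two arguments are identical. For part~(i), the paper simply writes down the $\GL_n(K)$-action, observes it is free because $T_\alpha$ is invertible, and asserts that the orbits are the fibers of $f$; your version improves on this by exhibiting the explicit global section $s(X,Y)=(I_n,Y,-XY,X)$ and checking the nilpotency condition, which makes both the surjectivity of $f$ and the (trivial) principal-bundle structure completely explicit rather than implicit. One tiny slip: with the action $U\cdot T_\alpha=UT_\alpha$, the group element carrying $T$ to $s(f(T))$ is $g_1=T_\alpha^{-1}$, not $g_1=T_\alpha$.
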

\begin{proof}
We begin with~\ref{it:RTAa}. Let $T\in\Pi(n\delta)^\times$.
By definition, $T_{\overline\beta}T_\alpha$ is nilpotent. In addition,
$$(T_{\overline\beta}T_\alpha)(T_{\alpha}^{-1}T_\beta)
=-T_{\overline\alpha}T_\alpha
=-T_{\alpha}^{-1}(T_\alpha T_{\overline\alpha})T_\alpha
=(T_{\alpha}^{-1}T_\beta)(T_{\overline\beta}T_\alpha),$$
thanks to the preprojective equations. So $f$ is well defined.
Now $\GL_n(K)$ acts on $\Pi(n\delta)^\times$ by
$$U\cdot(T_\alpha,T_\beta,T_{\overline\alpha},T_{\overline\beta})=
(UT_\alpha,UT_\beta,T_{\overline\alpha}U^{-1},T_{\overline\beta}U^{-1}).$$
This action is free, for $T_\alpha$ is invertible, and the orbits
of this action are the fibers of $f$. Thus~\ref{it:RTAa} holds true.

For a matrix $X\in M_n(K)$, the tangent space to its adjoint orbit
identifies with the subspace $\{[W,X]\mid W\in M_n(K)\}$. Under the
standard trace duality, the orthogonal of this subspace identifies
with the Lie algebra centralizer of $X$ because for any $Y\in M_n(K)$,
\begin{align*}
[X,Y]=0\ &\Longleftrightarrow\ \Bigl(\forall W\in M_n(K),\
\Tr\bigl(W[X,Y]\bigr)=0\Bigr)\\&\Longleftrightarrow\
\Bigl(\forall W\in M_n(K),\ \Tr\bigl([W,X]Y\bigr)=0\Bigr).
\end{align*}
Assertion~\ref{it:RTAb} follows.
\end{proof}

Thanks to Lemma~\ref{le:RegTildeA1}, we see that the irreducible
components of $\Pi(n\delta)^\times$ are of the form
$f^{-1}\bigl(\overline{T^*_{\mathscr O_\lambda}M_n(K)}\bigr)$. We
denote by $I(\lambda)$ the closure in $\Pi(n\delta)$ of this set;
since $\Pi(n\delta)^\times$ is open, this is an irreducible component
of $\Pi(n\delta)$, whose general point is $\gamma'$-semistable.
When $\lambda$ has just one nonzero part, here $n$, we write simply
$I(n)$ instead of $I((n))$.

\begin{figure}[ht]
\begin{center}
\begin{tikzpicture}
\node (0T) at (0,5){$0$};
\node (1T) at (2.4,4){$1$};
\node (0M) at (0,3){$0$};
\node (1M) at (2.4,2){$1$};
\node (0B) at (0,1){$0$};
\node (1B) at (2.4,0){$1$};
\coordinate (auxT) at (-.8,3);
\coordinate (auxM) at (3,2);
\coordinate (auxB) at (-.8,1);
\coordinate (auxE) at (4,3);
\draw[->,dotted] (0T) to[out=-15,in=145] (1T);
\draw[->] (0T) to[out=-35,in=165] (1T);
\draw[->,dotted] (1T) to[out=-165,in=35] (0M);
\draw[->] (1T) to[out=-145,in=15] (0M);
\draw[->,dotted] (0M) to[out=-15,in=145] (1M);
\draw[->] (0M) to[out=-35,in=165] (1M);
\draw[->,dotted] (1M) to[out=-165,in=35] (0B);
\draw[->] (1M) to[out=-145,in=15] (0B);
\draw[->,dotted] (0B) to[out=-15,in=145] (1B);
\draw[->] (0B) to[out=-35,in=165] (1B);
\draw[->] (0T) to[out=-130,in=100] (auxT) to[out=-80,in=180] (1M);
\draw[->] (0M) to[out=-130,in=100] (auxB) to[out=-80,in=180] (1B);
\draw[->] (0T) to[out=0,in=100] (auxE) to[out=-80,in=30] (1B);
\draw[->,dotted](1T) to[out=-50,in=80] (auxM) to[out=-100,in=0] (0B);
\draw (0T) ++(.6,-.6) node{$\scriptstyle x$};
\draw (0M) ++(.6,-.6) node{$\scriptstyle x$};
\draw (0B) ++(.6,-.6) node{$\scriptstyle x$};
\draw (1T) ++(-1.9,-.4) node{$\scriptstyle-x$};
\draw (1M) ++(-1.9,-.4) node{$\scriptstyle-x$};
\draw (0T) ++(-1.1,-1.6) node{$\scriptstyle x'$};
\draw (0M) ++(-1.1,-1.6) node{$\scriptstyle x'$};
\draw (1T) ++(1,-1.7) node{$\scriptstyle-x'$};
\draw (0T) ++(4.3,-1.8) node{$\scriptstyle x''$};
\end{tikzpicture}
\end{center}
\caption[]{The $\Pi$-module afforded by a general point $T$ of $I(3)$.
On the picture, a digit $0$ or $1$ represents a basis vector of the
corresponding degree, each dotted arrow indicates a nonzero entry
(equal to $1$ unless otherwise indicated) in the matrix that
represents $T_\alpha$ or $T_{\overline\alpha}$, and each plain arrow
indicates a nonzero entry in the matrix that represents $T_\beta$ or
$T_{\overline\beta}$. Things have been arranged so that
$T_{\overline\beta}T_\alpha=\begin{pmatrix}0&1&0\\
0&0&1\\0&0&0\end{pmatrix}$ is a Jordan block of size $3$ and
$T_\alpha^{-1}T_\beta=\begin{pmatrix}x&x'&x''\\0&x&x'\\
0&0&x\end{pmatrix}$ is a point in the commutant
of~$T_{\overline\beta}T_\alpha$. The parameters $x$, $x'$ and
$x''$ can take any value in~$K$.}
\label{fi:I(3)}
\end{figure}
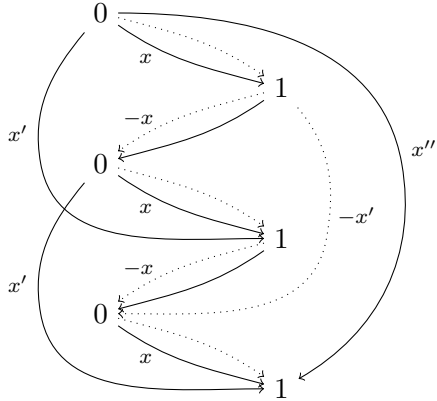

We denote the set of partitions by $\mathcal P$. Recalling the
well-known formula
$$\sum_{\lambda\in\mathcal P}t^{|\lambda|}=\prod_{n\geq1}
\frac1{1-t^n}$$
and applying Corollary~\ref{co:CntIrrCompRF} to $\Pi$ and
$\gamma'$, we see that $\{I(\lambda)\mid\lambda\in\mathcal P\}$
is the full set of elements in~$\mathfrak R_{\gamma'}$.

Recall Crawley-Boevey and Schr\"oer's theory of the canonical
decomposition explained in section~\ref{ss:CanonDec}.

\begin{proposition}
\label{pr:PptiesI}
Let $m$ and $n$ be positive integers and let $\lambda$ be a partition.
\begin{enumerate}
\item
\label{it:PIa}
Any general point $T$ in $I(n)$ is an indecomposable $\Pi$-module.
\item
\label{it:PIb}
We have $\hom_\Pi(I(m),I(n))=\ext^1_\Pi(I(m),I(n))=0$.
\item
\label{it:PIc}
Writing $\lambda=(\lambda_1,\ldots,\lambda_\ell)$, where $\ell$ is the
number of nonzero parts of $\lambda$, the canonical decomposition of
$I(\lambda)$ is
$$I(\lambda)=\overline{I(\lambda_1)\oplus\cdots\oplus I(\lambda_\ell)}.$$
\item
\label{it:PId}
The component $I(\lambda)^*$ is obtained from $I(\lambda)$
by applying the automorphism of $\Pi$ that exchanges the
idempotents $e_0$ and $e_1$, the arrows $\alpha$ and
$\overline\alpha$, and the arrows $\beta$ and $\overline\beta$.
\item
\label{it:PIe}
For any general point $T$ in $I(\lambda)$, we have
$\dim\End_\Pi(T)=|\lambda|$.
\end{enumerate}
\end{proposition}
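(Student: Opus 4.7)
The plan is to leverage the Frenkel-Savage isomorphism $f: \Pi(n\delta)^\times \to Z_n$ of Lemma~\ref{le:RegTildeA1} to translate every question into linear algebra on commuting pairs $(X,Y)$. For (i), I would compute the endomorphism ring of a general $T \in I(n)$ directly: writing $f(T) = (X,Y)$ with $X$ a regular Jordan block of size $n$ and $Y$ a generic element of $K[X]$, the relations defining a $\Pi$-endomorphism $(\phi_0, \phi_1)$ reduce, using the invertibility of $T_\alpha$, to the two commutation conditions $\phi_0 X = X \phi_0$ and $\phi_0 Y = Y \phi_0$. Since $Y \in K[X]$, the second condition is automatic given the first, so $\End_\Pi(T) \cong K[X] \cong K[t]/(t^n)$. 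This is a local ring, hence $T$ is indecomposable.

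The same matrix computation gives (ii): for general independent $T \in I(m)$ and $U \in I(n)$, a homomorphism $\phi: T \to U$ is determined by a linear map $\phi_0$ satisfying $\phi_0 X_T = X_U \phi_0$ and $\phi_0 Y_T = Y_U \phi_0$. Choosing $g$ with $X_U = g X_T g^{-1}$, the first relation forces $\phi_0 = g\, p(X_T)$ for some polynomial $p$, and the second becomes $p(X_T)\bigl(Y_T - g^{-1} Y_U g\bigr) = 0$. For generic, independent data the element in parentheses is a unit of $K[X_T]$, so $p = 0$ and $\phi_0 = 0$. Combined with the analogous vanishing of $\Hom_\Pi(U, T)$, Crawley-Boevey's formula~\eqref{eq:CrawleyBoeveyForm} and the identity $(m\delta, n\delta) = 0$ in the affine $A_1$ root lattice then force $\ext^1_\Pi(I(m), I(n)) = 0$.

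Parts (iii) and (v) are formal consequences. For (iii), parts (i) and (ii) verify the hypotheses of Crawley-Boevey-Schr\"oer's criterion recalled in section~\ref{ss:CanonDec}, so $\overline{I(\lambda_1) \oplus \cdots \oplus I(\lambda_\ell)}$ is an irreducible component of $\Pi(|\lambda|\delta)$; its generic point is block-diagonal with $T_\alpha$ invertible, and its image under $f$ has Jordan type $\lambda$, identifying this component with $I(\lambda)$. For (v), decomposing a generic $T \in I(\lambda)$ as $T^{(1)} \oplus \cdots \oplus T^{(\ell)}$, part (ii) kills the off-diagonal Hom spaces and the single-block case from (i) gives $\dim \End_\Pi(T) = \sum_k \lambda_k = |\lambda|$.

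The main obstacle is (iv). The difficulty is that a generic $T \in I(\lambda) \subseteq \mathfrak{R}_{\gamma'}$ has dual $T^*$ generically lying in $\mathfrak{R}_{\gamma''}$ (by Remark~\ref{rk:PolDual}), so $T^*$ leaves the open set $\Pi(n\delta)^\times$ on which Frenkel-Savage's description lives. My plan is to factor the analysis through the diagram automorphism $\sigma$: a direct calculation identifies $\sigma^*(T^*)$ with the transpose module $T^\vee$ whose operators are the transposes of those of $T$ (after dualizing the underlying vector spaces). Since $T_\alpha$ invertible implies $(T^\vee)_\alpha = T_\alpha^t$ is invertible as well, $T^\vee$ lies in $\Pi(n\delta)^\times$; applying $f$ yields a pair $(X^\vee, Y^\vee)$ with $X^\vee = (T_\alpha T_{\bar\beta})^t$, which is the transpose of a matrix conjugate to $X = T_{\bar\beta} T_\alpha$ via $T_\alpha$, hence has the same Jordan type $\lambda$. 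Therefore $T^\vee \in I(\lambda)$, equivalently $T^* \in \sigma^*(I(\lambda))$. Since both $I(\lambda)^*$ and $\sigma^*(I(\lambda))$ are irreducible components of $\Pi(n\delta)$ sharing a dense subset, they must coincide, establishing (iv).
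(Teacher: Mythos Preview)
Your approach is largely correct and more computational than the paper's, but there is a genuine gap in your argument for (ii) when $m \neq n$: the phrase ``choosing $g$ with $X_U = g X_T g^{-1}$'' is meaningless when $X_T$ and $X_U$ are square matrices of different sizes, so your parametrization $\phi_0 = g\,p(X_T)$ collapses. The space of intertwiners $\{\phi_0 : \phi_0 X_T = X_U \phi_0\}$ between regular Jordan blocks of sizes $m$ and $n$ is still $\min(m,n)$-dimensional, and your idea can be salvaged: for $m \le n$ replace $g$ by the natural inclusion $\iota: K^m \hookrightarrow K^n$ satisfying $X_U\iota = \iota X_T$, write every intertwiner as $\iota\,p(X_T)$, note that $Y_U\iota = \iota\,q(X_T)$ since $Y_U \in K[X_U]$, and conclude $p(X_T)(Y_T - q(X_T)) = 0$, which for generic independent constant terms forces $p = 0$. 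But this is not what you wrote.

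It is worth contrasting with the paper's route for (ii), which sidesteps matrices entirely. The paper observes that a general point of $I(1)$ is $\gamma'$-stable, hence simple in $\mathscr R_{\gamma'}$; two general points lie in distinct $G(\delta)$-orbits and are therefore non-isomorphic simples, so Schur's lemma kills all Homs between them, and this propagates to iterated self-extensions. That argument is uniform in $m,n$ and uses only the abelian structure of $\mathscr R_{\gamma'}$. Your direct computation, once patched, has the compensating virtue of producing $\End_\Pi(T) \cong K[t]/(t^n)$ for free, which the paper establishes separately in (v) via the stabilizer of $f(T)$; so your ordering is slightly more economical there. For (iv), your explicit Jordan-type check on $\sigma^*(T^*)$ is correct and more direct than the paper's, which instead argues indirectly: $\sigma^*(I(n)^*)$ lies in $\mathfrak R_{\gamma'}$, hence equals some $I(\mu)$; its general point is indecomposable by (i), forcing $\mu$ to have a single part, and dimension gives $\mu=(n)$.
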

\begin{proof}
Let $T$ be a general point in $I(n)$. It provides a $\Pi$-module
with dimension-vector $n\delta$. If this module were indecomposable,
then the matrix $T_{\overline\beta}T_\alpha$ would represent an
endomorphism of the vector space $K^n$ which stabilizes two
complementary subspaces. This is however impossible, since
$T_{\overline\beta}T_\alpha$ is a nilpotent matrix of Jordan
type~$(n)$. This shows~\ref{it:PIa}.

The group $G(\delta)$ acts on $I(1)$ with infinitely many orbits,
and a general point in $I(1)$ is a $\gamma'$-stable $\Pi$-module.
So if $S'$ and $S''$ are points in two different orbits, then they are
non-isomorphic simple objects in the category $\mathscr R_{\gamma'}$,
whence $\Hom_\Pi(S',S'')=\Hom_\Pi(S'',S')=0$ by Schur's lemma.
Now let $(T',T'')$ be general in $I(m)\times I(n)$. Then there is
a general point $(S',S'')\in I(1)^2$ such that $T'$ is an $m$-th
iterated extension of $S'$ and $T''$ is an $n$-th iterated extension
of $S''$. Therefore $\Hom_\Pi(T',T'')=\Hom_\Pi(T'',T')=0$, and
using Crawley-Boevey's formula~\eqref{eq:CrawleyBoeveyForm}, we
get $\Ext^1_\Pi(T',T'')=0$. Item~\ref{it:PIb} is~proved.

From~\ref{it:PIb} and from Crawley-Boevey and Schr\"oer's
theory, it follows that for any partition
$\lambda=(\lambda_1,\ldots,\lambda_\ell)$, the
closure $Z(\lambda)=\overline{I(\lambda_1)\oplus\cdots\oplus
I(\lambda_\ell)}$ is an element in $\mathfrak R_{\gamma'}$.
There thus exists a partition $\mu$ such that $Z(\lambda)=I(\mu)$.
Let $T$ be a general point in this irreducible component.
Looking at the Jordan type of the nilpotent matrix of
$T_{\overline\beta}T_\alpha$, we conclude that $\lambda=\mu$,
whence~\ref{it:PIc}.

In order to establish~\ref{it:PId}, it suffices by~\ref{it:PIc}
to consider the case $\lambda=(n)$. Certainly, $I(n)^*$ belongs
to $\mathfrak R_{\gamma''}$, by Remark~\ref{rk:PolDual}. Applying
to it the automorphism defined in the statement, we obtain again
an element of $\mathfrak R_{\gamma'}$, which we can write $I(\mu)$
for a certain partition $\mu$. Now the general point of this
component must be indecomposable, by~\ref{it:PIa}, so $\mu$ has
only one nonzero part. Looking at the dimension-vector, we
conclude that $\mu=n$, as desired. Item~\ref{it:PId} is~proved.

Lastly, let $T$ be a general point in $I(n)$. The commutator of
the matrix $X=T_{\overline\beta}T_\alpha$ in $M_n(K)$ is the algebra
of polynomials on $X$, because $X$ is a nilpotent matrix of Jordan
type $(n)$, so is a commutative algebra of dimension $n$. Therefore
the stabilizer of $f(T)$ in $\GL_n(K)$ coincides with the group of
invertible elements in this algebra; in particular, its dimension
is equal to $n$. The stabilizer of $T$ in $\Pi(n\delta)$ is
isomorphic to this group, and is the group of invertible elements
in the endomorphism algebra $\End_\Pi(T)$. We conclude that
$\dim\End_\Pi(T)=n$. This shows~\ref{it:PIe} in the particular
case where $\lambda$ has just one nonzero part; the general case
then follows from~\ref{it:PIb} and~\ref{it:PIc}.
\end{proof}

Assertion~\ref{it:PId} in this proposition implies
that $\{I(\lambda)^*\mid\lambda\in\mathcal P\}$
is the full set of elements in~$\mathfrak R_{\gamma''}$.

\subsection{Cores (proofs of Theorems~\ref{th:IntroCore1}
and~\ref{th:IntroCore2})}
\label{ss:Cores}
Let $\gamma\in\Gamma$ be a spherical chamber coweight. By
equation~\eqref{eq:CompCatSphFace}, we have
$$\bigcap_{\substack{\text{$F$ face}\\
\gamma\in\overline F}}\mathscr R_F
=\bigcap_{\substack{\text{$C$ Weyl chamber}\\
\gamma\in\overline C}}\mathscr R_C.$$
Objects in this intersection are called $\gamma$-cores.
They form an abelian, closed under extensions, subcategory of
$\Lambda\mmod$. The dimension-vector of a $\gamma$-core is a
multiple of $\delta$.

The following proposition provides an alternative definition
of $\gamma$-cores.
\begin{proposition}
\label{pr:AltDefCore}
A object in $\mathscr R_\gamma$ is a $\gamma$-core if and
only if the dimension-vectors of all its Jordan-H\"older
components are multiples of $\delta$.
\end{proposition}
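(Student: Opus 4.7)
The plan is to exploit Theorem~\ref{th:DescSimpRF} to separate the Jordan-H\"older factors of an object of $\mathscr R_\gamma$ into two classes---imaginary (dimension-vector a multiple of $\delta$) and real (dimension-vector in $\iota(\Phi^s)$)---and to show that the $\gamma$-core property kills the real factors.

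The ``if'' direction is an easy induction. Assume each Jordan-H\"older factor of $T$ in $\mathscr R_\gamma$ has dimension-vector a multiple of $\delta$. By Theorem~\ref{th:DescSimpRF}\ref{it:DSRFa} each such simple already lies in $\mathscr R_C$ for every spherical Weyl chamber $C$ with $\gamma\in\overline C$. Since $\mathscr R_C$ is abelian and closed under extensions (Proposition~\ref{pr:PRITheta}), climbing up a Jordan-H\"older filtration of $T$ places $T$ in $\mathscr R_C$, so $T$ is a $\gamma$-core.

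For the converse I would exploit Theorem~\ref{th:DescSimpRF}\ref{it:DSRFa} a second time. It says that each imaginary simple $S$ of $\mathscr R_\gamma$ forms its own linkage class; by definition of linkage this means $\Ext^1_\Lambda(S,X)=\Ext^1_\Lambda(X,S)=0$ for every non-isomorphic simple $X$ of $\mathscr R_\gamma$, and the usual long-exact-sequence d\'evissage upgrades this to $\Ext^1_\Lambda(A,B)=\Ext^1_\Lambda(B,A)=0$ whenever $A$ has only imaginary and $B$ only real Jordan-H\"older factors in $\mathscr R_\gamma$. Consequently every object splits canonically as $T=T^{\mathrm{im}}\oplus T^{\mathrm{re}}$, where $T^{\mathrm{im}}$ is the maximal subobject with only imaginary Jordan-H\"older factors; and $T^{\mathrm{re}}$ inherits the property of being a $\gamma$-core from $T$ because each $\mathscr R_C$, being abelian, is closed under direct summands. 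The goal is thus reduced to showing $T^{\mathrm{re}}=0$.

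Suppose for contradiction that $T^{\mathrm{re}}\neq 0$. As a finite-length object of $\mathscr R_\gamma$ it admits a simple subobject $S\subseteq T^{\mathrm{re}}$, necessarily real, with $\dimvec S=\alpha\in\iota(\Phi^s)$. The membership $S\in\mathscr R_\gamma$ forces $\langle\gamma,\alpha\rangle=0$, so the ray $\mathbb R_{>0}\gamma$ lies on the wall $\pi(\alpha)^\perp$ of the spherical Weyl fan. I can therefore pick a spherical Weyl chamber $C$ with $\gamma\in\overline C$ on the side of this wall where $\pi(\alpha)$ is positive, and $\eta\in C$ then gives $\langle\eta,\alpha\rangle>0$. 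Since $T^{\mathrm{re}}\in\mathscr R_C=\mathscr R_\eta$, every submodule of $T^{\mathrm{re}}$ in $\Lambda\mmod$---in particular $S$---satisfies $\langle\eta,\dimvec\cdot\rangle\leq 0$, contradicting $\langle\eta,\alpha\rangle>0$. The only step requiring a modicum of care---and which I anticipate as the main obstacle---is the passage from $\Ext^1$-vanishing between simples to the direct-sum decomposition $T=T^{\mathrm{im}}\oplus T^{\mathrm{re}}$, which must be carried out inside $\mathscr R_\gamma$ but using $\Ext^1_\Lambda$; this is legitimate because $\mathscr R_\gamma$ is closed under extensions in $\Lambda\mmod$, so the two $\Ext^1$'s coincide on its objects.
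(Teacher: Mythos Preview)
Your proof is correct and follows essentially the same route as the paper's. The paper packages the reduction slightly differently: rather than writing down the block decomposition $T=T^{\mathrm{im}}\oplus T^{\mathrm{re}}$ explicitly, it passes to an indecomposable summand and observes that indecomposability forces all Jordan--H\"older factors into a single linkage class, hence (by Theorem~\ref{th:DescSimpRF}) all real. The endgame---picking a simple subobject with real dimension-vector $\alpha$, using $\langle\gamma,\alpha\rangle=0$ to find a chamber $C$ with $\gamma\in\overline C$ and $\langle\eta,\alpha\rangle>0$ for $\eta\in C$, and contradicting $\eta$-semistability---is identical.
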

\begin{proof}
By Theorem~\ref{th:DescSimpRF}~\ref{it:DSRFa}, a simple
$\mathscr R_\gamma$-module whose dimension-vector is a multiple
of $\delta$ is necessarily a $\gamma$-core. The sufficiency of
the condition then follows from the fact that the category
of $\gamma$-cores is closed under extensions.

Conversely, let $T\in\mathscr R_\gamma$ be such that the
dimension-vector of at least one Jordan-H\"older component of
$T$ is not a multiple of $\delta$. We want to show that $T$ is
not a $\gamma$-core. It suffices to show that $T$ has a direct
summand which is not a $\gamma$-core, so without loss of
generality, we can assume that $T$ is indecomposable. Then all
the Jordan-H\"older components of $T$, regarded as an object
of $\mathscr R_\gamma$, belong to the same linkage class; by
Theorem~\ref{th:DescSimpRF}, none of these components have a
dimension-vector multiple of $\delta$. Now consider a simple
object $X$ of $\mathscr R_\gamma$ contained in $T$. Since $X$
belongs to $\mathscr R_\gamma$, its dimension-vector $\dimvec X$
is in the kernel of $\gamma$, without being a multiple of $\delta$,
so there exists $\theta$ near $\gamma$, in a Weyl chamber, such
that $\langle\theta,\dimvec X\rangle>0$. It follows that $T$ is
not in $\mathscr R_\theta$, and therefore is not a $\gamma$-core.
This proves the necessity of the condition given in the lemma.
\end{proof}

Obviously, the $*$-dual of a $\gamma$-core is a $(-\gamma)$-core.
More interesting is the following compatibility between cores
and reflection functors.

\begin{proposition}
\label{pr:CorReflFunc}
Let $\gamma\in\Gamma$, let $i\in I$, and let $T$ be a $\gamma$-core.
If $\langle\gamma,\alpha_i\rangle<0$, then $T\in\mathscr T^{s_i}$
and $\Sigma_iT$ is a $s_i\gamma$-core. If $\langle\gamma,\alpha_i
\rangle>0$, then $T\in\mathscr F_{s_i}$ and $\Sigma_i^*T$ is a
$s_i\gamma$-core. If $\langle\gamma,\alpha_i\rangle=0$,
then $T\cong\Sigma_iT\cong\Sigma_i^*T$.
\end{proposition}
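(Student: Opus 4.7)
The plan is to reduce everything to Theorem~\ref{th:SekiyaYamaura} by applying it at linear forms $\theta\in\mathfrak t$ in a small neighbourhood of $\gamma$, and to handle the boundary case $\langle\gamma,\alpha_i\rangle=0$ separately by an explicit comparison of $T$ with $\Sigma_iT$ using the preprojective relation at vertex~$i$.

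Assume first $\langle\gamma,\alpha_i\rangle<0$. Since $T\in\mathscr R_\gamma$, any epimorphism $T\twoheadrightarrow S_i$ would have a kernel $K$ satisfying $\langle\gamma,\dimvec K\rangle=-\langle\gamma,\alpha_i\rangle>0$, contradicting $\gamma$-semistability; hence $\hd_iT=0$, which by Example~\ref{ex:TorsTheo}~\ref{it:TTa} means $T\in\mathscr T^{s_i}$. For every $\theta$ in a sufficiently small neighbourhood of $\gamma$, continuity forces $\langle\theta,\alpha_i\rangle<0$, so $\langle s_i\theta,\alpha_i\rangle>0$, and Theorem~\ref{th:SekiyaYamaura} applied to $s_i\theta$ yields an equivalence $\Sigma_i:\mathscr R_\theta\to\mathscr R_{s_i\theta}$. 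Hence $\Sigma_iT\in\mathscr R_{s_i\theta}$, and as $\theta$ varies, $s_i\theta$ sweeps a neighbourhood of $s_i\gamma$, so $\Sigma_iT$ is an $s_i\gamma$-core. The case $\langle\gamma,\alpha_i\rangle>0$ is then obtained by applying the preceding case to $T^*$, which is a $(-\gamma)$-core by Remark~\ref{rk:PolDual}, and using that the $*$-duality exchanges $\Sigma_i$ with $\Sigma_i^*$ and $\mathscr T^{s_i}$ with $\mathscr F_{s_i}$ (Example~\ref{ex:TorsTheo}~\ref{it:TTc}).

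For the boundary case $\langle\gamma,\alpha_i\rangle=0$, I would approach $\gamma$ from each side: choosing $\theta$ near $\gamma$ with $\langle\theta,\alpha_i\rangle$ of each sign and invoking the inclusions $\mathscr R_\theta\subseteq\mathscr F_{s_i}$ and $\mathscr R_\theta\subseteq\mathscr T^{s_i}$ established in the proof of Theorem~\ref{th:SekiyaYamaura} gives $T\in\mathscr T^{s_i}\cap\mathscr F_{s_i}$; equivalently, $T_{\iin(i)}$ is surjective and $T_{\out(i)}$ is injective. The crucial numerical input is that $\delta$ lies in the radical of the bilinear form, so $(\alpha_i,\dimvec T)=0$ and $\dim\widetilde T_i=2\dim T_i$. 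Combined with the preprojective relation $T_{\iin(i)}T_{\out(i)}=0$, a dimension count then forces the equality $\im T_{\out(i)}=\ker T_{\iin(i)}$. This is precisely what is needed to make the induced linear maps $\overline T_{\out(i)}:T_i\to\ker T_{\iin(i)}$ and $\overline T_{\iin(i)}:\coker T_{\out(i)}\to T_i$ of Proposition~\ref{pr:DescReflFunc} into isomorphisms; a routine verification that tracks the signs $\varepsilon(a)$ along each arrow incident to $i$ then upgrades each to a $\Lambda$-module isomorphism $T\to\Sigma_iT$ and $\Sigma_i^*T\to T$. The hard part is really just this dimension coincidence $\im T_{\out(i)}=\ker T_{\iin(i)}$, which is where the affine hypothesis enters decisively through $(\alpha_i,\delta)=0$; the rest is formal.
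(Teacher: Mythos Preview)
Your proof is correct and follows essentially the same approach as the paper: the first two cases reduce to Theorem~\ref{th:SekiyaYamaura} applied at linear forms near $\gamma$, and the boundary case is handled by the dimension count $\dim\widetilde T_i=2\dim T_i$ coming from $(\alpha_i,\delta)=0$, which forces the complex $T_i\xrightarrow{T_{\out(i)}}\widetilde T_i\xrightarrow{T_{\iin(i)}}T_i$ to be acyclic and hence makes the descriptions in Proposition~\ref{pr:DescReflFunc} collapse to isomorphisms. The only cosmetic difference is that you deduce the case $\langle\gamma,\alpha_i\rangle>0$ from the case $\langle\gamma,\alpha_i\rangle<0$ via $*$-duality, whereas the paper simply invokes Theorem~\ref{th:SekiyaYamaura} symmetrically for both; either route is immediate.
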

\begin{proof}
The first two claims immediately follow from
Theorem~\ref{th:SekiyaYamaura}, so let us consider the case
where $\langle\gamma,\alpha_i\rangle=0$. Then there exists
$\theta\in\mathfrak t$ close to $\gamma$ such that
$\langle\theta,\alpha_i\rangle>0$, which forbids $S_i$ to
appear as a submodule of $T$, and there exists
$\eta\in\mathfrak t$ close to $\gamma$ such that
$\langle\eta,\alpha_i\rangle<0$, which forbids $S_i$ to appear
as a quotient of $T$. Therefore the $i$-socle and the
$i$-head of $T$ are both trivial. Now recall the diagram
\eqref{eq:LocalDesc}. We rewrite it as
$$\xymatrix@C=3.6em{T_i\ar[r]^{T_{\out(i)}}&\widetilde
T_i\ar[r]^{T_{\iin(i)}}&T_i.}$$
This is a complex, $T_{\out(i)}$ is injective, and $T_{\iin(i)}$ is
surjective. We have $(\alpha_i,\dimvec T)=0$, for $\dimvec T$ is
a multiple of $\delta$, so the dimension of $\widetilde T_i$ is
twice the dimension of $T_i$. Our complex is therefore acyclic.
The isomorphisms $T\cong\Sigma_iT\cong\Sigma_i^*T$ then follow from
Proposition~\ref{pr:DescReflFunc}.
\end{proof}

We will produce $\gamma$-cores by means of Hall functors. According
to section~\ref{se:HallFunc}, we need to construct pairs $(S,R)$ of
$\Lambda$-modules that satisfy \eqref{eq:DatumSR}. We proceed as
follows.

We call a pair $(C,F)$ a flag, if it consists of a spherical Weyl chamber
$C$ and a facet $F$ contained in the closure of $C$. Such a pair
determines a spherical chamber coweight $\gamma_{C/F}$ by the equation
$C=F+\mathbb R_{>0}\,\gamma_{C/F}$. The spherical Weyl group $W_0$
acts on the set of flags.

Take a flag $(C,F)$ and pick $\theta\in F$. Then
$\Phi\cap(\ker\theta)$ is an affine root system of type
$\widetilde A_1$ and $\Phi^s\cap(\ker\theta)$ is a root system
of type $A_1$, consisting of two opposite roots $\pm\alpha$.
By Theorem~\ref{th:DescSimpRF}, the dimension-vectors of the
simple objects in $\mathscr R_F$ belong to
$\{\iota(\alpha),\iota(-\alpha)\}\cup\bigl(\mathbb Z_{>0}\delta\bigr)$.
This implies that an object in $\mathscr R_F$ of dimension-vector
$\iota(\pm\alpha)$ is necessarily simple.

Now Corollary~\ref{co:CntIrrCompRF} guarantees the existence of
a whole irreducible component of $\Lambda(\iota(\pm\alpha))$
whose general point is in $\mathscr R_F$. Therefore there are
objects in $\mathscr R_F$ of dimension-vector $\iota(\pm\alpha)$.
These objects are necessarily simple, and therefore unique up
to isomorphism, by Theorem~\ref{th:DescSimpRF}~\ref{it:DSRFb}.
We denote them by $S_{C,F}$ and $R_{C,F}$, the labels being
adjusted so that
\begin{equation}
\label{eq:NormSRCF}
\langle\gamma_{C/F},\dimvec S_{C,F}\rangle=1\quad\text{and}\quad
\langle\gamma_{C/F},\dimvec R_{C,F}\rangle=-1.
\end{equation}

\begin{other}{Examples}
\label{ex:ScfRcf}
\begin{enumerate}
\item
\label{it:SRa}
One fashion to produce $S_{C,F}$ and $R_{C,F}$ is to use
Proposition~\ref{pr:TPAdjBiconv}: if we set
$A=A_\theta^{\min}=\{\alpha\in\Phi_+\mid
\langle\theta,\alpha\rangle>0\}$, $B'=A\sqcup\{\iota(\alpha)\}$
and $B''=A\sqcup\{\iota(-\alpha)\}$, then both modules $L(A,B')$
and $L(A,B'')$ belong to $\mathscr R_\theta$ and have the
correct dimension-vectors to be $S_{C,F}$ and $R_{C,F}$.
Alternatively, we can consider $L(A',B)$ and $L(A'',B)$,
where $B=A_\theta^{\max}=\{\alpha\in\Phi_+\mid
\langle\theta,\alpha\rangle\geq0\}$, $A'=B\setminus\{\iota(\alpha)\}$
and $A''=B\setminus\{\iota(-\alpha)\}$.
\item
\label{it:SRb}
A facet $F$ separates two chambers, say $C'$ and $C''$. We then
have $(S_{C',F},R_{C',F})=(R_{C'',F},S_{C'',F})$.
\item
\label{it:SRc}
If $(C,F)$ is a flag, then $(-C,-F)$ is also a flag, and we have
$(S_{-C,-F},R_{-C,-F})=((R_{C,F})^*,(S_{C,F})^*)$.
\item
\label{it:SRd}
Let us fix an extending vertex $0\in I$, as explained at the end
of section~\ref{ss:SetupAffTyp}. Then each $i\in I_0$ provides
a flag $(C_0^s,F_{\{i\}})$, where
$$F_{\{i\}}=\{\theta \in\mathfrak t\mid\langle\theta,
\alpha_i\rangle=0,\ \langle\theta,\alpha_j\rangle>0
\text{ for all }j\in I_0\setminus\{i\}\}.$$
We have $\gamma_{C_0^s/F_{\{i\}}}=\varpi_i$ and
$\{\iota(\pm\alpha)\}=\{\alpha_i,\delta-\alpha_i\}$. The
dimension-vector of $S_{C_0^s,F_{\{i\}}}$ is $\alpha_i$, hence
$S_{C_0^s,F_{\{i\}}}=S_i$. We define $R_i=R_{C_0^s,F_{\{i\}}}$.
Since $R_i$ is in $\strut\mathscr R_{F_{\{i\}}}$, it cannot contain
any submodule isomorphic to $S_j$, with $j\in I_0\setminus\{i\}$,
and since it is simple, it cannot contain any submodule isomorphic
to $S_i$ either. Therefore $R_i$ has dimension-vector
$\delta-\alpha_i$ and its socle is $S_0$. By Lemma 2 (2) of
\cite{Crawley-Boevey00}, these two conditions characterize $R_i$.
\item
\label{it:SRe}
Keeping our extending vertex $0\in I$, let $(i,w)\in I_0\times W_0$
be such that $\ell(ws_i)>\ell(w)$ and consider
$(C,F)=(wC_0^s,wF_{\{i\}})$. By Theorem~\ref{th:TiltPIR}~\ref{it:TPIRa},
we have equivalences of categories
$$\xymatrix@C=6em{\mathscr R_{F_{\{i\}}}
\ar@<.6ex>[r]^{I_w\otimes_\Lambda?}&
\ar@<.6ex>[l]^{\Hom_\Lambda(I_w,?)}\mathscr R_{wF_{\{i\}}},}$$
which carries $(S_i,R_i)$ to $(S_{C,F},R_{C,F})$.
\end{enumerate}
\end{other}

\begin{lemma}
\label{le:SRBricks}
The modules $S_{C,F}$ and $R_{C,F}$ satisfy the
conditions~\eqref{eq:DatumSR}.
\end{lemma}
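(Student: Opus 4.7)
The plan is to deduce essentially everything from the fact that $S_{C,F}$ and $R_{C,F}$ are non-isomorphic simple objects of $\mathscr R_F$, combined with Crawley-Boevey's formula \eqref{eq:CrawleyBoeveyForm} and the description of their dimension-vectors.

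First I would record that $S_{C,F}$ and $R_{C,F}$ are simple in $\mathscr R_F$ by construction (their dimension-vectors $\iota(\pm\alpha)$ cannot be split non-trivially among the possible simple dimension-vectors described in Theorem~\ref{th:DescSimpRF}), and that they are non-isomorphic, since $\langle\gamma_{C/F},\dimvec S_{C,F}\rangle$ and $\langle\gamma_{C/F},\dimvec R_{C,F}\rangle$ have opposite signs by the normalization \eqref{eq:NormSRCF}. Since $\mathscr R_F$ is a full abelian subcategory of $\Lambda\mmod$, Schur's lemma applied inside $\mathscr R_F$ immediately yields $\End_\Lambda(S_{C,F})=\End_\Lambda(R_{C,F})=K$ and $\Hom_\Lambda(S_{C,F},R_{C,F})=\Hom_\Lambda(R_{C,F},S_{C,F})=0$.

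Next, Theorem~\ref{th:DescSimpRF}\ref{it:DSRFb} asserts that simple objects of $\mathscr R_F$ whose dimension-vector is not a multiple of $\delta$ are rigid; applied to $S_{C,F}$ and $R_{C,F}$, this gives $\Ext^1_\Lambda(S_{C,F},S_{C,F})=\Ext^1_\Lambda(R_{C,F},R_{C,F})=0$.

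The one remaining point, computing $\dim\Ext^1_\Lambda(S_{C,F},R_{C,F})$, I would handle by Crawley-Boevey's formula \eqref{eq:CrawleyBoeveyForm}. Since both Hom spaces between $S_{C,F}$ and $R_{C,F}$ vanish,
$$\dim\Ext^1_\Lambda(S_{C,F},R_{C,F})=-\bigl(\iota(\alpha),\iota(-\alpha)\bigr).$$
To evaluate this pairing, I would observe that $\iota(\alpha)+\iota(-\alpha)$ projects to $0$ in $\mathfrak t^*$, hence is an element of $\mathbb Z\delta$; as a nonzero element of $\mathbb NI$, it equals $n\delta$ for some integer $n\geq1$. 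Expanding $0=(n\delta,n\delta)=(\iota(\alpha)+\iota(-\alpha),\iota(\alpha)+\iota(-\alpha))$ and using that real roots have square length~$2$, we obtain $(\iota(\alpha),\iota(-\alpha))=-2$, so $\dim\Ext^1_\Lambda(S_{C,F},R_{C,F})=2$, as required. There is no serious obstacle here: every ingredient has been prepared, and the only small point worth checking is that the sum $\iota(\alpha)+\iota(-\alpha)$ is genuinely a positive multiple of $\delta$, which follows since each summand lies in $\Phi_+^{\re}\subseteq\mathbb NI$.
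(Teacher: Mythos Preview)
Your proof is correct and follows essentially the same approach as the paper: Schur's lemma in $\mathscr R_F$ for the $\Hom$ conditions, and Crawley-Boevey's formula~\eqref{eq:CrawleyBoeveyForm} for the $\Ext^1$ conditions. The only cosmetic differences are that the paper derives rigidity directly from \eqref{eq:CrawleyBoeveyForm} (rather than citing Theorem~\ref{th:DescSimpRF}\ref{it:DSRFb}, which was itself proved that way) and leaves the computation $(\iota(\alpha),\iota(-\alpha))=-2$ implicit; also, simplicity of $S_{C,F}$ and $R_{C,F}$ was already established in the text preceding the lemma, so you need not re-argue it.
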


\begin{proof}
The modules $S_{C,F}$ and $R_{C,F}$ are simple objects in
$\mathscr R_F$, so by Schur's lemma, they are orthogonal bricks:
$$\End_\Lambda(S_{C,F})=\End_\Lambda(R_{C,F})=K,\qquad
\Hom_\Lambda(S_{C,F},R_{C,F})=\Hom_\Lambda(R_{C,F},S_{C,F})=0.$$
The remaining equations
$$\Ext^1_\Lambda(S_{C,F},S_{C_,F})=\Ext^1_\Lambda(R_{C,F},R_{C,F})=0,
\qquad\dim\Ext^1_\Lambda(S_{C,F},R_{C,F})=2$$
follow from Crawley-Boevey's formula~\eqref{eq:CrawleyBoeveyForm}.
\end{proof}

We can thus apply the results of section~\ref{se:HallFunc} to the
modules $S_{C,F}$ and $R_{C,F}$. We get a Hall functor
$\mathscr H_{C,F}:\Pi\mmod\to\Lambda\mmod$, which is an
equivalence of categories between $\Pi\mmod$ and the subcategory
$\langle S_{C,F},R_{C,F}\rangle$ of~$\mathscr R_F$.

We denote by $\mathfrak A=\bigsqcup_{\mathbf d\in\mathbb
N^2}\Irr\Pi(\mathbf d)$ the analog of the crystal $\mathfrak B$,
but for the Kronecker quiver. Proposition~\ref{pr:HFIrrComp}
claims that $\mathscr H_{C,F}$ induces a injection
$\mathfrak H_{C,F}:\mathfrak A\to\mathfrak B$, whose image
consists of those components whose general points lie in
$\langle S_{C,F},R_{C,F}\rangle$.

For $\lambda$ a partition, recall the irreducible component
$I(\lambda)\in\mathfrak A$ defined in section~\ref{ss:TypeTildeA1}.
We will see that the general point of $\mathfrak H_{C,F}(I(\lambda))$
is a $\gamma_{C,F}$-core, and moreover that
$\mathfrak H_{C,F}(I(\lambda))$ depends only on $\gamma_{C,F}$
and on $\lambda$.

To establish these results, our strategy is to first focus on the
particular case described in Example~\ref{ex:ScfRcf}~\ref{it:SRd}.
So now let us fix an extending vertex $0\in I$, let us take
$i\in I_0$, and let us set
$I(\varpi_i,1)=\mathfrak H_{C_0^s,F_{\{i\}}}(I(1))$.
As we saw earlier in this section, there is a unique irreducible
component of $\Lambda(\delta-\alpha_i)$ whose general point is in
$\mathscr R_{F_{\{i\}}}$. Further, this component is the closure
in $\Lambda(\delta-\alpha_i)$ of the set of all modules isomorphic
to $R_i$. Denoting it by $Z_i$, we thus have
$I(\varpi_i,1)=\tilde e_iZ_i$. The following proposition is
essentially a reformulation of \cite{Crawley-Boevey00}, Theorem~2.

\begin{proposition}
\label{pr:SimpCoreDom}
\begin{enumerate}
\item
\label{it:SCDa}
One has $\mathfrak R_{C_0^s}(\delta)=\{I(\varpi_i,1)\mid i\in I_0\}$.
\item
\label{it:SCDb}
For each $i\in I_0$, a general point in $I(\varpi_i,1)$ has socle
$S_0$ and head $S_i$.
\item
\label{it:SCDc}
For each $i\in I_0$, a general point in $I(\varpi_i,1)$ is a
$\varpi_i$-core. Conversely, any $\varpi_i$-core of
dimension-vector $\delta$ belongs to $I(\varpi_i,1)$.
\end{enumerate}
\end{proposition}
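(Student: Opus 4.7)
For part (i), I would begin by applying Corollary~\ref{co:CntIrrCompRF} to $\theta$ in the open chamber $C_0^s$: since the interior of $C_0^s$ meets no real-root hyperplane, the formula reduces to $P_{\mathfrak R_{C_0^s}}(t)=\prod_{n\geq 1}(1-t^{n\delta})^{-r}$, whose coefficient at $t^\delta$ equals $r=|I_0|$. Thus $\mathfrak R_{C_0^s}(\delta)$ has exactly $r$ elements, and it suffices to produce $r$ distinct members. By Proposition~\ref{pr:HFIrrComp}, each $I(\varpi_i,1)=\mathfrak H_{C_0^s,F_{\{i\}}}(I(1))$ is an irreducible component of $\Lambda(\delta)$ whose general point lies in $\langle S_i,R_i\rangle\subseteq\mathscr R_{F_{\{i\}}}$; the upgrade to $\mathscr R_{C_0^s}$ and the pairwise distinctness will both follow from part~(ii).

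For part (ii), I would exploit the alternative description $I(\varpi_i,1)=\tilde e_i Z_i$ stated just before the proposition, with $Z_i$ the closure of the orbit of $R_i$ in $\Lambda(\delta-\alpha_i)$. By the definition of $\tilde e_i$, a general point $T$ of $I(\varpi_i,1)$ fits in a non-split short exact sequence $0\to R_i\to T\to S_i\to 0$, non-splitness being generic because the split module $R_i\oplus S_i$ has a strictly larger endomorphism algebra. Non-splitness forces $R_i\subseteq\mathrm{rad}(T)$, so every simple quotient of $T$ factors through $T/R_i=S_i$; the vanishing of $\Hom_\Lambda(T,S_j)$ for $j\neq i$ at a generic extension class (coming from injectivity of the connecting map $\Hom_\Lambda(R_i,S_j)\to\Ext^1_\Lambda(S_i,S_j)$, a non-trivial pairing for a generic class) gives $\hd(T)=S_i$. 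Dually, any simple submodule of $T$ lies inside $R_i$ (else the sequence splits), so $\soc(T)=\soc(R_i)=S_0$ by Example~\ref{ex:ScfRcf}~\ref{it:SRd}. Distinctness of the $I(\varpi_i,1)$ is now immediate, and to complete (i) I verify $T\in\mathscr R_{C_0^s}$ by taking $\theta=\theta_F+\epsilon\varpi_i\in C_0^s$ with $\theta_F\in F_{\{i\}}$ and $\epsilon>0$ small, and classifying $\Lambda$-submodules $X$ of $T$ into three cases: contained in $R_i$ (handled by $\soc(R_i)=S_0$ forcing $\langle\varpi_i,\dimvec X\rangle\leq-1$); equal to $R_i$ or $T$; or surjecting onto $T/R_i=S_i$ (reduced to the first case via $\dimvec X=\dimvec(X\cap R_i)+\alpha_i$ and $\langle\theta,\alpha_i\rangle=\epsilon$, the subcase $X\cap R_i=0$ being excluded by $\soc(T)=S_0$).

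For part (iii), I would use Proposition~\ref{pr:AltDefCore}: a $\varpi_i$-core of dimension $\delta$ is exactly an object of $\mathscr R_{\varpi_i}$ of dimension $\delta$ that is simple in $\mathscr R_{\varpi_i}$, since the JH components in that category must have dimension-vectors that are multiples of $\delta$ and sum to $\delta$. For a proper nonzero $\mathscr R_{\varpi_i}$-subobject $X\subsetneq T$, the equation $\langle\varpi_i,\dimvec X\rangle=0$ together with $\dimvec X\leq\delta$ forces the $\alpha_0$- and $\alpha_i$-components of $\dimvec X$ to be $(0,0)$ or $(1,m_i)$. The first case makes $\dimvec X$ supported on $I_0\setminus\{i\}$, contradicting $\soc(T)=S_0$; the second makes $T/X$ supported on $I_0\setminus\{i\}$, but every simple quotient of $T/X$ is also a simple quotient of $T$ and hence equal to $\hd(T)=S_i$ by (ii), forcing $T/X=0$. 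Hence $T$ is simple in $\mathscr R_{\varpi_i}$, and therefore a $\varpi_i$-core. For the converse, any $\varpi_i$-core $T'$ of dimension $\delta$ lies in $\mathscr R_{C_0^s}$ and hence in some $I(\varpi_j,1)$ by (i); the core property gives $\langle\theta,\alpha_k\rangle\geq 0$ for every simple quotient $S_k$ of $T'$ and every $\theta$ in every spherical Weyl chamber containing $\varpi_i$ in its closure, which pins $k=i$. So $\Hom_\Lambda(T',S_j)=0$ for $j\neq i$, and upper semicontinuity of $\dim\Hom_\Lambda(?,S_j)$ combined with its generic value $1$ on $I(\varpi_j,1)$ forces $j=i$.

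The main obstacle will be the Hom-Ext calculation in (ii), namely showing $\Hom_\Lambda(T,S_j)=0$ for $j\neq i$ at a generic extension class: once the head of generic $T$ is pinned down to be $S_i$ (with multiplicity one), the slope check in (i) and the submodule/quotient enumeration in (iii) reduce to bookkeeping. The socle side is cleaner, as it follows from the known socle of $R_i$ and the non-split structure of the extension.
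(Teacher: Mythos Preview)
Your outline for (i) and (iii) is essentially correct, and your slope check that a general $T\in I(\varpi_i,1)$ lies in $\mathscr R_{C_0^s}$ is fine. The gap is exactly where you flag it: the head computation in~(ii). The sentence ``non-splitness forces $R_i\subseteq\mathrm{rad}(T)$'' is not a step toward the goal but a restatement of it: $R_i\subseteq\mathrm{rad}(T)$ is equivalent to $\hd(T)$ being a quotient of $T/R_i=S_i$, i.e.\ to $\hd(T)=S_i$. So all the weight falls on the connecting-map argument, and there you would need that for generic $\xi\in\Ext^1_\Lambda(S_i,R_i)$ the Yoneda map $\Hom_\Lambda(R_i,S_j)\to\Ext^1_\Lambda(S_i,S_j)$, $\phi\mapsto\phi_*\xi$, is injective for every $j\neq i$. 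This requires control over $\dim\Hom_\Lambda(R_i,S_j)$ (the $j$-head of $R_i$, which can be nontrivial) and a non-degeneracy statement for the pairing that you have not established.

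The paper avoids this obstacle entirely by reversing your order of (i) and (ii). It first observes that any $T\in\mathscr R_{C_0^s}$ of dimension-vector $\delta$ has socle $S_0$ (since $S_j\in\mathscr I_{C_0^s}$ for $j\in I_0$ cannot embed in $T$). Then, for any such $T$ and any $S_i$ in $\hd T$, the kernel of $T\twoheadrightarrow S_i$ has socle $S_0$ and dimension-vector $\delta-\alpha_i$, hence equals $R_i$ by the characterization of Example~\ref{ex:ScfRcf}~\ref{it:SRd}; thus $T\in\tilde e_iZ_i=I(\varpi_i,1)$. This covers the open set $\Lambda_0=\{T\in\Lambda(\delta)\mid T\in\mathscr R_{C_0^s}\}$ by the $r$ closed irreducible sets $I(\varpi_i,1)$; combined with the count $|\mathfrak R_{C_0^s}(\delta)|=r$ from Corollary~\ref{co:CntIrrCompRF}, this forces the $I(\varpi_i,1)$ to be the $r$ distinct components, proving (i). Assertion~(ii) is then immediate: a general point of $I(\varpi_i,1)$ lies in $\Lambda_0$ but not in any other component $I(\varpi_j,1)$, so by the covering argument its head cannot contain $S_j$ for $j\in I_0\setminus\{i\}$ (and it cannot contain $S_0$ since $\langle\theta,\alpha_0\rangle<0$ on $C_0^s$); the multiplicity of $S_i$ is one because $\hd_iR_i=0$. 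With (ii) in hand, your arguments for (iii) go through as written, and indeed match the paper's.
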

\begin{proof}
We first note that the socle of a module $T\in\mathscr R_{C_0^s}$
is necessarily concentrated at the vertex $0$. If in addition
$\dimvec T=\delta$, then $\soc T\cong S_0$.

Let $\Lambda_0=\{T\in\Lambda(\delta)\mid T\in\mathscr R_{C_0^s}\}$,
an open subset of $\Lambda(\delta)$ by
Proposition~\ref{pr:PolConstr}~\ref{it:PCc}.
Then $\mathfrak R_{C_0^s}(\delta)$ can be identified with
the set of irreducible components of $\Lambda_0$.

Let $T\in\Lambda_0$ and let $S_i$ in the head of $T$.
There is then a surjective morphism $T\to S_i$. Its kernel
has socle $S_0$ and dimension-vector $\delta-\alpha_i$, so is
isomorphic to $R_i$. We thus have a short exact sequence
$0\to R_i\to T\to S_i\to0$, which shows that $T$ belongs to
$\tilde e_iZ_i=I(\varpi_i,1)$.

Therefore $\Lambda_0$ is covered by the irreducible components
$I(\varpi_i,1)$. Now Corollary~\ref{co:CntIrrCompRF} asserts
that $\Lambda_0$ has $r$ irreducible components.
Assertion~\ref{it:SCDa} follows.

If a point in $\Lambda_0$ has two different simple modules $S_i$
and $S_j$ in its head, then it belongs to both $I(\varpi_i,1)$
and $I(\varpi_j,1)$, and therefore is not general. Therefore the
head of a general point in $I(\varpi_i,1)$ is concentrated at the
vertex $i$. Further, $R_i$ and $S_i$ are non-isomorphic simple objects
in $\mathscr R_{F_{\{i\}}}$, therefore $\Hom_\Lambda(R_i,S_i)=0$,
and so the $i$-head of $R_i$ is trivial. It follows that the
$i$-head of a general point $T$ in $I(\varpi_i,1)$ is
one-dimensional. All this shows~\ref{it:SCDb}.

Now fix $i\in I_0$. Let $T$ be a general point in $I(\varpi_i,1)$.
Then $\langle\varpi_i,\dimvec T\rangle=0$ and \ref{it:SCDb} implies
that $\langle\varpi_i,\dimvec X\rangle<0$ for any proper submodule
$X\subseteq T$. The module $T$ is thus a simple object in
$\mathscr R_{\varpi_i}$. By Theorem~\ref{th:DescSimpRF}~\ref{it:DSRFa},
$T$ is a $\varpi_i$-core.

Conversely, if $T$ is a $\varpi_i$-core of dimension-vector $\delta$,
then its socle must be $S_0$ and only $S_i$ can show up in the head
of $T$. The reasoning used to prove \ref{it:SCDa} applies anew, and
we conclude that $T$ belongs to $I(\varpi_i,1)$.
Assertion~\ref{it:SCDc} is proved.
\end{proof}

\begin{other}{Remark}
\label{rk:McKayCorr}
With the notation of the proof, a point $T$ in
$\Lambda_0\cap I(\varpi_i,1)$ is always the middle term
of a non-split extension $0\to R_i\to T\to S_i\to0$. Using
the fact that $\End_\Lambda(S_i)=\End_\Lambda(R_i)=K$, one shows
that the datum of the isomorphism class of $T$ is equivalent
to the datum of the class of the extension, up to scalar.
In other words, $G(\delta)$-orbits in $\Lambda_0\cap I(\varpi_i,1)$
are in bijection with points in the projective line
$\mathbb P(\Ext^1_\Lambda(S_i,R_i))$. In~\cite{Crawley-Boevey00},
Crawley-Boevey shows that in the moduli space
$\Lambda_0/\!/G(\delta)$ (the quotient here should be understood in
the GIT sense w.r.t.\ a character $\theta\in C_0^s$,
see \cite{King94}, Definition~2.1), these projective lines
intersect as displayed by the edges of the Dynkin diagram.
\end{other}

For $i\in I_0$ and $\lambda\in\mathcal P$, let us set
$I(\varpi_i,\lambda)=\mathfrak H_{C_0^s,F_{\{i\}}}(I(\lambda))$.
As before, we simplify the notation $I(\varpi_i,(n))$ into
$I(\varpi_i,n)$ when $\lambda$ has just one nonzero part,
here $n$. It follows from Theorem~\ref{th:HallFunc} and
Propositions~\ref{pr:HFIrrComp} and~\ref{pr:PptiesI}~\ref{it:PIc}
that if $\lambda=(\lambda_1,\ldots,\lambda_\ell)$, then
\begin{equation}
\label{eq:CanDecCoreDom}
I(\varpi_i,\lambda)=\overline{I(\varpi_i,\lambda_1)\oplus\cdots
\oplus I(\varpi_i,\lambda_\ell)}.
\end{equation}

\begin{lemma}
\label{le:CoreDom}
\begin{enumerate}
\item
\label{it:CDa}
Let $i\in I_0$. For any partition $\lambda$, a general point
in $I(\varpi_i,\lambda)$ is a $\varpi_i$-core.
\item
\label{it:CDb}
Let $(i,n)\in I_0\times\mathbb N$. A general point in
$I(\varpi_i,n)$ is indecomposable in $\Lambda\mmod$.
\item
\label{it:CDc}
For $(i,m)$ and $(j,n)$ in $I_0\times\mathbb N$, we have
$$\hom_\Lambda(I(\varpi_i,m),I(\varpi_j,n))=
\ext^1_\Lambda(I(\varpi_i,m),I(\varpi_j,n))=0.$$
In addition, if $(i,m)\neq(j,n)$, then
$I(\varpi_i,m)\neq I(\varpi_j,n)$.
\end{enumerate}
\end{lemma}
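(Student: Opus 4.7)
The plan is to bootstrap everything from the Hall functor $\mathscr H_i := \mathscr H_{C_0^s, F_{\{i\}}}$ of Example~\ref{ex:ScfRcf}~\ref{it:SRd} and Section~\ref{ss:ConsHallFunc}, which is an equivalence between $\Pi\mmod$ and $\langle S_i, R_i\rangle \subseteq \mathscr R_{F_{\{i\}}}$, together with Proposition~\ref{pr:HFIrrComp}, which guarantees that a general point of $I(\varpi_i, \lambda) = \mathfrak H_i(I(\lambda))$ is isomorphic to $\mathscr H_i(V)$ for $V$ a general point of $I(\lambda) \subseteq \Pi(|\lambda|\delta)$. Since $\langle S_i, R_i\rangle$ is a full abelian subcategory of $\Lambda\mmod$ closed under extensions, $\mathscr H_i$ transports $\End$-algebras, $\Hom$-spaces and $\Ext^1$-groups between $\Pi\mmod$ and $\langle S_i, R_i\rangle$; I would therefore obtain part~\ref{it:CDb} and the $i = j$ instance of part~\ref{it:CDc} directly from Proposition~\ref{pr:PptiesI}~\ref{it:PIa} and~\ref{it:PIb}, since local endomorphism rings stay local and the $\hom_\Pi$, $\ext^1_\Pi$ vanishings transfer verbatim.

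For part~\ref{it:CDa}, I would first treat $\lambda = (n)$. Lemma~\ref{le:RegTildeA1} lets me write a general $V \in I(n)$ in terms of the pair $(X, Y) := f(V)$ with $X$ a regular nilpotent and $Y = y_0 + y_1 X + \cdots + y_{n-1} X^{n-1}$ having generic coefficients. The kernel flag $0 \subset \ker X \subset \cdots \subset \ker X^n$ lifts to a sub-$\Pi$-module filtration of $V$ living entirely inside $\mathscr R_{\gamma'}$, and modulo $X$ the operator $Y$ reduces to the scalar $y_0$, so all successive quotients are isomorphic to the single simple $L \in I(1)$ of $\mathscr R_{\gamma'}$ associated with $y_0$. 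For $V$ general, $y_0$ is general in $K$, hence $L$ is a general point of $I(1)$ and $\mathscr H_i(L)$ is a $\varpi_i$-core by Proposition~\ref{pr:SimpCoreDom}~\ref{it:SCDc}. Applying $\mathscr H_i$ to the filtration of $V$ then exhibits $\mathscr H_i(V)$ as an iterated extension of copies of the $\varpi_i$-core $\mathscr H_i(L)$, and since $\varpi_i$-cores form a subcategory of $\Lambda\mmod$ closed under extensions (Section~\ref{ss:Cores}), $\mathscr H_i(V)$ is itself a $\varpi_i$-core. For a general partition $\lambda = (\lambda_1, \ldots, \lambda_\ell)$ the canonical decomposition~\eqref{eq:CanDecCoreDom} expresses a general point of $I(\varpi_i, \lambda)$ as a direct sum of general points of the $I(\varpi_i, \lambda_k)$, so it is a direct sum of $\varpi_i$-cores.

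For the outstanding $i \neq j$ instance of part~\ref{it:CDc}, let $U_k$ denote (the isomorphism class of) a general point of $I(\varpi_k, 1)$. By Proposition~\ref{pr:SimpCoreDom} it is a simple object of $\mathscr R_{C_0^s}$ of dimension-vector $\delta$ with head $S_k$, so for $i \neq j$ the modules $U_i$ and $U_j$ are non-isomorphic simples of $\mathscr R_{C_0^s}$. Schur's lemma then gives $\Hom_\Lambda(U_i, U_j) = 0$, and Crawley-Boevey's formula~\eqref{eq:CrawleyBoeveyForm} with $(\delta, \delta) = 0$ forces $\Ext^1_\Lambda(U_i, U_j) = 0$ (this also matches Theorem~\ref{th:DescSimpRF}~\ref{it:DSRFa}, which makes $\{U_i\}$ and $\{U_j\}$ distinct linkage classes). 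The construction of the previous paragraph presents a general $T \in I(\varpi_i, m)$ as an $m$-step iterated extension of $U_i$'s inside $\mathscr R_{C_0^s}$, and symmetrically $T' \in I(\varpi_j, n)$ as an $n$-step iterated extension of $U_j$'s; a double induction on $m$ and $n$ via the long exact sequences for $\Hom$ and $\Ext^1$ then propagates the base vanishing to $\Hom_\Lambda(T, T') = \Ext^1_\Lambda(T, T') = 0$.

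Finally, the distinctness claim splits as follows. If $m \neq n$, the two components sit in different ambient varieties $\Lambda(m\delta) \neq \Lambda(n\delta)$ and there is nothing to do; if $m = n \geq 1$ and $i \neq j$, equality $I(\varpi_i, m) = I(\varpi_j, m)$ would force a single general $T$ to admit two composition series in $\mathscr R_{C_0^s}$, one with all factors equal to $U_i$ and the other with all factors equal to $U_j$, contradicting Jordan-H\"older since $U_i \not\cong U_j$ (they have different heads). The hard part I expect is precisely the explicit identification of the $\mathscr R_{\gamma'}$-composition factors of a generic $V \in I(n)$ as $n$ copies of a \emph{single} simple $L$, which needs careful tracking through the Frenkel-Savage parametrization of $I(n)$; once that structural fact is in hand, everything downstream reduces to formal properties of the Hall functor equivalence and the homological vanishings above.
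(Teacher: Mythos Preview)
Your proposal is correct and follows essentially the same route as the paper: bootstrap from the Hall functor and Proposition~\ref{pr:PptiesI}, use the iterated-extension structure of a general point of $I(n)$ (which you make explicit via the kernel flag of $X$, whereas the paper simply asserts it) to propagate the core property from Proposition~\ref{pr:SimpCoreDom}, and handle the $i\neq j$ case via Schur's lemma plus Crawley-Boevey's formula. The only cosmetic differences are that the paper applies Crawley-Boevey once at the end to $(T',T'')$ rather than to the simples before inducting, and for distinctness the paper observes that the head of a general point of $I(\varpi_i,m)$ is concentrated at vertex~$i$ rather than invoking Jordan--H\"older in~$\mathscr R_{C_0^s}$.
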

\begin{proof}
Let $n$ be a positive integer. Let us take a general point in
$I(\varpi_i,n)$. It is of the form $\mathscr H_{C_0^s,F_{\{i\}}}(T)$,
where $T$ is a general point in $I(n)$, and there is a general
point $S\in I(1)$ such that $T$ is an $n$-th iterated extension
of $S$. By Proposition~\ref{pr:SimpCoreDom}~\ref{it:SCDc},
$\mathscr H_{C_0^s,F_{\{i\}}}(S)$ is a $\varpi_i$-core.
Thus $\mathscr H_{C_0^s,F_{\{i\}}}(T)$, being an iterated extension
of $\varpi_i$-cores, is itself a $\varpi_i$-core. This shows
\ref{it:CDa} in the case where $\lambda$ has just one nonzero part;
the general case follows then from~\eqref{eq:CanDecCoreDom}.

The functor $\mathscr H_{C_0^s,F_{\{i\}}}$ preserves the
indecomposability, because it is fully faithful and because
direct summands are kernels of idempotent endomorphisms.
Assertion~\ref{it:CDb} is thus a consequence of
Proposition~\ref{pr:PptiesI}~\ref{it:PIa}.

Let $(i,j,m,n)\in(I_0)^2\times\mathbb N^2$, with $i\neq j$.
Let $(T',T'')$ be a general point in $I(\varpi_i,m)\times
I(\varpi_j,n)$. Then there is a general point $(S',S'')\in
I(\varpi_i,1)\times I(\varpi_j,1)$ such that $T'$ is an
$m$-th iterated extension of $S'$ and $T''$ is an $n$-iterated
extension of $S''$. By Proposition~\ref{pr:SimpCoreDom}~\ref{it:SCDb},
the head of $S'$ is concentrated at vertex $i$ and the head of $S''$
is concentrated at vertex $j$. Therefore the simple objects $S'$ and
$S''$ of $\mathscr R_{C_0^s}$ are not isomorphic. By Schur's lemma,
we then have $\Hom_\Lambda(S',S'')=\Hom_\Lambda(S'',S')=0$, whence
$\Hom_\Lambda(T',T'')=\Hom_\Lambda(T'',T')=0$. Using Crawley-Boevey's
formula~\eqref{eq:CrawleyBoeveyForm}, we get
$\Ext^1_\Lambda(T',T'')=0$. Moreover, this argument show that
the head of a general point in $I(\varpi_i,m)$ (respectively,
$I(\varpi_j,n)$) is concentrated at vertex $i$ (respectively, $j$),
whence $I(\varpi_i,m)\neq I(\varpi_j,n)$. All this shows~\ref{it:CDc}
in the case $i\neq j$. When $i=j$, item~\ref{it:CDc} follows from
Proposition~\ref{pr:PptiesI}~\ref{it:PIb}.
\end{proof}

Any spherical chamber coweight $\gamma$ can be written as $w\varpi_i$,
with $(i,w)\in I_0\times W_0$. We denote by $I(\gamma,\lambda)$ the
image of $I(\varpi_i,\lambda)$ by the functor $I_w\otimes ?$, viewed as
operating on irreducible components as in section~\ref{ss:TiltCrysOp}.
Proposition~\ref{pr:CorReflFunc} shows that $I(\gamma,\lambda)$ does
not depend on the choice of $w$, which justifies the notation; it also
shows that a general point of $I(\gamma,\lambda)$ is a $\gamma$-core.
Moreover,
$$I(\gamma,\lambda)=\overline{I(\gamma,\lambda_1)\oplus\cdots\oplus
I(\gamma,\lambda_\ell)}.$$

\begin{theorem}
\label{th:DescRCham}
Let $C$ be a spherical Weyl chamber. Then the map
$$(\lambda_\gamma)\mapsto
\overline{\bigoplus_{\gamma\in\Gamma\cap\overline C}
I(\gamma,\lambda_\gamma)}$$
is a bijection from $\mathcal P^{\Gamma\cap\overline C}$
onto $\mathfrak R_C$.
\end{theorem}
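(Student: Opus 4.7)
The plan is to prove the map is well-defined (lands in $\mathfrak R_C$), injective (via uniqueness of the canonical decomposition from section~\ref{ss:CanonDec}), and surjective (via a character count against Corollary~\ref{co:CntIrrCompRF}). Throughout, the main reduction is to the dominant spherical chamber: writing $C=wC_0^s$ for some $w\in W_0$, we have $\Gamma\cap\overline C=\{w\varpi_i\mid i\in I_0\}$, and the equivalence $I_w\otimes_\Lambda?:\mathscr R_{C_0^s}\to\mathscr R_C$ furnished by Theorem~\ref{th:TiltPIR}~\ref{it:TPIRa} sends $I(\varpi_i,\lambda)$ to $I(\gamma,\lambda)$ with $\gamma=w\varpi_i$, as in Example~\ref{ex:ScfRcf}~\ref{it:SRe} and the definition of $I(\gamma,\lambda)$ given just before the theorem. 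Because $\mathscr R_C$ is cut out of $\Lambda\mmod$ by a torsion pair and is consequently closed under direct summands and extensions, this equivalence transports indecomposability, $\Hom_\Lambda$ and $\Ext^1_\Lambda$ between the $I(\varpi_i,\lambda)$ and the $I(\gamma,\lambda)$, so every numerical statement about the former is inherited by the latter.

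For well-definedness, expand each $\lambda_\gamma=(\lambda_{\gamma,1},\ldots,\lambda_{\gamma,\ell_\gamma})$ into its nonzero parts. By~\eqref{eq:CanDecCoreDom}, the candidate component can be rewritten as $\overline{\bigoplus_{\gamma,k}I(\gamma,\lambda_{\gamma,k})}$, and Crawley-Boevey and Schr\"oer's criterion asserts it is an irreducible component of $\Lambda(n\delta)$ (with $n=\sum_\gamma|\lambda_\gamma|$) as soon as $\ext^1_\Lambda(I(\gamma,m),I(\gamma',n))=0$ for every pair of distinct summands. Since every dimension-vector involved is a multiple of $\delta$ and $(\delta,\delta)=0$, Crawley-Boevey's formula~\eqref{eq:CrawleyBoeveyForm} reduces this to vanishing of $\hom_\Lambda$ on both sides. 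When $\gamma=\gamma'$ with $m\neq n$, the required $\hom$-vanishing is exactly Proposition~\ref{pr:PptiesI}~\ref{it:PIb} transported through the composition $I_w\otimes_\Lambda\mathscr H_{C_0^s,F_{\{i\}}}$. When $\gamma\neq\gamma'$, a general point of $I(\gamma,n)$ is an iterated extension inside $\mathscr R_C$ of a simple $\gamma$-core of dimension-vector $\delta$; the simple cores of types $\gamma$ and $\gamma'$ are non-isomorphic simple objects of $\mathscr R_C$ (distinguished in the dominant model by their heads via Proposition~\ref{pr:SimpCoreDom}~\ref{it:SCDb}, then transported), so Schur's lemma applied inside the abelian category $\mathscr R_C$ forces all $\hom_\Lambda$ between iterated extensions of such distinct simples to vanish. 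The resulting component lies in $\mathfrak R_C$ because a direct sum of objects of $\mathscr R_C$ stays in $\mathscr R_C$.

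Injectivity follows from Crawley-Boevey-Schr\"oer uniqueness once we note that each $I(\gamma,n)$ has indecomposable general point (Lemma~\ref{le:CoreDom}~\ref{it:CDb} transported by the equivalence) and that distinct pairs $(\gamma,n)$ give distinct irreducible components (dimension-vectors separate the $n$'s, while Lemma~\ref{le:CoreDom}~\ref{it:CDc} plus transport separates the $\gamma$'s for a fixed $n$). For surjectivity I match characters. Applying Corollary~\ref{co:CntIrrCompRF} to any $\theta\in C$ and observing that $\langle\theta,\alpha\rangle=\langle\theta,\pi(\alpha)\rangle$ with $\pi(\alpha)\in\Phi^s$, so that no $\alpha\in\Phi^{\re}_+$ is orthogonal to a generic point of the open spherical chamber $C$, we obtain $P_{\mathfrak R_C}=\prod_{n\geq 1}(1-t^{n\delta})^{-r}$. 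On the other side, since $|\Gamma\cap\overline C|=r$ and the dimension-vector attached to $(\lambda_\gamma)$ is $\bigl(\sum_\gamma|\lambda_\gamma|\bigr)\delta$, the generating series of $\mathcal P^{\Gamma\cap\overline C}$ is the very same product; combined with injectivity, this finishes the proof. The step I anticipate as the main obstacle is the cross-$\gamma$ $\hom$-vanishing in the well-definedness paragraph, since it requires identifying the simple objects of the ambient category $\mathscr R_C$, checking that the simple $\gamma$-cores of dimension $\delta$ remain simple and pairwise non-isomorphic in $\mathscr R_C$, and only then invoking Schur on iterated extensions.
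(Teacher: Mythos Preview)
Your argument is correct and follows the same strategy as the paper's proof: reduce to the dominant spherical chamber via reflection functors, invoke the $\hom/\ext^1$-vanishing between the components $I(\varpi_i,m)$ (this is exactly Lemma~\ref{le:CoreDom}, whose proof is the Schur-lemma-on-simple-cores argument you wrote out), use uniqueness of the canonical decomposition for injectivity, and finish with the character count against Corollary~\ref{co:CntIrrCompRF}. Two small remarks: your phrase ``$\gamma=\gamma'$ with $m\neq n$'' should also cover $m=n$ (repeated parts in a partition), but Proposition~\ref{pr:PptiesI}~\ref{it:PIb} already gives this since a general pair in $I(m)\times I(m)$ is built from two non-isomorphic simple cores; and the equivalence $\mathscr R_{C_0^s}\to\mathscr R_C$ you need is obtained by iterating Theorem~\ref{th:SekiyaYamaura} along a reduced word for $w\in W_0$ (the hypothesis $\langle\theta,\alpha_{i_k}\rangle>0$ holds at each step because $N_{w^{-1}}$ lies in the $\mathbb N$-span of $\{\alpha_i\mid i\in I_0\}$), rather than directly from Theorem~\ref{th:TiltPIR}~\ref{it:TPIRa}, whose statement is for $\theta$ in the Tits cone.
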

\begin{proof}
Using the reflection functors (specifically,
Theorem~\ref{th:TiltPIR}~\ref{it:TPIRa} and
Propositions~\ref{pr:CompEquiv} and \ref{pr:CrysBBEquiv}), we
can reduce to the case of the dominant chamber $C_0^s$.

Lemma~\ref{le:CoreDom} shows that each
$\overline{\bigoplus_{\gamma\in\Gamma\cap\overline{C_0^s}}
I(\gamma,\lambda_\gamma)}$ is an irreducible component whose general
point belongs to $\mathscr R_{C_0^s}$. The canonical decompositions
of these components show that they are pairwise distinct. The map
described in the statement is thus well defined and injective.
Its surjectivity follows from Corollary~\ref{co:CntIrrCompRF}.
\end{proof}

Theorem~\ref{th:DescRCham} provides the following intrinsic
characterization of $I(\gamma,n)$.
\begin{corollary}
\label{co:IndecCore}
For each $\gamma\in\Gamma$ and each $n\geq1$, $I(\gamma,n)$ is the
unique irreducible component of $\Lambda(n\delta)$ whose general
point is an indecomposable $\gamma$-core.
\end{corollary}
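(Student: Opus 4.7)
The plan is to split the argument into an existence assertion and a uniqueness assertion. For existence with $\gamma=\varpi_i$ dominant, I would invoke Lemma~\ref{le:CoreDom}: part~\ref{it:CDb} gives indecomposability of a general point of $I(\varpi_i,n)$ and part~\ref{it:CDa} gives the $\varpi_i$-core property. For a general $\gamma=w\varpi_i$, $I(\gamma,n)$ is by construction the image of $I(\varpi_i,n)$ under $I_w\otimes_\Lambda-$; Theorem~\ref{th:TiltPIR}~\ref{it:TPIRa} makes this functor an equivalence onto $\mathscr R_\gamma$, so it preserves indecomposability, while Proposition~\ref{pr:CorReflFunc} applied iteratively transports $\varpi_i$-cores to $\gamma$-cores. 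This settles existence.

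For uniqueness, let $Z\in\mathfrak B(n\delta)$ be a component whose general point is an indecomposable $\gamma$-core. Choosing any spherical Weyl chamber $C$ with $\gamma\in\overline C$, the defining property of a $\gamma$-core puts the general point of $Z$ in $\mathscr R_C$, so $Z\in\mathfrak R_C$. Theorem~\ref{th:DescRCham} then expresses $Z$ as $\overline{\bigoplus_{\gamma'\in\Gamma\cap\overline C}I(\gamma',\lambda_{\gamma'})}$ for a unique tuple $(\lambda_{\gamma'})$ of partitions. Combining the canonical decomposition~\eqref{eq:CanDecCoreDom} with indecomposability of the general point forces the tuple to have exactly one nonzero part, $\lambda_{\gamma_0}=(n)$, so $Z=I(\gamma_0,n)$ for a unique $\gamma_0\in\Gamma\cap\overline C$.

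The hard part will be to conclude $\gamma_0=\gamma$. By the existence part, the general point of $Z=I(\gamma_0,n)$ is also an indecomposable $\gamma_0$-core, so the task reduces to showing that a nonzero $\Lambda$-module cannot be simultaneously a $\gamma$-core and a $\gamma_0$-core for distinct $\gamma,\gamma_0\in\Gamma$. The approach is to analyze the head: if $T$ is a $\gamma$-core, then $T\in\mathscr R_\theta$ for every $\theta$ in a neighborhood of $\gamma$, so any simple quotient $S_k$ of $T$ must satisfy $\langle\theta,\alpha_k\rangle\geq 0$ for all such $\theta$; this condition holds only for the unique $i(\gamma)\in I$ with $\langle\gamma,\alpha_{i(\gamma)}\rangle>0$, since a small perturbation of $\theta$ into an adjacent chamber renders the pairing with any other $\alpha_k$ strictly negative. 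Combining this head constraint with its $*$-dual statement on the socle yields an invariant $(i(\gamma),i(-\gamma))\in I\times I$ attached to $\gamma$, and I expect the main technical difficulty to lie in verifying that this pair injects $\Gamma$ into $I\times I$, which amounts to checking that the sign pattern of $(\langle\gamma,\alpha_i\rangle)_{i\in I}$ determines a spherical chamber coweight uniquely; once this injectivity is in hand we conclude $\gamma_0=\gamma$ and the proof is complete.
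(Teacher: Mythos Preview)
Your existence argument and the reduction to $Z=I(\gamma_0,n)$ via Theorem~\ref{th:DescRCham} are correct and match the paper's route. The gap is in the final step, where you argue that a $\gamma$-core has its head concentrated at a \emph{unique} vertex $i(\gamma)$ with $\langle\gamma,\alpha_{i(\gamma)}\rangle>0$. This uniqueness fails outside the dominant case. For instance, in type $\widetilde D_4$ with central node $2$, take $\gamma=s_2\varpi_2$: a direct computation gives $\langle\gamma,\alpha_k\rangle>0$ for $k\in\{1,3,4\}$ simultaneously. So $i(\gamma)$ is not well-defined, the map $\gamma\mapsto(i(\gamma),i(-\gamma))$ does not exist as stated, and the injectivity you identify as the ``main technical difficulty'' is not even a meaningful question.

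The fix is to perform the head argument only after reducing to the dominant chamber, exactly as in the proof of Theorem~\ref{th:DescRCham}. Writing $\gamma=w\varpi_i$, Proposition~\ref{pr:CorReflFunc} (applied along a reduced word for $w$) shows that $\Hom_\Lambda(I_w,-)$ carries $\gamma$-cores to $\varpi_i$-cores, and Proposition~\ref{pr:CrysBBEquiv} transports the component $Z$ to a component $Z'$ whose general point is an indecomposable $\varpi_i$-core. Now your head argument is valid: for a dominant coweight $\varpi_i$ the only $k\in I$ with $\langle\varpi_i,\alpha_k\rangle>0$ is $k=i$, so a $\varpi_i$-core has head concentrated at $i$. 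On the other hand, the proof of Lemma~\ref{le:CoreDom} shows that the head of a general point of $I(\varpi_j,n)$ is concentrated at $j$. Since $Z'=I(\varpi_j,n)$ for some $j$ by Theorem~\ref{th:DescRCham}, comparing heads forces $j=i$, whence $Z=I(\gamma,n)$.
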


We have now established Theorem~\ref{th:IntroCore1}
and~\ref{th:IntroCore1}: Theorem~\ref{th:IntroCore2} is a special
case of Corollary~\ref{co:IndecCore}, and with this in hand
Theorem~\ref{th:IntroCore2} is exactly Theorem~\ref{th:DescRCham}.

The characterization in Corollary~\ref{co:IndecCore} also proves
that the components $I(\gamma,n)$, and thus also the components
$I(\gamma,\lambda)$, do not depend on the various choices made to
construct them, notably in the construction of the functors
$\mathscr H_{C_0^s,F_{\{i\}}}$ used to define $I(\varpi_i,\lambda)$.
It also implies that $I(-\gamma,n)=I(\gamma,n)^*$, whence
\begin{equation}
\label{eq:CorDual}
I(-\gamma,\lambda)=I(\gamma,\lambda)^*
\end{equation}
for any $(\gamma,\lambda)\in\Gamma\times\mathcal P$.

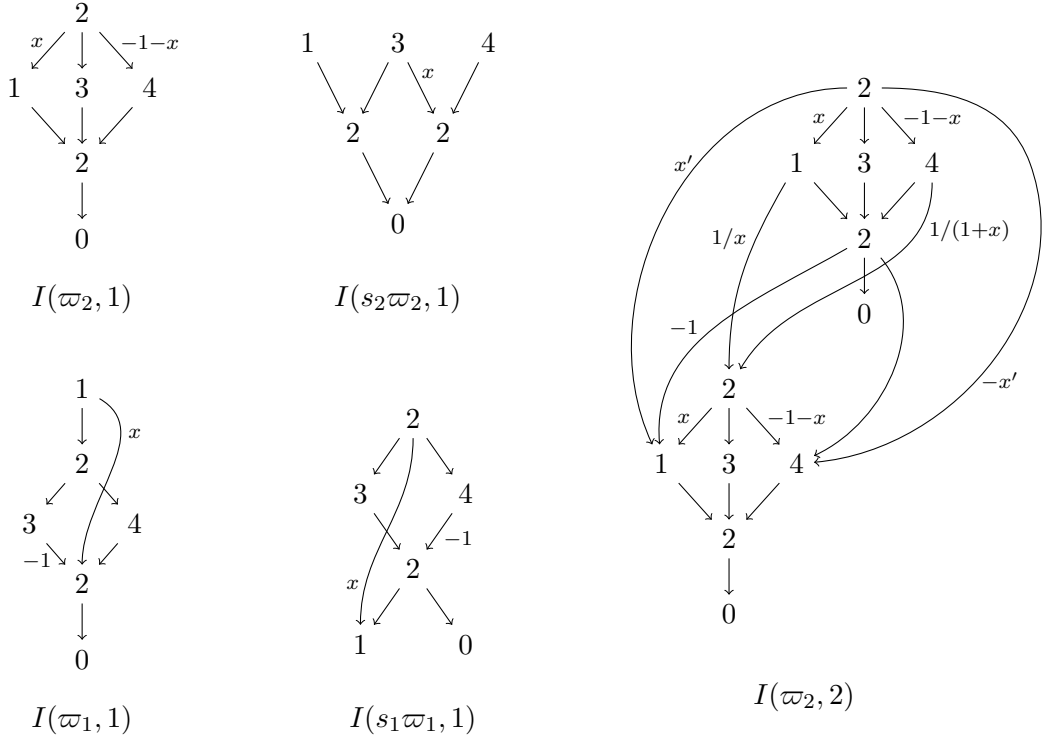
\begin{figure}[ht]
\begin{center}
\begin{tikzpicture}
\begin{scope}
\node (2a) at (0,3){$2$};
\node (1) at (-.9,2){$1$};
\node (3) at (0,2){$3$};
\node (4) at (.9,2){$4$};
\node (2b) at (0,1){$2$};
\node (0) at (0,0){$0$};
\draw[->] (2a) -- (1);
\draw[->] (2a) -- (3);
\draw[->] (2a) -- (4);
\draw[->] (1) -- (2b);
\draw[->] (3) -- (2b);
\draw[->] (4) -- (2b);
\draw[->] (2b) -- (0);
\draw (2a) ++(-.6,-.4) node{$\scriptstyle x$};
\draw (2a) ++(.9,-.4) node{$\scriptstyle-1-x$};
\draw (0,-.8) node{$I(\varpi_2,1)$};
\draw (2a) ++(0,.4) node{};
\end{scope}
\begin{scope}[xshift=4.2cm,yshift=.2cm]
\node (1) at (-1.2,2.4){$1$};
\node (3) at (0,2.4){$3$};
\node (4) at (1.2,2.4){$4$};
\node (2a) at (-.6,1.2){$2$};
\node (2b) at (.6,1.2){$2$};
\node (0) at (0,0){$0$};
\draw[->] (1) -- (2a);
\draw[->] (3) -- (2a);
\draw[->] (3) -- (2b);
\draw[->] (4) -- (2b);
\draw[->] (2a) -- (0);
\draw[->] (2b) -- (0);
\draw (3) ++(.4,-.4) node{$\scriptstyle x$};
\draw (0,-1) node{$I(s_2\varpi_2,1)$};
\end{scope}
\begin{scope}[yshift=-5.6cm]
\node(1) at (0,3.6){$1$};
\node(2a) at (0,2.6){$2$};
\node(3) at (-.7,1.8){$3$};
\node(4) at (.7,1.8){$4$};
\node(2b) at (0,1){$2$};
\node(0) at (0,0){$0$};
\draw[->] (1) -- (2a);
\draw[->] (1) to[out=-30,in=90] (2b);
\draw[->] (2a) -- (3);
\draw[->] (2a) -- (4);
\draw[->] (3) -- (2b);
\draw[->] (4) -- (2b);
\draw[->] (2b) -- (0);
\draw (1) ++(.7,-.6) node{$\scriptstyle x$};
\draw (3) ++(.1,-.5) node{$\scriptstyle-1$};
\draw (0,-.8) node{$I(\varpi_1,1)$};
\end{scope}
\begin{scope}[xshift=4.4cm,yshift=-5.4cm]
\node (2a) at (0,3){$2$};
\node (3) at (-.7,2){$3$};
\node (4) at (.7,2){$4$};
\node (2b) at (0,1){$2$};
\node (1) at (-.7,0){$1$};
\node (0) at (.7,0){$0$};
\draw[->] (2a) -- (3);
\draw[->] (2a) -- (4);
\draw[->] (3) -- (2b);
\draw[->] (4) -- (2b);
\draw[->] (2a) to[out=-90,in=90](1);
\draw[->] (2b) -- (1);
\draw[->] (2b) -- (0);
\draw (4) ++(-.1,-.6) node{$\scriptstyle-1$};
\draw (1) ++(-.1,.8) node{$\scriptstyle x$};
\draw (0,-1) node{$I(s_1\varpi_1,1)$};
\end{scope}
\begin{scope}[xshift=8.6cm,yshift=-5cm]
\node (2aT) at (1.8,7){$2$};
\node (1T) at (.9,6){$1$};
\node (3T) at (1.8,6){$3$};
\node (4T) at (2.7,6){$4$};
\node (2bT) at (1.8,5){$2$};
\node (0T) at (1.8,4){$0$};
\node (2aB) at (0,3){$2$};
\node (1B) at (-.9,2){$1$};
\node (3B) at (0,2){$3$};
\node (4B) at (.9,2){$4$};
\node (2bB) at (0,1){$2$};
\node (0B) at (0,0){$0$};
\coordinate(aux) at (4,6);
\draw[->] (2aT) -- (1T);
\draw[->] (2aT) -- (3T);
\draw[->] (2aT) -- (4T);
\draw[->] (1T) -- (2bT);
\draw[->] (3T) -- (2bT);
\draw[->] (4T) -- (2bT);
\draw[->] (2bT) -- (0T);
\draw[->] (2aB) -- (1B);
\draw[->] (2aB) -- (3B);
\draw[->] (2aB) -- (4B);
\draw[->] (1B) -- (2bB);
\draw[->] (3B) -- (2bB);
\draw[->] (4B) -- (2bB);
\draw[->] (2bB) -- (0B);
\draw[->] (2aT) to[out=180,in=115] (1B);
\draw[->] (2aT) to[out=0,in=110] (aux) to[out=-70,in=5] (4B);
\draw[->] (1T) to[out=-120,in=90] (2aB);
\draw[->] (4T) to[out=-90,in=60] (2aB);
\draw[->] (2bT) to[out=-150,in=95] (1B);
\draw[->] (2bT) to[out=-50,in=25] (4B);
\draw (2aT) ++(-.6,-.4) node{$\scriptstyle x$};
\draw (2aT) ++(.9,-.4) node{$\scriptstyle-1-x$};
\draw (2aB) ++(-.6,-.4) node{$\scriptstyle x$};
\draw (2aB) ++(.9,-.4) node{$\scriptstyle-1-x$};
\draw (2aT) ++(-2.4,-1) node{$\scriptstyle x'$};
\draw (2aT) ++(1.8,-3.9) node{$\scriptstyle-x'$};
\draw (2aB) ++(0,2) node{$\scriptstyle1/x$};
\draw (4T) ++(.5,-.9) node{$\scriptstyle1/(1+x)$};
\draw (2bT) ++(-2.4,-1.2) node{$\scriptstyle-1$};
\draw (1,-1.1) node{$I(\varpi_2,2)$};
\end{scope}
\end{tikzpicture}
\end{center}
\caption{Examples of cores in type $D_4$ (convention: the central
node of the Dynkin diagram is $2$). The isomorphism class of the
general point of $I(\gamma,n)$ depends on $n$ parameters; here
these parameters are called $x$ and $x'$.}
\label{fi:Cores}
\end{figure}

This independence on the choice of the Hall functors leads to
the following result.
\begin{corollary}
\label{co:CoresAgree}
For all flags $(C,F)$ and all partitions $\lambda$,
we have $\mathfrak H_{C,F}(I(\lambda))=I(\gamma_{C,F},\lambda)$.
\end{corollary}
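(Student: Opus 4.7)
My plan is to identify both sides with the same irreducible component via the intrinsic characterization of $I(\gamma_{C,F},n)$ provided by Corollary~\ref{co:IndecCore}.

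First I would reduce to the case of a partition with a single nonzero part. Writing $\lambda=(\lambda_1,\ldots,\lambda_\ell)$, Proposition~\ref{pr:PptiesI}~\ref{it:PIc} gives the canonical decomposition $I(\lambda)=\overline{I(\lambda_1)\oplus\cdots\oplus I(\lambda_\ell)}$ in~$\Pi\mmod$, while~\eqref{eq:CanDecCoreDom} combined with the definition $I(\gamma,\lambda)=I_w\otimes I(\varpi_i,\lambda)$ yields the analogous decomposition for $I(\gamma_{C,F},\lambda)$. Since the Hall functor $\mathscr H_{C,F}$ is exact and additive, the general point of $\mathfrak H_{C,F}(I(\lambda))$ is a direct sum of general points of the $\mathfrak H_{C,F}(I(\lambda_k))$; invoking the canonical decomposition in reverse, $\mathfrak H_{C,F}(I(\lambda))=\overline{\mathfrak H_{C,F}(I(\lambda_1))\oplus\cdots\oplus\mathfrak H_{C,F}(I(\lambda_\ell))}$. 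It therefore suffices to prove $\mathfrak H_{C,F}(I(n))=I(\gamma_{C,F},n)$ for each positive integer $n$.

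By Corollary~\ref{co:IndecCore}, the latter component is characterized by three properties, which I would check for $\mathfrak H_{C,F}(I(n))$: (i)~it is an irreducible component of $\Lambda(n\delta)$, (ii)~its general point is indecomposable, and (iii)~its general point is a $\gamma_{C,F}$-core. Property~(i) is immediate from Proposition~\ref{pr:HFIrrComp}. Property~(ii) follows from Proposition~\ref{pr:PptiesI}~\ref{it:PIa} together with the fact that the fully faithful functor $\mathscr H_{C,F}$ preserves indecomposability, since direct summands correspond to idempotent endomorphisms. For property~(iii), a general point of $I(n)$ is an $n$-fold iterated extension of a general point $S$ of $I(1)$, so its image under $\mathscr H_{C,F}$ is an iterated extension of $U:=\mathscr H_{C,F}(S)$; as $\gamma_{C,F}$-cores form a subcategory closed under extensions, it reduces to showing that $U$ is a $\gamma_{C,F}$-core.

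The main obstacle is this final step. My approach would be to show that $U$ is a simple object of $\mathscr R_{\gamma_{C,F}}$ with dimension-vector~$\delta$ and then apply Theorem~\ref{th:DescSimpRF}~\ref{it:DSRFa} to the face $G=\mathbb R_{>0}\gamma_{C,F}$ of the spherical Weyl fan: its conclusion gives membership in $\mathscr R_{C'}$ for every spherical Weyl chamber $C'$ whose closure contains $\gamma_{C,F}$, which is exactly the definition of a $\gamma_{C,F}$-core. Rather than control every $\Lambda$-submodule of $U$ directly, I would reduce to the dominant case where this has already been carried out. Writing $\gamma_{C,F}=w\varpi_i$ with $(i,w)\in I_0\times W_0$ and $w$ of minimal length, Example~\ref{ex:ScfRcf}~\ref{it:SRe} tells us that the equivalence $\Hom_\Lambda(I_w,-):\mathscr R_F\to\mathscr R_{F_{\{i\}}}$ of Theorem~\ref{th:TiltPIR}~\ref{it:TPIRa} carries the pair $(S_{C,F},R_{C,F})$ to $(S_i,R_i)$; transferring $U$ along this equivalence yields an object $U'\in\mathscr R_{F_{\{i\}}}$ of dimension-vector $\delta$ built from $S_i$ and $R_i$ in exactly the same way as $U$ is built from $S_{C,F}$ and $R_{C,F}$.

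Via this transfer, the verification that $U$ is simple in $\mathscr R_{\gamma_{C,F}}$ translates into the statement that $U'$ is simple in $\mathscr R_{\varpi_i}$, which is precisely what was shown in the course of the proof of Proposition~\ref{pr:SimpCoreDom}~\ref{it:SCDc} (the maximal proper submodule of $U'$ has head $S_i$, so every proper submodule is contained in the kernel of $U'\to S_i$, which has $\varpi_i$-weight $-1$); concretely, any proper nonzero submodule would have $\varpi_i$-weight strictly negative, preventing it from lying in $\mathscr R_{\varpi_i}$. Proposition~\ref{pr:CorReflFunc} then propagates the conclusion back: since $U'$ is a $\varpi_i$-core, $U=I_w\otimes_\Lambda U'$ is a $\gamma_{C,F}$-core. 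The three properties (i)--(iii) being verified, Corollary~\ref{co:IndecCore} forces $\mathfrak H_{C,F}(I(n))=I(\gamma_{C,F},n)$, completing the argument.
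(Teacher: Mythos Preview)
Your overall strategy---reduce to single parts and invoke the intrinsic characterization in Corollary~\ref{co:IndecCore}---is a legitimate alternative to the paper's argument, which instead exploits the independence of $I(\varpi_i,\lambda)$ from the particular Hall functor used to build it: since $\Hom_\Lambda(I_w,?)\circ\mathscr H_{C,F}$ is itself a Hall functor for $(S_i,R_i)$, it must send $I(\lambda)$ to $I(\varpi_i,\lambda)$ on components, and then applying $I_w\otimes_\Lambda?$ recovers $I(\gamma_{C,F},\lambda)$ by definition.

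There is, however, a genuine gap in your transfer step. You write $\gamma_{C,F}=w\varpi_i$ with $w$ of minimal length and then invoke Example~\ref{ex:ScfRcf}~\ref{it:SRe}, but that example presupposes $(C,F)=(wC_0^s,wF_{\{i\}})$ with $\ell(ws_i)>\ell(w)$. Your minimality condition (which makes $w$ reduced on the right with respect to $I_0\setminus\{i\}$, the stabilizer of $\varpi_i$) does not force this; indeed, for a given facet $F$ only one of the two bordering chambers yields a flag of the required form. For the other chamber there is no reason that $\Hom_\Lambda(I_w,-)$ gives an equivalence $\mathscr R_F\to\mathscr R_{F_{\{i\}}}$, nor that it carries $(S_{C,F},R_{C,F})$ to $(S_i,R_i)$ rather than to $(R_i,S_i)$. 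The paper confronts exactly this dichotomy: its direct argument covers only the flags of the form $(wC_0^s,wF_{\{i\}})$ with $\ell(ws_i)>\ell(w)$, and the remaining flags---those of the form $(-wC_0^s,-wF_{\{i\}})$---are then obtained by duality via Example~\ref{ex:ScfRcf}~\ref{it:SRc} and~\eqref{eq:CorDual}. Your argument becomes complete once you insert the same case split.
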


\begin{proof}
Suppose that we are in the situation of Example~\ref{ex:ScfRcf}
\ref{it:SRe} with $(C,F)=(wC_0^s,wF_{\{i\}})$. Then
$\Hom_\Lambda(I_w,?)\circ\mathscr H_{C,F}$ is a Hall functor
built from the datum of $S_i$ and $R_i$, so it can play the
role of $\mathscr H_{C_0^s,F_{\{i\}}}$. Thus by the above observation,
the map on components defined by the functor $\strut\Hom_\Lambda(I_w,?)$
sends $\mathfrak H_{C,F}(I(\lambda))$ to $I(\varpi_i,\lambda)$,
for each partition $\lambda$. This immediately yields
$\mathfrak H_{C,F}(I(\lambda))=I(\gamma_{C,F},\lambda)$.

This analysis applies to exactly half of the flags.
The remaining flags are of the form $(C,F)=(-wC_0^s,-wF_{\{i\}})$;
their cases are deduced from the previous situation by duality.
\end{proof}

Combining the above result with Proposition~\ref{pr:PptiesI}~\ref{it:PId}
and Example~\ref{ex:ScfRcf}~\ref{it:SRb}, we obtain:
\begin{corollary}
\label{co:DualHallFunc}
Let $F$ be a facet and let $C'$ and $C''$ be the two Weyl
chambers that $F$ separates. For any partition $\lambda$, we have
$\mathfrak H_{C',F}(I(\lambda)^*)=I(\gamma_{C''/F},\lambda)$.
\end{corollary}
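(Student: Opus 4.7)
The plan is to show the equivalent identity
\[
\mathfrak H_{C',F}(I(\lambda)^*)=\mathfrak H_{C'',F}(I(\lambda))
\]
and then conclude by Corollary~\ref{co:CoresAgree}, which gives
$\mathfrak H_{C'',F}(I(\lambda))=I(\gamma_{C''/F},\lambda)$.

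For the displayed identity, I would exploit the symmetry of the construction of the Hall functors in section~\ref{ss:ConsHallFunc}. By Example~\ref{ex:ScfRcf}~\ref{it:SRb} we have the coincidence of pairs
$(S_{C'',F},R_{C'',F})=(R_{C',F},S_{C',F})$,
so $\mathscr H_{C'',F}$ is built from the same pair of $\Lambda$-modules as $\mathscr H_{C',F}$ but with the two members interchanged. If we choose data $(\xi,\eta,\hat\xi,\hat\eta)$ for $(S_{C',F},R_{C',F})$ satisfying \eqref{eq:ApproxCoh1}--\eqref{eq:ApproxCoh2}, then $(\hat\xi,\hat\eta,\xi,\eta)$ is valid data for $(S_{C'',F},R_{C'',F})$. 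Tracing through Lemma~\ref{le:ApproxCoh} and the construction of $\mathscr H$, swapping the roles of $S$ and $R$ amounts, on the source $\Pi\mmod$, to precomposition with the automorphism $\sigma$ of $\Pi$ that exchanges $e_0\leftrightarrow e_1$, $\alpha\leftrightarrow\overline\alpha$ and $\beta\leftrightarrow\overline\beta$; that is, $\mathscr H_{C'',F}(V)\cong\mathscr H_{C',F}(V^\sigma)$ for every $V\in\Pi\mmod$, where $V^\sigma$ denotes the twist of $V$ by~$\sigma$. Passing to irreducible components, this becomes $\mathfrak H_{C'',F}(Z)=\mathfrak H_{C',F}(\sigma(Z))$, where $\sigma$ now denotes the induced bijection on $\mathfrak A$.

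It then suffices to observe that $\sigma(I(\lambda))=I(\lambda)^*$: this is precisely the content of Proposition~\ref{pr:PptiesI}~\ref{it:PId}. Combining this with the previous step yields $\mathfrak H_{C'',F}(I(\lambda))=\mathfrak H_{C',F}(I(\lambda)^*)$, and invoking Corollary~\ref{co:CoresAgree} completes the proof.

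The only non-tautological point is Step~2, namely checking that the swap of $(S,R)$ really corresponds to precomposition with $\sigma$. This will likely be the main obstacle, but it is a matter of bookkeeping rather than genuine difficulty: once one chooses the auxiliary data compatibly as indicated above, the universal formulas $S^{(\infty)}_a$, $R^{(\infty)}_a$, $P^{(\infty)}_a$, $Q^{(\infty)}_a$ produced by Lemma~\ref{le:ApproxCoh} for one pair are obtained from those for the other pair by applying $\sigma$ to the $\Pi$-factor, so the resulting $\Lambda$-modules agree up to the expected identification. Since $\mathfrak H_{C,F}$ depends only on the isomorphism class, not on the intermediate choices, no further subtlety arises.
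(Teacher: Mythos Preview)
Your proposal is correct and follows exactly the paper's approach: the paper's proof is the single sentence ``Combining the above result [Corollary~\ref{co:CoresAgree}] with Proposition~\ref{pr:PptiesI}~\ref{it:PId} and Example~\ref{ex:ScfRcf}~\ref{it:SRb}'', and you have spelled out precisely how these three ingredients fit together. One minor caveat in the bookkeeping you flagged: since $\tau_1$ is antisymmetric, the swapped tuple $(\hat\xi,\hat\eta,\xi,\eta)$ gives $\tau_1(\hat\xi,\xi)=-1$ rather than $+1$, so a sign must be absorbed somewhere (e.g.\ take $(-\hat\xi,-\hat\eta,\xi,\eta)$); as you anticipated, this is harmless and does not affect the induced map on irreducible components.
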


Our last corollary to Theorem~\ref{th:DescRCham} rounds off
Theorem~\ref{th:DescSimpRF}~\ref{it:DSRFa}.
\begin{corollary}
\label{co:CompDescSimpRF}
Let $F$ be a face of the spherical Weyl fan. Let $T$ be a simple
object of $\mathscr R_F$. If $\dimvec T$ is a multiple of $\delta$,
then in fact $\dimvec T=\delta$.
\end{corollary}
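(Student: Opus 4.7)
The plan is to realise $T$ as a (generally non-generic) point of an irreducible component $Z$ of $\Lambda(n\delta)$ afforded by Theorem~\ref{th:DescRCham}, to compute the endomorphism dimension at a general point of $Z$, and to conclude via upper semicontinuity. As a simple object of the full abelian subcategory $\mathscr R_F$, the module $T$ is a brick in $\Lambda\mmod$, so Schur's lemma over the algebraically closed field $K$ yields $\End_\Lambda(T)=K$. I pick a spherical Weyl chamber $C$ with $F\subseteq\overline C$; by Theorem~\ref{th:DescSimpRF}~\ref{it:DSRFa}, $T$ belongs to $\mathscr R_C$. The orbit closure of $T$ in $\Lambda(n\delta)$ is irreducible and meets the open set $\{X\in\Lambda(n\delta):X\in\mathscr R_C\}$ (Proposition~\ref{pr:PolConstr}~\ref{it:PCc}), so it is contained in some irreducible component $Z\in\mathfrak R_C$. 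By Theorem~\ref{th:DescRCham}, $Z=\overline{\bigoplus_{\gamma\in\Gamma\cap\overline C}I(\gamma,\lambda_\gamma)}$ for a tuple of partitions with $\sum_\gamma|\lambda_\gamma|=n$, and a general point of $Z$ decomposes as $V=\bigoplus_\gamma V_\gamma$ with $V_\gamma$ general in $I(\gamma,\lambda_\gamma)$.

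The second step is to compute $\dim\End_\Lambda(V)=n$. The identification $I(\gamma,\lambda_\gamma)=\mathfrak H_{C,F_0^\gamma}(I(\lambda_\gamma))$ from Corollary~\ref{co:CoresAgree}, combined with the full faithfulness of the Hall functor $\mathscr H_{C,F_0^\gamma}$ and Proposition~\ref{pr:PptiesI}~\ref{it:PIe}, yields $\dim\End_\Lambda(V_\gamma)=|\lambda_\gamma|$. For distinct $\gamma,\gamma'\in\Gamma\cap\overline C$, writing $\gamma=w\varpi_i$ and $\gamma'=w\varpi_j$ with $wC_0^s=C$ and transporting through the equivalence $\mathscr R_{C_0^s}\cong\mathscr R_C$ of Theorem~\ref{th:TiltPIR}~\ref{it:TPIRa}, the vanishing $\hom_\Lambda(I(\varpi_i,\lambda_\gamma),I(\varpi_j,\lambda_{\gamma'}))=0$ provided by Lemma~\ref{le:CoreDom}~\ref{it:CDc} transports to $\hom_\Lambda(I(\gamma,\lambda_\gamma),I(\gamma',\lambda_{\gamma'}))=0$, hence $\Hom_\Lambda(V_\gamma,V_{\gamma'})=0$. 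Summing over $\gamma$ gives $\dim\End_\Lambda(V)=\sum_\gamma|\lambda_\gamma|=n$.

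Finally, the upper semicontinuity of $X\mapsto\dim\End_\Lambda(X)$ on $\Lambda(n\delta)$ (recalled in section~\ref{ss:CanonDec}) together with $T\in Z$ forces $\dim\End_\Lambda(T)\geq\dim\End_\Lambda(V)=n$. Combined with $\dim\End_\Lambda(T)=1$, this yields $n\leq 1$, hence $n=1$. The delicate point is the Hom-vanishing between general points of $I(\gamma,\cdot)$ and $I(\gamma',\cdot)$ for distinct $\gamma,\gamma'\in\Gamma\cap\overline C$: a single Hall functor does not produce it directly, but reflecting to the dominant chamber---where Lemma~\ref{le:CoreDom}~\ref{it:CDc} directly applies---resolves it cleanly.
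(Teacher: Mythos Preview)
Your proof is correct and follows essentially the same approach as the paper's: both arguments place $T$ in some $\mathscr R_C$ via Theorem~\ref{th:DescSimpRF}~\ref{it:DSRFa}, compute that a general point of any component in $\mathfrak R_C(n\delta)$ has endomorphism algebra of dimension $n$ (using Proposition~\ref{pr:PptiesI}~\ref{it:PIe} through a Hall functor and the Hom-vanishing of Lemma~\ref{le:CoreDom}~\ref{it:CDc}), and conclude by upper semicontinuity and Schur's lemma. The only organizational difference is that the paper reduces to the dominant chamber $C_0^s$ at the outset via reflection functors, whereas you work in an arbitrary chamber $C$ and invoke the reflection equivalence only when handling the cross-terms; this is a matter of taste, not substance.
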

\begin{proof}
Using the reflection functors, we can reduce to the case where
$F$ is contained in the dominant spherical Weyl chamber $C_0^s$.

Let $n$ be a positive integer. Take an irreducible component
$Z\in\mathfrak R_{C_0^s}$ in dimension-vector $n\delta$.
This $Z$ can be written in the form provided by
Theorem~\ref{th:DescRCham}. Combining
Proposition~\ref{pr:PptiesI}~\ref{it:PIe} with
Lemma~\ref{le:CoreDom}, we obtain that $\dim\End_\Lambda(X)=n$
for any general point $X\in Z$. Since the function
$X\mapsto\dim\End_\Lambda(X)$ is upper semicontinuous on
$\Lambda(n\delta)$, we have $\dim\End_\Lambda(X)\geq n$ for
any $X\in Z$. Since $Z$ was arbitrary in $\mathfrak R_{C_0^s}$,
the latter inequality holds for any $X\in\mathscr R_{C_0^s}$
with dimension-vector $n\delta$.

Now take a simple object $T\in\mathscr R_F$ with
dimension-vector~$n\delta$. Then $T\in\mathscr R_{C_0^s}$,
by Theorem~\ref{th:DescSimpRF}~\ref{it:DSRFa}, and
$\dim\End_\Lambda(T)=1$, by Schur's lemma. The result just
established then implies that $1\geq n$.
\end{proof}

\begin{theorem}
\label{th:DescRFacet}
For any flag $(C,F)$, the map
$$(Z,(\lambda_\gamma))\mapsto\overline{\mathfrak H_{C,F}(Z)\oplus
\bigoplus_{\gamma\in\Gamma\cap\overline F}I(\gamma,\lambda_\gamma)}$$
is a bijection from $\mathfrak A\times\mathcal P^{\Gamma\cap\overline F}$
onto $\mathfrak R_F$.
\end{theorem}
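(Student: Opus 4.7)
The plan is to establish the theorem in three stages: well-definedness, injectivity via the Crawley-Boevey--Schr\"oer canonical decomposition, and surjectivity by matching characters through Corollary~\ref{co:CntIrrCompRF}. Throughout, write $C' = C$ and $C''$ for the two spherical Weyl chambers separated by $F$, with $\gamma' = \gamma_{C'/F}$ and $\gamma'' = \gamma_{C''/F}$, so that $\Gamma \cap \overline{C'} = (\Gamma \cap \overline F) \sqcup \{\gamma'\}$.

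First I would check well-definedness. Each summand on the right already belongs to $\mathfrak R_F$: by Proposition~\ref{pr:HFIrrComp} and Theorem~\ref{th:HallFunc}, the generic point of $\mathfrak H_{C,F}(Z)$ lies in $\langle S_{C,F}, R_{C,F}\rangle \subseteq \mathscr R_F$; and for $\gamma \in \Gamma \cap \overline F$ the containment $\overline F \subseteq \overline{C'} \cap \overline{C''}$ combined with~\eqref{eq:CompCatSphFace} ensures that any $\gamma$-core lies in $\mathscr R_{C'} \cap \mathscr R_{C''} \subseteq \mathscr R_F$, whence $I(\gamma, \lambda_\gamma) \in \mathfrak R_F$ by Theorem~\ref{th:DescRCham}. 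To conclude this stage I must verify generic $\mathrm{ext}^1_\Lambda$-vanishing between distinct summands. The simple objects of $\mathscr R_F$ split into pairwise non-linked classes: $\{S_{C,F}, R_{C,F}\}$ (linked via $\dim \Ext^1 = 2$ from Lemma~\ref{le:SRBricks}), a $\mathbb P^1$-family of Hall imaginary simples (each a singleton by Theorem~\ref{th:DescSimpRF}~\ref{it:DSRFa} and Corollary~\ref{co:CompDescSimpRF}), and the $r-1$ singletons $\{I(\gamma, 1)\}$ for $\gamma \in \Gamma \cap \overline F$. Generic $\mathscr R_F$-composition factors of a Hall summand lie in the first two families, while those of $I(\gamma, \lambda_\gamma)$ lie in the third. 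Since $\mathscr R_F$ is closed under extensions in $\Lambda\mmod$, an induction on composition length using the $2$-Calabi-Yau duality $\Ext^2_\Lambda(-,-) \cong \Hom_\Lambda(-,-)^*$ to control the intermediate Yoneda terms propagates $\Ext^1$-vanishing between simples in distinct linkage classes to arbitrary iterated extensions. The canonical decomposition theory of section~\ref{ss:CanonDec} then yields an irreducible component of $\Lambda(\nu)$ in $\mathfrak R_F$.

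Next I would address injectivity via uniqueness of the canonical decomposition. Through the equivalence of Theorem~\ref{th:HallFunc} combined with Proposition~\ref{pr:PptiesI}~\ref{it:PIc}, the indecomposable summands of $\mathfrak H_{C,F}(Z)$ match the indecomposable summands of $Z$ in $\mathfrak A$; by~\eqref{eq:CanDecCoreDom}, each $I(\gamma, \lambda_\gamma)$ decomposes as $\bigoplus_j I(\gamma, \lambda_{\gamma,j})$ indexed by the parts of $\lambda_\gamma$. Corollary~\ref{co:IndecCore} characterizes $I(\gamma, n)$ as the unique indecomposable $\gamma$-core of dimension $n\delta$. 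For $\gamma \in \Gamma \cap \overline F$ such a component cannot arise as a Hall summand: if it did, its generic point would be a $\gamma$-core lying in $\langle S_{C,F}, R_{C,F}\rangle$, and its generic $\mathscr R_F$-composition factors would have to be simples of $\langle S_{C,F}, R_{C,F}\rangle$ (i.e.\ $S_{C,F}$, $R_{C,F}$, or Hall imaginary simples), whereas Proposition~\ref{pr:AltDefCore} forces these factors to be $I(\gamma, 1)$, which belongs to a different $\mathscr R_F$-linkage class. The canonical decomposition therefore separates the Hall summands from the core summands cleanly, so $(Z, (\lambda_\gamma))$ is recovered uniquely from the output component.

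Finally, surjectivity will come from a character comparison. For $\theta \in F$, the positive real roots vanishing on $\theta$ form the type $\widetilde A_1$ subsystem $\{\iota(\alpha_*) + n\delta, \iota(-\alpha_*) + n\delta : n \geq 0\}$, where $\pm \alpha_*$ are the two roots of $\Phi^s$ orthogonal to $F$; Corollary~\ref{co:CntIrrCompRF} then gives
\begin{equation*}
P_{\mathfrak R_F}(t) = \prod_{n \geq 0} \frac{1}{(1 - t^{\iota(\alpha_*) + n\delta})(1 - t^{\iota(-\alpha_*) + n\delta})} \cdot \prod_{n \geq 1} \frac{1}{(1 - t^{n\delta})^r}.
\end{equation*}
On the source side, $P_{\mathfrak A}$ is the Kostant partition function of type $\widetilde A_1$ (since $\mathfrak A$ indexes a basis of $U(\mathfrak n_+^{\widetilde A_1})$), and the dimension-vector substitution $(a,b) \mapsto a\iota(\alpha_*) + b\iota(-\alpha_*)$ coming from $\mathscr H_{C,F}$ converts it into the first factor of the displayed formula times $\prod_{n \geq 1}(1 - t^{n\delta})^{-1}$. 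Using $|\Gamma \cap \overline F| = r - 1$ (the codimension-one face $\overline F$ of the simplicial cone $\overline{C'}$ carries $r-1$ extreme rays), the character $P_{\mathcal P^{\Gamma \cap \overline F}}(t) = \prod_{n \geq 1}(1 - t^{n\delta})^{-(r-1)}$ supplies the remaining imaginary factor, so the product of the two source characters matches $P_{\mathfrak R_F}$ exactly. Combined with the injectivity established above, this forces bijectivity. The hardest part will be the linkage-class analysis in $\mathscr R_F$ that underpins the first two stages: one has to pin down $\Irr \mathscr R_F$ as three families of pairwise non-linked simples, show that the Hall imaginary simples and the core simples $I(\gamma, 1)$ for $\gamma \in \Gamma \cap \overline F$ are distinct in $\mathscr R_F$, and then bootstrap $\Ext^1$-vanishing between simples to iterated extensions via the $2$-Calabi-Yau property.
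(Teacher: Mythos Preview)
Your proposal is correct and follows the same three-stage architecture as the paper's proof: well-definedness via $\ext^1$-vanishing between summands through the linkage-class argument, injectivity via uniqueness of the canonical decomposition, and surjectivity via the character comparison against Corollary~\ref{co:CntIrrCompRF}.

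Two minor points where you over-elaborate relative to the paper. First, the $2$-Calabi-Yau duality is not needed to propagate $\Ext^1$-vanishing from simples to iterated extensions: the long exact sequence in $\Ext^1$ alone already gives that if $\Ext^1(A,X)=\Ext^1(B,X)=0$ and $0\to A\to E\to B\to 0$ is exact then $\Ext^1(E,X)=0$, with no $\Ext^2$ term entering. Second, you aim for a full classification of $\Irr\mathscr R_F$ into three families, whereas the paper's argument needs only the weaker fact that a general point $X\in I(\gamma,1)$ for $\gamma\in\Gamma\cap\overline F$ is simple in $\mathscr R_F$ and forms a singleton linkage class (Theorem~\ref{th:DescSimpRF}~\ref{it:DSRFa}), hence is unlinked to $S_{C,F}$ and $R_{C,F}$; this immediately gives both the $\Ext^1$-vanishing and the disjointness of indecomposable summands required for injectivity, without pinning down every simple of $\mathscr R_F$.
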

\begin{proof}
Let $\gamma\in\Gamma\cap\overline F$ and let $n$ be a positive integer.
Let $T$ be a general point in $I(\gamma,n)$. Then $T$ is the $n$-th
iterated extension of a general point $X\in I(\gamma,1)$. By
Corollary~\ref{co:IndecCore}, $X$ is a $\gamma$-core of
dimension-vector $\delta$, so $X$ is simple in $\mathscr R_\gamma$
by Proposition~\ref{pr:AltDefCore}, so $X$ is simple in $\mathscr R_F$.
By Theorem~\ref{th:DescSimpRF}, $\{X\}$ is a linkage class in
$\Irr\mathscr R_F$, so $X$ is not linked to the modules $S_{C,F}$
and $R_{C,F}$. It follows that $\Ext^1_\Lambda(Y,T)=0$ for any
module $Y$ in the subcategory $\langle S_{C,F},R_{C,F}\rangle$, and
therefore that $\ext^1_\Lambda(\mathfrak H_{C,F}(Z),I(\gamma,n))=0$
for any $Z\in\mathfrak A$. In view of Crawley-Boevey and Schr\"oer's
theory, this implies that our map is well defined.

The argument above also shows that the general point of the
component $I(\gamma,n)$ does not belong to the category
$\langle S_{C,F},R_{C,F}\rangle$, so $I(\gamma,n)$ cannot occur
in the canonical decomposition of $\mathfrak H_{C,F}(Z)$. The
set of indecomposable irreducible components that arise from the
$\mathfrak H_{C,F}(Z)$ is thus disjoint from $\bigl\{I(\gamma,n)\bigm|
\gamma\in\Gamma\cap\overline F,\ n\geq1\bigr\}$. The uniqueness
of the canonical decomposition of an element in $\mathfrak R_F$
implies then that our map is injective.

Finally, we use a counting argument to prove the surjectivity of
our map. Pick $\theta\in F$. If $Z\in\mathfrak A$ has weight
$\mu=(\mu_0,\mu_1)$, then $\mathfrak H_{C,F}(Z)\in\mathfrak B$
has weight $\mathbf K(\mathscr H_{C,F})(\mu)=\mu_0\dimvec
S_{C,F}+\mu_1\dimvec R_{C,F}$. We here note that
$\mathbf K(\mathscr H_{C,F})$ maps the imaginary root
$\delta\in\Delta_+$ to the imaginary root in $\delta\in\Phi_+$;
thus the use of the same notation $\delta$ for both root systems
does not lead to any confusion. Plugging this information into
the character series for $\mathfrak A$, given by the analog for
$\Delta_+$ of \eqref{eq:KostPartFun}, and adding the contribution
of the components $I(\gamma,\lambda_\gamma)$, we get the character
of the image of our map:
$$\prod_{n\in\mathbb N}\left(\frac1{1-t^{\dimvec S_{C,F}+n\delta}}
\times\frac1{1-t^{\dimvec R_{C,F}+n\delta}}\times
\frac1{1-t^{(n+1)\delta}}\right)
\times\prod_{\gamma\in\Gamma\cap\overline F}\left(
\sum_{\lambda_\gamma\in\mathcal P}t^{|\lambda_\gamma|\delta}\right).$$
This is equal to
$$P_{\mathfrak R_\theta}=
\left(\prod_{\substack{\alpha\in\Phi_+^{\re}\\
\langle\theta,\alpha\rangle=0}}\frac1{1-t^\alpha}\right)
\left(\prod_{n\geq1}\frac1{1-t^{n\delta}}\right)^{\!r}$$
(see Corollary~\ref{co:CntIrrCompRF}), which ensures that our map
is surjective.
\end{proof}

Let $(Z,(\lambda_\gamma))\in\mathfrak A\times\mathcal
P^{\Gamma\cap\overline F}$ and denote by $\widetilde Z\in\mathfrak R_F$
its image under the map in Theorem~\ref{th:DescRFacet}. Write each
partition involved as $\lambda_\gamma=(\lambda_{\gamma,1},
\lambda_{\gamma,2},\ldots)$ and denote by $\ell(\lambda_\gamma)$
the number of nonzero parts of $\lambda_\gamma$. Pick general points
$X\in Z$ and $Y_{\gamma,p}\in I(\gamma,\lambda_{\gamma,p})$,
for each $\gamma\in\Gamma\cap\overline F$ and each
$p\in\{1,\ldots,\ell(\lambda_\gamma)\}$.
Then
\begin{equation}
\label{eq:GenPt2Face}
\widetilde X=\mathscr H_{C,F}(X)\oplus\left(\bigoplus_{\gamma\in
\Gamma\cap F}\left(\bigoplus_{1\leq p\leq\ell(\lambda_\gamma)}
Y_{\gamma,p}\right)\right)
\end{equation}
is a general point of $\widetilde Z$. By
Remark~\ref{rk:HNPol}~\ref{it:HNPb} applied to the category
$\mathscr R_F$, the HN polytope of $\widetilde X$ is the Minkowski
sum of the HN polytopes of its summands, that is, the Minkowski sum
of $\mathbf K(\mathscr H_{C,F})_{\mathbb R}(\Pol(X))$ and of
segments that join $0$ to each $[Y_{\gamma,p}]$. Further, if
$i:\mathscr R_F\subseteq\Lambda\mmod$ denotes the inclusion
functor, then $\mathbf K(i)_{\mathbb R}([Y_{\gamma,p}])=p\delta$, so
\begin{equation}
\label{eq:MinkSum2Face}
\mathbf K(i)_{\mathbb R}\bigl(\Pol\bigl(\widetilde X\bigr)\bigr)
=\mathbf K(i\circ\mathscr H_{C,F})_{\mathbb R}(\Pol(X))+
\sum_{\gamma\in\Gamma\cap\overline F}
\bigl[\;0,\;|\lambda_\gamma|\,\delta\;\bigr].
\end{equation}

Pick now $\theta\in F$ and $\Lambda_b\in\mathfrak B$. The bijection
$\Xi_\theta^{-1}$ maps $\Lambda_b$ to say $(\Lambda_{b'},\Lambda_{b''},
\Lambda_{b'''})\in\mathfrak I_F\times\mathfrak R_F\times\mathfrak
P_F$. If $T\in\Lambda_b$ is a general point, then
$T_\theta^{\max}/T_\theta^{\min}$ is a general point in
$\Lambda_{b''}$. Corollary~\ref{co:FacesHN} then says that
the HN polytope of $T_\theta^{\max}/T_\theta^{\min}$, viewed as
an object of $\mathscr R_F$, is the $2$-face of the HN polytope of
$T$ defined by $\theta$. Putting $\widetilde Z=\Lambda_{b''}$ in the
previous paragraph, we then see that this $2$-face can be written as
a Minkowski sum \eqref{eq:MinkSum2Face}. This sum involves
$|\Gamma\cap\overline F|=r-1$ one-dimensional polytopes, which
all point in the direction $\delta$, so we can intuitively regard
it as an MV polytope of type $\widetilde A_1\times\widetilde A_0^{r-1}$
if we equip it with adequate partitions.

We now need to look at these partitions. In particular, we need to
show that the partitions used in Theorem~\ref{th:DescRFacet}
(including those that decorate the polytope $\Pol(X)$) are the same
as the partitions provided by Theorem~\ref{th:DescRCham}, which
decorate $\Pol(T)$.

\subsection{The MV polytope of a component (proof of
Theorem~\ref{th:IntroImag2Face})}
\label{ss:MVPolComp}
For any spherical Weyl chamber $C$, we have bijections
$$\mathfrak I_C\times\mathfrak R_C\times\mathfrak P_C\to\mathfrak B
\quad\text{and}\quad\mathcal P^{\Gamma\cap\overline C}\to\mathfrak
R_C,$$
by Proposition~\ref{pr:NesTorIrrComp} and Theorem~\ref{th:DescRCham}.
Therefore each $\Lambda_b\in\mathfrak B$ provides a tuple of
partitions $(\lambda_\gamma)_{\gamma\in\Gamma\cap\overline C}$.
Concretely, for any $\eta\in C$ and any general point $T$ in
$\Lambda_b$, in the Krull-Schmidt decomposition of
$T_\eta^{\max}/T_\eta^{\min}$, there are as many $\gamma$-cores
of dimension-vector $n\delta$ as parts equal to $n$ in
$\lambda_\gamma$.

A priori, $\lambda_\gamma$ depends on $b$, on $\gamma$ and on $C$,
but in fact it only depends on $b$ and $\gamma$. Our aim now is to
show this fact. To this end we fix $b$ and we indicate the potential
dependence on $C$ by writing $\lambda_\gamma(C)$.

Let us study what happens around a facet $F$. We thus consider a flag
$(C',F)$, whence a functor $\mathscr H_{C',F}$ to which we apply
Theorem~\ref{th:DescRFacet}. Tracing $\Lambda_b\in\mathfrak B$ through
the bijections
$$\mathfrak I_F\times\mathfrak R_F\times\mathfrak P_F\to\mathfrak B
\quad\text{and}\quad\mathfrak A\times\mathcal P^{\Gamma\cap
\overline F}\to\mathfrak R_F$$
given by Proposition~\ref{pr:NesTorIrrComp} and
Theorem~\ref{th:DescRFacet}, we get $(Z,(\lambda_\gamma(F)))\in
\mathfrak A\times\mathcal P^{\Gamma\cap\overline F}$. Concretely,
for any $\theta\in F$ and any general point $T$ in $\Lambda_b$,
we can write
$$T_\theta^{\max}/T_\theta^{\min}\cong\mathscr H_{C,F}(X)\oplus
\left(\bigoplus_{\gamma\in\Gamma\cap F}\left(\bigoplus_{1\leq
p\leq\ell(\lambda_\gamma)}Y_{\gamma,p}\right)\right)$$
where the notations are as in equation~\eqref{eq:GenPt2Face}.
In addition, with the notation of section~\ref{ss:TypeTildeA1}, we
can look at the $\Pi$-module $Y_{\gamma'}^{\max}/Y_{\gamma'}^{\min}$;
this is the general point of an irreducible component $I(\lambda')$.
Likewise, the $\Pi$-module $Y_{\gamma''}^{\max}/Y_{\gamma''}^{\min}$
is the general point of an irreducible component $I(\lambda'')^*$.
Finally, let $C''$ be the other spherical Weyl chamber bordered by~$F$.

\begin{proposition}
\label{pr:RedImagTwoFace}
In the context above,
$$\lambda_{\gamma_{C'/F}}(C')=\lambda',\quad
\lambda_{\gamma_{C''/F}}(C'')=\lambda'',$$
and for each $\gamma\in\Gamma\cap\overline F$,
$$\lambda_\gamma(C')=\lambda_\gamma(C'')=\lambda_\gamma(F).$$
\end{proposition}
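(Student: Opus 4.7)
My strategy is to pick $\theta' \in C'$ close to $\theta$ and compute the $\theta'$-semistable subquotient $T_{\theta'}^{\max}/T_{\theta'}^{\min}$ by reading it off from the decomposition \eqref{eq:GenPt2Face} of $\widetilde X = T_\theta^{\max}/T_\theta^{\min}$. Matching the outcome with Theorem~\ref{th:DescRCham} will yield the partitions $(\lambda_\gamma(C'))_{\gamma \in \Gamma \cap \overline{C'}}$, and the case of $C''$ will follow by a parallel argument after a sign flip.

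The first step is to invoke Proposition~\ref{pr:SubfacPert} with $\eta = \theta'$: for $m$ large, $T_{m\theta+\theta'}^{\min}$ and $T_{m\theta+\theta'}^{\max}$ are the pull-backs along $T_\theta^{\max} \twoheadrightarrow \widetilde X$ of the torsion subobjects of $\widetilde X$ in $\mathscr R_F$ relative to the torsion pairs $(\mathscr I_{\theta'}, \overline{\mathscr P}_{\theta'})$ and $(\overline{\mathscr I}_{\theta'}, \mathscr P_{\theta'})$. Since $m\theta + \theta' \in C'$, this reduces the problem to computing the $\theta'$-semistable subquotient of $\widetilde X$ inside $\mathscr R_F$. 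Because torsion and torsion-free classes are stable under direct sums, the decomposition \eqref{eq:GenPt2Face} lets me treat each summand separately: each $Y_{\gamma,p}$ with $\gamma \in \Gamma \cap \overline F$ is a $\gamma$-core, and $\overline F \subseteq \overline{C'}$ implies $Y_{\gamma,p} \in \mathscr R_{C'}$, so it is $\theta'$-semistable and passes through unchanged; only the Hall image $\mathscr H_{C',F}(Y)$, where $Y$ denotes a general point of $Z$ in the sense of the proposition statement, needs to be further truncated.

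The key step is to identify the $\theta'$-semistable subquotient of $\mathscr H_{C',F}(Y)$ in $\mathscr R_F$ with $\mathscr H_{C',F}\bigl(Y_{\gamma'}^{\max}/Y_{\gamma'}^{\min}\bigr)$. Since $\mathscr H_{C',F}$ is an equivalence onto its essential image (Theorem~\ref{th:HallFunc}), it suffices to show that the pull-back of $\theta'$ along $\mathbf K(\mathscr H_{C',F})$ is a positive multiple of the functional $\gamma'$ on $\mathbf K(\Pi\mmod)$. For this I would decompose $\theta' = \theta'_F + c\,\gamma_{C'/F}$ with $c>0$ and $\theta'_F$ in the linear span of $F$: since $\dimvec S_{C',F}$ and $\dimvec R_{C',F}$ are of the form $\iota(\pm\alpha)$ for the spherical root $\alpha$ vanishing on $F$, the component $\theta'_F$ annihilates them, while the normalization \eqref{eq:NormSRCF} forces $\theta'$ to evaluate to $+c$ and $-c$ on them. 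The pull-back is therefore exactly $c\gamma'$, proving the claim. Applying the hypothesis that $Y_{\gamma'}^{\max}/Y_{\gamma'}^{\min}$ is a general point of $I(\lambda')$ and invoking Corollary~\ref{co:CoresAgree}, the Hall-image contribution produces a general point of $I(\gamma_{C'/F}, \lambda')$.

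Assembling the two types of summands, $T_{m\theta+\theta'}^{\max}/T_{m\theta+\theta'}^{\min}$ is a general point of the component obtained by summing $I(\gamma_{C'/F}, \lambda')$ with the $I(\gamma, \lambda_\gamma(F))$ for $\gamma \in \Gamma \cap \overline F$, and Theorem~\ref{th:DescRCham} then reads off $\lambda_{\gamma_{C'/F}}(C') = \lambda'$ and $\lambda_\gamma(C') = \lambda_\gamma(F)$ for $\gamma \in \Gamma \cap \overline F$. The $C''$ case is handled by the same argument applied to a $\theta'' \in C''$: because $\gamma_{C''/F} = -\gamma_{C'/F}$, the pull-back of $\theta''$ is now a negative multiple of $\gamma'$, so $\theta''$-semistability on the Hall image corresponds to $\gamma''$-semistability in $\Pi\mmod$, selecting the subquotient $Y_{\gamma''}^{\max}/Y_{\gamma''}^{\min}$, which is a general point of $I(\lambda'')^*$ by hypothesis; Corollary~\ref{co:DualHallFunc} sends this to a general point of $I(\gamma_{C''/F}, \lambda'')$, completing the argument. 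The main obstacle throughout is this compatibility step: verifying that the Hall equivalence intertwines $\theta'$-semistability in $\mathscr R_F$ with $\gamma'$-semistability in $\Pi\mmod$, and in particular that the sign convention matches the assignment $(C',C'') \leftrightarrow (\gamma',\gamma'')$ imposed by the normalization \eqref{eq:NormSRCF}.
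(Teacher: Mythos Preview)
Your proposal is correct and follows essentially the same route as the paper: apply Proposition~\ref{pr:SubfacPert} to reduce to the $\theta'$-semistable subquotient of $\widetilde X$, observe that the core summands $Y_{\gamma,p}$ pass through unchanged, and compute the pull-back of $\theta'$ along $\mathbf K(\mathscr H_{C',F})$ via \eqref{eq:NormSRCF} to see that the Hall summand truncates to $\mathscr H_{C',F}(X_{\gamma'}^{\max}/X_{\gamma'}^{\min})$, with Corollary~\ref{co:DualHallFunc} handling the $C''$ side. The only difference is cosmetic: the paper chooses the perturbation to be $\gamma_{C'/F}$ itself (so the pull-back is exactly $\gamma'$ rather than a positive multiple), and uses $X$ rather than your $Y$ for the general point of $Z$.
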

\begin{proof}
Take $m$ large enough and consider $\eta=m\theta+\gamma_{C'/F}$.

We first note that $\gamma'=\eta\circ\mathbf K(\mathscr H_{C',F})$.
In fact, $\mathbf K(\mathscr H_{C',F})$ maps a dimension-vector
$\mu$ in $\mathbf K(\Pi\mmod)$ to the element $\mu_0\dimvec S_{C',F}+
\mu_1\dimvec R_{C',F}$ of $\mathbf K(\Lambda\mmod)$.
Using~\eqref{eq:NormSRCF}, we then see that $\eta\circ\mathbf
K(\mathscr H_{C',F})$ maps $\mu$ to $\mu_0-\mu_1$, as does
$\gamma'$ (see section~\ref{ss:TypeTildeA1}).

Now $\eta$ is an element of $C'$, so the module
$T_\eta^{\max}/T_\eta^{\min}$ bears the information about the
partitions $\lambda_\gamma(C')$. Proposition~\ref{pr:SubfacPert}
explains how this module can be obtained from
$T_\theta^{\max}/T_\theta^{\min}$. In the process, the summands
$Y_{\gamma,p}$ stay unchanged, because they already belong to
$\mathscr R_{C'}$. By contrast, $\mathscr H_{C',F}(X)$
is truncated to its subquotient
$X'=\mathscr H_{C',F}(X_{\gamma'}^{\max}/X_{\gamma'}^{\min})$,
which is a general point of the component
$\mathfrak H_{C',F}(I(\lambda'))=I(\gamma_{C'/F},\lambda')$.
We thus have
$$\lambda_\gamma(C')=\begin{cases}
\lambda_\gamma(F)&\text{if }\gamma\in\Gamma\cap\overline F,\\
\lambda'&\text{if }\gamma=\gamma_{C'/F}.\end{cases}$$
The partitions $\lambda_\gamma(C'')$ are computed in a similar
fashion, using Corollary~\ref{co:DualHallFunc} at the last step.
\end{proof}

Proposition~\ref{pr:RedImagTwoFace} asserts in particular that
$\lambda_\gamma(C')=\lambda_\gamma(C'')$ if $C'$ and $C''$
are two adjacent spherical Weyl chambers whose closures contain
$\gamma$. This implies that $\lambda_\gamma(C)$ is independent
of $C$, assuming of course that $\gamma\in\overline C$.

To an irreducible component $\Lambda_b\in\mathfrak B$, we may thus
associate a family of partitions $(\lambda_\gamma)\in\mathcal P^\Gamma$.
In addition, by Proposition~\ref{pr:PolConstr}~\ref{it:PCb},
$\Lambda_b$ contains a dense open subset on which the map
$T\mapsto\Pol(T)$ is constant. As in the introduction, we denote by
$\widetilde\Pol(b)$ the datum of this constant value $\Pol(T)$ and
of the family of partitions $(\lambda_\gamma)$.

We have shown that $\widetilde\Pol(b)$ belongs to the set
$\mathcal{MV}$ of decorated lattice polytopes defined in
section~\ref{ss:TwoFaces}:
\begin{itemize}
\item
The polytope is GGMS, by Corollary~\ref{co:SuppFunTitsFan},
so its normal fan is coarser than~$\mathscr W$.
\item
Given a spherical Weyl chamber $C$, the partitions $(\lambda_\gamma)$
for $\gamma\in\overline C$ describe the Krull-Schmidt decomposition of
$T_\theta^{\max}/T_\theta^{\min}$, where $T$ is general in $\Lambda_b$
and $\theta\in C$. Therefore
$$\left(\sum_{\gamma\in\Gamma\cap\overline C}|\lambda_\gamma|\right)
\delta=\dimvec T_\theta^{\max}/T_\theta^{\min}.$$
\item
Each $2$-face of finite type is constrained by the relations in
Propositions~\ref{pr:LusPLBij1} and~\ref{pr:LusPLBij2}, and each
$2$-face of affine type is an MV polytope of type
$\widetilde A_1\times\widetilde A_0^{r-1}$, as explained at the
end of section~\ref{ss:Cores} and in the discussion following
Proposition~\ref{pr:RedImagTwoFace}.
\end{itemize}

The last point in the list above proves Theorem~\ref{th:IntroImag2Face}.

\subsection{Lusztig data (proof of Theorem~\ref{th:IntroLusDat})}
\label{ss:LusDatComp}
We are now ready to prove Theorem~\ref{th:IntroLusDat}. However,
first we need to precisely define the map $\Omega_{\preceq}$
(i.e.\ the map taking an element of $B(-\infty)$ to its Lusztig
data) in affine type.

Fix a convex order $\preccurlyeq$. Given a weight
$\nu\in\mathbb NI$, we set $E_\nu=\bigl\{\alpha\in\Phi_+^{\re}
\sqcup\{\delta\}\bigm|\height\alpha\leq\height\nu\bigr\}$.
Enumerate the elements in $E_\nu$ in decreasing order:
$\beta_1\succ\beta_2\succ\cdots\succ\beta_\ell$. For $1\leq k\leq\ell$,
set $A_k=\{\alpha\in\Phi_+\mid\alpha\succ\beta_k\}$ and
$B_k=\{\alpha\in\Phi_+\mid\alpha\succcurlyeq\beta_k\}$. These
biconvex subsets provide a nested family of torsion pairs
(here we write only the torsion classes):
$$\{0\}\subseteq\mathscr T(A_1)\subseteq\mathscr T(B_1)\subseteq
\mathscr T(A_2)\subseteq\cdots\subseteq\mathscr T(A_\ell)\subseteq
\mathscr T(B_\ell)\subseteq\Lambda\mmod.$$

On a $\Lambda$-module $T$, this induces a filtration
\begin{equation}
\label{eq:FilConvOrd}
0\subseteq T_1\subseteq\overline T_1\subseteq T_2\subseteq
\cdots\subseteq T_\ell\subseteq\overline T_\ell\subseteq T,
\end{equation}
with $\overline T_k/T_k\in\mathscr F(A_k)\cap\mathscr T(B_k)$.
If $\dimvec T=\nu$, then by Lemma~\ref{le:ContTPBiconv} the only
jumps in the filtration~\eqref{eq:FilConvOrd} occur between a $T_k$
and the corresponding $\overline T_k$. If moreover $T$ is a general
point in an irreducible component $Z\in\mathfrak B(\nu)$, then by
Proposition~\ref{pr:NesTorIrrComp} each subquotient $\overline T_k/T_k$
will be a general point in an irreducible component
$Z_k\in\mathfrak F(A_k)\cap\mathfrak T(B_k)$, and furthermore the map
\begin{equation}
\label{eq:BijLuszData}
\mathfrak B(\nu)\to\left\{(Z_1,\ldots,Z_\ell)\in
\prod_{k=1}^\ell\Bigl(\mathfrak F(A_k)\cap\mathfrak T(B_k)\Bigr)
\left|\;\sum_{k=1}^\ell\wt Z_k=\nu\right.\right\}
\end{equation}
is a bijection.

By Proposition~\ref{pr:TPAdjBiconv}, if $\beta_k$ is real, then
$\mathscr F(A_k)\cap\mathscr T(B_k)=\add L(A_k,B_k)$, where
$L(A_k,B_k)$ is a rigid $\Lambda$-module of dimension-vector
$\beta_k$. Therefore $\mathfrak F(A_k)\cap\mathfrak T(B_k)$ is
in one-to-one correspondence with $\mathbb N$: to a natural
number $n$ corresponds the closure in $\Lambda(n\beta_k)$ of
the orbit that represents $L(A_k,B_k)^{\oplus n}$.

On the other hand, if $\beta_k=\delta$, then we can find a spherical
Weyl chamber $C$ such that $(A_k,B_k)=(A_\theta^{\min},A_\theta^{\max})$
for $\theta\in C$ (Lemma~\ref{le:DeltaAdj}~\ref{it:DAa}),
and then $\mathscr F(A_k)\cap\mathscr T(B_k)=\mathscr R_C$
(Proposition~\ref{pr:AThetaTor}), whence
$\mathfrak F(A_k)\cap\mathfrak T(B_k)=\mathfrak R_C$. Further,
Theorem~\ref{th:DescRCham} provides a bijection between
$\mathfrak R_C$ and~$\mathcal P^{\Gamma\cap\overline C}$.

The bijection \eqref{eq:BijLuszData} can thus be rewritten as
$$\mathfrak B(\nu)\to\left\{((n_\beta),(\lambda_\gamma))\in\mathbb
N^{E_\nu\cap\Phi_+^{\re}}\times\mathcal P^{\Gamma\cap\overline C}
\left|\;\sum_{\beta\in E_\nu\cap\Phi_+^{\re}}n_\beta\beta+
\left(\sum_{\gamma\in\Gamma\cap\overline C}|\lambda_\gamma|\right)
\delta=\nu\right.\right\}.$$
Letting $\nu$ run over $\mathbb NI$ and assembling the resulting
maps, we get a bijection
$$\Omega_\preccurlyeq:\mathfrak B\to\mathbb N^{(\Phi_+^{\re})}\times
\mathcal P^{\Gamma\cap\overline C},$$
proving Theorem~\ref{th:IntroLusDat}.

One may here observe that the map $\Omega_{\mathbf i}$
constructed in section~\ref{ss:TiltCrysOp} gives the beginning of
$\Omega_\preccurlyeq$ when the smallest roots for $\preccurlyeq$
are, in order
$$\alpha_{i_1},\ s_{i_1}\alpha_{i_2},\ s_{i_1}s_{i_2}\alpha_{i_3},\
\ldots$$
In addition, Remark~\ref{rk:RemLusParTop}~\ref{it:RLPTb} and
Propositions~\ref{pr:LusPLBij1} and~\ref{pr:LusPLBij2} justify
referring to the bijection $\Omega_{\mathbf i}$ as the partial
Lusztig datum in direction $\mathbf i$. We are thus led to regard
$\Omega_\preccurlyeq$ as the Lusztig datum in direction~$\preccurlyeq$.
A further justification of this terminology is the fact that the
components of $\Omega_\preccurlyeq(b)$ can be read as the lengths
and the decorations of the edges on the path in the $1$-skeleton
of $\widetilde\Pol(b)$ defined by $\preccurlyeq$.

\subsection{Proof of Theorem~\ref{th:IntroMV}}
\label{ss:PfThIntroMV}
\trivlist
\item[\hskip\labelsep{\itshape Proof of the injectivity of
$\widetilde\Pol$.}]\upshape
Let us choose a convex order $\preccurlyeq\,$, and let $C$
be the spherical Weyl chamber whose positive root system is
$\pi(\{\beta\in\Phi_+^{\re}\mid\beta\succ\delta\})$.

An element $\widetilde P\in\mathcal{MV}$ is the datum
of a GGMS polytope $P$ and of a family of partitions
$(\lambda_\gamma)\in\mathcal P^\Gamma$, subject to certain
conditions. To $P$ and $\preccurlyeq$, the construction at the end
of section~\ref{ss:GGMSPol} associates a finitely supported family
of non-negative integers $(n_\alpha)$, indexed by $\Phi_+^{\re}$;
specifically, $n_\alpha$ is the length of the edge parallel to
$\alpha$ on the path in the $1$-skeleton of $P$ defined by
$\preccurlyeq$. Adding to this datum the partitions
$\lambda_\gamma$ for all $\gamma$ in $\Gamma\cap\overline C$, we
get an element $\widehat\Omega_\preccurlyeq(\widetilde P)\in\mathbb
N^{(\Phi_+^{\re})}\times\mathcal P^{\Gamma\cap\overline C}$.

Proposition~\ref{pr:TorsionBiconv} shows that the resulting map
$\widehat\Omega_\preccurlyeq$ is compatible with the map
$\Omega_\preccurlyeq$ constructed in the previous section, in the
sense that the diagram
\begin{equation}
\label{eq:CompLusDat}
\raisebox{25pt}{\xymatrix@C=1em@R=3em{\mathfrak
B\ar[rr]^{\widetilde\Pol}\ar[dr]_(.4){\Omega_\preccurlyeq}
&&\mathcal{MV}\ar[dl]^(.4){\widehat\Omega_\preccurlyeq}\\&
\mathbb N^{(\Phi_+^{\re})}\times\mathcal P^{\Gamma\cap\overline C}&}}
\end{equation}
commutes. The injectivity of $\widetilde\Pol$ then follows
from the injectivity of $\Omega_\preccurlyeq$.
\nobreak\noindent$\square$
\endtrivlist

\trivlist
\item[\hskip\labelsep{\itshape Proof of the surjectivity of
$\widetilde\Pol$.}]\upshape
As explained in Example~\ref{ex:ExConvexOrder}~\ref{it:ECOb},
a sufficiently general $\theta\in(\mathbb RI)^*$ defines a convex
order. The precise condition is that $\theta$ avoids the countably
many hyperplanes
$$H_{\alpha,\beta}=\{\theta\in(\mathbb RI)^*\mid\theta(\alpha)/
\height(\alpha)=\theta(\beta)/\height(\beta)\},$$
where $(\alpha,\beta)$ runs over pairs of non-proportional roots.
We denote the collection of all these hyperplanes by $\mathscr X$.
For simplicity, we will denote by $\widehat{\Omega}_\theta$
the map $\widehat\Omega_\preccurlyeq$ relative to the order
$\preccurlyeq$ defined by such a $\theta$.

Consider an MV polytope $\widetilde P=(P,(\lambda_\gamma))$.
Every vertex (respectively, every imaginary edge) of $P$ is a
face $P_\theta$, and $\theta\in(\mathbb RI)^*$ can certainly be
chosen outside all the hyperplanes in the collection~$\mathscr X$.
The position of this vertex (respectively, the partitions
relative to this imaginary edge) is then determined
by $\widehat\Omega_\theta(\widetilde P)$. Thus $\widetilde P$
is determined by the datum of
$\widehat\Omega_\theta(\widetilde P)$ for all possible~$\theta$.

Now fix an element $\eta_0\in(\mathbb RI)^*$ outside the
hyperplanes in the collection $\mathscr X$. We will show that
for any general $\eta_1\in(\mathbb RI)^*$ and any MV polytope
$\widetilde P$, the datum $\widehat\Omega_{\eta_1}(\widetilde P)$
can unambiguously be determined from
$\widehat\Omega_{\eta_0}(\widetilde P)$ by a
rule which depends on $\widetilde P$ only through its weight.
Together with the argument in the previous paragraph, this
establishes the injectivity of the map $\widehat\Omega_{\eta_0}$.
Chasing in the diagram~\eqref{eq:CompLusDat} relative to the order
$\preccurlyeq$ defined by $\eta_0$, we can then deduce the desired
result from the surjectivity of the map $\Omega_\preccurlyeq$.

Let $\widetilde P=(P,(\lambda_\gamma))$ be an MV polytope,
let $\nu$ be the weight of $\widetilde P$, and as in
section~\ref{ss:LusDatComp}, let
$E_\nu=\bigl\{\alpha\in\Phi_+^{\re}\sqcup\{\delta\}\bigm|
\height\alpha\leq\height\nu\bigr\}$. The datum
$\widehat\Omega_\preccurlyeq(\widetilde P)$
depends on the choice of the convex order $\preccurlyeq$
only through the restriction of $\preccurlyeq$ to $E_\nu$.
Therefore, as a function of $\theta$, the datum
$\widehat\Omega_\theta(\widetilde P)$ only changes when
$\theta$ crosses an hyperplane $H\in\mathscr X_\nu$, where
$\mathscr X_\nu$ denotes the collection of all hyperplanes
$H_{\alpha,\beta}$ with $(\alpha,\beta)\in(E_\nu)^2$.

Pick $\eta_1\in(\mathbb RI)^*$ outside all the hyperplanes of
the collection $\mathscr X$, and let $(\eta_t)$ be a piecewise
linear path in $(\mathbb RI)^*$ that connects $\eta_0$ to $\eta_1$,
general enough so that $\eta_t$ lies on a hyperplane of the
collection $\mathscr X_\nu$ at only finitely many times
$0<t_1<\cdots<t_m<1$, and never lies at once on two such
hyperplanes. Set $t_0=0$ and $t_{m+1}=1$. For a fixed
$j\in\{0,\ldots,m\}$, the data $\widehat\Omega_{\eta_t}(\widetilde P)$
for $t\in(t_j,t_{j+1})$ are all one and the same. We denote
this datum by $\widehat\Omega_j$ and observe that
$\widehat\Omega_{\eta_0}(\widetilde P)=\widehat\Omega_0$
and $\widehat\Omega_{\eta_1}(\widetilde P)=\widehat\Omega_m$.
For $j\in\{1,\ldots,m\}$, the data $\widehat\Omega_{j-1}$ and
$\widehat\Omega_j$ record the lengths and the partitions along
two paths in the $1$-skeleton of $P$ which coincide almost
everywhere, the only difference between these paths being that they
may traverse the opposite sides of the $2$-face $P_\theta$, where
$\theta=\eta_{t_j}$. But $\widetilde P$ is an MV polytope, so its
$2$-faces are constrained: by Propositions~\ref{pr:LusPLBij1}
and~\ref{pr:LusPLBij2} in the case where $P_\theta$ is
a $2$-face of finite type, and by Theorem~\ref{th:DescRFacet} and
Proposition~\ref{pr:RedImagTwoFace} in the case where $P_\theta$ is
a $2$-face of affine type, the datum $\widehat\Omega_{j-1}$ determines
$\widehat\Omega_j$. Thus $\widehat\Omega_{\eta_1}(\widetilde P)$
is determined by $\widehat\Omega_{\eta_0}(\widetilde P)$, as
announced.
\nobreak\noindent$\square$
\endtrivlist

\appendix{Restriction to the tame quiver}
The path algebra $KQ$ of an acyclic quiver $Q$ can be seen as a
subalgebra of the completed preprojective algebra $\Lambda_Q$ of
$Q$. In our present situation of an extended Dynkin diagram, $Q$
is tame, so its representation theory is very well understood,
thanks to the work of Dlab and Ringel. In this appendix, we discuss
our constructions in terms of the representation theory of $Q$.

We begin with a refinement to Theorem~\ref{th:DescSimpRF}
in the case where $F$ is a minimal face, that is, the ray generated
by a spherical chamber coweight $\gamma$. For $(\mu,\nu)\in(\mathbb ZI)^2$,
we write $\mu\geq\nu$ if $\mu-\nu\in\mathbb NI$.

\begin{proposition}
\label{pr:SumDimVecSimp}
Let $F$ be a ray of the spherical Weyl fan and let $L$
be a linkage class of simple objects in $\mathscr R_F$.
Then $\sum_{S\in L}\dimvec S\leq\delta$.
\end{proposition}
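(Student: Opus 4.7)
The plan is to apply the dichotomy of Theorem~\ref{th:DescSimpRF} to reduce to the case of rigid simples, then transfer to the tame path algebra $KQ\subset\Lambda$ and invoke the Dlab--Ringel tube decomposition of regular representations of~$Q$.

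If some $S\in L$ has $\dimvec S$ a positive multiple of $\delta$, then Theorem~\ref{th:DescSimpRF}~\ref{it:DSRFa} forces $L=\{S\}$ and Corollary~\ref{co:CompDescSimpRF} gives $\dimvec S=\delta$, so the inequality holds with equality. I may therefore assume that every $S\in L$ is rigid with $\dimvec S\in\iota(\Phi^s)$. I would then analyze the restriction $R(S)$ of $S$ along the exact forgetful functor $R\colon\Lambda\mmod\to KQ\mmod$. Using that $\dimvec S$ is minimal in its $\delta$-coset of real roots (the defining property of $\iota(\Phi^s)$), together with the $\theta$-semistability of $S$ at a point $\theta\in F=\mathbb R_{>0}\gamma$, I would argue that $R(S)$ is a simple regular $KQ$-module: the preprojective and preinjective components are ruled out by semistability since their dimension-vectors are not bounded in projection to $\mathfrak t^*$ and fail to satisfy $\langle\theta,{-}\rangle=0$, and the minimality combined with the fact that $(\dimvec S,\dimvec S)=2$ pins down $R(S)$ as a regular simple rather than a higher regular length module.

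The conclusion then follows from Dlab--Ringel: the regular subcategory of $KQ\mmod$ decomposes as $\bigsqcup_{\mu\in\mathbb P^1_K}\mathcal T_\mu$, where $\mathcal T_\mu$ is a tube of some rank $p_\mu$ containing exactly $p_\mu$ regular simples $E^{(\mu)}_1,\ldots,E^{(\mu)}_{p_\mu}$, satisfying $\sum_{j=1}^{p_\mu}\dimvec E_j^{(\mu)}=\delta$, with the crucial orthogonality $\Hom_{KQ}(E,E')=\Ext^1_{KQ}(E,E')=0$ for $E\in\mathcal T_\mu$ and $E'\in\mathcal T_{\mu'}$ with $\mu\neq\mu'$. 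Transporting this orthogonality back to $\Lambda$, I would show that if $S,S'\in L$ have restrictions in distinct tubes then $\Ext^1_\Lambda(S,S')=0$, contradicting linkage. Hence $R$ maps $L$ injectively into a single tube $\mathcal T_\mu$, and as dimension-vectors are preserved by restriction,
$$\sum_{S\in L}\dimvec S\;=\;\sum_{S\in L}\dimvec R(S)\;\leq\;\sum_{j=1}^{p_\mu}\dimvec E_j^{(\mu)}\;=\;\delta.$$

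The principal obstacle will be the $\Ext^1$-comparison between $\Lambda$ and $KQ$: the restriction functor is not $\Ext$-preserving, so tube separation in $\Ext^1_{KQ}$ has to be lifted to $\Ext^1_\Lambda$-vanishing. The bridge is the $2$-Calabi--Yau property of $\Lambda\mmod$ together with the projective resolution~\eqref{eq:GLSComp1}: an induction/coinduction adjunction between $\Lambda\mmod$ and $KQ\mmod$, combined with the antisymmetric pairing $\tau_1$ from section~\ref{ss:ProjRes}, should show that a nonzero class in $\Ext^1_\Lambda(S,S')$ detects a nonzero class in $\Ext^1_{KQ}(R(S),R(S'))$ (up to a correction supported on the regular part), forcing $R(S)$ and $R(S')$ into the same tube. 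Making this spectral-sequence-type argument clean is the delicate technical heart of the proof; once it is done, the rest is routine bookkeeping.
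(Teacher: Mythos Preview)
Your approach has a genuine gap: it only works when the ray $F$ is spanned by a coweight of the form $\gamma_\Omega$ for some acyclic orientation $\Omega$ of the affine diagram. The preprojective/regular/preinjective trichotomy for $KQ$-modules is governed by the defect $\langle\delta,?\rangle_Q=\gamma_\Omega$, not by your chosen $\theta\in F$; so your claim that ``$\theta$-semistability rules out preprojective and preinjective summands of $R(S)$'' is only valid when $\theta$ is a positive multiple of $\gamma_\Omega$. But by Proposition~\ref{pr:AssChambCow}, outside type $\widetilde A$ the map $\Omega\mapsto\gamma_\Omega$ is not surjective onto $\Gamma$: in type $\widetilde D_n$ every $\gamma_\Omega$ is $W_0$-conjugate to $\varpi_{n-2}$, and in types $\widetilde E$ to the coweight at the branching node. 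A ray such as $\mathbb R_{>0}\varpi_1$ in type $\widetilde D_4$ is simply not reached by any orientation, and reflection functors cannot help since they move you within a single $W_0$-orbit. The paper's proof avoids this by working entirely inside $\Lambda\mmod$: after reducing via reflection functors to $F=\mathbb R_{>0}\varpi_i$ for an \emph{arbitrary} $i\in I_0$, it explicitly classifies the rigid simples of $\mathscr R_F$ as the $S_j$ for $j\in I_0\setminus\{i\}$ together with one module $R_J$ for each connected component $J$ of $I_0\setminus\{i\}$, and computes each linkage-class sum as $\delta-\widetilde\alpha_J+\sum_{j\in J}\alpha_j\leq\delta$.

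Incidentally, the step you flag as the ``principal obstacle'' is not one: if $R(S)$ and $R(S')$ lie in distinct tubes then the Dlab--Ringel orthogonality gives $\langle\dimvec S,\dimvec S'\rangle_Q=\langle\dimvec S',\dimvec S\rangle_Q=0$, hence $(\dimvec S,\dimvec S')=0$, and Crawley-Boevey's formula~\eqref{eq:CrawleyBoeveyForm} together with Schur's lemma in $\mathscr R_F$ immediately yields $\Ext^1_\Lambda(S,S')=0$. No spectral sequence or adjunction is needed. So your argument does go through cleanly in type $\widetilde A$ (and for the special rays $\mathbb R_{>0}\gamma_\Omega$ in other types, cf.\ Remark~\ref{rk:CompPrepQuiv}~\ref{it:CPQb}); it just does not cover all rays.
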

\begin{proof}
Theorem~\ref{th:DescSimpRF} distinguishes two kinds of simple objects,
described in its assertions~\ref{it:DSRFa} and~\ref{it:DSRFb}.
For the objects of the first kind, the desired property is proved
in Corollary~\ref{co:CompDescSimpRF}. In the sequel of this proof,
we consider the other case, when the dimension-vectors of objects
in $L$ belong to $\iota(\Phi^s)$.

We choose an extending vertex in $I$ and we set $I_0=I\setminus\{0\}$.
The spherical root system $\Phi^s$ is then endowed with a basis,
namely $\{\pi(\alpha_i)\mid i\in I_0\}$, whence a positive system
$\Phi^s_+$, and a dominant spherical Weyl chamber $C_0^s$.
As in section~\ref{ss:SetupAffTyp}, we identify the spherical Weyl
group $W_0$ with the parabolic subgroup
$\langle s_i\mid i\in I_0\rangle$ of $W$.

First consider the case where $F\subseteq\overline{C_0^s}$. Then
$F$ is spanned by a certain spherical fundamental coweight $\varpi_i$,
with~$i\in I_0$.

Given a connected component $J$ of $I_0\setminus\{i\}$, we can look
at the root system $\Phi_J=\Phi\cap\mathbb ZJ$. This root system is
finite and irreducible and comes with a natural basis, so it has a
largest root $\widetilde\alpha_J$. By the dual statement of
\cite{Crawley-Boevey00}, Lemma~2 (2), there is a unique
$\Lambda$-module with socle $S_0$ and dimension-vector
$\delta-\widetilde\alpha_J$; we denote it by $R_J$.

We claim that the head of $R_J$ is isomorphic to $S_i$. In fact,
$S_0$ does not occur in the head of $R_J$; otherwise, $S_0$ would
be a direct factor of $R_J$ (because it occurs in the socle of $R_J$
and its Jordan-H\"older multiplicity in $R_J$ is one), which is ruled
out by the fact that $R_J$ is indecomposable (the socle of $R_J$ is
simple) of dimension-vector $\neq\alpha_0$. If $S_j$ occurs in
the head of $R_J$, then we can produce a $\Lambda$-module $X$
with socle $S_0$ and dimension-vector $\dimvec R_J-\alpha_j$; the
latter is then a root, by \cite{Crawley-Boevey00}, Lemma~2 (1),
and therefore $\widetilde\alpha_J+\alpha_j$ is a root; this forces
$j=i$. If $S_i$ occurred twice in the head of $R_J$,
then $\dimvec R_J-2\alpha_i$ would be a root, so
$\widetilde\alpha_J+2\alpha_i$ would be a root, which is
impossible because the root system $\Phi^s$ is simply laced.

Next we claim that $R_J$ is a simple object in $\mathscr R_F$. To
prove that, it suffices to show that $R_J$ is $\varpi_i$-stable, in
other words, that $\langle\varpi_i,\dimvec R_J\rangle=0$ and
that $\langle\varpi_i,\dimvec(R_J/X)\rangle>0$ for all proper
submodules $X$ of $R_J$. The first equation comes from the fact
that $\Phi_J$ is contained in $\ker\varpi_i$. To prove the second
equation, we observe that $S_0$ is not a Jordan-H\"older
component of $R_J/X$ (because $X$ contains the unique copy of
$S_0$ in $R_J$), so the simple components in $R_J/X$ are $S_j$
with $j\in I_0$, and $S_i$ appears at least once in $R_J/X$.

In addition, the modules $S_j$, for $j\in I_0\setminus\{i\}$, are
also simple objects in $\mathscr R_F$. We now claim that the modules
$R_J$ and $S_j$ are all the simple objects in $\mathscr R_F$
whose dimension-vectors are in $\iota(\Phi^s)$.

Indeed, let $T\in\Irr\mathscr R_F$ such that $\dimvec T\in\iota(\Phi^s_+)$.
The vector space $T_0$ attached to the extending vertex is thus zero.
The vector space $T_i$ attached to $i$ is then also zero, for
$\varpi_i(\dimvec T)=0$. Thus $T$ is an iterated extension of the
modules $S_j$ with $j\in I_0\setminus\{i\}$. Since all these modules
belong to $\mathscr R_F$, we conclude that $T$ is one of these $S_j$.

On the other hand, let $T\in\Irr\mathscr R_F$ such that
$\beta=\dimvec T$ belongs to $\iota(\Phi^s_-)$. The simplicity of
$T$ forbids any $S_j$ with $j\in I_0\setminus\{i\}$ to appear in
the socle or in the head of $T$. Further, $S_i$ cannot appear in
the socle of $T$, because $T\in\mathscr R_F$ and $S_i\in\mathscr I_F$.
Therefore $\soc T=S_0$. This condition and $\beta$ completely
determine $T$, by \cite{Crawley-Boevey00}, Lemma~2 (2). With the
notations of \cite{BaumannKamnitzer12}, section~3 (see also
Example~\ref{ex:CompBKAIRT}), we have $T\cong N(\beta-\omega_0)$.
(To apply \cite{BaumannKamnitzer12}, Theorem~3.1, we note the
existence of $w\in W_0$ such that $\beta=w\alpha_0$, which
implies $\beta-\omega_0=-ws_0\omega_0$.) Equation~(3.1)
in~\cite{BaumannKamnitzer12} (or the proof of Lemma~2 in
\cite{Crawley-Boevey00}) then says that
$$\dim\hd_jT=\max(0,(\beta,\alpha_j)-\langle\omega_0,\alpha_j\rangle),$$
and we have seen that the left-hand side is zero for
$j\in I_0\setminus\{i\}$. Let us write $\beta=\iota(-\alpha)$,
with $\alpha\in\Phi^s_+$. Then $(\alpha,\alpha_j)\geq0$ for each
$j\in I_0\setminus\{i\}$. Now $\langle\varpi_i,\beta\rangle=0$,
so the support of $\alpha$ (a subset of $I_0$) avoids the node $i$,
and therefore $\alpha\in\Phi^s_J$ for a certain connected component
$J$ of $I_0\setminus\{i\}$. We then conclude that
$\alpha=\widetilde\alpha_J$, and therefore that $T=R_J$.

We now claim that the simple objects linked to $R_J$ are the $S_j$
with $j\in J$. By \cite{Bourbaki68}, chapitre~6, \S1,
n\textordmasculine\;6, Proposition~19, there is a sequence
$\beta_1$, \dots, $\beta_n$ of elements in $\{\alpha_j\mid
j\in J\}$ such that $\beta_1+\cdots+\beta_k$ is a root for each $k$
and $\beta_1+\cdots+\beta_n=\widetilde\alpha_J$. For
$k\in\{1,\ldots,n\}$, let $T_k$ be the simple $\Lambda$-module
with dimension-vector $\beta_k$. Let $N_{n+1}=R_J$, and for
$1<k\leq n$, let $N_k$ be the $\Lambda$-module with socle $S_0$
and dimension-vector
$$\delta-(\beta_1+\cdots+\beta_{k-1})=
\delta-\widetilde\alpha_J+(\beta_n+\cdots+\beta_k).$$
The existence and uniqueness of $N_k$ follows from Lemma~2 (2) in
\cite{Crawley-Boevey00}. Inspecting the proof of this result, we
see that $N_k$ is the middle term of a non-trivial extension of
$N_{k+1}$ by $T_k$, and thus $T_k$ is certainly linked to at least
one of the simple components of $N_{k+1}$. This conclusion also
holds for $k=1$, since
$\Ext^1_\Lambda(N_2,T_1)\neq0$ by Crawley-Boevey's formula
\eqref{eq:CrawleyBoeveyForm}. Thus, all the $T_k$ with
$1\leq k\leq n$ are linked to $R_J$. Since each $S_j$ with
$j\in J$ shows up among these modules $T_k$, we conclude that
all the $S_j$ with $j\in J$ are linked to $R_J$.

Now take two different connected components $J$ and $K$ of
$I_0\setminus\{i\}$. By Schur's lemma,
$$\Hom_\Lambda(S_j,S_k)=\Hom_\Lambda(S_j,R_K)=\Hom_\Lambda(R_J,S_k)=
\Hom_\Lambda(R_J,R_K)=0$$
for any $j\in J$ and $k\in K$. An easy calculation based on
Crawley-Boevey's formula~\eqref{eq:CrawleyBoeveyForm} then shows
that the $\Ext^1$ between $S_j$ or $R_J$ and $S_k$ or $R_K$ is
zero. So $J$ and $K$ give rise to different linkage classes.

To each connected component of $I_0\setminus\{i\}$ corresponds
thus a linkage class, formed by $R_J$ and the $S_j$ with $j\in J$.
The sum of the dimension-vectors of these objects is
$\delta-\widetilde\alpha_J+\sum_{j\in J}\alpha_j$, which is
smaller than or equal to $\delta$, with equality if and only if
$J$ is of type $A$.

At this point, we have established the desired property in the
case where $F\subseteq\overline{C_0^s}$. It remains to handle
the case of a general face $F$. Let $w\in W_0$ of minimal length
such that $w^{-1}F\subseteq\overline{C_0}$. Then $F$ is spanned by
a certain spherical chamber coweight $w\,\varpi_i$, where $i\in I_0$
and $w\in W_0$ is $I_0\setminus\{i\}$-reduced on the right. By
Theorem~\ref{th:SekiyaYamaura} (and Corollary~\ref{co:NwJred}),
we have equivalences of categories
$$\xymatrix@C=6.3em{\mathscr R_{\varpi_i}
\ar@<.6ex>[r]^(.55){I_w\otimes_\Lambda?}&
\ar@<.6ex>[l]^(.45){\Hom_\Lambda(I_w,?)}\mathscr R_F}.$$
We can then transfer to $\mathscr R_F$ the information
obtained above for $\mathscr R_{\varpi_i}$.

What is at stake is the fact that the sum of the
dimension-vectors of the simple objects in a linkage class
is at most $\delta$. As regards $\mathscr R_{\varpi_i}$, this
sum has the form $\delta-\beta_J$, where $J$ is a connected
component of $I_0\setminus\{i\}$ and
$\beta_J=\widetilde\alpha_J-\sum_{j\in J}\alpha_j$. Checking
the classification of root systems, we observe that $\beta_J$
is a root; using Corollary~\ref{co:NwJred}, we see that
$\beta_J\notin N_{w^{-1}}$. Therefore $w\beta_J$ is a positive
root and $w(\delta-\beta_J)=\delta-w\beta_J$ is less than or
equal to $\delta$, as desired.
\end{proof}

Recall the framework of section~\ref{ss:BasicDef}. The graph
$(I,E)$ can be endowed with several orientations $\Omega$
(we only consider acyclic orientations). The datum of $\Omega$
gives a quiver $Q$, whence an Euler form $\langle\,,\;\rangle_Q$
on $\mathbb ZI$, defined as
$$\langle\lambda,\mu\rangle_Q=\sum_{i\in I}\lambda_i\mu_i
-\sum_{a\in\Omega}\lambda_{s(a)}\mu_{t(a)}.$$
The symmetric bilinear form $(\,,\,):\mathbb ZI\times\mathbb
ZI\to\mathbb Z$ is then the symmetrization of $\langle\,,\;\rangle_Q$.

The imaginary root $\delta$ belongs to the kernel of $(\,,\,)$,
so $\langle\delta,?\rangle_Q$ induces a linear form on
$\mathfrak t^*$, in other words, an element
$\gamma_\Omega\in\mathfrak t$. For example, in type
$\widetilde A_1$, there are two orientations
$$\,\Omega':\,\xymatrix@C=2.5em{0\ar@/^/[r]^\alpha\ar@/_/[r]_\beta&1}
\quad\text{and}\quad\Omega'':\,\xymatrix@C=2.5em{0&1.
\ar@/_/[l]_{\overline\alpha}\ar@/^/[l]^{\overline\beta}}$$
The corresponding linear forms are the spherical chamber coweights
$\gamma_{\Omega'}=\gamma'$ and $\gamma_{\Omega''}=\gamma''$
of section~\ref{ss:TypeTildeA1}.

\begin{proposition}
\label{pr:AssChambCow}
The map $\Omega\mapsto\gamma_\Omega$ is an injection from the
set of all non-cyclic orientations of $(I,E)$ into $\Gamma$. In
type $\widetilde A$, this map is bijective.
\end{proposition}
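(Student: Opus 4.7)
The argument splits into three stages. First, $\gamma_\Omega$ is a well-defined element of $\mathfrak{t}$: the identity $\langle \lambda, \mu \rangle_Q + \langle \mu, \lambda \rangle_Q = (\lambda, \mu)$ applied at $\lambda = \mu = \delta$ yields $2\langle \delta, \delta \rangle_Q = (\delta, \delta) = 0$, so $\gamma_\Omega$ annihilates $\delta$ and descends to $\mathfrak t$. Next, any acyclic orientation $\Omega$ admits a source $i_0$, and then $\gamma_\Omega(\alpha_{i_0}) = \delta_{i_0} > 0$, hence $\gamma_\Omega \neq 0$. A dual computation at a sink $i_1$, combined with $(\alpha_{i_1}, \delta) = 0$, gives $\gamma_\Omega(\alpha_{i_1}) = -\delta_{i_1}$. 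In the tree types $\widetilde D, \widetilde E$ every leaf of the diagram has degree one and is therefore automatically a source or a sink, and leaves of an affine Dynkin graph are extending vertices ($\delta = 1$); in type $\widetilde A$ the underlying graph has no leaves, but all $\delta_i$ equal one. Either way, $\gamma_\Omega$ takes the value $\pm 1$ on some simple root.

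The central step is to show $\gamma_\Omega \in \Gamma$. I will embed $KQ\mmod$ into $\Lambda\mmod$ by letting the arrows of $\bar\Omega$ act as zero; the preprojective relations become trivial, and $\Lambda$-submodules of such extended modules coincide with $KQ$-submodules. By Dlab--Ringel's theory of tame hereditary algebras, the regular indecomposable $KQ$-modules are precisely those with $\gamma_\Omega(\dimvec)=0$, and they organize into tubes; a rank-$p$ tube contributes $p$ quasi-simples $\beta_0,\dots,\beta_{p-1}\in\Phi_+^{\re}$ summing to $\delta$. The non-trivial tube ranks are classical invariants of $(I,E)$ (and, in type $\widetilde A$, of $\Omega$): $(p,q)$ with $p+q=n+1$ in $\widetilde A_n$; $(2,2,n-2)$ in $\widetilde D_n$; and $(2,3,3)$, $(2,3,4)$, $(2,3,5)$ in $\widetilde E_6,\widetilde E_7,\widetilde E_8$. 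In each case $\sum(p_i-1)=r-1$, so the projections $\pi(\beta_k)$ taken over all tubes span a rank-$(r-1)$ sub-root system of $\Phi^s$ contained in $\ker \gamma_\Omega$; inspection case by case confirms that this subsystem is parabolic, that is, obtained from a simple system of $\Phi^s$ by removing one node. Since $\gamma_\Omega\neq 0$, this forces $\gamma_\Omega$ onto the unique ray perpendicular to the parabolic subsystem, so $\gamma_\Omega$ is a positive rational multiple of the fundamental coweight dual to the removed node; the first paragraph pins the multiple to~$1$ because $\gamma_\Omega$ already attains $\pm 1$ on $\Phi^s$. Thus $\gamma_\Omega\in\Gamma$.

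For the remaining claims, observe that $\gamma_\Omega$ determines the $\delta$-weighted in-degrees $\delta_i-\gamma_\Omega(\alpha_i)=\sum_{a\in\Omega,\,t(a)=i}\delta_{s(a)}$ at every vertex. In the tree types an inductive leaf-stripping recovers $\Omega$: the weighted in-degree at a leaf fixes the orientation of its unique incident edge, after which one recurses on the smaller graph. In type $\widetilde A_n$ all $\delta_i=1$, so the data amount to the ordinary in-degree sequence; two acyclic orientations of the $(n+1)$-cycle with equal in-degree sequences must agree, since otherwise their symmetric difference contains a directed cycle, contradicting acyclicity. For the bijection in type $\widetilde A$ one counts: the $(n+1)$-cycle has $2^{n+1}-2$ non-cyclic orientations, and $|\Gamma|=\sum_{i=1}^{n}\binom{n+1}{i}=2^{n+1}-2$ because the stabilizer of $\varpi_i$ in $W_0=S_{n+1}$ is $S_i\times S_{n+1-i}$; injectivity plus equal cardinality gives bijectivity. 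The main obstacle is the middle step, specifically the case-by-case verification of the tube ranks and the check that the resulting subsystem of $\Phi^s$ is parabolic of rank exactly $r-1$ in each affine type.
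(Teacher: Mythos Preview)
Your approach differs substantially from the paper's. The paper works by explicit coordinate computation in type $\widetilde A$; in the tree types it computes $\gamma_\Omega$ for one chosen orientation, then uses that reflecting at a source conjugates $\gamma_\Omega$ by the corresponding simple reflection, and it relegates injectivity in types $\widetilde E$ to a computer check. You instead read off $\gamma_\Omega\in\Gamma$ from Dlab--Ringel's tube structure for the tame quiver $Q$, and give uniform combinatorial injectivity arguments (leaf-stripping for trees, an Eulerian-subgraph argument for the cycle) together with a clean counting proof of bijectivity in type $\widetilde A$. Your route is more structural and avoids the computer verification; the paper's route is more elementary, needing no representation theory of $Q$ beyond the Euler form.

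Two points need tightening. First, the claim that every leaf of an affine Dynkin diagram has $\delta$-coefficient $1$ is false in $\widetilde E_7$ and $\widetilde E_8$ (for instance the short-arm leaf in $\widetilde E_7$ has coefficient $2$). This is harmless: the extending vertex $0$ is always a leaf with $\delta_0=1$, so $\gamma_\Omega(\alpha_0)=\pm1$, which is all your normalization step uses.

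Second, and more importantly, the phrase ``inspection case by case confirms that this subsystem is parabolic'' misplaces the difficulty. For \emph{any} nonzero $\theta\in\mathfrak t$, the subsystem $\Phi^s\cap\ker\theta$ is automatically parabolic: choose a simple system for which $\theta$ is dominant, and the roots annihilated by $\theta$ are exactly those supported on $J=\{i:\theta(\alpha_i)=0\}$. No case analysis is needed there. The genuine content is that this parabolic has rank exactly $r-1$, i.e.\ that $\gamma_\Omega$ lies on a \emph{ray} of the spherical Weyl fan rather than in a higher-dimensional face. This is not automatic: for example $\varpi_1+\varpi_2$ in type $A_2$ is integral and attains the value $1$, yet sits in the open chamber and is not a chamber coweight. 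Your count $\sum(p_i-1)=r-1$ of tube ranks is the right input, but you still owe the linear independence of the projected quasi-simples. One clean way to close the gap within your framework: from the Hom/Ext pattern inside a rank-$p$ tube, the Gram matrix of its quasi-simples under $(\,,\,)$ is the affine $\widetilde A_{p-1}$ Cartan matrix, hence those $p$ vectors are linearly independent in $\mathbb RI$; quasi-simples in distinct tubes are Hom- and Ext-orthogonal, hence orthogonal under $(\,,\,)$, so the spans attached to different tubes meet only in $\mathbb R\delta$; summing dimensions then gives a total span in $\mathbb RI$ of dimension $\sum p_i-(k-1)=r$, whence the $\pi$-images span a hyperplane in $\mathfrak t^*$ as required.
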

\trivlist
\item[\hskip\labelsep{\itshape Sketch of proof.}]\upshape
We begin by studying the type $\widetilde A_n$. The vertices of
the graph $(I,E)$ are numbered consecutively from $0$ to $n$ and
we have $\delta=\alpha_0+\cdots+\alpha_n$. Let $\Omega$ be a
non-cyclic orientation. The number
$a_i=\gamma_\Omega(\pi(\alpha_i))=\langle\delta,\alpha_i\rangle_Q$
is equal to $1$, $0$ or $-1$ depending on the number of arrows
that terminate at $i$. When $i$ cyclically runs over $\{0,\ldots,n\}$,
$a_i$ alternatively takes the values $1$ and $-1$, with zeros
interspersed between these values. The sum of all the $a_i$ is
$\langle\delta,\delta\rangle_Q=0$, and the $a_i$ cannot be all
zero because $\Omega$ is not cyclic. There is thus a unique
sequence of values $b_i\in\{0,1\}$, for $i\in\{0,\ldots,n+1\}$,
such that $a_i=b_i-b_{i+1}$ and $b_0=b_{n+1}$. Now $\mathfrak t^*$
has a standard realization as a hyperplane of the vector space
with basis $\{\varepsilon_i\mid1\leq i\leq n+1\}$. In this context,
the $b_i$ are the coordinates of $\gamma_\Omega$ in the basis
$(\varepsilon_i^*)$ dual to $(\varepsilon_i)$. In this basis
$(\varepsilon_i^*)$, the spherical chamber coweights are the sums
$\varepsilon_{i_1}^*+\cdots+\varepsilon_{i_k}^*$ with
$1\leq k\leq n$ and $1\leq i_1<\cdots<i_k\leq n+1$. Certainly
$\gamma_\Omega$ matches this pattern, hence is a spherical chamber
coweight. We leave to the reader the routine verifications needed
to show the announced bijectivity.

Consider the following orientation in type $\widetilde D_n$.
$$\xymatrix@=1em{\scriptstyle0&&&&&&&\scriptstyle n-1\\
&\scriptstyle2\ar[ul]\ar[dl]&\scriptstyle3\ar[l]&\ar[l]&
\ar@{.}[l]&\scriptstyle n-3\ar[l]&\scriptstyle n-2\ar[l]
\ar[ur]\ar[dr]&\\\scriptstyle1&&&&&&&\scriptstyle n}$$
A direct calculation shows that the associated coweight is
$(s_{n-1}s_n)(s_1\cdots s_{n-2})\varpi_{n-2}$. Since the graph
is a tree, any orientation $\Omega$ can be obtained from this one
by a sequence of reflections at sources. Noting that $\delta$ is
$W$-invariant and using \cite{BaumannKamnitzer12}, Lemma~7.2,
we deduce that the coweight $\gamma_\Omega$ is $W$-conjugate to
$\varpi_{n-2}$. We omit the proof of the injectivity of the map
$\Omega\mapsto\gamma_\Omega$, for it requires lengthy (but direct)
calculations in coordinates.

The types $\widetilde E_6$, $\widetilde E_7$ and $\widetilde E_8$
are dealt with similarly. One finds that the coweights $\gamma_\Omega$
are all $W$-conjugate to the fundamental coweight corresponding to
the branching point in the Dynkin diagram. For these exceptional
types, we used a computer to check the injectivity of the
map~$\Omega\mapsto\gamma_\Omega$.
\nobreak\noindent$\square$
\endtrivlist

Let us fix an orientation $\Omega$, whence a quiver $Q$. In
\cite{Ringel98a}, Ringel describes $\Lambda\mmod$ in terms of the
category $KQ\mmod$ of finite dimensional representations of $Q$.
More precisely, let $\tau$ denote the Auslander-Reiten translation
in $KQ\mmod$ and let $M$ be a $KQ$-module. Then the structures of
$\Lambda$-module on $M$ that extend the given structure of
$KQ$-module are in natural bijection with a certain subspace
$\mathcal N^{\tau^-}(M)$ of nilpotent elements in
$\mathcal O^{\tau^-}(M)=\Hom_{KQ}(\tau^{-1}M,M)$.

Recall that indecomposable $KQ$-modules are classified into
preprojective, preinjective and regular types. Every $KQ$-module $M$
can then be written as $M=I\oplus R\oplus P$, where $I$, $R$ and $P$
are the submodules of $M$ obtained by gathering all direct summands
in a Krull-Schmidt decomposition of $M$ which are respectively
preinjective, regular, and preprojective. (The subspaces $R$ and
$P$ depend on the choice of the Krull-Schmidt decomposition, but
$I$ and $I\oplus R$ do not; see~\cite{Crawley-Boevey92}, \S7,
Remark.)

\begin{proposition}
\label{pr:PRIQuiv}
Let $T$ be a $\Lambda$-module and decompose the restriction
$M=T|_Q$ as a sum $M=I\oplus R\oplus P$ as above. Then
$T_{\gamma_\Omega}^{\min}=I$ and
$T_{\gamma_\Omega}^{\max}=I\oplus R$ (as subspaces of $T$).
\end{proposition}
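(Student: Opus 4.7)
My plan is a two-sided sandwich argument: I will show that $I$ and $I\oplus R$ are $\Lambda$-submodules of $T$, and then verify the four memberships $I\in\mathscr I_{\gamma_\Omega}$, $T/I\in\overline{\mathscr P}_{\gamma_\Omega}$, $I\oplus R\in\overline{\mathscr I}_{\gamma_\Omega}$, and $T/(I\oplus R)\in\mathscr P_{\gamma_\Omega}$; together these pin down $T_{\gamma_\Omega}^{\min}=I$ and $T_{\gamma_\Omega}^{\max}=I\oplus R$.

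The first ingredient is a defect dictionary. By bilinearity of the Euler form, for every $KQ$-module $N$ one has $\langle\gamma_\Omega,\dimvec N\rangle=\langle\delta,\dimvec N\rangle_Q=\partial(N)$, the classical Dlab--Ringel defect. The tame trichotomy gives $\partial>0$ on nonzero preinjective modules, $\partial=0$ on regular modules, and $\partial<0$ on nonzero preprojective modules. The standard Hom-vanishings $\Hom_{KQ}(\text{preinj},\text{reg}\oplus\text{preproj})=0$ and $\Hom_{KQ}(\text{preinj}\oplus\text{reg},\text{preproj})=0$ in turn make $(\text{preinj},\text{reg}\oplus\text{preproj})$ and $(\text{preinj}\oplus\text{reg},\text{preproj})$ torsion pairs in $KQ\mmod$, so that $I$ and $I\oplus R$ are the torsion submodules of $M$ with respect to these two torsion pairs. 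In particular they are canonical $KQ$-subspaces of $T$, independent of any Krull--Schmidt decomposition.

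The second, and main, step is to promote $I$ and $I\oplus R$ from $KQ$-submodules of $M$ to $\Lambda$-submodules of $T$. By Ringel's parametrization \cite{Ringel98a} recalled just before the statement, the $\Lambda$-structure on $T$ is encoded by a nilpotent element $\phi\in\Hom_{KQ}(\tau^{-1}M,M)$, and a $KQ$-submodule $N\subseteq M$ lifts to a $\Lambda$-submodule exactly when the composition $\tau^{-1}N\to\tau^{-1}M\xrightarrow{\phi}M\twoheadrightarrow M/N$ vanishes as a $KQ$-homomorphism. Since the Auslander--Reiten translation $\tau^{-1}$ preserves the classes of preinjective, regular, and preprojective modules, $\tau^{-1}I$ is preinjective and $\tau^{-1}(I\oplus R)$ is preinjective-plus-regular; combined with the Hom-vanishings above, together with the identifications $M/I=R\oplus P$ for $N=I$ and $M/(I\oplus R)=P$ for $N=I\oplus R$, both compositions are forced to be zero, so $I$ and $I\oplus R$ are $\Lambda$-submodules of $T$.

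Once this is established, the four memberships follow from closure of the three classes under the relevant operations. Any nonzero $\Lambda$-quotient of $I$ is in particular a nonzero $KQ$-quotient, hence preinjective, hence of defect $>0$, so $I\in\mathscr I_{\gamma_\Omega}$. Any $\Lambda$-submodule of $T/I$ embeds as a $KQ$-submodule of $R\oplus P$, which by the Hom-vanishings carries no preinjective direct summand and so has defect $\leq 0$, giving $T/I\in\overline{\mathscr P}_{\gamma_\Omega}$; together these force $T_{\gamma_\Omega}^{\min}=I$. The dual argument---$\Lambda$-quotients of $I\oplus R$ have no preprojective summand and so have defect $\geq 0$, while $\Lambda$-submodules of $T/(I\oplus R)=P$ are preprojective of defect $<0$---yields $T_{\gamma_\Omega}^{\max}=I\oplus R$. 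The main obstacle I foresee is step two, where the nonfunctoriality of $\tau^{-1}$ under restriction to submodules must be handled carefully through Ringel's parametrization; the rest of the proof reduces to standard Dlab--Ringel theory for tame hereditary algebras.
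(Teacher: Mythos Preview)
Your proposal is correct and follows essentially the same route as the paper's proof: use Ringel's description of $\Lambda$-modules via $\phi\in\Hom_{KQ}(\tau^{-1}M,M)$ together with the Hom-vanishings between the preinjective, regular and preprojective classes to see that $I$ and $I\oplus R$ are $\Lambda$-submodules, and then verify the defect inequalities to identify them with $T_{\gamma_\Omega}^{\min}$ and $T_{\gamma_\Omega}^{\max}$. The paper packages the key step (that $\phi$ maps $\tau^{-1}I$ into $I$ and $\tau^{-1}(I\oplus R)$ into $I\oplus R$) as a direct citation to \cite{Crawley-Boevey92}, \S7, Lemma~3, whereas you unpack it via the Hom-vanishings, but these are the same argument.
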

\begin{proof}
According to Ringel's construction, the datum of $T$ is equivalent
to the datum of $M$ and of $f\in\mathcal N^{\tau^-}(M)$.
By~\cite{Crawley-Boevey92}, \S7, Lemma~3, $f$ must map $\tau^{-1}I$
to $I$ and $\tau^{-1}(R\oplus I)$ to $R\oplus I\strut$, so $I$
and $R\oplus I$ are $\Lambda$-submodules of $T$.

Any nonzero quotient $KQ$-module of $I$ is preinjective (otherwise,
we would have a nonzero map from a preinjective to a preprojective
or a regular module), hence has a positive defect (\S7, Lemma~2
in~\cite{Crawley-Boevey92}). A fortiori, a nonzero quotient
$\Lambda$-module $X$ of $I$ satisfies
$\langle\gamma_\Omega,\dimvec X\rangle>0$. Therefore the
$\Lambda$-module $I$ belongs to
$\mathscr I_{\gamma_\Omega}$.

Similarly, a nonzero $KQ$-submodule of $P\oplus R$ cannot have
a preinjective direct summand, so has a nonpositive defect.
Thus a nonzero $\Lambda$-submodule $Y$ of $T/I$ satisfies
$\langle\gamma_\Omega,\dimvec Y\rangle\leq0$, and so
$T/I$ belongs to $\overline{\mathscr P}_{\gamma_\Omega}$.

We conclude that $I$ is the torsion submodule of $T$ with respect
to the torsion pair $(\mathscr I_{\gamma_\Omega},
\overline{\mathscr P}_{\gamma_\Omega})$, so
$T_{\gamma_\Omega}^{\min}=I$. The proof of the equality
$T_{\gamma_\Omega}^{\max}=I\oplus R$ is similar.
\end{proof}

\begin{other}{Remarks}
\label{rk:CompPrepQuiv}
\begin{enumerate}
\item
\label{it:CPQa}
This proposition explains our choice of the notation $\mathscr I_\theta$,
$\mathscr R_\theta$ and $\mathscr P_\theta$: when $\theta=\gamma_\Omega$,
the objects of these categories are the $\Lambda$-modules whose
restriction to $Q$ are preinjective, regular or preprojective,
respectively.
\item
\label{it:CPQb}
The abelian category $\mathcal R_Q$ of regular $KQ$-modules is
well understood (see~\cite{Crawley-Boevey92}, \S8). Indecomposable
objects are grouped into tubes, and there is no nonzero morphism or
extension between modules that belong to different tubes. Simple
objects in $\mathcal R_Q$ lie at the mouth of the tubes; two simple
objects are linked if and only if they belong to the same tube. The
sum of the dimension-vectors of the simple objects in a tube
$\mathcal T$ is equal to $\delta$. A tube is called homogeneous
if it has only one simple object; all but at most three tubes
are homogeneous.

This description fits well with Theorem~\ref{th:DescSimpRF}
and Proposition~\ref{pr:SumDimVecSimp}, with
$F=\mathbb R_{>0}\gamma_\Omega$. In fact, using Ringel's description
of $\Lambda$-modules, one easily shows that the arrows in
$\overline\Omega$ act by zero on any simple object in $\mathscr R_F$.
Therefore the map $T\mapsto T\bigl|_Q$ is a bijection from
$\Irr\mathscr R_F$ onto $\Irr\mathcal R_Q$. In this context,
the statements~\ref{it:DSRFa} and~\ref{it:DSRFb} in
Theorem~\ref{th:DescSimpRF} correspond to the cases where
$T\bigl|_Q$ belongs to an homogeneous tube or not.

Two simple objects in $\mathscr R_F$ are linked if their
restrictions to $Q$ are linked in $\Irr\mathcal R_Q$. Using
Proposition~\ref{pr:SumDimVecSimp}, we conclude that the bijection
$T\mapsto T\bigl|_Q$ maps linkage classes in $\Irr\mathscr R_F$
to linkage classes in $\Irr\mathcal R_Q$.
\item
\label{it:CPQc}
Let us keep $F=\mathbb R_{>0}\gamma_\Omega$, let us choose an
extending vertex $0$ in $I$, and let $i\in I_0$ be such that
$\gamma_\Omega$ is $W_0$-conjugated to $\varpi_i$. By
item~\ref{it:CPQb} above, we have $\sum_{S\in L}\dimvec S=\delta$
for each linkage class $L$ in $\mathscr R_F$. As we observed
during the course of the proof of Proposition~\ref{pr:SumDimVecSimp},
this implies that the connected components $J$ of $I_0\setminus\{i\}$
are of type~$A$. This is certainly compatible with the fact, noticed
in the proof of Proposition~\ref{pr:AssChambCow}, that $i$ is the
central node of $I_0$ when $I$ is of type
$\widetilde D$~or~$\widetilde E$.
\end{enumerate}
\end{other}

Let $\nu\in\mathbb NI$ be a dimension-vector. The nilpotent variety
$\Lambda(\nu)$ is a subvariety of the space of representations
of the double quiver $\overline Q$, which itself can be identified
with the cotangent of the space of representations $\Rep(KQ,\nu)$
of the quiver $Q$. It turns out that any irreducible component of
$\Lambda(\nu)$ is the closure of the conormal bundle of a
constructible subset $X\subseteq\Rep(KQ,\nu)$. The relevant subsets
$X$ were first described by Lusztig~\cite{Lusztig92} in the case of
a bipartite orientation $\Omega$, and by Ringel~\cite{Ringel98b} in
the general case of an acyclic orientation. We now explain how this
works.

Recall that an indecomposable $KQ$-module $N$ is regular if and
only if the Auslander-Reiten translation acts periodically on $N$:
there is a number $p>0$ such that $\tau^pN\cong N$. One says that
a finite-dimensional $KQ$-module $M$ is aperiodic if for any
indecomposable regular module $N$ in a non-homogeneous tube,
the sum $\bigoplus_{i=0}^{p-1}\tau^iN$ is not a direct summand of
$M$, where $p\geq2$ is the $\tau$-period of $N$. In addition,
recall that a homogeneous tube $\mathcal T$ contains exactly one
module in each dimension-vector $n\delta$; we denote this module
by $J(\mathcal T,n)$. Lastly, recall that the set of homogeneous
tubes is parameterized by the projective line $\mathbb P^1_K$,
minus at most three points.

Given $\nu\in\mathbb NI$, let $\mathscr S(\nu)$ be the set of all
pairs $(\sigma,\lambda)$, where $\sigma$ is an isomorphism
class of aperiodic modules and $\lambda$ is a partition, with the
further condition $\nu=\dimvec\sigma+|\lambda|\delta$
(see~\cite{Lusztig92}, \S4.13). For $(\sigma,\lambda)\in\mathscr
S(\nu)$, denote by $\ell$ the number of nonzero parts of $\lambda$,
write $\lambda=(\lambda_1,\ldots,\lambda_\ell)$, and let
$X(\sigma,\lambda)$ be the set of all points in
$\Rep(KQ,\nu)$ isomorphic to a module of the form
$M\oplus J(\mathcal T_1,\lambda_1)\oplus\cdots\oplus
J(\mathcal T_\ell,\lambda_\ell)$, where $\mathcal T_1$, \dots,
$\mathcal T_\ell$ are distinct homogeneous tubes and $M$ is an
aperiodic module in the isomorphism class $\sigma$. Let also
$\mathcal N(\sigma,\lambda)$ be the closure of the conormal bundle
of $X(\sigma,\lambda)$. Proposition~4.14 in~\cite{Lusztig92} claims
that the map $(\sigma,\lambda)\mapsto\mathcal N(\sigma,\lambda)$
is a bijection from $\mathscr S(\nu)$ onto $\mathfrak B(\nu)$.
Thanks to~\cite{Ringel98b}, Corollary~5.3,
$\mathcal N(\sigma,\lambda)$ can also be described as the closure
of $\{T\in\Lambda(\nu)\mid T\bigl|_Q\in X(\sigma,\lambda)\}$.

With all these tools in hand, one can prove that
$I(\gamma_\Omega,\lambda)=\mathcal N(0,\lambda)$
for each partition $\lambda$, where $0$ is the
isomorphism class of the trivial module. Thanks to
Proposition~\ref{pr:AssChambCow}, this construction
provides another proof (only valid in type $\widetilde A$)
for the results presented in section~\ref{ss:Cores}.

Pierre Baumann,
Institut de Recherche Math\'ematique Avanc\'ee,
Universit\'e de Strasbourg et CNRS,
7 rue Ren\'e Descartes,
67084 Strasbourg Cedex,
France.\\
\texttt{p.baumann@unistra.fr}
\medskip

Joel Kamnitzer,
Department of Mathematics,
University of Toronto,
Toronto, ON, M5S 2E4
Canada.\\
\texttt{jkamnitz@math.toronto.edu}
\medskip

Peter Tingley,
Department of Mathematics and Statistics,
Loyola University Chicago,
1032 W.\ Sheridan Road,
Chicago, IL 60660,
U.S.A.\\
\texttt{ptingley@luc.edu}

\begin{thebibliography}{99}
\bibitem{AmiotIyamaReitenTodorov10}
C.~Amiot, O.~Iyama, I.~Reiten, G.~Todorov, \textit{Preprojective algebras
and $c$-sortable words,} Proc.\ Lond.\ Math.\ Soc.\ (3) \textbf{104}
(2012), 513--539.
\bibitem{Anderson03}
J.~E.~Anderson, \textit{A polytope calculus for semisimple groups,}
Duke Math.\ J.\ \textbf{116} (2003), 567--588.
\bibitem{Assem90}
I.~Assem, \textit{Tilting theory --- an introduction,} Topics in
algebra, Part 1 (Warszawa, Poland, 1988), ed.\ by S.~Balcerzyk et al.,
Banach Center Publ.\ \textbf{26} (1990), 127--180.
\bibitem{BaumannDunlapKamnitzerTingley12}
P.~Baumann, T.~Dunlap, J.~Kamnitzer, P.~Tingley, \textit{Rank 2 affine
MV polytopes,} to appear in
\href{http://arxiv.org/abs/1202.6416}{Represent.\ Theory}.
\bibitem{BaumannKamnitzer12}
P.~Baumann, J.~Kamnitzer, \textit{Preprojective algebras and MV polytopes,}
Represent.\ Theory \textbf{16} (2012), 152--188.
\bibitem{Beck94}
J.~Beck, \textit{Convex bases of PBW type for quantum affine algebras,}
Comm.\ Math.\ Phys.~\textbf{165} (1994), 193--199.
\bibitem{BeckChariPressley99}
J.~Beck, V.~Chari, A.~Pressley, \textit{An algebraic characterization
of the affine canonical basis,} Duke Math.\ J.\ \textbf{99} (1999),
455--487.
\bibitem{BeckNakajima04}
J.~Beck, H.~Nakajima, \textit{Crystal bases and two-sided cells of
quantum affine algebras,} Duke Math.\ J.\ \textbf{123} (2004), 335--402.
\bibitem{BerensteinZelevinsky01}
A.~Berenstein, A.~Zelevinsky, \textit{Tensor product multiplicities,
canonical bases and totally positive varieties,} Invent.\ Math.\
\textbf{143} (2001), 77--128.
\bibitem{Bourbaki68}
N.~Bourbaki, \textit{Groupes et alg\`ebres de Lie, chapitres 4, 5 et 6,}
Hermann, 1968.
\bibitem{BravermanFinkelbergGaitsgory06}
A.~Braverman, M.~Finkelberg, D.~Gaitsgory, \textit{Uhlenbeck Spaces
via Affine Lie algebra,} The unity of mathematics, Progress in
Mathematics vol.~244, Birkh\"auser Boston, 2006, pp.~17--135.
\bibitem{BravermanGaitsgory01}
A.~Braverman, D.~Gaitsgory, \textit{Crystals via the affine
Grassmannian,} Duke Math.\ J.\ \textbf{107} (2001), 561--575.
\bibitem{Bridgeland09}
T.~Bridgeland, \textit{Spaces of stability conditions,}
Algebraic geometry (Seattle, WA, USA, 2005), ed.~by D.~Abramovich et al.,
Proceedings of Symposia in Pure Mathematics vol.~80, part~1, Amer.\
Math.\ Soc., 2009, pp.~1--21.
\bibitem{BuanIyamaReitenScott09}
A.~B.~Buan, O.~Iyama, I.~Reiten, J.~Scott, \textit{Cluster structures
for $2$-Calabi-Yau categories and unipotent groups,} Compos.\ Math.\
\textbf{145} (2009), 1035--1079.
\bibitem{CelliniPapi98}
P.~Cellini, P.~Papi, \textit{The structure of total reflection orders
in affine root systems,} J.\ Algebra \textbf{205} (1998), 207--226.
\bibitem{Crawley-Boevey92}
W.~Crawley-Boevey, \textit{Lectures on representations of quivers,}
lecture notes for a course given in Oxford in Spring 1992, available
at \href{http://www.amsta.leeds.ac.uk/~pmtwc/}
{\texttt{http://www.amsta.leeds.ac.uk/\raisebox{-4pt}{\string~}pmtwc/}}.
\bibitem{Crawley-Boevey00}
W.~Crawley-Boevey, \textit{On the exceptional fibres of Kleinian
singularities,} Amer.\ J.\ Math.\ \textbf{122} (2000), 1027--1037.
\bibitem{Crawley-BoeveySchroer02}
W.~Crawley-Boevey, J.~Schr\"oer, \textit{Irreducible components of varieties
of modules,} J.\ reine angew.\ Math.~\textbf{553} (2002), 201--220.
\bibitem{Dunlap10}
T.~Dunlap, \textit{Combinatorial representation theory of affine
$\mathfrak{sl}_2$ via polytope calculus,} PhD thesis,
Northwestern University, 2010.
\bibitem{FrenkelMalkinVybornov01}
I.~Frenkel, A.~Malkin, M.~Vybornov, \textit{Affine Lie algebras and
tame quivers,} Selecta Math.\ (N.S.) \textbf7 (2001), 1--56.
\bibitem{FrenkelSavage03}
I.~Frenkel, A.~Savage, \textit{Bases of representations of type $A$
affine Lie algebras via quiver varieties and statistical mechanics,}
Int.\ Math.\ Res.\ Not.\ \textbf{2003}, no.\ 28, 1521--1547.
\bibitem{GabrielRoiter92}
P.~Gabriel, A.~V.~Roiter, \textit{Representations of finite-dimensional
algebras,} with a chapter by B.~Keller, Encyclopaedia Math.\ Sci.\
vol.~73, Algebra~VIII, Springer-Verlag, 1992.
\bibitem{GaussentLittelmann05}
S.~Gaussent, P.~Littelmann, \textit{LS galleries, the path model, and
MV cycles,} Duke Math.\ J.\ \textbf{127} (2005), 35--88.
\bibitem{GelfandGoreskyMacPhersonSerganova87}
I.~M.~Gelfand, R~.M~.Goresky, R.~D.~MacPherson, V.~V.~Serganova,
\textit{Combinatorial geometries, convex polyhedra, and Schubert cells,}
Adv.\ Math.\ \textbf{63} (1987), 301--316.
\bibitem{GeissLeclercSchroer06}
C.~Gei\ss, B.~Leclerc, J.~Schr\"oer, \textit{Rigid modules over
preprojective algebras,} Invent.\ Math.\ \textbf{165} (2006), 589--632.
\bibitem{GeissLeclercSchroer07}
C.~Gei\ss, B.~Leclerc, J.~Schr\"oer, \textit{Semicanonical basis and
preprojecive algebras. II: a multiplication formula,} Compos.\ Math.\
\textbf{143} (2007), 1313--1334.
\bibitem{GeissLeclercSchroer11}
C.~Gei\ss, B.~Leclerc, J.~Schr\"oer, \textit{Kac-Moody groups and
cluster algebras,} Adv.\ Math.\ \textbf{228} (2011), 329--433.
\bibitem{Ito01}
K.~Ito, \textit{The classification of convex orders on affine root
systems,} Commun.\ Algebra \textbf{29} (2001), 5605--5630.
\bibitem{Ito05}
K.~Ito, \textit{Parametrizations of infinite biconvex sets in affine
root systems,} Hiroshima Math.\ J.\ \textbf{35} (2005), 425--451.
\bibitem{Ito10}
K.~Ito, \textit{A new description of convex bases of PBW type for
untwisted quantum affine algebras,} Hiroshima Math.\ J.\ \textbf{40}
(2010), 133--183.
\bibitem{IyamaReiten08}
O.~Iyama, I.~Reiten, \textit{Fomin-Zelevinsky mutation and tilting
modules over Calabi-Yau algebras,} Amer.\ J.\ Math.\ \textbf{130}
(2008), 1087--1149.
\bibitem{IyamaReiten10}
O.~Iyama, I.~Reiten, \textit{$2$-Auslander algebras associated with
reduced words in Coxeter groups,} Int.\ Math.\ Res.\ Not.\
\textbf{2011}, no.\ 8, 1782--1803.
\bibitem{Jiang13}
Y.~Jiang, \textit{Parametrizations of canonical bases and
irreducible components of nilpotent varieties,} to appear in
\href{http://arxiv.org/abs/1110.2937}{Int.\ Math.\ Res.\ Not.}
\bibitem{Kac90}
V.~G.~Kac, \textit{Infinite dimensional Lie algebras,} 3rd ed.,
Cambridge University Press, 1990.
\bibitem{Kamnitzer07}
J.~Kamnitzer, \textit{The crystal structure on the set of Mirkovi\'c-Vilonen
polytopes,} Adv.\ Math.~\textbf{215} (2007), 66--93.
\bibitem{Kamnitzer10}
J.~Kamnitzer, \textit{Mirkovi\'c-Vilonen cycles and polytopes,}
Ann.\ of Math.~\textbf{171} (2010), 245--294.
\bibitem{Kashiwara95}
M.~Kashiwara, \textit{On crystal bases,} Representations of groups
(Banff, AB, 1994), CMS Conf.\ Proc.\ vol.~16 (1995), pp.~155--197.
\bibitem{KashiwaraSaito97}
M.~Kashiwara, Y.~Saito, \textit{Geometric construction of crystal bases,}
Duke Math.\ J.\/ \textbf{89} (1997), 9--36.
\bibitem{Kimura07}
Y.~Kimura, \textit{Affine quivers and crystal bases,} Master thesis,
University of Kyoto (Japan), 2007.
\bibitem{King94}
A. King, \textit{Moduli of representations of finite dimensional
algebras,} Quart.\ J.\ Math.\ Oxford (2) \textbf{45} (1994), 515--530.
\bibitem{Lusztig90a}
G.~Lusztig, \textit{Canonical bases arising from quantized enveloping
algebras,} J.\ Amer.\ Math.\ Soc.~\textbf3 (1990), 447--498.
\bibitem{Lusztig90b}
G.~Lusztig, \textit{Canonical bases arising from quantized enveloping
algebras II,} Progr.\ Theoret.\ Phys.\ Suppl. \textbf{102} (1990),
175--201.
\bibitem{Lusztig91}
G.~Lusztig, \textit{Quivers, perverse sheaves, and quantized enveloping
algebras,} J.\ Am.\ Math.\ Soc.\ \textbf4 (1991), 365--421.
\bibitem{Lusztig92}
G.~Lusztig, \textit{Affine quivers and canonical bases,} Inst.\ Hautes
\'Etudes Sci.\ Publ.\ Math.\ \textbf{76} (1992), 111--163.
\bibitem{Lusztig93}
G.~Lusztig, \textit{Introduction to quantum groups,} Progress in
Mathematics vol.~110, Birkh\"auser Boston, 1993.
\bibitem{Lusztig96}
G.~Lusztig, \textit{Braid group action and canonical bases,}
Adv.\ Math.\ \textbf{122} (1996), 237--261.
\bibitem{MirkovicVilonen04}
I.~Mirkovi\'c, K.~Vilonen, \textit{Geometric Langlands duality
and representations of algebraic groups over commutative rings,}
Ann.\ of Math.\ \textbf{166} (2007), 95--143.
\bibitem{Mumford99}
D.~Mumford, \textit{The red book of varieties and schemes,}
Lecture Notes in Mathematics, vol.~1358, Springer-Verlag, 1999.
\bibitem{Muthiah11}
D.~Muthiah, \textit{Double MV cycles and the Naito-Sagaki-Saito crystal,}
preprint \href{http://arxiv.org/abs/1108.5404}{arXiv:1108.5404}.
\bibitem{MuthiahTingley13}
D.~Muthiah, P.~Tingley, \textit{Affine PBW bases and MV polytopes
in rank $2$,} \href{http://dx.doi.org/10.1007/s00029-012-0117-z}{to
appear in Selecta Math.}
\bibitem{NaitoSagakiSaito12}
S.~Naito, D.~Sagaki, Y.~Saito, \textit{Toward Berenstein-Zelevinsky data
in affine type A, parts~I and~II,} Representation theory of algebraic
groups and quantum groups '10 (Nagoya, Japan, 2010), ed.\ by S.~Ariki et
al., Contemp.\ Math.\ vol.~565, Amer.\ Math.\ Soc., 2012, pp.~143--216.
\bibitem{NaitoSagakiSaito13}
S.~Naito, D.~Sagaki, Y.~Saito, \textit{Toward Berenstein-Zelevinsky data
in affine type A, part~III,} Symmetries, integrable systems and
representations (Tokyo, Japan; Lyon, France, 2011), ed.\ by K.~Iohara
et al., Springer Proceedings in Mathematics and Statistics vol.~40,
Springer-Verlag, 2013, pp.~361--402.
\bibitem{Reineke03}
M.~Reineke, \textit{The Harder-Narasimhan system in quantum groups
and cohomology of quiver moduli,} Invent.\ Math.\ \textbf{152} (2003),
349--368.
\bibitem{Ringel98a}
C.~M.~Ringel, \textit{The preprojective algebra of a quiver,} Algebras
and modules II, Eighth international conference on representations of
algebras (Geiranger, Norway, 1996), ed.\ by I.~Reiten, CMS conf.\ Proc.\
vol.~24, Amer.\ Math.\ Soc., 1998, pp.~467--480.
\bibitem{Ringel98b}
C.~M.~Ringel, \textit{The preprojective algebra of a tame quiver: the
irreducible components of the module varieties,} Trends in the
representation theory of finite dimensional algebras (Seattle, WA, USA,
1997), ed.\ by E.~L.~Green et al., Contemp.\ Math.\ vol.~229, Amer.\
Math.\ Soc., 1998, pp.~293--306.
\bibitem{Rudakov97}
A.~Rudakov, \textit{Stability for an abelian category,} J.\ Algebra
\textbf{197} (1997), 231--245.
\bibitem{Saito94}
Y.~Saito, \textit{PBW basis of quantized universal enveloping algebras,}
Publ.\ Res.\ Inst.\ Math.\ Sci.\ \textbf{30} (1994), 209--232.
\bibitem{Shatz77}
S.~Shatz, \textit{The decomposition and specilization of algebraic
families of vector bundles,} Compos.\ Math.\ \textbf{35} (1977), 163--187.
\bibitem{SekiyaYamaura13}
Y.~Sekiya, K.~Yamaura, \textit{Tilting theoretical approach to moduli
spaces over preprojective algebras,} to appear in
\href{http://dx.doi.org/10.1007/s10468-012-9380-0}{Algebr.\ Represent.\
Theory}.
\end{thebibliography}
\end{document}